\numberwithin{equation}{section}
\numberwithin{figure}{section}
\def\theenumi{\arabic{enumi}}
\def\theenumii{\alph{enumii}}
\def\p@enumii{\theenumi.}
\def\theenumiii{\arabic{enumiii}}
\def\p@enumiii{(\theenumi)(\theenumii)}
\def\p@enumiv{\p@enumiii.\theenumiii}
\newtheorem{theorem}{Theorem}[section]
\newtheorem{assumption}[theorem]{Assumption}
\newtheorem{claim}[theorem]{Claim}
\newtheorem{corollary}[theorem]{Corollary}
\newtheorem{fact}[theorem]{Fact}
\newtheorem{lemma}[theorem]{Lemma}
\newtheorem{proposition}[theorem]{Proposition}
\theoremstyle{definition}
\newtheorem{definition}[theorem]{Definition}
\newtheorem{notation}[theorem]{Notation}
\newtheorem{remark}[theorem]{Remark}
\newtheorem{example}[theorem]{Example}
\begin{document}
\title{On FC-central extensions of groups of intermediate growth}
\author{Tianyi Zheng}
\begin{abstract}
It is shown that FC-central extensions retain sub-exponential volume
growth. A large collection of FC-central extensions of the first Grigorchuk
group is provided by the constructions in the works of Erschler \cite{Erschler06}
and Kassabov-Pak \cite{KassabovPak}. We show that in these examples
subgroup separability is preserved. We introduce two new collections
of extensions of the Grigorchuk group. One collection gives first
examples of intermediate growth groups with centers isomorphic to
$\mathbb{Z}^{\infty}$; and the other provides groups with prescribed
oscillating intermediate growth functions. 
\end{abstract}

\thanks{The author is partially supported by the Alfred. P. Sloan Foundation
and a Hellman Fellowship from UCSD}
\maketitle

\section{Introduction}

The FC-center of a group $G$, denoted by $Z_{{\rm FC}}(G)$, consists
of all elements of $G$ which have finite conjugacy classes. We say
$\Gamma$ is an FC-central extension of $G$ if $G$ is a quotient
of $\Gamma$ and $\ker(\Gamma\to G)$ is contained in $Z_{{\rm FC}}(\Gamma)$.
A special case is that $\Gamma$ is a central extension of $G$, that
is, $\ker(\Gamma\to G)$ is contained in the center of $\Gamma$.
In this paper we study FC-central extensions of groups of sub-exponential
growth, in particular, of groups related to Grigorchuk groups. 

Let $G$ be a group equipped with a finite generating set $S$. Denote
by $d_{S}$ the graph distance on the Cayley graph of $(G,S)$. The
growth function $v_{G,S}(n)$ counts the number of elements in the
ball of radius $n$,
\[
v_{G,S}(n)=\left|\{\gamma\in G:\ d_{S}(\gamma,id)\le n\}\right|.
\]
A finitely generated group $G$ is of polynomial growth if there exists
$0<d<\infty$ and a constant $C>0$ such that $v_{G,S}(n)\le Cn^{d}$.
It is of exponential growth if $\lim_{n\to\infty}v_{G,S}(n)^{1/n}>1$.
If $v_{G,S}(n)$ is sub-exponential but not bounded by a polynomial,
we say $G$ is of intermediate growth. A group of polynomial growth
is virtually nilpotent by Gromov's theorem \cite{gromov}. The first
examples of groups of intermediate growth are constructed by Grigorchuk
in \cite{Grigorchuk84}. 

We show that FC-central extensions retain sub-exponential growth:

\begin{lemma}[= Lemma \ref{FC2}]\label{FC-1}

Let $\Gamma$ be a finitely generated FC-central extension of a group
$G$. If $G$ is of sub-exponential growth, then $\Gamma$ is of sub-exponential
growth as well. 

\end{lemma}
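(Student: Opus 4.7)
The plan is to reduce via pigeonhole and then exploit the FC-central hypothesis structurally.

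First, I would fix a finite symmetric generating set $S$ of $\Gamma$; its image $\bar S = \pi(S)$ generates $G$. For $\gamma, \gamma' \in B_\Gamma(n)$ with the same projection in $G$, the difference $\gamma^{-1}\gamma'$ lies in $N \cap B_\Gamma(2n)$, yielding the pigeonhole estimate
\[
v_\Gamma(n) \le v_G(n) \cdot |N \cap B_\Gamma(2n)|.
\]
Since $v_G(n)^{1/n} \to 1$ by hypothesis, the task reduces to showing that $|N \cap B_\Gamma(m)|$ is sub-exponential in $m$.

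The FC-central hypothesis enters through B.~H.~Neumann's theorem on finitely generated FC-groups. For each $m$, the normal closure $H_m$ in $\Gamma$ of the finite set $N \cap B_\Gamma(m)$ is finitely generated (since FC gives finite conjugacy classes) and contained in $Z_{\mathrm{FC}}(\Gamma)$, so it is itself a finitely generated FC-group, hence center-by-finite, in particular virtually abelian of finite rank. Moreover, the conjugation action $\Gamma \to \mathrm{Aut}(H_m)$ has every element with finite orbit on the (up to finite index) finitely generated abelian group $H_m$, which forces the image to be finite. Consequently $\Gamma_m := C_\Gamma(H_m)$ has finite index in $\Gamma$, and in $\Gamma_m$ the subgroup $H_m$ is central.

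The main obstacle I anticipate is to bound $|H_m \cap B_\Gamma(m)|$ by a polynomial in $m$. The idea is that super-polynomial distortion of the central subgroup $H_m$ inside $\Gamma_m$ would produce exponentially many elements of $\Gamma$ lying in a single fiber of $\pi$; via the pigeonhole estimate applied to the strictly simpler FC-central extension $\Gamma_m/H_m$ of a finite-index subgroup of $G$ by $N/H_m$, this would in turn force $G$ to have exponential growth, contradicting the hypothesis. Making this bootstrap non-circular is the delicate point: one needs a structural invariant of the FC-central kernel (e.g.\ the rank of $H_m$, or a derived-length-style invariant since $N$ itself need not be finitely generated) on which to induct. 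Once a polynomial bound $|H_m \cap B_\Gamma(m)| \le P(m)$ is in hand, the pigeonhole estimate yields $v_\Gamma(n) \le v_G(n) \cdot P(2n) = e^{o(n)}$, completing the proof.
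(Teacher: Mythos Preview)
Your reduction to bounding $|N \cap B_\Gamma(m)|$ is fine, and invoking Neumann's theorem is the right instinct. But the proof sketch has a genuine gap at exactly the point you flag as delicate, and the target you set for yourself is actually false. You ask for a polynomial bound $|H_m \cap B_\Gamma(m)| \le P(m)$; however, in the paper's own examples (e.g.\ the Kassabov--Pak extensions of $\mathfrak{G}$, or the groups in Theorem~\ref{growth-pres}) the growth of $\Gamma$ can be any prescribed intermediate function while $v_G(n)\asymp e^{n^{\alpha_0}}$, so $|N\cap B_\Gamma(m)|$ is necessarily super-polynomial. Thus no fixed polynomial $P$ exists. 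What you need is only a sub-exponential bound, but then your bootstrap collapses: passing to $\Gamma_m/H_m$ gives another FC-central extension of (a finite-index subgroup of) $G$, with no invariant that has strictly decreased. The rank of $H_m$ goes to infinity with $m$ in the interesting cases, and $N/H_m$ can be just as complicated as $N$, so there is no induction.

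The paper's argument avoids this by a two-scale decomposition rather than an induction. Fix a section $s:G\to\Gamma$ and set $A(k,r)=\{s(g_1)\cdots s(g_k)s(g_1\cdots g_k)^{-1}: g_i\in B_G(r)\}$, so $B_\Gamma(n)\subseteq A(n,1)\, s(B_G(n))$. The key step (Claim~\ref{conj-prod}) is a rearrangement showing $A(n,1)\subseteq C(j,1)^{n/j}\, A(n/j,j)$, where $C(j,1)$ is the union of all $\Gamma$-conjugates of $A(j,1)$. The crucial difference from your approach is that $j$ is \emph{fixed} (chosen once $\epsilon>0$ is given, so that $v_G(j)^{1/j}<e^{\epsilon/3}$), hence $C(j,1)$ is a single finite subset of $Z_{\mathrm{FC}}(\Gamma)$; by Neumann it generates a virtually abelian group, so $|C(j,1)^{n/j}|$ is polynomial in $n$ with degree depending only on $j$. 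Meanwhile $|A(n/j,j)|\le v_G(j)^{n/j}\le e^{\epsilon n/3}$. Combining gives $|A(n,1)|\le K_j n^{d_j} e^{\epsilon n/3}$, and the sub-exponential bound follows. The moral: rather than letting $H_m$ grow with $m$, freeze the ``FC part'' at a fixed scale $j$ and absorb the remainder into a power of $v_G(j)$.
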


A well-known question of Rosenblatt \cite{Rosenblatt} asks if every
Cayley graph with exponential volume growth function admits a Lipschitz
embedding of the binary tree. Equivalently, the question asks if no
group of exponential growth can be \emph{supramenable}, see the characterizations
of supramenablity in \cite{KMR}. A group of sub-exponential growth
is supramenable, and results in \cite{Monod} imply that central extensions
of a group of sub-exponential growth are supramenable. Lemma \ref{FC-1}
shows that one can not find an example that answers Rosenblatt's question
negatively by taking a central extension of an intermediate growth
group. 

The space $\mathcal{M}_{k}$ of $k$-marked group the space of marked
groups $(G,S)$, where $S=(s_{1},\ldots,s_{k})$ is an ordered generating
$k$-tuple of $G$. Equivalently, $(G,S)$ can be viewed as an epimorphism
$\mathbf{F}_{k}\to G$, where $\mathbf{F}_{k}$ is the free group
on $k$ generators. The space $\mathcal{M}_{k}$ is endowed with the
Cayley topology, also referred to as the Cayley-Grigorchuk topology.
The Cayley topology is induced by the metric $d((G_{1},S_{1}),(G_{2},S_{2}))=2^{-k}$,
where $k$ is the maximal radius such that the ball around $id_{G_{1}}$
in the Cayley graph of $(G_{1},S_{1})$ is identical to the ball of
the same radius around $id_{G_{2}}$ in the Cayley graph of $(G_{2},S_{2})$.
The space $\mathcal{M}_{k}$ is first used by Grigorchuk in \cite{Grigorchuk84}
to study the family of Grigorchuk groups $\left(G_{\omega}\right)$,
$\omega\in\{0,1,2\}^{\mathbb{N}}$. 

Given a collection of $k$-marked groups $\left(\left(G_{i},S_{i}\right)\right)_{i\in I}$,
one can take the diagonal product $(\Delta,S)$ of this collection,
namely the subgroup of $\prod_{i\in I}G_{i}$ generated by diagonally
embedding the generating set of each $G_{i}$. Taking the diagonal
product of a sequence of marked group is a useful tool to construct
groups with \textquotedbl exotic\textquotedbl{} properties. Such a
construction has been performed by Erschler in \cite{Erschler06}
(in the language of piecewise automata), Kassabov and Pak in \cite{KassabovPak},
Brieussel and the author in \cite{BZ1,BZ2}. The goal is often to
produce a family of groups with certain desired properties: sub-exponential
growth groups with arbitrarily large F{\o}lner functions in \cite{Erschler06},
groups with oscillating volume growth in \cite{KassabovPak}, and
groups with prescribed random walk characteristics or metric embedding
distortion into Banach spaces in \cite{BZ1}. 

A connection between FC-central extensions and diagonal products is
the following. Suppose $\left(\left(G_{i},S_{i}\right)\right)_{i=1}^{\infty}$
is a sequence of marked groups in $\mathcal{M}_{k}$ such that each
$G_{i}$ is finite and moreover $(G_{i},S_{i})$ converges to $(G,S)$
in the Cayley topology, then the diagonal product of $\left(\left(G_{i},S_{i}\right)\right)_{i=1}^{\infty}$
is an FC-central extension of $G$. For instance, the groups constructed
in \cite{Erschler06} and \cite{KassabovPak} cited above are FC-central
extensions of the first Grigorchuk group arising this way. 

In general a central extension does not preserve residual finiteness.
By \cite{ErschlerNonResidual}, there exist central extensions of
the first Grigorchuk group $\mathfrak{G}$ that are not commensurable
up to finite kernels with any residually finite groups. 

Subgroup separability (also called locally extended residual finiteness,
LERF in abbreviation) is a strong form of residual finiteness. A group
$G$ is said to be \emph{subgroup separable} if every finitely generated
subgroup is an intersection of subgroups of finite index in $G$.
Subgroup separability is a powerful property and is only known to
hold for a few special classes of groups, notably, free groups \cite{Burns},
polycyclic groups \cite{Malcav}, some $3$-manifold groups \cite{Long-Niblo},
the first Grigorchuk group $\mathfrak{G}$ \cite{Grigorchuk-Wilson},
the Gupta-Sidki 3-group \cite{Garrido}. 

Regarding stability under group constructions, it is known that taking
free products (\cite{Burns}) and carefully controlled amalgamations
preserve subgroup separability, see \cite{Scott,Gitik}. In Section
\ref{sec:Subgroups} we show that the type of FC-central extension
of the first Grigorchuk group $\mathfrak{G}$ as in \cite{Erschler06}
and \cite{KassabovPak} preserves subgroup separability. The definition
of such extensions is explained at the beginning of Section \ref{sec:definition}.
Denote by $\mathbf{F}$ the free product $\left(\mathbb{Z}/2\mathbb{Z}\right)\ast\left(\mathbb{Z}/2\mathbb{Z}\times\mathbb{Z}/2\mathbb{Z}\right)$,
marked with the generating tuple $\mathbf{S}=\left(\mathbf{a},\mathbf{b},\mathbf{c},\mathbf{d}\right)$,
where $\mathbf{a},\mathbf{b},\mathbf{c},\mathbf{d}$ are involutions,
$\mathbf{F}=\left\langle \mathbf{a}\right\rangle \ast\left(\left\langle \mathbf{b}\right\rangle \times\left\langle \mathbf{c}\right\rangle \right)$,
$\mathbf{d}=\mathbf{bc}$. We say that a group $A$ is a marked quotient
of $\mathbf{F}$ if there is an epimorphism $\mathbf{F}\to A$ and
$A$ is marked with the generating tuple $(a,b,c,d)$ which is the
image of $\mathbf{S}$. 

\begin{theorem}[= Theorem \ref{separable}]\label{subgroups}

Let $\mathcal{L}=(A_{n})_{n=1}^{\infty}$ be a sequence of finite
marked quotients of $\mathbf{F}$. Denote by $\mathfrak{G}$ the first
Grigorchuk group and $\Gamma(\mathcal{L})$ the FC-central extension
of $\mathfrak{G}$ with $\mathcal{L}$ as input described in Definition
\ref{def:gamma(L)}. Then $\Gamma(\mathcal{L})$ is subgroup separable.

\end{theorem}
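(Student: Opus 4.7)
The plan is to combine the Grigorchuk--Wilson subgroup separability of $\mathfrak{G}$ with the residual-finiteness structure of $\Gamma(\mathcal{L})$ built into its diagonal-product definition. Write $\pi\colon\Gamma(\mathcal{L})\twoheadrightarrow\mathfrak{G}$ for the canonical projection, with FC-central kernel $K$, and let $\pi_{n}\colon\Gamma(\mathcal{L})\twoheadrightarrow\Gamma_{n}$ denote the finite quotients obtained by truncating the diagonal product after the first $n$ factors, so that $\bigcap_{n}\ker\pi_{n}=\{1\}$. Fix a finitely generated subgroup $H\leq\Gamma(\mathcal{L})$ and an element $g\notin H$; the goal is to produce a finite-index subgroup $N\leq\Gamma(\mathcal{L})$ with $H\leq N$ and $g\notin N$.

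The argument splits on whether $\pi(g)\in\pi(H)$. If $\pi(g)\notin\pi(H)$, then since $\pi(H)$ is finitely generated and $\mathfrak{G}$ is subgroup separable by Grigorchuk--Wilson, there exists a finite-index subgroup $N_{0}\leq\mathfrak{G}$ with $\pi(H)\leq N_{0}$ and $\pi(g)\notin N_{0}$, and $N:=\pi^{-1}(N_{0})$ solves the problem. Otherwise, write $g=g'h$ with $h\in H$ and $g'\in K\setminus\{1\}$; the problem reduces to separating the nontrivial kernel element $g'$ from $H$. Pick $n$ with $\pi_{n}(g')\neq 1$; if also $\pi_{n}(g')\notin\pi_{n}(H)$ then $\pi_{n}^{-1}(\pi_{n}(H))$ suffices. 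The genuine difficulty is the residual scenario in which $\pi_{n}(g')\in\pi_{n}(H)$ for every $n$, equivalently, $g'$ lies in the closure of $H$ in the profinite topology defined by $\{\ker\pi_{n}\}$.

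To rule out this scenario I would use two structural features supplied by Definition \ref{def:gamma(L)}: first, the vertical description of $K$, in which $K$ is assembled coordinate-by-coordinate from (subgroups of) the finite groups $A_{n}$, each localized inside the rigid stabilizer of a level of the binary tree on which $\mathfrak{G}$ acts; and second, the FC-centrality of $K$, which guarantees that a finite-index subgroup of $\Gamma(\mathcal{L})$ centralizes $g'$, thereby linearizing the problem in a neighborhood of $g'$. Combined with Grigorchuk--Wilson-style separation on the $\mathfrak{G}$-side, this permits simultaneous control of $H$ on finitely many $A_{n}$-coordinates and of $\pi(H)\leq\mathfrak{G}$, which together should isolate $g'$ inside a suitable finite quotient.

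The main obstacle is aligning these two streams of data---separation in $\mathfrak{G}$ and coordinate-wise separation in the $A_{n}$---into a \emph{single} finite-index subgroup of $\Gamma(\mathcal{L})$, noting that $K$ and $\mathfrak{G}$ do not a priori split off as direct factors. Here one must exploit the explicit identification, built into Definition \ref{def:gamma(L)}, of $\pi_{n}(H)\cap A_{n}$ with a subgroup controlled by the level quotients of $\mathfrak{G}$; the branch structure of $\mathfrak{G}$, which drove the Grigorchuk--Wilson argument, should provide the compatibility that patches the local separations into a global one. I would expect the detailed verification to proceed by induction on $n$, tracking how $H$ intersects the descending chain of normal subgroups $\ker\pi_{n}$ and matching these intersections with the rigid-stabilizer decomposition of $\mathfrak{G}$.
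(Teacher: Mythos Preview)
Your reduction is clean when $\pi(g)\notin\pi(H)$, but the hard case---where $g'\in K\setminus\{1\}$ yet $\pi_n(g')\in\pi_n(H)$ for all $n$---is not actually resolved. You describe ingredients (FC-centrality of $K$, coordinate description of $K$, branch structure of $\mathfrak{G}$) and say they ``should'' combine, but there is no mechanism given. The difficulty is real: the finite quotients $\pi_n$ do not by themselves separate $H$ from elements of its profinite closure, and nothing in your outline explains how to manufacture a single finite-index subgroup of $\Gamma(\mathcal{L})$ that contains $H$ and misses $g'$. The ``induction on $n$ tracking $H\cap\ker\pi_n$'' is not a workable plan, since those intersections are typically not finitely generated and the kernels $\ker\pi_n$ are not the right filtration for this problem.

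The paper takes a completely different route and never argues element-by-element. Instead it exploits a structural dichotomy for finitely generated subgroups of $\mathfrak{G}$ (a consequence of Grigorchuk--Wilson's Theorem~3): there is a level $\ell$ such that every section $Q_v$ of $Q=\pi(H)$ is either finite or all of $\mathfrak{G}$. Lifting this via the generalized wreath recursion, the generalized sections $H_v$ are either finite or of finite index in $\Gamma(\mathfrak{s}^\ell\mathcal{L})$. The paper then introduces generalized rigid stabilizers $R_m^\Gamma(\mathsf{T}_v)$, shows they have finite index (branching lemma), and proves that the subgroups
\[
U_m=\Bigl\langle \mathrm{St}_H(\ell),\ \prod_{v\in A} R_m^\Gamma(\mathsf{T}_v)\Bigr\rangle
\]
are of finite index in $\Gamma(\mathcal{L})$ with $\bigcap_m U_m=\mathrm{St}_H(\ell)$. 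A key step here is an automorphism-lifting lemma: the isomorphisms between sections $Q_u\simeq Q_v$ (coming from $\mathrm{Aut}(\mathfrak{G})$) lift to $\Delta_{>k}$, which forces certain ``off-diagonal'' pieces to be finite and makes the intersection computation go through. None of this machinery---the section dichotomy, the generalized rigid stabilizers, or the automorphism lifting---appears in your proposal, and it is what actually carries the argument.
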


One can also determine the commensurability classes of finitely generated
subgroups of $\Gamma(\mathcal{L})$, see Theorem \ref{comm-1}. The
proof of Theorem \ref{subgroups} relies on the generalized wreath
recursion and branching structure in $\Gamma(\mathcal{L})$ (see Section
\ref{sec:definition}) and properties of subgroups of $\mathfrak{G}$
proven in \cite{Grigorchuk-Wilson} (see Subsection \ref{subsec:Ingredients}).
Groups constructed in \cite{KassabovPak} correspond to taking $\mathcal{L}=\left({\rm PSL}_{2}\left(\mathbb{Z}/n_{i}\mathbb{Z}\right)\right)$.
As a consequence of the main theorem of \cite{KassabovPak}, the collection
of FC-central extensions of $\mathfrak{G}$ which are of the form
$\Gamma(\mathcal{L})$ contains uncountably many groups of intermediate
growth with pairwise non-comparable growth functions. 

The second half of the paper is devoted to constructing more examples
of extensions of intermediate growth groups and studying their properties.
We start with permutation wreath extensions of the first Grigorchuk
group $\mathfrak{G}$, instead of $\mathfrak{G}$ as in previous examples.
The reasons for this choice will be clear in the context: in Section
\ref{sec:central}, we need the Schur multipliers to contain copies
of $\mathbb{Z}$ as direct summands; and in Section \ref{sec:volume}
we rely on the structure of permutation wreath products to provide
sharp volume lower bounds. 

Let $G$ be a group acting on a countable set $X$ and $L$ be a group.
The \emph{permutation wreath product} $W=L\wr_{X}G$ is the semi-direct
product $\oplus_{X}L\rtimes\Gamma$, where $G$ acts on $\oplus_{X}L$
by permuting coordinates. The permutation wreath extension of $\mathfrak{G}$
of the form $W=A\wr_{\mathcal{S}}\mathfrak{G}$, where $A$ is a group
and $\mathcal{S}$ is the orbit of the right-most ray $1^{\infty}$
of the rooted binary tree under the action of $\mathfrak{G}$, is
first considered in \cite{BE1}. 

The goal of Section \ref{sec:central} is to construct groups of intermediate
growth with center isomorphic to $\mathbb{Z}^{\infty}$, where $\mathbb{Z}^{\infty}$
denotes the direct sum of countably many copies of $\mathbb{Z}$.
We take a specific countable $\mathfrak{G}$-set $X$ and construct
a sequence of marked groups $\Gamma_{n}$ which converges to $\mathbb{Z}\wr_{X}\mathfrak{G}$
in the Cayley topology, such that the center of diagonal product of
this sequence is isomorphic to $\mathbb{Z}^{\infty}$. The question
whether there is a group of intermediate growth whose center contains
$\mathbb{Z}^{\infty}$ is asked by Bartholdi and Erschler (personal
communication). 

\begin{theorem}\label{large-center}

There exists a finitely generated torsion-free group $\Gamma$ of
intermediate growth whose center $Z(\Gamma)$ is isomorphic to $\mathbb{Z}^{\infty}.$

\end{theorem}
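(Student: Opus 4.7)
The plan is to realize $\Gamma$ as the diagonal product of a sequence of torsion-free marked groups $(\Gamma_{n},S_{n})_{n\ge 1}$ converging in the Cayley topology to the permutation wreath extension $W:=\mathbb{Z}\wr_{X}\mathfrak{G}$ for a suitable countable $\mathfrak{G}$-set $X$ (modeled on the orbit of $1^{\infty}$). The group $W$ has sub-exponential growth by Bartholdi--Erschler~\cite{BE1}, and its center is trivial: $\mathfrak{G}$ is centerless, and the only element of $\oplus_{X}\mathbb{Z}$ fixed by the transitive $\mathfrak{G}$-action on the infinite set $X$ is $0$.

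Each $\Gamma_{n}$ will be a torsion-free central extension, by $\mathbb{Z}$, of a finite-index subquotient of $W$, with the following properties:
\begin{enumerate}
\item[(i)] $\Gamma_{n}$ is torsion-free;
\item[(ii)] $Z(\Gamma_{n})$ contains a distinguished infinite cyclic subgroup $\langle z_{n}\rangle$;
\item[(iii)] there is a word $w_{n}$ in the abstract generators $S$ evaluating to $z_{n}$ in $\Gamma_{n}$, to the identity in $W$, and to the identity in $\Gamma_{m}$ for every $m\ne n$;
\item[(iv)] $(\Gamma_{n},S_{n})\to(W,S)$ in the Cayley topology as $n\to\infty$.
\end{enumerate}
The central $\mathbb{Z}$ in (ii) will be extracted from a $\mathbb{Z}$-direct summand of the Schur multiplier of the finite subquotient, which is precisely why this construction is carried out over $W$ rather than directly over $\mathfrak{G}$. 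The cocycle will be placed on the wreath factor so that torsion-freeness (i) is preserved by the extension.

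Given such a sequence, let $\Gamma$ be its diagonal product. Convergence (iv) realizes $\Gamma$ as an FC-central extension of $W$, so Lemma~\ref{FC-1} combined with sub-exponential growth of $W$ gives sub-exponential growth of $\Gamma$; super-polynomial growth is inherited from the surjection onto $W$. Torsion-freeness of $\Gamma$ follows from (i) and the embedding $\Gamma\hookrightarrow\prod_{n}\Gamma_{n}$. An element of $\Gamma$ is central iff each coordinate is central in the corresponding $\Gamma_{n}$ (since every $S_{n}$ generates $\Gamma_{n}$); items (ii)--(iii) then give an embedding $\bigoplus_{n}\langle z_{n}\rangle\cong\mathbb{Z}^{\infty}\hookrightarrow Z(\Gamma)$. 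The reverse inclusion uses triviality of $Z(W)$ to force $Z(\Gamma)\subseteq\ker(\Gamma\to W)$, together with a bound on each $Z(\Gamma_{n})$ to exclude other central coordinates.

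The main obstacle is the construction (i)--(iv). Conditions (iii) and (iv) pull in opposite directions: $w_{n}$ must vanish in $W$ and in $\Gamma_{m}$ for $m\ne n$, yet survive as a central generator of infinite order in $\Gamma_{n}$. The natural way to engineer this is to choose $\Gamma_{n}$ so that its central generator lives at a scale precisely annihilated by the relations present in $W$ and in the other $\Gamma_{m}$'s but not yet imposed in $\Gamma_{n}$; maintaining Cayley convergence requires this scale to tend to infinity with $n$. Achieving this simultaneously with torsion-freeness of the central extension --- via a nilpotent lift of the relevant wreath subquotient supplying the desired Schur class --- is where the main work of the section lies.
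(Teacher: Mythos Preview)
Your overall strategy --- building a diagonal product of marked central extensions converging to $W=\mathbb{Z}\wr_{X}\mathfrak{G}$, with each factor contributing a copy of $\mathbb{Z}$ to the center via a Schur-multiplier argument --- matches the paper's approach, and your outline of (ii)--(iv) captures the mechanism correctly. The paper engineers the central $\mathbb{Z}$ in each $\Gamma_{n}$ from an orbit of trivial sign in $X_{n}\times X_{n}$ (Tappe's description of $H^{2}$ of a permutational wreath product), and establishes the separation condition (iii) by an explicit calculation with words representing stabilizer elements at the $n$-th level.

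The genuine gap is condition (i). You cannot have each $\Gamma_{n}$ torsion-free \emph{and} have $(\Gamma_{n},S_{n})\to(W,S)$ in the Cayley topology with the natural marking: in $W$ the generator $\tilde{a}=(\mathbf{0},a)$ (and likewise $\tilde{b},\tilde{c},\tilde{d}$) satisfies $\tilde{a}^{2}=id$, so once the ball of radius $2$ in $(\Gamma_{n},S_{n})$ matches that of $(W,S)$, the corresponding generator already has order $2$ in $\Gamma_{n}$. More structurally, the paper's factors are $\Gamma_{n}=\bar{N}_{n}\rtimes G_{n+3}$, a semidirect product by a \emph{finite} quotient of $\mathfrak{G}$; these are virtually nilpotent but never torsion-free, and a central $\mathbb{Z}$-extension of something carrying a nontrivial finite quotient of $\mathfrak{G}$ cannot be torsion-free. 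Consequently your inference ``$\Gamma$ is torsion-free because it embeds in $\prod_{n}\Gamma_{n}$'' has nothing to stand on.

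The paper handles torsion-freeness as a separate, final step rather than baking it into the $\Gamma_{n}$. It first builds the diagonal product $\Delta$ (with torsion) whose center is already $\mathbb{Z}^{\infty}$, and then takes a further diagonal product of $(\Delta,T)$ with Grigorchuk's torsion-free intermediate-growth group $(\tilde{G},\tilde{T})$ from \cite{Grigorchuk85}, with markings arranged so that the extra generator $t$ is trivial in $\tilde{G}$ while $a^{2}$ is trivial in $\Delta$. A putative torsion element of $\tilde{\Delta}$ must project trivially to $\tilde{G}$, hence lies in $\ker(\Delta\to\mathfrak{G})$, which is torsion-free by construction; and the center decomposes as $Z(\Delta)\times Z(\tilde{G})\simeq\mathbb{Z}^{\infty}\oplus\mathbb{Z}\simeq\mathbb{Z}^{\infty}$.
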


In Section \ref{sec:construction} we take the diagonal products of
suitable finite marked groups which result in FC-central extensions
of $W=(U\times V)\wr_{\mathcal{S}}\mathfrak{G}$, where $U,V$ are
finite group and $\mathcal{S}$ is the orbit of $1^{\infty}$ under
the action of $\mathfrak{G}$. We show that the volume growth functions
of such diagonal products can be estimated up to good precision, see
Theorem \ref{thm:main}. In particular, such FC-central extensions
of $W$ allow to show the following result on prescribed growth function. 

For the first Grigorchuk group $\mathfrak{G}$, the upper bound in
\cite{Bartholdi98} states that $v_{\mathfrak{G},S}(n)\le\exp\left(Cn^{\alpha_{0}}\right)$,
where $\alpha_{0}=\frac{\log2}{\log\lambda_{0}}\approx0.7674$, and
$\lambda_{0}$ is the positive root of the polynomial $X^{3}-X^{2}-2X-4$.
The volume lower bound in \cite{EZ2} shows that $v_{\mathfrak{G},S}(n)\ge\exp\left(c_{\epsilon}n^{\alpha_{0}-\epsilon}\right)$
for any $\epsilon>0$. The upper and lower bounds together imply that
\[
\lim_{n\to\infty}\frac{\log\log v_{\mathfrak{G},S}(n)}{\log}=\alpha_{0},
\]
that is, the volume exponent of $\mathfrak{G}$ exists and is equal
to $\alpha_{0}$.

Recall that a function $f:\mathbb{N}\to\mathbb{N}$ is sub-additive
if $f(n+m)\le f(n)+f(m)$ and it is sublinear if $\lim_{n\to\infty}f(n)/n=0$. 

\begin{theorem}\label{growth-pres}

Denote by $\alpha_{0}$ the volume exponent of the first Grigorchuk
group $\mathfrak{G}$. There exists an absolute constant $C>0$ such
that the following is true. Let $f:\mathbb{N}\to\mathbb{N}$ be any
sublinear, non-decreasing, subadditive function such that $f(1)=1$
and 
\[
f(n)\ge n^{\alpha_{0}}\mbox{ for all }n\in\mathbb{N}.
\]
Then there exists a group $\Delta$ equipped with a finite generating
set $T$ such that $\Delta$ is an FC-central extension of $W=(\mathbb{Z}/2\mathbb{Z})^{3}\wr_{\mathcal{S}}\mathfrak{G}$
and for all $n\in\mathbb{N}$,
\[
\exp\left(\frac{1}{C}f(n)\right)\le v_{\Delta,T}(n)\le\exp\left(Cf(n)\right).
\]

\end{theorem}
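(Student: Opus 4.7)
The plan is to construct $\Delta$ as a diagonal product of finite marked groups associated to a suitably chosen sequence of integer parameters, and then invoke Theorem \ref{thm:main} to get matching upper and lower volume bounds that realize the prescribed $f$. Concretely, take $U \cong \mathbb{Z}/2\mathbb{Z}$ and $V \cong (\mathbb{Z}/2\mathbb{Z})^2$ so that $U \times V \cong (\mathbb{Z}/2\mathbb{Z})^3$ and the limit wreath product in Section \ref{sec:construction} is exactly $W = (\mathbb{Z}/2\mathbb{Z})^3 \wr_{\mathcal{S}} \mathfrak{G}$. For each unbounded non-decreasing integer sequence $(m_k)$, the framework of Section \ref{sec:construction} produces an FC-central extension $\Delta((m_k))$ of $W$ with a distinguished generating set $T$, and Theorem \ref{thm:main} estimates its growth up to absolute multiplicative constants in the exponent by an explicit piecewise function $F_{(m_k)}(n)$ of the parameters.

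The second step is to choose $(m_k)$ from $f$. Define ``doubling scales'' $1 = n_0 < n_1 < \cdots$ by letting $n_k$ be the smallest integer with $f(n_k) \ge 2 f(n_{k-1})$, and pick $m_k$ so that the $k$-th transition in $F_{(m_k)}$ occurs near $n_k$. The three hypotheses on $f$ play complementary roles: the floor $f(n) \ge n^{\alpha_0}$ matches the baseline behavior of $F_{(m_k)}$ (which cannot be smaller than the growth of $\mathfrak{G}$ itself); subadditivity bounds $f(2n)/f(n)$, preventing any single scale from having to ``jump'' beyond what the wreath construction can realize; and sublinearity ensures $\Delta$ has sub-exponential growth, consistent with Lemma \ref{FC-1}.

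The third step is the verification $F_{(m_k)}(n) \asymp f(n)$ up to constants independent of $f$. Between consecutive doubling scales $n_{k-1}$ and $n_k$, $f$ grows at most by a factor of $2$ and at least like $(n/n_{k-1})^{\alpha_0}$ by the floor condition, which is precisely the behavior of $F_{(m_k)}$ between transitions according to the formula of Theorem \ref{thm:main}. Substituting this matching into Theorem \ref{thm:main} produces $\exp(f(n)/C) \le v_{\Delta, T}(n) \le \exp(Cf(n))$ for an absolute constant $C$.

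The main obstacle is the matching in the third step. Integer-valued $m_k$ introduce $O(1)$ discretization errors at each scale, and one needs to show these errors aggregate to at most a multiplicative constant in the exponent. This is where subadditivity is indispensable: combined with the doubling definition of the $n_k$, it forces $f$ to be essentially log-concave at the scale of its doublings, so the piecewise $F_{(m_k)}$ cannot drift from $f$ by more than a bounded factor. The style of argument parallels those used in \cite{BZ1,KassabovPak}, now adapted to the permutation-wreath setting of Section \ref{sec:construction}.
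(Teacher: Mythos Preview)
Your high-level outline --- build $\Delta$ as a diagonal product over $W=(\mathbb{Z}/2\mathbb{Z})^3\wr_{\mathcal{S}}\mathfrak{G}$, choose parameters from $f$, invoke Theorem~\ref{thm:main} --- matches the paper's proof. But two genuine gaps prevent the sketch from going through as written.

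\textbf{Misreading of Theorem~\ref{thm:main}.} You write that Theorem~\ref{thm:main} ``estimates its growth up to absolute multiplicative constants in the exponent by an explicit piecewise function $F_{(m_k)}(n)$.'' It does not. The upper bound is a \emph{sum} $\sum_{j\in J_r}2^{j}\Phi(\min\{(\eta/2)^{j}r,\mathrm{Diam}(F_j)\})$, while the lower bound is the \emph{maximum} of the same terms. There is no single function $F_{(m_k)}$ that Theorem~\ref{thm:main} hands you; the whole remaining difficulty is to choose parameters so that the sum and the max are comparable. This is precisely the content of the approximation lemma (Lemma~\ref{approximation}) in the paper: it shows that for carefully chosen sequences $(\theta_j)$ and $(\phi(\theta_j))$ the two bounds collapse onto $f$. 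Your doubling scales $n_k$ are close to the paper's $m_j''$, but the paper also uses a second condition $m_j'$ governed by the ratio $f(\lambda^m)/\lambda^m$, and takes $m_j=\max\{m_j',m_j''\}$. The $m_j'$ condition is what forces the exponents $\theta_j$ to be separated enough that the geometric part of the sum is dominated by its largest term. Doubling alone does not obviously give this, and your third step asserts the matching without doing the work.

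\textbf{Unspecified lamp groups.} The construction in Section~\ref{sec:construction} takes as input a sequence $\mathcal{L}=(F_n)$ of marked quotients of $U\ast V$, not a sequence of integers. You never say what groups $F_n$ are, and Theorem~\ref{thm:main} only applies under Assumption~\ref{L-growth-assum} on their growth. The paper takes $F_{\lfloor\theta_j\rfloor}=\mathrm{PSL}_2(\mathbb{Z}/5^{\lfloor\phi(\theta_j)\rfloor}\mathbb{Z})$ (an expander family, so $\Phi(x)=x$) and $F_n=\{id\}$ otherwise. Two pieces of data are chosen per scale: the level $\theta_j$ \emph{and} the diameter $\phi(\theta_j)$ of the lamp group placed there. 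Your single sequence $(m_k)$ does not carry enough information, and without the expander property the $\Phi$ in Theorem~\ref{thm:main} is not linear and the bounds do not take the clean form your sketch assumes.
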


Note that $\log v_{G,S}$ is necessarily a non-decreasing, subadditive
function. The limitation of Theorem \ref{growth-pres} is that it
is based on extensions of $W$, thus the volume functions obtained
are always at least the growth function of $W$ (which is equivalent
to $e^{n^{\alpha_{0}}}$ by \cite{BE1}). It is an important open
problem whether there exists intermediate growth groups with (lower)
volume exponent strictly less than $\alpha_{0}$. This problem is
closely related to the Gap Conjecture of Grigorchuk, see the survey
\cite{GrigorchukGap}. 

The groups in Theorem \ref{growth-pres} are similar to, but not the
same kind as those considered in \cite[Theorem 10.1]{KassabovPak}.
The construction is designed so that volume growth of the diagonal
product is controlled by what we call the traverse fields of words
(defined in Section \ref{sec:tr-metric}). The traverse fields are
compatible with the word recursion, which allows us to apply the norm
contraction inequality to control the volume growth, see Section \ref{sec:tr-metric}
and \ref{sec:volume}. 

\subsection*{Relation to prior works}

Theorem \ref{growth-pres} can be viewed as a further development
of earlier works of Bartholdi and Erschler \cite{BE2}, Brieussel
\cite{BrieusselOsi}, Kassabov and Pak \cite{KassabovPak}, in the
sense that it combines some aspects of the constructions in these
works. The main theorem of \cite{BE2} states that given a function
$\phi:\mathbb{R}_{+}\to\mathbb{R}_{+}$ such that 
\begin{equation}
\phi(2R)\le2\phi(R)\le\phi\left(2^{\frac{1}{\alpha_{0}}}R\right)\mbox{ for all }R\mbox{ large enough},\label{eq:BE-assum}
\end{equation}
then there exists a group $\Gamma$ with finite generating set $S$
and a constant $C>0$ such that $e^{\phi(n/C)}\le v_{\Gamma,S}(n)\le e^{\phi(Cn)}$.
The group $\Gamma$ is taken to be $(\mathbb{Z}/2\mathbb{Z})\wr_{\mathcal{S}}G_{\omega}$,
where $\omega\in\{0,1,2\}^{\mathbb{N}}$ is chosen carefully according
to the given function $\phi$. The groups considered in \cite{BrieusselOsi}
are the same kind as in \cite{BE2}, where estimates are given in
terms of upper and lower growth exponents. The use of permutation
wreath products in Theorem \ref{growth-pres} is motivated by the
sharp volume estimates in \cite{BE1,BE2}. Note that the collection
of admissible functions in Theorem \ref{growth-pres} is larger than
in \cite{BE2} cited above. Indeed, the assumption $2\phi(R)\le\phi\left(2^{\frac{1}{\alpha_{0}}}R\right)$
in (\ref{eq:BE-assum}) implies not only $\phi(R)\ge cR^{\alpha_{0}}$,
but also $\phi(R)/R^{\alpha_{0}}$ is non-decreasing. Consider a function
that oscillates between $n^{\alpha_{0}}$ and some larger sub linear
function $g(n)$, for example, $f$ is a sub-additive function such
that 
\[
f(n)\ge n^{\alpha_{0}},\ \liminf_{n\to\infty}\frac{f(n)}{n^{\alpha_{0}}}<\infty\ \mbox{and }\limsup_{n\to\infty}\frac{f(n)}{n/\log n}=\infty.
\]
This type of oscillating growth functions is not covered by the results
in \cite{BE2}, but allowed in Theorem \ref{growth-pres}. The main
theorem in \cite{KassabovPak} states that given sub-exponential functions
$f_{1},f_{2},g_{1},g_{2}$ satisfying 
\[
f_{1}\succcurlyeq f_{2}\succcurlyeq g_{1}\succcurlyeq g_{2}=v_{\mathfrak{G},S}
\]
and some additional conditions, there exists a group $\Gamma$ (which
is an FC-central extension of $\mathfrak{G}$ as mentioned earlier)
with a finite generating set $S$, such that its growth function satisfies
$g_{2}(n)<v_{\Gamma,S}(n)<f_{1}(n)$ and $v_{\Gamma,S}$ takes values
in the intervals $\left[g_{2}(n),g_{1}(n)\right]$ and $\left[f_{2}(n),f_{1}(n)\right]$
infinitely often. For an illustration such oscillating growth functions,
see \cite[Figure 1]{KassabovPak}. The examples in Theorem \ref{growth-pres}
can exhibit oscillating behaviors similar to \cite{KassabovPak} and
at the same time the log of the volume functions can be estimated
up to an absolute constant. We mention that techniques in Section
\ref{sec:volume} can be used to improve volume upper bounds of groups
considered in \cite{KassabovPak}. 

Another feature of the family of FC-central extensions of $W$ in
Theorem \ref{growth-pres} is that simple random walks on these groups
exhibit rich behavior. The technique in the proof of Theorem \ref{growth-pres}
is relevant for random walks: the random walk is controlled by the
traverse field of random words (instead of deterministic words in
the volume calculations). In particular, one can show that with appropriate
choice of parameters, the entropy of simple random walk on $\Delta$
can be equivalent to any prescribed sublinear function $f$ with $f(n)\ge n^{1/2}$.
Estimates of random walk characteristics on $\Delta$ will be discussed
elsewhere. 

\subsection*{Organization of the paper}

In Section 2 we collect some background on permutation wreath products,
Grigorchuk groups and the space of marked groups. In Section 3 we
show that FC-central extensions preserve sub-exponential volume growth.
Section 4 contains a basic lemma on growth of diagonal products and
discusses direct sum structure in the diagonal product. The rest of
the paper contains three parts, which can be read independently. Part
I consists of Section 5 and 6 on the FC-central extensions of the
Grigorchuk group $\mathfrak{G}$ of the form $\Gamma(\mathcal{L})$.
In Section 5 we present the definition of $\Gamma(\mathcal{L})$ and
explain the wreath recursions on the formal level. Section 6 is devoted
to the study of finitely generated subgroups of $\Gamma(\mathcal{L})$,
where we determine the commensurability classes of finitely generated
subgroups and show subgroup separability. Part II consists of Section
7 where we construct finitely generated groups of intermediate growth
with center isomorphic to $\mathbb{Z}^{\infty}$. Part III consists
of Section 8, 9 and 10 on extensions of $(U\times V)\wr_{\mathcal{S}}\mathfrak{G}$.
In Section 8 we describe the construction and explain the recursive
structure in these groups. In Section 9 we introduce the traverse
fields of words and show that they are compatible with the wreath
recursion. Section 10 contains volume growth estimates for groups
described in Section 8. We first use the traverse fields and the norm
contraction inequality to estimate growth in each factor group, then
combine the results to obtain estimates for the growth of the diagonal
product. Finally we present an elementary lemma for approximation
of prescribed functions and show explicitly that given a function
$f$ as in Theorem \ref{growth-pres}, how to choose appropriate parameters
and find a diagonal product whose log volume function is equivalent
to $f$. 

\subsection*{Acknowledgment}

We thank Nicholas Monod for explaining the results on supramenablity
in \cite{KMR,Monod}. We thank Gideon Amir and Anna Erschler for discussions
at the early stage of this work. 

\section{Preliminaries}

\subsection{Permutation wreath product }

Let $G$ be a group acting on a set $X$ from the right and $L$ be
a group. The \emph{permutation wreath product} $W=L\wr_{X}G$ is defined
as the semi-direct product $\oplus_{X}L\rtimes\Gamma$, where $G$
acts on $\oplus_{X}L$ by permuting coordinates. The \emph{support
${\rm supp}f$ }of $f:X\to L$ consists of points of $x\in X$ such
that $f(x)\neq id_{L}$. Elements of $\oplus_{X}L$ are viewed as
finitely supported functions $X\to L$. The action of $G$ on $\oplus_{X}L$
is It follows that ${\rm supp}(g\cdot f)=({\rm supp}f)\cdot g^{-1}$.
Elements in $W=L\wr_{X}G$ are recorded as pairs $(f,g)$ where $f\in\oplus_{X}L$
and $g\in G$. Multiplication in $W$ is given by 
\[
(f_{1},g_{1})(f_{2},g_{2})=\left(f_{1}\left(g_{1}\cdot f_{2}\right),g_{1}g_{2}\right),\mbox{ where }g\cdot f(x)=f(x\cdot g).
\]
When $G$ and $L$ are finitely generated, $W=L\wr_{X}G$ is finitely
generated as well. For more information on Cayley graphs of permutation
wreath products, see \cite[Section 2]{BE1}. 

We use the notation $\delta_{x}^{\gamma}$ for the function $f:X\to L$
such that $f(x)=\gamma$ and $f(y)=id_{L}$ for all $y\neq x$. The
additive notation $\delta_{x}^{\gamma_{1}}+\delta_{y}^{\gamma_{2}}$,
$x\neq y$, means the function $f:X\to L$ such that $f(x)=\gamma_{1}$,
$f(y)=\gamma_{2}$ and $f(z)=id_{L}$ for $z\notin\{x,y\}$. 

\subsection{Grigorchuk groups\label{subsec:Grigorchuk-groups}}

The groups $\left\{ G_{\omega}\right\} $ were introduced by Grigorchuk
in \cite{Grigorchuk84} as first examples of groups of intermediate
growth. We recall below the definition of these groups. See also the
exposition in the book \cite[Chapter VIII]{delaHarpeBook}.

The spherically symmetric rooted tree $\mathsf{T}_{\mathbf{d}}$ is
the tree with vertices $v=v_{1}\ldots v_{n}$ with each $v_{j}\in\{0,1,\ldots,d_{j}-1\}$.
The root is denoted by the empty sequence $\emptyset$. Edge set of
the tree is $\left\{ \left(v_{1}\ldots v_{n},v_{1}\ldots v_{n}v_{n+1}\right)\right\} $.
The index $n$ is called the depth or level of $v$, denoted $\left|v\right|=n$.
Denote by $\mathsf{T}_{\mathbf{d}}^{n}$ the finite subtree of vertices
up to depth $n$ and $\mathsf{L}_{n}$ the vertices of level $n$.
The boundary $\partial\mathsf{T}_{\mathbf{d}}$ of the tree $\mathsf{T}_{\mathbf{d}}$
is the set of infinite rays $x=v_{1}v_{2}\ldots$ with $v_{j}\in\{0,1,\ldots,d_{j}-1\}$
for each $j\in\mathbb{N}$. Throughout the paper, denote by $\mathsf{T}$
the rooted binary tree, that is, $\mathbf{d}$ is the constant sequence
with $d_{i}=2$. 

Let $\{0,1,2\}$ be the three non-trivial homomorphisms from the $4$-group
$(\mathbb{Z}/2\mathbb{Z})\times(\mathbb{Z}/2\mathbb{Z})=\{id,b,c,d\}$
to the $2$-group $\mathbb{Z}/2\mathbb{Z}=\{id,a\}$. They are ordered
in such a way that $0,1,2$ vanish on $d,c,b$ respectively. For example,
$0$ maps $id,d$ to $id$ and $b,c$ to $a$. Let $\Omega=\{0,1,2\}^{\infty}$
be the space of infinite sequences over letters $\{0,1,2\}$. The
space $\Omega$ is endowed with the shift map $\mathfrak{s}:\Omega\to\Omega$,
$\mathfrak{s}(\omega_{0}\omega_{1}\ldots)=\omega_{1}\omega_{2}\ldots$.

Given an $\omega\in\Omega$, the Grigorchuk group $G_{\omega}$ acting
on the rooted binary tree $\mathsf{T}$ is generated by $\left\{ a,b_{\omega},c_{\omega},d_{\omega}\right\} $,
where $a=(id,id)\varepsilon$, $\varepsilon$ transposes $0$ and
$1$, and the automorphisms $b_{\omega},c_{\omega},d_{\omega}$ are
defined recursively according to $\omega$ as follows. The wreath
recursion sends
\[
\psi_{n}:G_{\mathfrak{s}^{n}\omega}\to G_{\mathfrak{s}^{n+1}\omega}\wr_{\{0,1\}}\mathfrak{S}_{2}
\]
by
\begin{align*}
\psi_{n}\left(b_{\mathfrak{s}^{n}\omega}\right) & =\left(\omega_{n}(b),b_{\mathfrak{s}^{n+1}\omega}\right),\\
\psi_{n}\left(c_{\mathfrak{s}^{n}\omega}\right) & =\left(\omega_{n}(c),c_{\mathfrak{s}^{n+1}\omega}\right),\\
\psi_{n}\left(d_{\mathfrak{s}^{n}\omega}\right) & =\left(\omega_{n}(d),d_{\mathfrak{s}^{n+1}\omega}\right).
\end{align*}
The string $\omega$ determines the portrait of the automorphisms
$b_{\omega},c_{\omega},d_{\omega}$ by the recursive definition above.
The \emph{first Grigorchuk group} corresponds to the periodic sequence
$\omega=(012)^{\infty}$ and is often denoted as $G_{012}$. 

By \cite{Grigorchuk84}, if $\omega$ is eventually constant, then
$G_{\omega}$ has polynomial growth; otherwise $G_{\omega}$ is of
intermediate growth. When $\omega$ contains infinitely many of each
of the three symbols $\{0,1,2\}$, $G_{\omega}$ is an infinite torsion
group. 

\subsection{Marked groups and the Cayley topology}

Let $k\in\mathbb{N}$. We say $(G,S)$ is a\emph{ $k$-marked group}
if $S=\left(s_{1},\ldots,s_{k}\right)$ is an ordered $k$-tuple of
elements in $G$ that generates $G$. We refer to $S$ as a \emph{$k$-marking}
of $G$. Equivalently one can think of a $k$-marking of $G$ as a
quotient map $\pi:{\bf F}_{k}\to G$, where ${\bf F}_{k}$ is the
free group on free generators $\{{\bf s}_{1},\ldots,{\bf s}_{k}\}$.
We say a map $\psi:(G_{1},S_{1})\to(G_{2},S_{2})$ is a \emph{$k$-marked
group homomorphism}, where $S_{i}=\left(s_{1}^{(i)},\ldots,s_{k}^{(i)}\right)$
for $i\in\{1,2\}$, if $\psi$ is a homomorphism $G_{1}\to G_{2}$
such that $\psi\left(s_{j}^{(1)}\right)=s_{j}^{(2)}$ for all $j\in\{1,\ldots,k\}$. 

Given a collection of $k$-marked groups $\left\{ \left(G_{i},S_{i}\right)\right\} _{i\in I}$,
the diagonal product $(\Delta,S)$ of this collection is the subgroup
of $\prod_{i\in I}G_{i}$ generated by diagonally embedding the generating
set of each $G_{i}$. Equivalently, let ${\bf F}_{k}$ be the free
group on free generators ${\bf S}=\{{\bf s}_{1},\ldots,{\bf s}_{k}\}$
and identify the $k$-marking $S_{i}$ of $G_{i}$ as the quotient
map $\pi_{i}:{\bf F}_{k}\to G_{i}$ sending ${\bf S}$ to $S_{i}$,
then the diagonal product $(\Delta,S)$ of the collection $\left\{ \left(G_{i},S_{i}\right)\right\} _{i\in I}$
is given by
\[
\Delta={\bf F}_{k}/\cap_{i\in I}\ker\left(\pi_{i}:{\bf F}_{k}\to G_{i}\right),\ S=\mbox{image of }{\bf S}.
\]
For some basic properties of diagonal products we refer to \cite[Section4]{KassabovPak}.
In particular, let $(\Delta,S)$ be the diagonal product of a sequence
of $k$-marked groups that converge to $(G,S)$ in the Cayley topology,
then $G$ is a $k$-marked quotient of $\Delta$. 

\section{Growth of FC-central extensions\label{sec:dfc}}

In this section we show that sub-exponential growth is preserved under
FC-central extensions. We will need the following classical result
of B.H. Neumann \cite{Neumann} which states that if $N$ is an FC-group
(i.e., $N=Z_{{\rm FC}}(N)$), then 
\begin{itemize}
\item the elements of finite order form a characteristic subgroup $P$ of
$N$ and $P$ contains the derived subgroup $N'$; 
\item every finite set of finite order elements is contained in a finite
normal subgroup of $N$;
\item moreover, if $N$ is finitely generated then $P$ is finite.
\end{itemize}
The key step in the proof of the following lemma is the rearranging
of products in Claim \ref{conj-prod}. 

\begin{lemma}\label{FC2}

Let $G$ be a group of sub-exponential growth. Suppose $\Gamma$ is
a finitely generated FC-central extension of $\Gamma$, that is, 
\[
1\to N\to\Gamma\overset{\pi}{\to}G\to1\ \mbox{where }N\subseteq Z_{{\rm FC}}(\Gamma).
\]
Then $\Gamma$ has sub-exponential growth as well. 

\end{lemma}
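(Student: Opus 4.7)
The plan is to reduce the question to counting $N$-elements inside $\Gamma$-balls, and then to use the FC-central hypothesis to put a long word representing such an element into a manageable normal form.

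Fix a finite generating $k$-tuple $T=(t_{1},\ldots,t_{k})$ of $\Gamma$ and let $S=\pi(T)$ generate $G$. For each $g\in G$ choose a lift $\tilde{g}\in\Gamma$ with $d_{T}(\tilde{g},id)=d_{S}(g,id)$. Any $\gamma\in\Gamma$ with $d_{T}(\gamma,id)\le n$ may be written $\gamma=\tilde{g}\cdot(\tilde{g}^{-1}\gamma)$ with $g=\pi(\gamma)$ and $\tilde{g}^{-1}\gamma\in N$ of $T$-length at most $2n$. Therefore
\[
v_{\Gamma,T}(n)\le v_{G,S}(n)\cdot\bigl|\{\eta\in N:d_{T}(\eta,id)\le 2n\}\bigr|,
\]
and since $v_{G,S}(n)^{1/n}\to 1$ by hypothesis, the lemma reduces to showing that $\bigl|\{\eta\in N:d_{T}(\eta,id)\le n\}\bigr|$ grows sub-exponentially in $n$.

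Given a word $\gamma=t_{i_{1}}\cdots t_{i_{n}}\in N$, set $g_{j}=\pi(t_{i_{1}}\cdots t_{i_{j}})$, so that $g_{0}=g_{n}=id$, and define
\[
\eta_{j}=\tilde{g}_{j-1}\,t_{i_{j}}\,\tilde{g}_{j}^{-1}\in N.
\]
A telescoping calculation gives $\gamma=\eta_{1}\eta_{2}\cdots\eta_{n}$. Each $\eta_{j}$ is determined by the pair $(g_{j-1},i_{j})$, so all $\eta_{j}$'s arising from length-$n$ words in $N$ lie in a single finite set $F_{n}\subset N$ of cardinality at most $k\,v_{G,S}(n)$, which is sub-exponential in $n$.

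The heart of the argument is the rearrangement of this product, which is the content of Claim~\ref{conj-prod}. Since every $\eta\in F_{n}$ has finite $\Gamma$-conjugacy class, the conjugation closure $F_{n}^{*}=\bigcup_{\eta\in F_{n}}\mathrm{Conj}_{\Gamma}(\eta)\subset N$ is finite and $\Gamma$-invariant; the subgroup $H_{n}=\langle F_{n}^{*}\rangle$ is $\Gamma$-normal, finitely generated, and an FC-group. By the Neumann results recalled at the start of the section, the torsion subgroup $P_{n}$ of $H_{n}$ is finite and contains $[H_{n},H_{n}]$. The identity $\eta\eta'=\eta'\cdot(\eta')^{-1}\eta\eta'$, together with conjugation-invariance of $F_{n}^{*}$, allows one to swap adjacent factors in $\eta_{1}\cdots\eta_{n}$ at the cost of replacing a factor by a $\Gamma$-conjugate still lying in $F_{n}^{*}$. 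Sorting the factors with respect to a fixed enumeration $\xi_{1},\ldots,\xi_{M_{n}}$ of $F_{n}^{*}$ and absorbing accumulated commutators into $P_{n}$ produces a normal form
\[
\gamma=\xi_{1}^{a_{1}}\xi_{2}^{a_{2}}\cdots\xi_{M_{n}}^{a_{M_{n}}}\cdot d,\qquad a_{i}\ge 0,\ \sum_{i}a_{i}=n,\ d\in P_{n},
\]
from which $\bigl|\{\eta\in N:d_T(\eta,id)\le n\}\bigr|\le|P_{n}|\cdot\binom{n+M_{n}-1}{M_{n}-1}$.

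The principal obstacle is to verify that this right-hand side is genuinely sub-exponential in $n$. Both $M_{n}\le|F_{n}^{*}|$ and $|P_{n}|$ depend on $n$, and although each is finite at every stage by the FC-central hypothesis together with Neumann's theorem, their joint growth must be carefully controlled. I expect to bound $|F_{n}^{*}|$ sub-exponentially from $|F_{n}|\le k\,v_{G,S}(n)$ combined with uniform control of individual conjugacy-class sizes of elements of $F_{n}$, and to bound $|P_{n}|$ through Neumann's explicit commutator estimate in terms of centralizer indices of the generators of $H_{n}$. Coordinating these two sub-exponential estimates so that $|P_{n}|\cdot\binom{n+M_{n}-1}{M_{n}-1}$ is sub-exponential in $n$ is the delicate technical core of Claim~\ref{conj-prod}; once it is in place, the lemma follows from the reduction in the first paragraph.
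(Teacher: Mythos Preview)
Your setup through the telescoping identity $\gamma=\eta_1\cdots\eta_n$ with $\eta_j\in F_n$, $|F_n|\le k\,v_{G,S}(n)$, is essentially the same as the paper's. The genuine gap is exactly where you flag it: you allow the alphabet $F_n$, its conjugacy closure $F_n^{*}$, and the torsion subgroup $P_n$ all to depend on $n$, and then hope to show $|P_n|\cdot\binom{n+M_n-1}{M_n-1}$ is sub-exponential. This hope is not realistic. Conjugacy classes of elements in $Z_{\mathrm{FC}}(\Gamma)$ are finite but of arbitrary size, so $|F_n^{*}|/|F_n|$ admits no uniform bound; Neumann's bound on $|P_n|$ is in terms of the centralizer indices $[H_n:C_{H_n}(\xi)]$ of the generators, which again have no control as $n$ grows. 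Even if you managed $M_n\le e^{\epsilon n}$, the binomial $\binom{n+M_n-1}{M_n-1}$ can be exponential in $n$ once $M_n$ is comparable to $n$, and super-exponential once $M_n\gg n$.

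The paper avoids this by never letting the alphabet grow with $n$. Given $\epsilon>0$, it first fixes a block length $j=j_\epsilon$ with $v_{G,S}(j)\le e^{\epsilon j/3}$, and then groups the product $\eta_1\cdots\eta_n$ into $n/j$ blocks. Each block contributes an element of a \emph{fixed} finite set $A(j,1)\subset N$ (size $\le v_{G,S}(1)^j$), conjugated by some prefix; the union of all $\Gamma$-conjugates $C(j,1)$ is still a single fixed finite subset of $N$, independent of $n$. The subgroup $\langle C(j,1)\rangle$ is a fixed finitely generated FC-group, hence virtually abelian, so $|C(j,1)^{m}|\le K m^{d}$ with $K,d$ depending only on $j$. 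The remaining ``coarse'' factor is a single element of $A(n/j,j)$, of cardinality at most $v_{G,S}(j)^{n/j}\le e^{\epsilon n/3}$. Thus the $n$-dependence enters only through the exponent $n/j$ of a fixed polynomial-growth set and through the small factor $e^{\epsilon n/3}$, giving sub-exponential growth. The missing idea in your proposal is precisely this two-scale decomposition: freeze a finite conjugation-closed set once and for all, and absorb the residual growth into a single coarse factor controlled by $v_{G,S}$.
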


\begin{proof}

Given two sets $A,B\subseteq\Gamma$, denote by $AB$ the product
set $\{gh:g\in A,h\in B\}$. 

Take a symmetric finite generating set of $\Gamma$, adjust the generating
set if necessary, we may assume that it is of the form $T=\{t_{1},\ldots,t_{\ell}\}\cup\{n_{1},\ldots,n_{p}\}$,
where the projections $\pi(t_{i})$, $1\le i\le\ell$ are pairwise
distinct non-trivial elements of $G$ and $n_{i}\in N$ for $1\le i\le p$.
Write $T_{1}=\{t_{1},\ldots,t_{\ell}\}$ and denote by $\Gamma_{1}$
the subgroup generated by $T_{1}$. Denote by $Q_{0}$ the union of
the conjugates of $\left\{ n_{1},\ldots,n_{p}\right\} $ in $\Gamma$.
Note that $B_{\Gamma,T}(id,n)\subseteq Q_{0}^{n}B_{\Gamma_{1},T_{1}}(id,n)$.
Since $Q_{0}$ is a finite set in the FC-center of $\Gamma$, by Neumann's
result, $\left\langle Q_{0}\right\rangle $ is virtually abelian.
Thus to show that $\Gamma$ is of sub-exponential growth, it suffices
to show for the subgroup $\Gamma_{1}$. In what follows we replace
$\Gamma$ by $\Gamma_{1}$ and assume the generating set $T=\{t_{1},\ldots,t_{\ell}\}$
is symmetric and the projections $\pi(t_{i})$, $1\le i\le\ell$ are
pairwise distinct non-trivial elements of $G$. 

Equip $G$ by the generating set $\bar{T}=\pi(T)$. In what follows
balls in $\Gamma$ ($G$ resp.) are with respect to word length $\left|\cdot\right|_{T}$
( $\left|\cdot\right|_{\bar{T}}$ resp.). Throughout the proof, fix
a choice of section $s:G\to\Gamma$ such that $s\left(id_{G}\right)=id_{\Gamma}$
and on the generating set $\bar{T}$, $s(\pi(t_{i}))=t_{i}$. For
each integer $k\ge2$, the section $s$ gives rise to a map 
\begin{align*}
\beta_{k} & :G^{k}\to N\\
 & (g_{1},\ldots,g_{k})\mapsto s(g_{1})\ldots s(g_{k})s(g_{1}\ldots g_{k})^{-1}.
\end{align*}

Denote by $A(k,r)$ the image of $k$ copies of $B_{G}(id,r)$ under
the map $\beta_{k}$, that is,
\[
A(k,r):=\beta_{k}\left(B_{G}(id,r)\times\ldots\times B_{G}(id,r)\right).
\]
By its definition, we have that the size of the set $A_{k}^{r}$ is
bounded by
\begin{equation}
|A(k,r)|\le\left|B_{G}(id,r)\right|^{k}.\label{eq:dom-bound}
\end{equation}
Since an element $\gamma\in B_{\Gamma}(id,k)$ can be written as 
\[
\gamma=t_{i_{1}}\ldots t_{i_{|\gamma|}}=s\left(\pi\left(t_{i_{1}}\right)\right)\ldots s\left(\pi\left(t_{i_{|\gamma|}}\right)\right)s(\pi(\gamma))^{-1}\cdot s(\pi(\gamma))\in A(k,1)s\left(\pi(\gamma)\right),
\]
it follows that 
\begin{equation}
B_{\Gamma}(id,k)\subseteq A(k,1)s\left(B_{G}(id,k)\right).\label{eq:k-1}
\end{equation}

Denote by $C(k,r)$ the union of the $\Gamma$-conjugates of $A(k,r)$,
that is, 
\[
C(k,r):=\cup_{g\in\Gamma}g^{-1}A(k,r)g.
\]
Since $A(k,r)$ is a finite subset in $N$ and $N$ is contained in
the FC-center of $\Gamma$, the set $C(k,r)$ is a finite subset in
$N$ as well. We have the following inclusion relation of sets:

\begin{claim}\label{conj-prod}

Let $j$ be an integer such that $j|n$. Then 
\[
A(n,r)\subseteq C(j,r)^{n/j}A(n/j,rj).
\]

\end{claim}

\begin{proof}[Proof of the Claim]

Recall that an element $\gamma$ in $A(n,r)$ can be written in the
form $\gamma=s(g_{1})\ldots s(g_{n})s(g_{1}\ldots g_{n})^{-1}$, where
each $g_{j}$ is an element in $G$ satisfying $|g_{j}|_{\bar{T}}\le r$.
Divide into blocks of size $j$ and write for each $1\le k\le n/j$,
\begin{align*}
a_{k} & =s(g_{j(k-1)+1})\ldots s(g_{jk})s\left(g_{j(k-1)+1}\ldots g_{jk}\right)^{-1},\\
x_{k} & =s\left(g_{j(k-1)+1}\ldots g_{jk}\right),
\end{align*}
and let $w_{0}=id$, 
\[
w_{k}=x_{1}\ldots x_{k}.
\]
Then we have 
\begin{align}
\gamma & =s(g_{1})\ldots s(g_{n})s(g_{1}\ldots g_{n})^{-1}\nonumber \\
 & =\left(\prod_{k=1}^{n/j}a_{k}x_{k}\right)s(g_{1}\ldots g_{n})^{-1}\\
 & =\left(\prod_{k=1}^{n/j}w_{k-1}a_{k}w_{k-1}^{-1}\right)w_{n/j}s(g_{1}\ldots g_{n})^{-1}.\label{eq:prod1}
\end{align}
By definitions of the sets, we have that $a_{k}\in A(j,r)$, its conjugate
$w_{k-1}a_{k}w_{k-1}^{-1}\in C(j,r)$, and 
\[
w_{n/j}s(g_{1}\ldots g_{n})^{-1}=\left(\prod_{k=1}^{n/j}s\left(g_{j(k-1)+1}\ldots g_{jk}\right)\right)s(g_{1}\ldots g_{n})^{-1}\in A(n/j,rj).
\]
The claim then follows from (\ref{eq:prod1}). 

\end{proof}

Now we return to the proof of the lemma. Since $G$ is assumed to
have sub-exponential growth, given any $\epsilon>0$, there exists
a $j_{\epsilon}\in\mathbb{N}$ such that 
\begin{equation}
v_{G,\bar{T}}(n)\le e^{\epsilon n/3}\mbox{ for all }n\ge j_{\epsilon}.\label{eq:G-n rad}
\end{equation}
In Claim \ref{conj-prod}, take $r=1$ and $j=j_{\epsilon}$. By Neumann's
result, the finite set $C(j_{\epsilon},1)$ generates a virtually
abelian group. Since finitely generated virtually abelian groups are
either finite or of polynomial growth, there exists constants $K,d>0$,
depending only on $C(j_{\epsilon},1)$, such that for all $m\ge1$,
we have
\[
\left|C(j_{\epsilon},1)^{m}\right|\le Km^{d}.
\]
Recall that by (\ref{eq:dom-bound}), we have $\left|A(m,j)\right|\le v_{G,\bar{T}}(j)^{m}.$
Combine the two parts, by Claim \ref{conj-prod}, we have that for
all $n\in j_{\epsilon}\mathbb{N}$,
\begin{align*}
\left|A(n,1)\right| & \le\left|C(j_{\epsilon},1)^{n/j_{\epsilon}}\right|\left|A(n/j_{\epsilon},j_{\epsilon})\right|\\
 & \le K\left(\frac{n}{j_{\epsilon}}\right)^{d}v_{G,\bar{T}}(j_{\epsilon})^{n/j_{\epsilon}}\\
 & \le K\left(\frac{n}{j_{\epsilon}}\right)^{d}e^{\epsilon n/3}.
\end{align*}
Combined with (\ref{eq:k-1}), we have that for all $n\in j_{\epsilon}\mathbb{N}$,
\[
\left|B_{\Gamma}(id,n)\right|\le\left|A(n,1)\right|\left|B_{G}(id,n)\right|\le K\left(\frac{n}{j_{\epsilon}}\right)^{d}e^{2\epsilon n/3},
\]
where $K$ and $d$ are constants only depending on $C(j_{\epsilon},1)$.
In particular, it implies that 
\[
\liminf_{n\to\infty}\frac{1}{n}\log\left|B_{\Gamma}(id,n)\right|\le\frac{2}{3}\epsilon.
\]
Since this is true for every $\epsilon>0$, we conclude that $\Gamma$
has sub-exponential growth. 

\end{proof}

\begin{remark}

In the setting of Lemma \ref{FC2}, if $G$ is of polynomial growth,
then $\Gamma$ is of polynomial growth too. Indeed, by Gromov's theorem
\cite{gromov}, a polynomial growth group is virtually nilpotent;
and any finitely generated FC-central extension of a virtually nilpotent
group is virtually nilpotent, see e.g., \cite{Losert}. 

\end{remark}

\begin{example}

Let $G$ be a finitely generated group marked with generating $k$-tuple
$S$, that is we have an epimorphism ${\bf F}_{k}\to G$. Let $R$
be the kernel $R=\ker\left({\bf F}_{k}\to G\right)$. Then the group
$\Gamma={\bf F}_{k}/\left[{\bf F}_{k},R\right]$ is a finitely generated
central extension of $G$. By Lemma \ref{FC2}, $\Gamma={\bf F}_{k}/\left[{\bf F}_{k},R\right]$
has sub-exponential growth if and only if $G=\mathbf{F}_{k}/R$ has
sub-exponential growth. 

The subgroup $R/[\mathbf{F}_{k},R]$, which is in the center of $\Gamma$,
contains the Schur multiplier of $G$: by the Hopf formula (see e.g.,
\cite[Chapter 11]{RobinsonBook}), we have
\[
\left(R\cap[{\bf F}_{k},{\bf F}_{k}]\right)/\left[{\bf F}_{k},R\right]\simeq H^{2}(G,\mathbb{Z}),
\]
where $H^{2}(G,\mathbb{Z})$ is the Schur multiplier of $G$. The
Schur multiplier of the first Grigorchuk group $\mathfrak{G}$ is
determined by Grigorchuk in \cite{GrigorchukSchur}: $H^{2}(\mathfrak{G},\mathbb{Z})\simeq\left(\mathbb{Z}/2\mathbb{Z}\right)^{\infty}$.
It follows that the central extension $\Gamma=\mathbf{F}_{4}/\left[\mathbf{F}_{4},R_{\mathfrak{G}}\right]$
of $\mathfrak{G}=\mathbf{F}_{4}/R_{\mathfrak{G}}$ contains $\left(\mathbb{Z}/2\mathbb{Z}\right)^{\infty}$
in its center. 

\end{example}

\section{Diagonal product of marked groups}

As mentioned in the Introduction, a useful way to produce FC-central
extensions is to take the diagonal product of a suitable sequence
of marked groups. 

\begin{fact}[c.f. {\cite[Remark 9.2]{EZ2}}]

Suppose $\left(\left(\Gamma_{i},S_{i}\right)\right)_{i=1}^{\infty}$
is a sequence of $k$-marked finite groups that converge to $(G,S)$
in the Cayley topology. Then the diagonal product $(\Delta,S)$ of
$\left(\left(\Gamma_{i},S_{i}\right)\right)_{i=1}^{\infty}$ is an
FC-central extension of $G$. Moreover, if $G$ is residually finite,
then $\Delta$ is residually finite as well. 

\end{fact}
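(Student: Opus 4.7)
The plan is to unwind everything in terms of the free group $\mathbf{F}_k$: identify each marking with its epimorphism $\pi_i : \mathbf{F}_k \to \Gamma_i$ and $\pi_G : \mathbf{F}_k \to G$, and identify $\Delta = \mathbf{F}_k / \bigcap_i \ker \pi_i$ as the subgroup of $\prod_i \Gamma_i$ generated by the diagonal images of the generators. The first step is to produce a marked surjection $q : \Delta \twoheadrightarrow G$, which amounts to the containment $\bigcap_i \ker \pi_i \subseteq \ker \pi_G$. For any $w \in \bigcap_i \ker \pi_i$, the Cayley convergence $(\Gamma_i,S_i) \to (G,S)$ supplies an index $i_0$ for which the ball of radius $|w|$ in $\Gamma_{i_0}$ agrees with that in $G$; triviality of $w$ in $\Gamma_{i_0}$ then forces triviality in $G$, so $w \in \ker \pi_G$.

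The heart of the argument is showing that the kernel $N := \ker q$ lies in $Z_{\rm FC}(\Delta)$. For each integer $M \ge 1$, introduce the normal subgroup
\[
K_M \;:=\; \Delta \cap \prod_{i < M}\Gamma_i \;=\; \ker\!\left(\Delta \hookrightarrow \prod_i \Gamma_i \twoheadrightarrow \prod_{i \ge M} \Gamma_i\right),
\]
which is finite because $\prod_{i<M}\Gamma_i$ is. Given $\delta \in N$, fix a word $w \in \ker \pi_G$ representing $\delta$. By Cayley convergence there exists $M = M(|w|)$ such that $w \in \ker \pi_i$ for every $i \ge M$, whence $\delta \in K_M$. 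Since $K_M$ is a finite normal subgroup of $\Delta$, the conjugacy class of $\delta$ sits inside $K_M$ and is therefore finite, placing $\delta$ in $Z_{\rm FC}(\Delta)$ and establishing the FC-central extension claim.

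For residual finiteness, the cleanest route exploits that $\Delta$ is a subdirect product of finite groups: any nontrivial $\delta \in \Delta$ has some nontrivial coordinate $\delta_{i_0}$, and the projection $\Delta \to \prod_{i \le i_0}\Gamma_i$ is a homomorphism onto a finite group that does not kill $\delta$. Alternatively, one can split along the short exact sequence $1 \to N \to \Delta \to G \to 1$: if $q(\delta) \ne id$, pull back a finite quotient of $G$ separating $q(\delta)$ using residual finiteness of $G$; if $q(\delta)=id$, then $\delta \in K_M$ and enlarging $M$ if needed gives a projection to a finite product distinguishing $\delta$ from the identity. The main point to watch is the translation between Cayley convergence and eventual triviality of words in the $\Gamma_i$; once this is in hand, the finiteness of each $\Gamma_i$ delivers both the finite normal subgroups $K_M$ exhausting $N$ and the finite quotients needed for residual finiteness, so no further obstacle arises.
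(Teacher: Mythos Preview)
The paper does not supply its own proof of this Fact; it is stated with a citation to \cite[Remark 9.2]{EZ2} and left at that. Your argument is correct and is the natural way to verify the statement: the containment $\bigcap_i \ker\pi_i \subseteq \ker\pi_G$ follows from Cayley convergence, and any element of $\ker(\Delta\to G)$ lands in one of the finite normal subgroups $K_M$, giving a finite conjugacy class. As you also note, residual finiteness of $\Delta$ is immediate from its being a subdirect product of finite groups, so the hypothesis on $G$ in the ``Moreover'' clause is in fact superfluous.
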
 

\subsection{Growth }

The following lemma is a direct consequence of the definition of convergence
in the space of marked groups and sub-multiplicativity of the volume
growth function. 

\begin{lemma}\label{FC}

Suppose $\left(\left(\Gamma_{i},S_{i}\right)\right)_{i=1}^{\infty}$
is a sequence of $k$-marked groups converging to $(G,S)$ in the
Cayley topology. The following statements are equivalent.
\begin{description}
\item [{(i)}] The diagonal product $(\Delta,S)$ of the sequence $\left(\left(\Gamma_{i},S_{i}\right)\right)_{i=1}^{\infty}$
is of sub-exponential growth.
\item [{(ii)}] For every $i\in\mathbb{N}$, $(\Gamma_{i},S_{i})$ is of
sub-exponential growth and the limit group $(G,S)$ is of sub-exponential
growth.
\end{description}
\end{lemma}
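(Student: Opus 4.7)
The plan is to handle the two implications separately. Write $\pi_i : \Delta \twoheadrightarrow \Gamma_i$ for the projection onto the $i^{\text{th}}$ coordinate, and $\pi : \Delta \twoheadrightarrow G$ for the marked quotient onto the limit, which exists because, as recalled in Section 2.3, the Cayley limit of $(\Gamma_i, S_i)$ is a marked quotient of the diagonal product. For the easy direction (i) $\Rightarrow$ (ii), growth functions cannot increase under a marked surjection, so $\pi_i$ and $\pi$ give $v_{\Gamma_i}(n), v_G(n) \le v_\Delta(n)$ for every $n$; hence sub-exponential growth of $\Delta$ passes to each $\Gamma_i$ and to $G$.

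For the nontrivial direction (ii) $\Rightarrow$ (i), I would combine the definition of Cayley convergence with sub-multiplicativity. For each $n$, Cayley convergence supplies the smallest integer $N(n)$ such that $B_{\Gamma_i}(id, 2n) = B_G(id, 2n)$ as marked balls for all $i \ge N(n)$. One then verifies that the projection
\[
\Phi_n : B_\Delta(id, n) \longrightarrow B_G(id, n) \times \prod_{i=1}^{N(n)-1} B_{\Gamma_i}(id, n), \qquad \delta \mapsto \bigl(\pi(\delta), \pi_1(\delta), \ldots, \pi_{N(n)-1}(\delta)\bigr)
\]
is injective: if $\Phi_n(\delta_1) = \Phi_n(\delta_2)$, any word of length $\le 2n$ representing $\delta_1 \delta_2^{-1}$ is trivial in $G$ and in each $\Gamma_i$ with $i < N(n)$, and the $2n$-ball identification forces it trivial in $\Gamma_i$ for $i \ge N(n)$ too, hence in $\Delta$. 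This yields the key bound
\[
v_\Delta(n) \le v_G(n) \prod_{i=1}^{N(n)-1} v_{\Gamma_i}(n).
\]

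To conclude I would exploit sub-multiplicativity. Since $v_\Delta$ is sub-multiplicative, Fekete's lemma gives $\omega_\Delta := \lim_n v_\Delta(n)^{1/n} = \inf_n v_\Delta(n)^{1/n}$, so it suffices to produce, for each $\varepsilon > 0$, a single scale $n_0$ with $v_\Delta(n_0)^{1/n_0} \le e^\varepsilon$. Pick $n_0$ large enough that $v_G(n_0)^{1/n_0} \le e^{\varepsilon/2}$; then $N(n_0)$ is a fixed finite index, and the displayed bound involves a fixed finite product of sub-exponential factors $v_{\Gamma_1}, \ldots, v_{\Gamma_{N(n_0)-1}}$. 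Applying sub-multiplicativity to each $v_{\Gamma_i}$ in the form $v_{\Gamma_i}(n_0) \le v_{\Gamma_i}(m_i)^{\lceil n_0/m_i \rceil}$ at an auxiliary scale $m_i$ where $v_{\Gamma_i}(m_i)^{1/m_i}$ is already close to $1$, one can push each $v_{\Gamma_i}(n_0)^{1/n_0}$ arbitrarily close to $1$ by taking $n_0$ sufficiently large. The main delicacy is a scale-coupling: $n_0$ determines both $N(n_0)$ and the auxiliary scales $m_i$, so a careful diagonal choice is needed to ensure $n_0$ dominates each $m_i$ for $i < N(n_0)$ while simultaneously satisfying the bound on $v_G(n_0)$. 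I expect this bookkeeping to be the main technical step.
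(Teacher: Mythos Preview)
Your direction (i)$\Rightarrow$(ii) and your injectivity argument for (ii)$\Rightarrow$(i) are both correct, and the displayed bound $v_\Delta(n)\le v_G(n)\prod_{i<N(n)} v_{\Gamma_i}(n)$ does hold. However, the ``scale-coupling'' you flag is not mere bookkeeping: it is the heart of the matter, and as stated your product bound is too crude to conclude. The number of factors $N(n)-1$ grows with $n$, and although each $v_{\Gamma_i}(n)^{1/n}\to 1$ individually, the product need not stay bounded. Concretely, suppose the agreement radii satisfy $R_i\asymp i$ and $\log v_G(r)\asymp r^{\alpha}$ for some $\alpha\in(0,1)$. Then $N(n)\asymp n$, and using $v_{\Gamma_i}(n)\le v_G(R_i)^{\lceil n/R_i\rceil}$ for $i<N(n)$ one finds
\[
\frac{1}{n}\sum_{i<N(n)}\log v_{\Gamma_i}(n)\ \gtrsim\ \sum_{i<n} i^{\alpha-1}\ \asymp\ n^{\alpha}\ \longrightarrow\ \infty,
\]
so your inequality yields no sub-exponential bound at any scale $n_0$, and Fekete's lemma cannot close the argument. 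The product over-counts: the tail factors $\Gamma_i$ are far from independent once their balls agree with $G$.

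The paper sidesteps this by decoupling the cutoff from the scale. Given $\varepsilon>0$, it first fixes an index $i_0$ (depending only on $\varepsilon$) so that $r_{i_0}:=\inf_{i>i_0}R_i$ is large enough that $v_G(r_{i_0})^{1/r_{i_0}}\le e^{\varepsilon/4}$. The key observation is that the \emph{tail diagonal product} $\Delta_{>i_0}$ has its $r_{i_0}$-ball identical to that of $G$; sub-multiplicativity applied once to the single group $\Delta_{>i_0}$ then gives
\[
v_{\Delta_{>i_0}}(n)\ \le\ v_G(r_{i_0})^{\lceil n/r_{i_0}\rceil}\ \le\ e^{\varepsilon n/2}
\]
for all large $n$, uniformly. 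Combined with the elementary injection $v_\Delta(n)\le v_{\Delta_{>i_0}}(n)\prod_{i\le i_0}v_{\Gamma_i}(n)$, only a \emph{fixed} finite product of $i_0$ sub-exponential functions remains, which is $\le e^{\varepsilon n/2}$ for $n$ large. So the missing idea in your outline is to apply sub-multiplicativity to $\Delta_{>i_0}$ as a whole rather than to each $\Gamma_i$ separately; once you do this, the diagonal choice you anticipated disappears.
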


\begin{proof}

The (i)$\Rightarrow$(ii) direction is obvious because the groups
$(\Gamma_{i},S_{i})$ and $(G,S)$ are quotients of $(\Delta,S)$. 

We now prove (ii)$\Rightarrow$(i). Denote by $R_{i}$ be the largest
radius $r$ such that the ball of radius $r$ around identity in $(\Gamma_{i},S_{i})$
coincide with the ball of same radius around identity in $(G,S)$.
Given an index $i_{0}\in\mathbb{N}$, consider the diagonal product
$\left(\Delta_{>i_{0}},S\right)$ of the collection $\left(\left(\Gamma_{i},S_{i}\right)\right)_{i>i_{0}}^{\infty}$.
Then by definition the balls of radius $r_{i_{0}}:=\inf_{i>i_{0}}R_{i}$
around the identities in the Cayley graphs of $\left(\Delta_{>i_{0}},S\right)$
and $(G,S)$ coincide. Since the volume function is sub-multiplicative,
we have for any $n>r_{i_{0}}$,
\[
v_{\Delta_{>i_{0}},S}(n)\le v_{G,S}(r_{i_{0}})^{\left\lceil n/r_{i_{0}}\right\rceil }.
\]
Regard $\left(\Delta,S\right)$ as the diagonal product of $\left(\left(\Gamma_{i},S_{i}\right)\right)_{i\le i_{0}}$
and $\left(\Delta_{>i_{0}},S\right)$, we have 
\[
v_{\Delta,S}(n)\le\left(\prod_{i=1}^{i_{0}}v_{\Gamma_{i},S_{i}}(n)\right)v_{G,S}(r_{i_{0}})^{\left\lceil n/r_{i_{0}}\right\rceil }.
\]
Let $\varepsilon>0$ be any small positive constant. Since $G$ is
of subexponential growth, there is a radius $t_{\varepsilon}$ such
that $v_{G,S}(n)\le\exp(\varepsilon n/4)$ for all $n>t_{\varepsilon}$.
Since $\left(\left(\Gamma_{i},S_{i}\right)\right)_{i=1}^{\infty}$
converge to $(G,S)$ in the Chabauty topology, there exists an index
$i_{0}$ such that $r_{i_{0}}>t_{\varepsilon}$. It follows that for
any $n>r_{i_{0}}>t_{\varepsilon}$, 
\begin{equation}
v_{\Delta,S}(n)\le\left(\prod_{i=1}^{i_{0}}v_{\Gamma_{i},S_{i}}(n)\right)v_{G,S}(r_{i_{0}})^{\left\lceil n/r_{i_{0}}\right\rceil }\le\left(\prod_{i=1}^{i_{0}}v_{\Gamma_{i},S_{i}}(n)\right)e^{\varepsilon n/2}.\label{eq:delta1}
\end{equation}
Since each group $\Gamma_{i}$ is assumed to be of sub-exponential
growth, there exists a constant $n_{i_{0}}$ such that for any $i\le i_{0}$
and $n\ge n_{i_{0}}$, we have that $v_{\Gamma_{i},S_{i}}(n)\le e^{\varepsilon n/2i_{0}}.$
It follows then from (\ref{eq:delta1}) that for any $n\ge\max\left\{ n_{i_{0}},r_{i_{0}}\right\} $,
$v_{\Delta,S}(n)\le e^{\varepsilon n}$. Since $\varepsilon$ is arbitrary,
we conclude that $(\Delta,S)$ is of subexponential growth.

\end{proof}

\subsection{The direct sum assumption}

In this subsection we consider a splitting condition which makes the
structure of the diagonal product more transparent. 

\begin{definition}[Direct sum assumption (D)]\label{splitting(S)}

We say a sequence of $k$-marked groups $(\Gamma_{i},S_{i})$, $i\in\mathbb{N}$
satisfies the direct sum condition (D) over $(G,S)$ if
\begin{itemize}
\item $(\Gamma_{i},S_{i})\to(G,S)$ when $i\to\infty$ in the Cayley topology, 
\item for each $i$, there is a marked quotient $G_{i}$ of $\Gamma_{i}$,
such that the diagonal product $\Gamma$ of the sequence $\left(\left(\Gamma_{i},S_{i}\right)\right)_{i\in\mathbb{N}}$
satisfies that 
\[
\ker(\Gamma\to G)=\oplus_{i\in\mathbb{N}}\ker\left(\Gamma_{i}\to G_{i}\right).
\]
\end{itemize}
\end{definition}

If satisfied, the direct sum assumption (D) plays an important role
in understanding the structure of the diagonal product $\Gamma$.
The groups we consider in Section \ref{sec:construction} satisfy
the direct sum assumption (D). Typically, in situations where (D)
can be verified, there is a natural choice of $(G_{i})$, e.g., $\Gamma_{i}$
by construction is an extension of $G_{i}$, where $(G_{i},S_{i})$
is a sequence of quotient groups of $(G,S)$ that converges to $(G,S)$
when $i\to\infty$. To verify (D), it then suffices to show: 
\begin{description}
\item [{(1)}] the length of the shortest nontrivial element in $\ker\left(\Gamma_{i}\to G_{i}\right)$
goes to infinity as $i\to\infty$; 
\item [{(2)}] for each $i$, one can find words in $\mathbf{F}_{k}$ such
that their images in any $\Gamma_{j}$, $j>i$ are trivial and the
normal closure of their images in $\Gamma_{i}$ is $\ker\left(\Gamma_{i}\to G_{i}\right)$. 
\end{description}

\section{The definition of $\Gamma(\mathcal{L})$ and formal recursions \label{sec:definition}}

This section is a preparation for the study of finitely generated
subgroups of $\Gamma(\mathcal{L})$ in the next section. Throughout
this section, denote by the $\mathbf{F}$ the free product $\mathbf{F}=(\mathbb{Z}/2\mathbb{Z})\ast(\mathbb{Z}/2\mathbb{Z}\times\mathbb{Z}/2\mathbb{Z})$.
Mark $\mathbf{F}$ with the generating tuple $\mathbf{S}=\left(\mathbf{a},\mathbf{b},\mathbf{c},\mathbf{d}\right)$,
where $\mathbf{a},\mathbf{b},\mathbf{c},\mathbf{d}$ are involutions
and $\mathbf{F}=\left\langle \mathbf{a}\right\rangle \ast\left(\left\langle \mathbf{b}\right\rangle \times\left\langle \mathbf{c}\right\rangle \right)$,
$\mathbf{d}=\mathbf{bc}$. In Subsection \ref{subsec:def} we present
the type of construction in \cite[Section 2]{Erschler06} and \cite{KassabovPak}
in the language of permutation wreath products, which is algebraically
rather transparent. In Subsection \ref{subsec:formal-recursion},
we set up notations for the formal wreath recursion and explain the
branching structure on the formal level. Automorphisms of $\Gamma(\mathcal{L})$
are discussed in Subsection \ref{subsec:automorphisms} and a sufficient
condition for $\Gamma(\mathcal{L})$ to satisfy the direct sum assumption
(D) is provided in Subsection \ref{subsec:sufficient(D)}. 

\subsection{The definition of $\Gamma(\mathcal{L},\omega)$\label{subsec:def}}

We use notations introduced in Subsection \ref{subsec:Grigorchuk-groups}.
Let $\mathcal{L}=(A_{n})_{n=1}^{\infty}$ be a sequence of marked
quotients of $\mathbf{F}$ and $\omega$ be a string in $\{0,1,2\}^{\infty}$.
For each $n$, fix a bijection 
\begin{equation}
\psi_{n}:\left\{ b_{\mathfrak{s}^{n}\omega},c_{\mathfrak{s}^{n}\omega},d_{\mathfrak{s}^{n}\omega}\right\} \to\left\{ b,c,d\right\} .\label{eq:bij}
\end{equation}
For example, one can choose $\psi_{n}$ to send $x_{\mathfrak{s}^{n}(\omega)}\mapsto x$,
for $x=b,c,d$. 

Recall that each letter $0,1,2$ denotes a nontrivial homomorphism
$\{id,b,c,d\}\to\{id,a\}$. Recall the recursive definition of the
tree automorphisms $b_{\omega},c_{\omega}$ and $d_{\omega}$. Denote
by $\pi_{n}$ the natural projection ${\rm Aut}(\mathsf{T})\to{\rm Aut}(\mathsf{T}^{n}).$
Consider the permutation wreath product $A_{n}\wr_{\mathsf{L}_{n}}\pi_{n}\left(G_{\omega}\right)$
and its subgroup $\Gamma_{n}$ defined as
\begin{align}
\Gamma_{n}^{\omega} & =\left\langle S_{n}\right\rangle ,\ S_{n}=\left(a_{n},b_{n}^{\omega},c_{n}^{\omega},d_{n}^{\omega}\right),\label{eq:Gamma_n}\\
\mbox{where }a_{n}=(id,a), & \ x_{n}^{\omega}=\left(\delta_{1^{n}}^{\psi_{n}\left(x_{\mathfrak{s}^{n}\omega}\right)}+\delta_{1^{n-1}0}^{\omega_{n-1}(x)},x\right)\mbox{ for }x\in\{b,c,d\}.\nonumber 
\end{align}
Note that the definition of $x_{n}^{\omega}$ mimics the sections
of the generator $x_{\omega}$ on the level $n$ in $G_{\omega}$,
$x\in\{b,c,d\}$, while the difference is that the lamp group is $A_{n}$
in $\Gamma_{n}^{\omega}$. 

\begin{definition}\label{def:gamma(L)}

Given a sequence $\mathcal{L}$ of finite quotients of $\mathbf{F}$,
$\omega\in\{0,1,2\}^{\infty}$ and bijections $\left(\psi_{n}\right)_{n=1}^{\infty}$
as in (\ref{eq:bij}). Let the sequence of marked groups $\left(\left(\Gamma_{n}^{\omega},S_{n}^{\omega}\right)\right)_{n=1}^{\infty}$
be defined as in (\ref{eq:Gamma_n}). We call the diagonal product
of the sequence $\left(\left(\Gamma_{n}^{\omega},S_{n}^{\omega}\right)\right)_{n=1}^{\infty}$
the\emph{ extension of $G_{\omega}$ with $\mathcal{L}$ as input}
and denote it as $\Gamma(\mathcal{L},\omega)$. 

\end{definition}

Although most of the statements in this section can be generalized
to $\Gamma(\mathcal{L},\omega)$, where all three letters $0,1,2$
appear infinitely often in $\omega$, in order not to burden the reader
with heavier notations, we will focus on the first Grigorchuk group
$\mathfrak{G}=G_{(012)^{\infty}}$. For the first Grigorchuk group,
we suppress the reference to the string $(012)^{\infty}$ and 
\[
\Gamma(\mathcal{L})=\Gamma\left(\mathcal{L},(012)^{\infty}\right),
\]
where the bijections are: for $\omega=(012)^{\infty}$, 
\begin{align}
\psi_{n}\left(b_{\mathfrak{s}^{n}\omega}\right) & =b,\ \psi_{n}\left(c_{\mathfrak{s}^{n}\omega}\right)=c,\ \psi_{n}\left(d_{\mathfrak{s}^{n}\omega}\right)=d\mbox{ for }n\equiv0\mod3,\label{eq:phi_n1}\\
\psi_{n}\left(b_{\mathfrak{s}^{n}\omega}\right) & =c,\ \psi_{n}\left(c_{\mathfrak{s}^{n}\omega}\right)=d,\ \psi_{n}\left(d_{\mathfrak{s}^{n}\omega}\right)=b\mbox{ for }n\equiv1\mod3,\nonumber \\
\psi_{n}\left(b_{\mathfrak{s}^{n}\omega}\right) & =d,\ \psi_{n}\left(c_{\mathfrak{s}^{n}\omega}\right)=b,\ \psi_{n}\left(d_{\mathfrak{s}^{n}\omega}\right)=c\mbox{ for }n\equiv2\mod3.\nonumber 
\end{align}

Indexing by $\mathbb{N}$ in $\mathcal{L}$ is for convenience. In
particular, the shift $\mathfrak{s}$, where $\mathfrak{s}\mathcal{L}=\left(A_{n+1}\right)_{n=1}^{\infty}$,
will be useful in the recursion. In our notation, if the diagonal
product is taken over a subsequence $(n_{i})$, then for a level $n\notin\{n_{i}:i\in\mathbb{N}\}$,
the corresponding $A_{n}$ is the trivial group $\{id\}$. 

\subsection{Formal wreath recursion\label{subsec:formal-recursion}}

Denote by $\mathfrak{S}_{2}$ the permutation group of $\{0,1\}$
and $\varepsilon$ the transposition $(0,1)$. Recall that under the
canonical wreath recursion, the generators of the Grigorchuk group
$\mathfrak{G}$ give
\[
a=\left(id,id\right)\varepsilon,\ b=(a,c),\ c=(a,d),\ d=(id,b).
\]
We now consider the recursion rules on the formal level. Let $\boldsymbol{\varphi}$
be the homomorphism 
\[
\boldsymbol{\varphi}:\mathbf{F}\to\mathbf{F}\wr_{\{0,1\}}\mathfrak{S}_{2}
\]
determined by 
\begin{align*}
{\bf a} & \mapsto\left(id,\varepsilon\right),\\
{\bf b} & \mapsto\left(\delta_{1}^{{\bf \mathbf{c}}}+\delta_{0}^{{\bf a}},id\right),\\
{\bf c} & \mapsto\left(\delta_{1}^{{\bf d}}+\delta_{0}^{{\bf a}},id\right),\\
{\bf d} & \mapsto\left(\delta_{1}^{{\bf b}},id\right).
\end{align*}
Following the notation of canonical wreath recursion on ${\rm Aut}(\mathsf{T})$,
we record $\boldsymbol{\varphi}(w)=\left(\delta_{0}^{w_{0}}+\delta_{1}^{w_{1}},\varepsilon^{s}\right)$
as $(w_{0},w_{1})\varepsilon^{s}$. We refer to the homomorphism $\boldsymbol{\varphi}$
as the (1-step) \emph{formal wreath recursion}. The word \textquotedbl formal\textquotedbl{}
to indicate that the map is considered on the free product $\mathbf{F}$
instead of rooted tree automorphisms. The homomorphism $\boldsymbol{\varphi}$
can be applied recursively and we have 
\[
\boldsymbol{\varphi}^{k}:\mathbf{F}\to\mathbf{F}\wr_{\mathsf{L}_{k}}G_{k}.
\]
Mark $\boldsymbol{\varphi}^{k}(\mathbf{F})$ by the generating tuple
of $\left(\boldsymbol{\varphi}^{k}(\mathbf{a}),\boldsymbol{\varphi}^{k}(\mathbf{b}),\boldsymbol{\varphi}^{k}(\mathbf{c}),\boldsymbol{\varphi}^{k}(\mathbf{d})\right)$.
Denote by $\theta_{k}$ the projection $\mathbf{F}\wr_{\mathsf{L}_{k}}G_{k}\to G_{k}$.
Because of the recursion rules, the following diagram commute: 

\[ \begin{tikzcd}
\mathbf{F} \arrow{r}{\boldsymbol{\varphi}^{k}} \arrow[swap]{d}{\pi} & 
\boldsymbol{\varphi}^{k}(\mathbf{F}) \arrow{d}{\theta_k} \\%
\mathfrak{G} \arrow{r}{\pi_k}& G_k 
\end{tikzcd} \]

Let $\sigma:\{a,b,c,d\}^{\ast}\to\{a,b,c,d\}^{\ast}$ be the substitution
\begin{equation}
\sigma:a\mapsto aba,\ b\mapsto d,\ c\mapsto b,\ d\mapsto c.\label{eq:L-sub}
\end{equation}
A similar substitution (often referred to as the Lysionok substitution)
which sends $a\mapsto aca,b\mapsto d,c\mapsto b,d\mapsto c$, appears
in the recursive presentation \cite{Lysionok} of the Grigorchuk group
$\mathfrak{G}$. We may regard the substitution $\sigma$ as a homomorphism
$\mathbf{F}\to\mathbf{F}$. 

Denote by $\mathbf{K}$ the normal closure of $\left[\mathbf{a},\mathbf{b}\right]$
in $\mathbf{F}$. Let $K$ be the image of $\mathbf{K}$ under the
projection $\pi:\mathbf{F}\to\mathfrak{G}$. The group $\mathfrak{G}$
is regularly branching over $K$, see e.g., \cite[Subsection 1.6.6]{BGShandbook}.
Fact \ref{K-recursion} expresses the branching structure in terms
of the formal recursion. Denote by $\mathbf{K}_{v}$ the subgroup
of $\mathbf{F}\wr_{\mathsf{L}_{k}}G_{k}$, where $k=|v|$, defined
as
\[
\mathbf{K}_{v}:=\left\{ \left(\delta_{v}^{w},id\right):w\in\mathbf{K}\right\} .
\]

\begin{fact}\label{K-recursion}

The substitution $\sigma$ satisfies that $\sigma\left(\mathbf{K}\right)\subseteq\mathbf{K}$
and for any $w\in\mathbf{K}$, $\boldsymbol{\varphi}\left(\sigma(w)\right)=(id,w)$.
Moreover, we have 
\begin{equation}
\prod_{v\in\mathsf{L}_{k}}\mathbf{K}_{v}<\boldsymbol{\varphi}^{k}(\mathbf{F}).\label{eq:Kprod}
\end{equation}

\end{fact}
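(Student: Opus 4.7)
The plan is to verify the three assertions of Fact~\ref{K-recursion} one after the other, reducing everything to the action of $\boldsymbol{\varphi}\circ\sigma$ on the four generators of $\mathbf{F}$.

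First, for $\sigma(\mathbf{K})\subseteq\mathbf{K}$: since $\sigma$ is a group endomorphism and $\mathbf{K}$ is normal in $\mathbf{F}$, it suffices to verify that $\sigma([\mathbf{a},\mathbf{b}])=[\mathbf{aba},\mathbf{d}]$ lies in $\mathbf{K}$. I would do this by reducing modulo $\mathbf{K}$: in $\mathbf{F}/\mathbf{K}$ the element $\mathbf{a}$ commutes with $\mathbf{b}$, so $\mathbf{aba}\equiv\mathbf{b}$, and $\mathbf{b}$ commutes with $\mathbf{d}=\mathbf{bc}$ since both lie in the abelian free factor $\langle\mathbf{b}\rangle\times\langle\mathbf{c}\rangle$. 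Hence $[\mathbf{aba},\mathbf{d}]$ is trivial in the quotient, giving the inclusion.

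For the identity $\boldsymbol{\varphi}(\sigma(w))=(id,w)$, I would compute $\boldsymbol{\varphi}\circ\sigma$ on each generator:
\begin{align*}
\boldsymbol{\varphi}(\sigma(\mathbf{a}))&=\boldsymbol{\varphi}(\mathbf{aba})=(\mathbf{c},\mathbf{a}), & \boldsymbol{\varphi}(\sigma(\mathbf{b}))&=\boldsymbol{\varphi}(\mathbf{d})=(id,\mathbf{b}),\\
\boldsymbol{\varphi}(\sigma(\mathbf{c}))&=\boldsymbol{\varphi}(\mathbf{b})=(\mathbf{a},\mathbf{c}), & \boldsymbol{\varphi}(\sigma(\mathbf{d}))&=\boldsymbol{\varphi}(\mathbf{c})=(\mathbf{a},\mathbf{d}).
\end{align*}
All four values have trivial permutation component in $\mathfrak{S}_{2}$, so $\boldsymbol{\varphi}\circ\sigma$ in fact takes $\mathbf{F}$ into the base $\mathbf{F}\times\mathbf{F}<\mathbf{F}\wr_{\{0,1\}}\mathfrak{S}_{2}$. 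The two coordinate projections therefore define honest homomorphisms $h_{0},h_{1}:\mathbf{F}\to\mathbf{F}$ by $\boldsymbol{\varphi}(\sigma(w))=(h_{0}(w),h_{1}(w))$. Reading off the table, $h_{1}$ fixes each of $\mathbf{a},\mathbf{b},\mathbf{c},\mathbf{d}$, so $h_{1}=id_{\mathbf{F}}$; and $h_{0}([\mathbf{a},\mathbf{b}])=\mathbf{c}\cdot id\cdot \mathbf{c}\cdot id=id$, so the homomorphism $h_{0}$ kills the normal closure $\mathbf{K}$. Combined, this gives $\boldsymbol{\varphi}(\sigma(w))=(id,w)$ for every $w\in\mathbf{K}$.

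For the branching inclusion $\prod_{v\in\mathsf{L}_{k}}\mathbf{K}_{v}<\boldsymbol{\varphi}^{k}(\mathbf{F})$, I would argue by induction on $k$. Iterating the previous two steps yields the key identity $\boldsymbol{\varphi}^{k}(\sigma^{k}(w))=(\delta_{1^{k}}^{w},id)$ for $w\in\mathbf{K}$: since $\sigma(w)\in\mathbf{K}$ by the first step, the inductive hypothesis applies to $\sigma^{k-1}(\sigma(w))$ and produces $(\delta_{1^{k-1}}^{\sigma(w)},id)$ inside $\boldsymbol{\varphi}^{k-1}(\mathbf{F})$; expanding one more level by $\boldsymbol{\varphi}$ at the unique nontrivial leaf uses the second step to replace $\sigma(w)$ by $(id,w)$, placing $w$ at the leaf $1^{k}$ with trivial permutation throughout. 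This delivers $\mathbf{K}_{1^{k}}\subseteq\boldsymbol{\varphi}^{k}(\mathbf{F})$. For a general vertex $v\in\mathsf{L}_{k}$, I would use that the permutation quotient $G_{k}=\pi_{k}(\mathfrak{G})$ of $\boldsymbol{\varphi}^{k}(\mathbf{F})$ acts transitively on $\mathsf{L}_{k}$; picking $(h,\pi)\in\boldsymbol{\varphi}^{k}(\mathbf{F})$ with $1^{k}\cdot\pi^{-1}=v$ and conjugating $(\delta_{1^{k}}^{w},id)$ by it yields $(\delta_{v}^{h(v)wh(v)^{-1}},id)\in\boldsymbol{\varphi}^{k}(\mathbf{F})$, and by normality of $\mathbf{K}$ the conjugate $h(v)wh(v)^{-1}$ ranges over all of $\mathbf{K}$ as $w$ does, so $\mathbf{K}_{v}\subseteq\boldsymbol{\varphi}^{k}(\mathbf{F})$. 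Distinct $\mathbf{K}_{v}$ have disjoint supports and hence commute, so their product is a subgroup, giving the claimed inclusion.

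The whole argument is driven by the observation in the second step that $\boldsymbol{\varphi}\circ\sigma$ has trivial permutation part on each generator; once this is noted, $h_{0}$ and $h_{1}$ become genuine homomorphisms and the iterated wreath recursion descends level by level without subtlety. I do not expect any serious obstacle beyond careful bookkeeping of the wreath product multiplication.
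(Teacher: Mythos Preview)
Your proof is correct and follows the same strategy as the paper: verify the claims on the normal generator $[\mathbf{a},\mathbf{b}]$ of $\mathbf{K}$, iterate to reach level $k$ at the vertex $1^{k}$, then move to arbitrary vertices. The paper's proof simply computes $\sigma([\mathbf{a},\mathbf{b}])=\mathbf{abadabad}=[\mathbf{a},\mathbf{b}][\mathbf{a},\mathbf{b}]^{\mathbf{d}}\in\mathbf{K}$ and $\boldsymbol{\varphi}(\mathbf{abadabad})=(id,\mathbf{abab})$ directly, then asserts that checking on the generator suffices and that \eqref{eq:Kprod} follows by iteration.

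Your organization is a bit different and in fact more careful on one point: you first observe that $\boldsymbol{\varphi}\circ\sigma$ has trivial permutation part on \emph{all four} generators of $\mathbf{F}$, so its image lies in the base $\mathbf{F}\times\mathbf{F}$ and the two coordinate maps $h_{0},h_{1}$ are genuine endomorphisms of $\mathbf{F}$. This makes the passage from the single generator $[\mathbf{a},\mathbf{b}]$ to all of $\mathbf{K}$ transparent (kernels of homomorphisms are normal), whereas the paper's ``it suffices to check on the generator'' leaves this step implicit. Your quotient argument for $\sigma(\mathbf{K})\subseteq\mathbf{K}$ is likewise a clean alternative to the paper's explicit factorization. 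For the branching inclusion you spell out the conjugation by an element with prescribed level-$k$ permutation and invoke normality of $\mathbf{K}$, which is exactly what the paper's terse ``then \eqref{eq:Kprod} follows'' is pointing at.
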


\begin{proof}

It suffices to check on the generator $\left[\mathbf{a},\mathbf{b}\right]$:
\[
\sigma(\left[\mathbf{a},\mathbf{b}\right])=\mathbf{abadabad}=\left[\mathbf{a},\mathbf{b}\right]\left[\mathbf{a},\mathbf{b}\right]^{\mathbf{d}}\in\mathbf{K}
\]
and 
\[
\boldsymbol{\varphi}\left(\sigma(\left[\mathbf{a},\mathbf{b}\right])\right)=\boldsymbol{\varphi}\left(\mathbf{abadabad}\right)=\left(id,\mathbf{abab}\right).
\]
The calculation above also shows that $\boldsymbol{\varphi}\left(\left\langle \mathbf{\left[\mathbf{b}^{a},\mathbf{d}\right]}\right\rangle ^{\mathbf{F}}\right)=\mathbf{K}\times\mathbf{K}$. 

Iterate $k$ times, we have that for $w\in\mathbf{K}$, $\boldsymbol{\varphi}^{k}\left(\sigma^{k}(w)\right)=\left(\delta_{1^{k}}^{w},id\right)$.
Then (\ref{eq:Kprod}), follows. 

\end{proof}

The following length reduction property can be seen directly from
the rule of multiplication in $\mathbf{F}\wr_{\mathsf{L}_{k}}G_{k}$.
For an element $w\in\mathbf{F}$, under the formal wreath recursion
$\boldsymbol{\varphi}^{k}:\mathbf{F}\to\mathbf{F}\wr_{\mathsf{L}_{k}}G_{k}$,
write 
\[
\boldsymbol{\varphi}^{k}(w)=\left(\left(w_{v}\right)_{v\in\mathsf{L}_{k}},\pi_{k}(w)\right).
\]
The element $w_{v}$ is referred to as the \emph{section of $w$ at
$v$ under }$\boldsymbol{\varphi}^{k}$. Recall that $d_{S}$ denotes
the graph distance on the orbital Schreier graph. 

\begin{fact}[Length reduction, c.f. {\cite[Lemma 5.11]{KassabovPak}}]\label{short-section}

Suppose $w\in\mathbf{F}$ is a word length $\ell\le2^{k-1}-1$, then
under $\boldsymbol{\varphi}^{k}$, the word length of the section
$w_{v}$ is bounded by $1$. More precisely, $w_{v}\in\{id,\mathbf{b},\mathbf{c},\mathbf{d}\}$
for $v\in\mathsf{L}_{k}$ such that $d_{S}(v,1^{k})\le2^{k-1}-1$;
and $w_{v}\in\{id,\mathbf{a}\}$ for $v\in\mathsf{L}_{k}$ such that
$d_{S}(v,1^{k-1}0)\le2^{k-1}-1$.

\end{fact}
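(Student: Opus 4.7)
The plan is to proceed by induction on $k$, exploiting the contracting nature of the formal wreath recursion $\boldsymbol{\varphi}$. The base case is verified by direct inspection of $\boldsymbol{\varphi}^k$ applied to the generators $\mathbf{a}, \mathbf{b}, \mathbf{c}, \mathbf{d}$: the recursion rules show that each single non-$\mathbf{a}$ generator has, at level $k$, a non-trivial section of type $\{\mathbf{b},\mathbf{c},\mathbf{d}\}$ at $1^k$ and of type $\{\mathbf{a}\}$ at $1^{k-1}0$, with all other sections trivial, consistent with the claim.

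For the inductive step, given a reduced word $w \in \mathbf{F}$ of length $\ell \le 2^{k-1}-1$, write $\boldsymbol{\varphi}(w) = (w_0, w_1)\varepsilon^s$. The key quantitative step is a length-contraction estimate $|w_0|, |w_1| \le 2^{k-2}-1$. Since $\mathbf{F} = \langle \mathbf{a}\rangle \ast (\langle \mathbf{b}\rangle \times \langle \mathbf{c}\rangle)$, a reduced $w$ alternates between $\mathbf{a}$ and non-trivial elements of $\{\mathbf{b}, \mathbf{c}, \mathbf{d}\}$. Under $\boldsymbol{\varphi}$, the letter $\mathbf{a}$ contributes only a swap (no section length), each of $\mathbf{b}, \mathbf{c}$ contributes a single letter to both coordinates, and $\mathbf{d}$ contributes nothing at coordinate $0$ and a single letter at coordinate $1$. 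Tracking these contributions together with the coordinate permutation induced by the $\mathbf{a}$'s, combined with a parity/endpoint analysis of the alternating pattern, yields the required bound.

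With the length bound in place, I would apply the inductive hypothesis at level $k-1$ to $w_0$ and $w_1$. The sections of $w$ at level $k$ are precisely the level-$(k-1)$ sections of $w_0$ in the subtree rooted at $0$ and of $w_1$ in the subtree rooted at $1$. The vertices $v \in \mathsf{L}_k$ with $d_S(v, 1^k) \le 2^{k-1}-1$ or $d_S(v, 1^{k-1}0) \le 2^{k-1}-1$ lie in the subtree rooted at $1$, because the orbital Schreier graph of $\mathfrak{G}$ on level $k$ is (approximately) a path with $1^k$ at one end and the vertices starting with $0$ forming the opposite half. Under the identification of this subtree with the level-$(k-1)$ tree, $1^k$ corresponds to $1^{k-1}$ and $1^{k-1}0$ to $1^{k-2}0$, so the inductive hypothesis applied to $w_1$ yields the claimed section types at exactly the required vertices.

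The main obstacle is the refined length-contraction estimate $|w_i| \le 2^{k-2}-1$: the naive bound $\lceil \ell/2\rceil$ is off by one in the extremal case $\ell = 2^{k-1}-1$ (odd) with $w$ beginning and ending in non-$\mathbf{a}$ letters. Closing this gap requires either exploiting the trivial contribution of $\mathbf{d}$ at coordinate $0$, or detecting a cancellation in the section induced by the swap parity being fixed between consecutive non-$\mathbf{a}$ letters. Once the refinement is made, the remainder of the proof is a routine transfer of the inductive hypothesis through the branch structure of $\boldsymbol{\varphi}$.
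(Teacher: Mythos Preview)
Your inductive scheme has a genuine structural error. You claim that the vertices $v\in\mathsf{L}_k$ with $d_S(v,1^k)\le 2^{k-1}-1$ or $d_S(v,1^{k-1}0)\le 2^{k-1}-1$ all lie in the subtree rooted at $1$, so that it suffices to apply the inductive hypothesis to $w_1$. This is false: those two balls partition \emph{all} of $\mathsf{L}_k$ (they have $2^{k-1}$ vertices each and are disjoint), so half of the vertices you need lie in the $0$-subtree. Concretely, $1^k\cdot a = 01^{k-1}$, so already the vertex at Schreier distance $1$ from $1^k$ starts with $0$. The level-$k$ Schreier graph weaves back and forth between the two subtrees, and there is no simple correspondence between $d_S(iu,1^k)$ on level $k$ and $d_S(u,1^{k-1})$ on level $k-1$. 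So even after applying the hypothesis to both $w_0$ and $w_1$, you would still need a nontrivial lemma relating Schreier distances across levels. The length-contraction issue you flag is also real and not easily repaired: for $k=2$, $\ell\le 1$, $w=\mathbf{b}$ gives $|w_0|=|w_1|=1>2^{0}-1=0$, so the induction cannot even start.

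The paper does not give a proof; it says the fact ``can be seen directly from the rule of multiplication in $\mathbf{F}\wr_{\mathsf{L}_k}G_k$'', and indeed the direct argument is much simpler than induction. Under $\boldsymbol{\varphi}^k$, each generator $x\in\{\mathbf{b},\mathbf{c},\mathbf{d}\}$ has nontrivial section only at $1^k$ (where it lies in $\{\mathbf{b},\mathbf{c},\mathbf{d}\}$) and at $1^{k-1}0$ (where it lies in $\{id,\mathbf{a}\}$); the generator $\mathbf{a}$ contributes no section letters at all. By the wreath-product multiplication rule, the section $w_v$ is a product over those indices $i$ for which the inverted orbit satisfies $1^k\cdot(g_1\cdots g_{i-1})^{-1}=v$ (contributing a letter from $\{\mathbf{b},\mathbf{c},\mathbf{d}\}$) or $1^{k-1}0\cdot(g_1\cdots g_{i-1})^{-1}=v$ (contributing a letter from $\{id,\mathbf{a}\}$). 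Since $|w|\le 2^{k-1}-1$, the inverted orbit of $1^k$ stays within Schreier distance $2^{k-1}-2$ of $1^k$, and likewise for $1^{k-1}0$; these two balls are disjoint. Hence for $v$ in the half near $1^k$ the section $w_v$ is a product of elements of the subgroup $\{id,\mathbf{b},\mathbf{c},\mathbf{d}\}$, and for $v$ in the other half it is a product of elements of the subgroup $\{id,\mathbf{a}\}$. That is the whole argument; the key point you are missing is that $\{id,\mathbf{b},\mathbf{c},\mathbf{d}\}$ and $\{id,\mathbf{a}\}$ are \emph{subgroups} of $\mathbf{F}$, so arbitrary products of their elements remain in them, making any length bookkeeping unnecessary.
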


Fact \ref{short-section} implies that the sequence of marked groups
$\left((\Gamma_{n},S_{n})\right)_{n=1}^{\infty}$, as in Definition
\ref{def:gamma(L)}, converges to the Grigorchuk group $\left(\mathfrak{G},S\right)$
in the Cayley topology. 

\subsection{Lifting automorphisms of $\mathfrak{G}$ to $\Gamma(\mathcal{L})$\label{subsec:automorphisms}}

The automorphism group of $\mathfrak{G}$ is determined by Grigorchuk
and Sidki in \cite{Grigorchuk-Sidki}. For $i\in\mathbb{N}$, let
$\theta_{i}$ be the element in ${\rm Aut}(\mathsf{T})$ defined as
$\theta_{i}\in{\rm St}_{{\rm Aut}(\mathsf{T})}(i)$ and the sections
in level $i$ are given by 
\[
\left(\theta_{i}\right)_{v}=ad,\ \mbox{for all }v\in\mathsf{L}_{i}.
\]
By the main theorem of \cite{Grigorchuk-Sidki}, ${\rm Aut}(\mathfrak{G})$
is isomorphic to the normalizer ${\rm N}_{{\rm Aut}(\mathsf{T})}(\mathfrak{G})$
of $\mathfrak{G}$ in ${\rm Aut}(\mathsf{T})$ and 
\begin{equation}
{\rm N}_{{\rm Aut}(\mathsf{T})}(\mathfrak{G})=\mathfrak{G}V,\label{eq:NG-1}
\end{equation}
where $V$ is the subgroup generated by the collection $\left\{ \theta_{i},i\in\mathbb{N}\right\} $. 

In the next section we will need to lift automorphisms of $\mathfrak{G}$.
Without further assumptions it is not always guaranteed that an outer
automorphism of $\mathfrak{G}$ can be lifted to $\Gamma\left(\mathcal{L}\right)$,
however the following weaker version will be sufficient for our purposes.
Denote by $\Delta_{>k}=\Delta_{>k}(\mathcal{L})$ the diagonal product
of $\left(\left(\Gamma_{n},S_{n}\right)\right)_{n=k+1}^{\infty}$.
Note that $\Delta_{>k}$ can be identified with $\Gamma(\mathcal{L}^{k})$,
where 
\[
\mathcal{L}^{k}=\left(\underbrace{\{id\},\ldots,\{id\}}_{k\mbox{ trivial groups}},A_{k+1},A_{k+2},\ldots\right).
\]
Denote by $\pi_{>k}$ the marked projection $\mathbf{F}\to\Delta_{>k}$.
By its definition, we have the embedding
\begin{equation}
\varphi_{k}:\Delta_{>k}\hookrightarrow\Gamma(\mathfrak{s}^{k}\mathcal{L})\wr_{\mathsf{L}_{k}}G_{k},\label{eq:phi_k2}
\end{equation}
where $\varphi_{k}$ sends 
\[
a\mapsto\left(id,a\right),\ x\mapsto\left(\delta_{1^{k}}^{\psi_{k}(x)}+\delta_{1^{k-1}0}^{\omega_{k-1}(x)},x\right),\ x\in\{b,c,d\},
\]
where $\omega=(012)^{\infty}$ and $\psi_{k}$ is specified in (\ref{eq:phi_n1}).
Consider the quotient map 
\[
\varrho_{k}:\mathbf{F}\wr_{\mathsf{L}_{k}}G_{k}\to\Gamma(\mathfrak{s}^{k}\mathcal{L})\wr_{\mathsf{L}_{k}}G_{k}
\]
which is induced by the marked projection $\mathbf{F}\to\Gamma(\mathfrak{s}^{k}\mathcal{L})$.
The following diagram commutes and $\varphi_{k}$ is an isomorphic:

\[ \begin{tikzcd}
\mathbf{F} \arrow{r}{\boldsymbol{\varphi}^{k}} \arrow[swap]{d}{\pi_{>k}} & 
\boldsymbol{\varphi}^{k}(\mathbf{F}) \arrow{d}{\varrho_{k}} \\%
\Delta_{> k} \arrow{r}{\varphi_k}&  \varrho_{k}\circ\boldsymbol{\varphi}^{k}(\mathbf{F})
\end{tikzcd} \]In particular, $\Delta_{>k}$ can be viewed as a marked quotient of
$\boldsymbol{\varphi}^{k}(\mathbf{F})$.

\begin{lemma}\label{lift-auto1}

Let $\tau:\mathfrak{G}\to\mathfrak{G}$ be an automorphism. Then there
exists an integer $k\in\mathbb{N}$ and an automorphism $\tilde{\tau}:\Delta_{>k}\to\Delta_{>k}$
such that $\pi\circ\tilde{\tau}=\tau\circ\pi$:

\[ \begin{tikzcd}
\Delta_{>k} \arrow{r}{\tilde{\tau}} \arrow[swap]{d}{\pi} & 
\Delta_{>k}\arrow{d}{\pi} \\%
\mathfrak{G} \arrow{r}{\tau}&  \mathfrak{G}.
\end{tikzcd} \]

\end{lemma}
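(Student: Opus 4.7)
The plan is to use the Grigorchuk--Sidki description (\ref{eq:NG-1}) of $\mathrm{Aut}(\mathfrak{G})$ to reduce to lifting conjugation by a tree automorphism in the $V$-factor, and then realise the lift as (the restriction of) an inner automorphism in an enveloping wreath product.

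First I would decompose $\tau = \mathrm{inn}(g)\circ \mathrm{Ad}_\theta$ with $g\in\mathfrak{G}$ and $\theta\in V = \langle\theta_i: i\in\mathbb{N}\rangle$. Inner automorphisms lift trivially: conjugation by any preimage of $g$ in $\Delta_{>k}$ lifts $\mathrm{inn}(g)$, so it suffices to lift $\mathrm{Ad}_\theta$. Since $\theta$ is a finite word in the $\theta_i^{\pm 1}$, let $m$ be the largest index appearing; then $\theta\in\mathrm{St}_{\mathrm{Aut}(\mathsf{T})}(m)$, and for every $k\ge m$ and every $v\in\mathsf{L}_k$ the section $\theta_v$ of $\theta$ at $v$ is a word in sections of $ad$ and hence lies in $\mathfrak{G}$. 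Choose such a $k$ (taking it somewhat larger than $m$ if necessary, for reasons explained below).

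Next, using the embedding (\ref{eq:phi_k2}) realise $\Delta_{>k}$ inside $\Gamma(\mathfrak{s}^k\mathcal{L})\wr_{\mathsf{L}_k}G_k$ via $\varphi_k$. For each $v\in\mathsf{L}_k$ pick an arbitrary preimage $\hat\theta_v\in\Gamma(\mathfrak{s}^k\mathcal{L})$ of $\theta_v\in\mathfrak{G}$ under the natural projection, and assemble
\[
\hat\theta = \bigl((\hat\theta_v)_{v\in\mathsf{L}_k},\,\pi_k(\theta)\bigr)\in \Gamma(\mathfrak{s}^k\mathcal{L})\wr_{\mathsf{L}_k}\mathrm{N}_{\mathrm{Aut}(\mathsf{T}^k)}(G_k).
\]
Because $\theta$ normalises $\mathfrak{G}$, its image $\pi_k(\theta)$ normalises $G_k=\pi_k(\mathfrak{G})$ in $\mathrm{Aut}(\mathsf{T}^k)$, so conjugation by $\hat\theta$ induces a well-defined automorphism $\widehat{\mathrm{Ad}}$ of $\Gamma(\mathfrak{s}^k\mathcal{L})\wr_{\mathsf{L}_k}G_k$. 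The candidate lift is $\tilde\tau:=\varphi_k^{-1}\circ\widehat{\mathrm{Ad}}\circ\varphi_k$, provided $\widehat{\mathrm{Ad}}$ preserves the image $\varphi_k(\Delta_{>k})$. The commutativity $\pi\circ\tilde\tau = \tau\circ\pi$ is then automatic: $\pi$ on $\Delta_{>k}$ factors through coordinate-wise lamp projection $\Gamma(\mathfrak{s}^k\mathcal{L})\to\mathfrak{G}$, and $\hat\theta$ projects via the iterated wreath recursion to $\theta\in\mathrm{N}_{\mathrm{Aut}(\mathsf{T})}(\mathfrak{G})$.

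The delicate step, which I expect to be the main obstacle, is verifying that $\widehat{\mathrm{Ad}}$ preserves $\varphi_k(\Delta_{>k})$. It suffices to check this on the marked generators $\varphi_k(a)=(id,a)$ and $\varphi_k(x)=(\delta_{1^k}^{\psi_k(x)}+\delta_{1^{k-1}0}^{\omega_{k-1}(x)},x)$, $x\in\{b,c,d\}$, from (\ref{eq:Gamma_n}). Direct wreath-product computation shows that the $G_k$-component of $\hat\theta\cdot\varphi_k(s)\cdot\hat\theta^{-1}$ equals $\pi_k(\theta s\theta^{-1})$, which agrees with the $G_k$-component of $\varphi_k(\widehat{s'})$ for any lift $\widehat{s'}\in\Delta_{>k}$ of $\theta s\theta^{-1}\in\mathfrak{G}$; the discrepancy is concentrated in the lamp factor $\oplus_v\Gamma(\mathfrak{s}^k\mathcal{L})$ and must be shown to lie in $\varphi_k(\Delta_{>k})$. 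The key tool is the branching property (\ref{eq:Kprod}) from Fact \ref{K-recursion}: since $\prod_{v\in\mathsf{L}_k}\mathbf{K}_v<\boldsymbol{\varphi}^k(\mathbf{F})$, independent inner twists of the lamp coordinates can be implemented by elements of $\Delta_{>k}$ itself via the Lysionok-type substitution $\sigma$ of (\ref{eq:L-sub}), at the cost of some recursive depth. This is the reason one may need to enlarge $k$ beyond $m$: one must have enough room under level $k$ to realise the lamp corrections by elements whose images in each $\Gamma_n$ (for $n>k$) are controlled.
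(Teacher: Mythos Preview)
Your overall strategy---using the Grigorchuk--Sidki decomposition to reduce to lifting conjugation by $\theta\in V$, realising the lift as conjugation inside an enveloping wreath product over level $k$, and invoking the branching property to verify invariance of $\varphi_k(\Delta_{>k})$---is exactly the paper's approach. One minor slip: a product of $\theta_i$'s with largest index $m$ lies only in $\mathrm{St}(i_{\min})$, not $\mathrm{St}(m)$; what you actually need (and correctly state next) is that all level-$k$ sections of $\theta$ lie in $\mathfrak{G}$ once $k\ge m$.

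The real gap is in the verification step. You allow \emph{arbitrary} preimages $\hat\theta_v$ of the sections and assert that the resulting lamp discrepancy can be absorbed via branching. But the branching $\prod_{v\in\mathsf{L}_k}\mathbf{K}_v<\boldsymbol{\varphi}^k(\mathbf{F})$ only produces independent $K$-elements in the lamp coordinates; it does not implement independent conjugation by arbitrary lamp elements, and the sections $\theta_v$ are general elements of $\mathfrak{G}$, not of $K$. The discrepancy between $\hat\theta\,\varphi_k(s)\,\hat\theta^{-1}$ and a chosen element of $\varphi_k(\Delta_{>k})$ lies a priori only in $\oplus_v\ker\bigl(\Gamma(\mathfrak{s}^k\mathcal{L})\to\mathfrak{G}\bigr)$, and you give no argument that this subgroup is contained in $\varphi_k(\Delta_{>k})$; without assumption~(D) there is no reason it should be.

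The paper sidesteps this by lifting each $\theta_i$ \emph{separately} and taking the \emph{specific} formal lift $\boldsymbol{\theta}_i=\bigl((\mathbf{ad})_{v\in\mathsf{L}_i},\,id\bigr)\in\mathbf{F}\wr_{\mathsf{L}_i}G_i$, then carrying out explicit commutator calculations at the level of $\mathbf{F}$ (equations (\ref{eq:i=00003D1})--(\ref{eq:theta-comm})) showing that $[\boldsymbol{\theta}_i,\boldsymbol{\varphi}^i(x)]$ for $x\in\{a,c,d\}$ is either trivial or, after one further recursion, lands in $\boldsymbol{\varphi}^3(\mathbf{K}_{1^{i-2}})$. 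Only then does Fact~\ref{K-recursion} close the argument. Your plan is missing precisely these explicit computations, and the arbitrary-lift formulation makes it unclear that anything softer can replace them.
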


\begin{proof}

By \cite{Grigorchuk-Sidki}, the automorphism $\tau$ of $\mathfrak{G}$
can be written as a finite composition $\tau=\alpha_{1}\ldots\alpha_{r}$,
where each $\alpha_{j}$ is either an inner automorphism or conjugation
by $\theta_{i_{j}}$ for some $i\in\mathbb{N}$. Set $i_{j}=0$ if
$\alpha_{j}$ is inner. Let 
\[
k=\max_{1\le j\le r}i_{j}+1.
\]
To show that $\tau$ can be lifted to an automorphism of $\Delta_{>k}$,
it suffices to show that for each $i\in\{1,\ldots,k-1\}$, conjugation
by $\theta_{i}$ can be lifted to an automorphism of $\Delta_{>k}$. 

Let $W_{i}=\mathbf{F}\wr_{\mathsf{L}_{i}}G_{i}$ and $\boldsymbol{\theta}_{i}$
be the element of $W_{i}$ given by 
\[
\boldsymbol{\theta}_{i}=\left(f,id_{G_{i}}\right),\ \mbox{where }f(v)=\mathbf{ad}\mbox{ for all }v\in\mathsf{L}_{i}.
\]
By explicit calculation on the generators, we have that in $W_{1}$,
\begin{equation}
\left[\boldsymbol{\theta}_{1},\boldsymbol{\varphi}(\mathbf{a})\right]=1,\ \left[\boldsymbol{\theta}_{1},\boldsymbol{\varphi}(\mathbf{c})\right]=(\mathbf{adad},\mathbf{adad}),\ \left[\boldsymbol{\theta}_{1},\boldsymbol{\varphi}(\mathbf{d})\right]=\left(1,\left[\mathbf{ad},\mathbf{b}\right]\right).\label{eq:i=00003D1}
\end{equation}
Since $(\mathbf{ad})^{4}\in\ker(\boldsymbol{\varphi})$, under the
formal recursion, we have 
\begin{equation}
\boldsymbol{\varphi}\left(\left[\boldsymbol{\theta}_{1},\boldsymbol{\varphi}(\mathbf{c})\right]\right)=\boldsymbol{\varphi}\left(\left(\mathbf{dada},\mathbf{adad}\right)\right)=\boldsymbol{\varphi}^{2}\left((\mathbf{ac})^{4}\right).\label{eq:i=00003D1sec}
\end{equation}

Apply the Lysionok substitution to $(\mathbf{ac})^{4}$, we obtain
the word $(\mathbf{acab})^{4}$. Note that $(\mathbf{acab})^{2}\in\mathbf{K}$
and $\boldsymbol{\varphi}\left((\mathbf{acab})^{4}\right)=\left((\mathbf{da})^{4},(\mathbf{ac})^{4}\right)$.
Inductively, for $i\ge2$, we have 

\begin{equation}
\left[\boldsymbol{\theta}_{i},\boldsymbol{\varphi}^{i}(\mathbf{a})\right]=1,\ \boldsymbol{\varphi}\left(\left[\boldsymbol{\theta}_{i},\boldsymbol{\varphi}^{i}(\mathbf{c})\right]\right),\boldsymbol{\varphi}\left(\left[\boldsymbol{\theta}_{i},\boldsymbol{\varphi}^{i}(\mathbf{d})\right]\right)\in\boldsymbol{\varphi}^{3}\left(\mathbf{K}_{1^{i-2}}\right).\label{eq:theta-comm}
\end{equation}

Now regard $\Delta_{>k}$ as a subgroup of $\Gamma(\mathfrak{s}^{k}\mathcal{L})\wr_{\mathsf{L}_{k}}G_{k}$
via the embedding $\varphi_{k}$. Define the map $\tilde{\tau}_{i}$
on $\Delta_{>k}$ to be 
\begin{equation}
\tilde{\tau}_{i}(g)=g^{\tilde{\theta}_{i}}\ \mbox{where }\tilde{\theta}_{i}=\varrho_{k}\left(\boldsymbol{\varphi}^{k-i}\left(\boldsymbol{\theta}_{i}\right)\right).\label{eq:theta-lift}
\end{equation}
 By its definition, it is clear that when projected to $\mathfrak{G}$,
the map is conjugation by $\theta_{i}$. It remains to show that for
any $g\in\Delta_{>k}$, $g^{\tilde{\theta}_{i}}$ is in $\Delta_{>k}$,
equivalently, $[\tilde{\theta}_{i},g]\in\Delta_{>k}$. It suffices
to check on the generators $a,c,d$. For the generator $a$, we have
$\left([\tilde{\theta}_{i},a]\right)=id\in\Delta_{>k}$. For the other
two generators, if $i=1$, then by direct calculations as in (\ref{eq:i=00003D1})
and (\ref{eq:i=00003D1sec}) we have that $\left[\tilde{\theta}_{1},c\right],\left[\tilde{\theta}_{1},d\right]\in\Delta_{>k}$.
For $i\ge2$, by (\ref{eq:theta-comm}) we have that $\left[\tilde{\theta}_{i},c\right]$
and $\left[\tilde{\theta}_{i},d\right]$ are in the projection of
$\varrho_{k}\left(\boldsymbol{\varphi}^{k-i+2}\left(\mathbf{K}_{1^{i-2}}\right)\right)$.
By (\ref{eq:Kprod}), $\mathbf{K}_{1^{i-2}}<\boldsymbol{\varphi}^{i-2}(\mathbf{F})$.
It follows then $\left[\tilde{\theta}_{i},c\right],\left[\tilde{\theta}_{i},d\right]\in\Delta_{>k}$. 

\end{proof}

Recall the direct sum assumption (D) as in Definition \ref{splitting(S)}.
Under (D), Lemma \ref{lift-auto1} can be improved:

\begin{corollary}\label{lift-auto2}

Suppose $\Gamma(\mathcal{L})$ satisfies ${\rm (D)}$. Let $\tau:\mathfrak{G}\to\mathfrak{G}$
be an automorphism. Then there exists an automorphism $\tilde{\tau}:\Gamma(\mathcal{L})\to\Gamma(\mathcal{L})$
such that $\pi\circ\tilde{\tau}=\tau\circ\pi$. 

\end{corollary}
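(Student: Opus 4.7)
The plan is to promote the partial lift $\tilde\tau_k\colon\Delta_{>k}\to\Delta_{>k}$ supplied by Lemma \ref{lift-auto1} to an automorphism of the full diagonal product $\Gamma(\mathcal{L})$, exploiting the direct sum structure (D) to perform independent coordinate-wise corrections at the finitely many levels $n\le k$. By the Grigorchuk--Sidki description of ${\rm Aut}(\mathfrak{G})$ recalled in (\ref{eq:NG-1}), every $\tau$ is a product of inner automorphisms together with conjugations by the elements $\theta_i$; inner automorphisms lift trivially by conjugation with any preimage in $\Gamma(\mathcal{L})$ of a conjugating element, so I reduce to the case $\tau=$ conjugation by a single $\theta_i$. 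I then apply Lemma \ref{lift-auto1} with $k>i$ to obtain $\tilde\tau_k$, realized explicitly as conjugation by $\tilde\theta_i=\varrho_k(\boldsymbol{\varphi}^{k-i}(\boldsymbol\theta_i))$ inside $\Gamma(\mathfrak{s}^k\mathcal{L})\wr_{\mathsf{L}_k}G_k$.

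I next seek $\tilde\tau$ on $\Gamma(\mathcal{L})$ of the form $s\mapsto w_s$, where $w_s\in\mathbf{F}$ is a fixed word with $\pi(w_s)=\theta_i\pi(s)\theta_i^{-1}$ in $\mathfrak{G}$, evaluated in $\Gamma(\mathcal{L})$. Since $\Gamma(\mathcal{L})\hookrightarrow\prod_n\Gamma_n$, extending $\tilde\tau$ to an endomorphism amounts to verifying that the induced formal substitution $\Theta\colon\mathbf{F}\to\mathbf{F}$ satisfies $\Theta(\ker(\mathbf{F}\to\Gamma_n))\subseteq\ker(\mathbf{F}\to\Gamma_n)$ for every $n$. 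For $n>k$ this is immediate from Lemma \ref{lift-auto1} composed with the projection $\Delta_{>k}\to\Gamma_n$. For the remaining finitely many levels $n\le k$, I invoke (D): the decomposition $\ker(\Gamma(\mathcal{L})\to\mathfrak{G})=\oplus_n N_n$ means that each summand $N_n$ is realized inside $\Gamma(\mathcal{L})$ by words trivial in every other $\Gamma_m$. Consequently any failure of $\Theta$ to preserve $\ker(\mathbf{F}\to\Gamma_n)$ must, modulo true relations, take values inside $N_n$, and can be eliminated by right-multiplying each $w_s$ by a suitable word representing an element of $N_n$; because these correcting words are trivial in every $\Gamma_m$ with $m\ne n$, the adjustments for distinct $n\le k$ are independent and do not affect each other or the already-correct behaviour on the tail $\Delta_{>k}$. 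Bijectivity of $\tilde\tau$ then follows by repeating the construction for $\tau^{-1}$ and checking that the two composed substitutions act as the identity on the generators modulo $\ker(\mathbf{F}\to\Gamma(\mathcal{L}))$.

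The main obstacle is precisely the coordinate-wise correction at the small levels $n\le k$. In $\Delta_{>k}$ the lift $\tilde\tau_k$ is realized through a \emph{single} ambient element $\tilde\theta_i$ by Lemma \ref{lift-auto1}, but at levels $n\le k$ the element $\theta_i$ need not normalize $\Gamma_n$ inside its ambient wreath product, and the naive substitution produces only a map into $\Gamma(\mathcal{L})/N$ rather than $\Gamma(\mathcal{L})$ itself. Hypothesis (D) is exactly what decouples these finitely many repairs: without it the substitution might send a relation of $\Gamma_n$ into some other factor $\Gamma_m$ with no coordinate-wise fix available, whereas (D) confines the discrepancy to $N_n$ alone, where a single local modification suffices.
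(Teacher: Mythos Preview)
Your strategy of realizing $\tilde\tau$ through a single word substitution $\Theta\colon\mathbf{F}\to\mathbf{F}$ and then repairing it level by level is genuinely different from the paper's proof, and the correction step contains a real gap. The paper never attempts to write $\tilde\tau$ as a substitution on generators. Instead it views $\Gamma(\mathcal{L})$ as a subgroup of $\Delta_{\le k}\times\Delta_{>k}$ (with $k=i+1$) and defines $\tilde\tau_i$ \emph{piecewise} by
\[
(f,g)\longmapsto\bigl(f,\,g^{\tilde\theta_i}\bigr),
\]
that is, it acts as the \emph{identity} on the first $k$ factors. The only thing to check is that this map, which is visibly an automorphism of the ambient product, preserves the diagonal subgroup $\Gamma(\mathcal{L})$; under (D) this reduces to verifying that $[\tilde\theta_i,g]$ has trivial image in $G_{i+1}$, which is done on generators.

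Your approach, by contrast, requires finding fixed words $w_s$ and corrections $c_s^{(n)}\in N_n$ (one per generator, per level $n\le k$) so that the modified substitution $s\mapsto w_s\prod_n c_s^{(n)}$ sends \emph{every} relation of $\Gamma(\mathcal{L})$ to the identity in each $\Gamma_n$. You correctly observe that for $r\in\bigcap_m\ker(\mathbf{F}\to\Gamma_m)$ the image $\Theta(r)|_{\Gamma_n}$ lies in $N_n$; but this only says that $\Theta$ descends to a well-defined map $\Gamma(\mathcal{L})\to\Gamma_n/N_n\cong G_n$, not to a map into $\Gamma_n$. Eliminating the discrepancy by adjusting the four elements $w_s|_{\Gamma_n}$ inside their $N_n$-cosets is a nontrivial lifting problem: you are asking that a specific map $\mathbf{F}\to G_n$ lift to an endomorphism of $\Gamma_n$ along $\Gamma_n\to G_n$, and nothing in (D) guarantees such a lift exists. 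The sentence ``can be eliminated by right-multiplying each $w_s$ by a suitable word'' asserts the solvability of this system without argument.

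The paper's device sidesteps the problem entirely: by choosing the identity on $\Delta_{\le k}$ there is no lifting to perform on the small factors, and (D) is used only to check compatibility across the two blocks, not to produce any endomorphism of an individual $\Gamma_n$.
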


\begin{proof}

It suffices to prove the statement for the automorphism $\tau_{i}$
of $\mathfrak{G}$ which is the conjugation by $\theta_{i}$, $i\in\mathbb{N}$.
By Lemma \ref{lift-auto1}, $\tau_{i}$ can be lifted to an automorphism
of $\Delta_{>i+1}$ given by conjugation by $\tilde{\theta}_{i}$
as defined in (\ref{eq:theta-lift}) with $k=i+1$. Denote by $\Delta_{\le k}$
the diagonal product of the first $k$ factors $\left(\left(\Gamma_{n},S_{n}\right)\right)_{n=1}^{k}$.
Regard $\Gamma=\Gamma(\mathcal{L})$ as the diagonal product of $\Delta_{\le i+1}$
and $\Delta_{>i+1}$ and record its elements as $(f,g)$, where $f\in\Delta_{\le i+1}$
and $g\in\Delta_{>i+1}$. Define the map $\tilde{\tau}_{i}$ to be
\begin{equation}
\tilde{\tau}_{i}\left(\left(f,g\right)\right)=\left(f,g^{\tilde{\theta}_{i}}\right).\label{eq:def-auto}
\end{equation}
In words, the first factor remains the same, while the second factor,
regarded as an element in $\Gamma\left(\mathfrak{s}^{i+1}\mathcal{L}\right)\wr_{\mathsf{L}_{i+1}}G_{i+1}$,
is conjugated by $\tilde{\theta}_{i}$. To show that map $\tilde{\tau}_{i}$
is an automorphism of $\Gamma$, we need to show that for $(f,g)\in\Gamma$,
the image $\left(f,g^{\tilde{\theta}_{i}}\right)$ is in $\Gamma$,
equivalently, $\left(id,[\tilde{\theta}_{i},g]\right)\in\Gamma$.
By Lemma \ref{lift-auto1}, we have that $[\tilde{\theta}_{i},g]\in\Delta_{>i+1}$.
Under (D), a pair $(f,g)\in\Delta_{\le i+1}\times\Delta_{>i+1}$ is
in the diagonal product $\Delta$ if and only if the projections to
$G_{i+1}$ and $\mathfrak{G}$ are consistent, that is $\bar{\pi}_{i+1}(f)=\pi_{i+1}\circ\pi(g)$,
where $\bar{\pi}_{i+1}$ is the projection $\Delta_{\le i+1}\to G_{i+1}$,
$\pi$ is the projection $\Delta_{>i+1}\to\mathfrak{G}$. By direct
inspection on the generators, we have that $\left[\tilde{\theta}_{i},a\right],\left[\tilde{\theta}_{i},c\right]$
and $\left[\tilde{\theta}_{i},d\right]$ all project to identity in
$G_{i+1}$. It follows that $[\tilde{\theta}_{i},g]$ projects to
identity in $G_{i+1}$ for any $g\in\Delta_{>i+1}$ and $\left(id,[\tilde{\theta}_{i},g]\right)\in\Gamma$. 

\end{proof}

\subsection{A sufficient condition for (D)\label{subsec:sufficient(D)}}

Although not needed in the next section, we explain a condition on
the sequence $\mathcal{L}$ that not only guarantees the direct sum
assumption (D), but also allows to identify the FC-center of $\Gamma(\mathcal{L})$
explicitly. 

\begin{lemma}\label{K-split}

Suppose that $\mathcal{L}=(A_{n})_{n=1}^{\infty}$ is a sequence of
marked quotients of $\mathbf{F}$ such that the subgroup $\mathbf{K}\cap\ker\left(\boldsymbol{\varphi}\right)$
projects onto the commutator $[A_{n},A_{n}]$ for all $n\in\mathbb{N}$.
Then $\left(\left(\Gamma_{n},S_{n}\right)\right)_{n=1}^{\infty}$
satisfies the direct sum assumption ${\rm (D)}$ over $\mathfrak{G}$
and the FC-center of $\Gamma(\mathcal{L})$ is 
\[
Z_{\mathrm{FC}}\left(\Gamma\left(\mathcal{L}\right)\right)\simeq\bigoplus_{n=1}^{\infty}\bigoplus_{v\in\mathsf{L}_{n}}\left(\left[A_{n},A_{n}\right]\right)_{v}.
\]

\end{lemma}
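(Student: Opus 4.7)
The plan is to take $G_n := \Gamma_n / \tilde{K}_n$ with $\tilde{K}_n := \bigoplus_{v \in \mathsf{L}_n} [A_n, A_n]_v$, which is normal in $\Gamma_n$ because the quotient $\pi_n(\mathfrak{G})$ permutes the lamp coordinates and $[A_n, A_n]$ is characteristic in $A_n$. The work splits into three parts: a construction step realizing $\tilde{K}_n$ as a subgroup of $\Gamma(\mathcal{L})$ sitting inside the $n$-th factor only, a verification of (D) based on this, and the identification of the FC-center.

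The key construction is as follows. For each $n$, $v \in \mathsf{L}_n$, and $g \in [A_n, A_n]$, I would produce $W_{n,v,g} \in \mathbf{F}$ whose image in the $n$-th factor is $(\delta_v^g, id) \in \Gamma_n$ and whose image in every other factor $\Gamma_m$ is trivial. By the hypothesis choose $u \in \mathbf{K} \cap \ker(\boldsymbol{\varphi})$ with marked projection $\pi_{A_n}(u) = g$. Fact \ref{K-recursion} applied $n$ times, together with the branching (\ref{eq:Kprod}), yields $\tau_{n,v}(u) \in \mathbf{K}$ with $\boldsymbol{\varphi}^n(\tau_{n,v}(u)) = (\delta_v^u, id)$. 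Because $u \in \ker(\boldsymbol{\varphi})$, the image of $\tau_{n,v}(u)$ in $\Gamma_m$ vanishes for every $m > n$. For $m < n$, one can arrange all intermediate sections of $\tau_{n,v}(u)$ to lie in $\mathbf{K}$, so the image in $\Gamma_m$ is supported on finitely many vertices with lamps in $[A_m, A_m]$, i.e.\ it lies in $\tilde{K}_m$. Invoking the hypothesis at level $m$ supplies correction elements that cancel these lamps one vertex at a time, and iterating this cancellation downward from $m = n-1$ to $m = 1$ produces the required $W_{n,v,g}$.

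Given the construction, (D) follows quickly. The elements $W_{n,v,g}$ generate a subgroup of $\Gamma$ isomorphic to $\bigoplus_n \tilde{K}_n$ and contained in $\ker(\Gamma \to \mathfrak{G})$, since each $W_{n,v,g}$ is trivial in $\Gamma_m$ for all $m > n$ and hence in $\mathfrak{G}$ by Cayley convergence. Conversely, any $\gamma \in \ker(\Gamma \to \mathfrak{G})$ is the image of some $w \in \mathbf{F}$ with $\pi(w) = id$; the convergence $(\Gamma_m, S_m) \to (\mathfrak{G}, S)$ forces $w$ to have trivial image in $\Gamma_m$ for all large $m$, so $\gamma$ has finite support in $\prod_n \Gamma_n$ and each coordinate lies in $K_n := \ker(\Gamma_n \to \pi_n(\mathfrak{G}))$. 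To upgrade $\gamma_n \in K_n$ to $\gamma_n \in \tilde{K}_n$ I would use the abelianized total-lamp homomorphism $\phi_n \colon \Gamma_n \to A_n^{\mathrm{ab}}$: on the subgroup that projects trivially to $\mathfrak{G}^{\mathrm{ab}}$ this homomorphism is controlled by the marked projection $\mathbf{F}^{\mathrm{ab}} \to A_n^{\mathrm{ab}}$, and $\pi(w) = id$ then forces $\phi_n(\gamma_n) = 0$; a branching argument at level $n$ then refines the vanishing of the total lamp to vanishing vertex by vertex.

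For the FC-center: an element of $\bigoplus_n \bigoplus_v [A_n, A_n]_v$ has finite support with values in the finite lamp groups $A_n$, and conjugation in $\Gamma$ permutes the lamp positions among finitely many vertices of $\mathsf{L}_n$ (via $\mathfrak{G}$) and conjugates values by elements of the finite group $A_n$, so its $\Gamma$-orbit is finite. Conversely, $\mathfrak{G}$ is just-infinite and torsion, so Neumann's theorem forces $Z_{\mathrm{FC}}(\mathfrak{G}) = \{id\}$ (any non-trivial FC-central element would generate a finite normal subgroup, contradicting just-infiniteness). Hence $Z_{\mathrm{FC}}(\Gamma)$ projects trivially to $\mathfrak{G}$ and lies in $\bigoplus_n \tilde{K}_n$ by (D). The main obstacle is the downward cancellation in the key construction: it requires tracking the $\mathbf{K}$-valued intermediate sections of $\tau_{n,v}(u)$ level by level and matching them against the marked projections to each $A_m$, with the hypothesis $\mathbf{K} \cap \ker(\boldsymbol{\varphi}) \twoheadrightarrow [A_m, A_m]$ providing exactly the surjectivity needed to cancel cascading lamps.
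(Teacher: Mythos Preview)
Your approach is essentially the paper's, and the argument is correct. A couple of comparative remarks.

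First, you make the downward cancellation explicit, whereas the paper is terse: it only verifies that the image of $\sigma^n(w)$ in $\Gamma_j$ is trivial for $j>n$ and then asserts the containment $\Gamma(\mathcal{L})>\bigoplus_n\bigoplus_v[A_n,A_n]_v$. Your observation that the intermediate formal sections stay in $\mathbf{K}\subseteq[\mathbf{F},\mathbf{F}]$, so that the images in $\Gamma_m$ for $m<n$ land in $\tilde{K}_m$ and can be cancelled inductively, is exactly what is needed to justify that step.

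Second, your argument for the reverse inclusion $\ker(\Gamma\to\mathfrak{G})\subseteq\bigoplus_n\tilde K_n$ is correct but packaged awkwardly. The total-lamp homomorphism $\phi_n$ is not the right tool: vanishing of the total lamp in $A_n^{\mathrm{ab}}$ does \emph{not} by itself force each lamp into $[A_n,A_n]$. What actually does the work is the vertex-by-vertex statement: if $\pi(w)=id$ in $\mathfrak{G}$, then every formal section $w_v\in\mathbf{F}$ at level $n$ also satisfies $\pi(w_v)=id$, hence $w_v\in[\mathbf{F},\mathbf{F}]$, hence its image in $A_n$ lies in $[A_n,A_n]$. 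That is presumably your ``branching argument,'' but the total-lamp step preceding it is a red herring, not a stepping stone. The paper encodes the same fact more cleanly as the isomorphism $\Gamma(\bar{\mathcal{L}})\simeq\mathfrak{G}$, where $\bar{\mathcal{L}}=(A_n^{\mathrm{ab}})$: since the abelianized-lamp diagonal product is already $\mathfrak{G}$, the kernel is forced to sit in $\bigoplus_n\tilde K_n$.

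The FC-center identification is the same in both: the paper invokes that $\mathfrak{G}$ is ICC, you invoke $Z_{\mathrm{FC}}(\mathfrak{G})=\{id\}$ via just-infiniteness; these are equivalent.
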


\begin{proof}

By Fact \ref{short-section}, the sequence $\left((\Gamma_{n},S_{n})\right)_{n=1}^{\infty}$
converges to the Grigorchuk group $\left(\mathfrak{G},S\right)$ in
the Cayley topology. 

We first verify that for any $n\in\mathbb{N}$ and $g\in\left[A_{n},A_{n}\right]$,
there exists $\gamma\in\Gamma(\mathcal{L})$ such that the projection
of $\gamma$ to $\Gamma_{j}$, $j\ge n+1$, is trivial; while the
projection of $\gamma$ to $\Gamma_{n}$ is $\left(\delta_{1^{n}}^{g},id\right)$.
Let $w\in\mathbf{K}\cap\ker\left(\boldsymbol{\varphi}\right)$. Apply
the substitution $\sigma$ to $w$ for $n$ times, by Fact \ref{K-recursion},
we have $\boldsymbol{\varphi}^{n}\left(\sigma^{n}(w)\right)=\left(\delta_{1^{n}}^{w},id\right)$.
Since $w\in\ker(\boldsymbol{\varphi})$, further recursions give trivial
image, that is, $\boldsymbol{\varphi}^{j}\left(\sigma^{n}(w)\right)=id$
for all $j\ge n+1$. Thus if $g$ is the image of $w$ in the marked
quotient $A_{n}$, we can take the claimed element $\gamma$ to be
the image of $\sigma^{n}(w)$ in $\Gamma(\mathcal{L})$. 

The claim in the previous paragraph implies that if $\mathbf{K}\cap\ker\left(\boldsymbol{\varphi}\right)$
projects onto the commutator $[A_{n},A_{n}]$ for all $n\in\mathbb{N}$,
then
\[
\Gamma\left(\mathcal{L}\right)>\bigoplus_{n=1}^{\infty}\bigoplus_{v\in\mathsf{L}_{n}}\left(\left[A_{n},A_{n}\right]\right)_{v}.
\]
Denote by $\bar{\Gamma}_{n}$ the quotient group $\Gamma_{n}/\bigoplus_{v\in\mathsf{L}_{n}}\left(\left[A_{n},A_{n}\right]\right)_{v}$.
Write $\bar{\mathcal{L}}=\left(\bar{A}_{n}\right){}_{n=1}^{\infty}$,
where $\bar{A}_{n}$ is the liberalization $A_{n}/\left[A_{n},A_{n}\right]$.
By definition of $\Gamma_{n}$ and Fact \ref{short-section}, we have
that $\left(\bar{\Gamma}_{n},\bar{S}_{n}\right)$ converges to $(\mathfrak{G},S)$
and the diagonal product $\left(\Gamma\left(\bar{\mathcal{L}}\right),S\right)\simeq(\mathfrak{G},S)$.
It follows that the second item in (D) is satisfied with the sequence
$\left(\bar{\Gamma}_{n},\bar{S}_{n}\right)$. By definition it is
clear that an element in $\left(\left[A_{n},A_{n}\right]\right)_{u}$,
$|u|=n$, is contained in the finite normal subgroup $\bigoplus_{v\in\mathsf{L}_{n}}\left(\left[A_{n},A_{n}\right]\right)_{v}$,
thus in the FC-center of $\Gamma(\mathcal{L})$. Since $\Gamma\left(\bar{\mathcal{L}}\right)$
is isomorphic to $\mathfrak{G}$, in particular, it is ICC, we conclude
that the FC-center of $\Gamma\left(\mathcal{L}\right)$ is $\bigoplus_{n=1}^{\infty}\bigoplus_{v\in\mathsf{L}_{n}}\left(\left[A_{n},A_{n}\right]\right)_{v}$. 

\end{proof}

\begin{example}

Recall that $\mathbf{K}=\left\langle [{\bf a,b}]\right\rangle ^{\mathbf{F}}=[\mathbf{B},\mathbf{F}]$.
By direct calculation, the word $(\mathbf{ad})^{4}\in\ker\left(\boldsymbol{\varphi}\right)$.
Therefore the element $\left[\mathbf{b},(\mathbf{ad})^{4}\right]\in\mathbf{K}\cap\ker\left(\boldsymbol{\varphi}\right)$.
If each $A_{n}$ is normally generated by the element $\left[b,(ad)^{4}\right]$,
then the sequence $\mathcal{L}=(A_{n})$ satisfies the assumption
of Lemma \ref{K-split}. By \cite[Lemma 6.1]{KassabovPak}, if $n$
is an integer such that the only prime factors of $n$ are of the
form $p\equiv1\mod4$, then there exists a generating set $\{a,b,c,d\}$
of ${\rm PSL}_{2}(\mathbb{Z}/n\mathbb{Z})$ such that the element
$[c,[d,[b,(ad)^{4}]]]$ normally generates ${\rm PSL}_{2}(\mathbb{Z}/n\mathbb{Z})$. 

\end{example}

\section{Finitely generated subgroups of $\Gamma$($\mathcal{L}$)\label{sec:Subgroups}}

In this section we study in detail finitely generated subgroups of
$\Gamma(\mathcal{L})$. We continue to use notations introduced in
Section \ref{sec:definition}. The main point is that via natural
lifting arguments, certain properties of finitely generated subgroups
of the Grigorchuk $\mathfrak{G}$ are inherited by $\Gamma(\mathcal{L})$.
In \cite{Grigorchuk-Wilson}, it is proved that $\mathfrak{G}$ is
subgroup separable and any finitely generated subgroup $H$ of $\mathfrak{G}$
is either finite or abstractly commensurable to $\mathfrak{G}$. We
show similar statements for $\Gamma(\mathcal{L})$. 

\begin{theorem}\label{comm-1}

Let $\mathcal{L}=(A_{n})_{n=1}^{\infty}$ be a sequence of finite
marked quotients of $\mathbf{F}$. Let $H$ be a finitely generated
subgroup of $\Gamma\left(\mathcal{L}\right)$. Then there exists $\ell,p\in\mathbb{N}$
such that $H$ is abstractly commensurable to $\prod_{i=1}^{p}\Gamma\left(\mathfrak{s}^{\ell}\mathcal{L}\right)$. 

\end{theorem}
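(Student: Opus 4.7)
The plan is to project $H$ to $\mathfrak{G}$ via $\pi:\Gamma(\mathcal{L})\to\mathfrak{G}$, invoke the Grigorchuk--Wilson classification of finitely generated subgroups of $\mathfrak{G}$, and then lift the resulting structure back to $\Gamma(\mathcal{L})$ using the wreath recursion $\varphi_\ell$ and the branching provided by Fact~\ref{K-recursion}. Set $\bar H=\pi(H)$, a finitely generated subgroup of $\mathfrak{G}$. By Grigorchuk--Wilson, either $\bar H$ is finite, or there exist $\ell\in\mathbb{N}$ and a finite-index subgroup $\bar H_0\le\bar H$ lying in the level-$\ell$ rigid stabilizer $\mathrm{Rist}_{\mathfrak{G}}(\mathsf L_\ell)$ and decomposing as a direct product $\bar H_0=\prod_{v\in\mathsf L_\ell}\bar H_v$, in which exactly $p$ factors are non-trivial, each such $\bar H_v$ being of finite index in $\mathrm{Rist}_{\mathfrak{G}}(v)$ (which is itself commensurable with $\mathfrak{G}$).

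In the finite case, $H\cap\ker\pi$ has finite index in $H$ and is finitely generated. Any finitely generated subgroup of $\ker\pi$ involves only finitely many levels $n$ of the diagonal product (by Cayley convergence $(\Gamma_n,S_n)\to(\mathfrak{G},S)$, a word of bounded length representing a non-trivial element of $\ker\pi$ can only sit inside the first few factors), and each $\ker(\Gamma_n\to G_n)$ is contained in the finite group $\bigoplus_{v\in\mathsf L_n}(A_n)_v$. Hence $\ker\pi$ is locally finite, $H\cap\ker\pi$ is finite, and so $H$ is finite, which is the statement with $p=0$.

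In the infinite case, let $H_0=H\cap\pi^{-1}(\bar H_0)$, a finite-index subgroup of $H$ whose image in $G_\ell$ is trivial. Regard $\Gamma(\mathcal L)$ as the diagonal product of $\Delta_{\le\ell}$ and $\Delta_{>\ell}$: the projection of $H_0$ to $\Delta_{\le\ell}$ lands in the finite FC-central group $\ker(\Delta_{\le\ell}\to G_\ell)\subseteq\bigoplus_{n\le\ell}\bigoplus_{v\in\mathsf L_n}(A_n)_v$, which contributes only a finite correction and may be discarded for commensurability. The projection to $\Delta_{>\ell}$ is analyzed via $\varphi_\ell:\Delta_{>\ell}\hookrightarrow\Gamma(\mathfrak{s}^\ell\mathcal L)\wr_{\mathsf L_\ell}G_\ell$: level-$\ell$ stabilizing elements correspond to tuples $(g_v)_{v\in\mathsf L_\ell}$ with $g_v\in\Gamma(\mathfrak{s}^\ell\mathcal L)$. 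By Fact~\ref{K-recursion}, $\prod_{v\in\mathsf L_\ell}\mathbf K_v$ sits inside $\boldsymbol\varphi^\ell(\mathbf F)$; pushing through the marked quotient to $\Gamma(\mathfrak{s}^\ell\mathcal L)$, each vertex $v$ contributes a finite-index subgroup isomorphic to the image of $K$ in $\Gamma(\mathfrak{s}^\ell\mathcal L)$, and these commute and intersect trivially. Matching the product decomposition of $\bar H_0$ with this product structure at level $\ell$, and discarding vertices where $\bar H_v=\{1\}$, produces a finite-index subgroup of $H_0$ realized as a finite-index subgroup of $\prod_{i=1}^{p}\Gamma(\mathfrak{s}^\ell\mathcal L)$. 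This gives the claimed abstract commensurability.

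The main obstacle is the lifting step: showing that the direct product decomposition of $\bar H_0$ inside $\mathfrak{G}$ lifts to a genuine direct product structure inside $\Delta_{>\ell}$, rather than merely a product of commuting subgroups, and that each non-trivial factor fills a finite-index subgroup of a full copy of $\Gamma(\mathfrak{s}^\ell\mathcal L)$ (not some proper subquotient). This is where the regular branching of $\mathfrak{G}$ over $K$, together with its formal counterpart in Fact~\ref{K-recursion}, plays the decisive role; the finite FC-central contribution from the top $\ell$ levels is harmless because it is absorbed in the passage to commensurability.
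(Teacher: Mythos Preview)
Your overall strategy is sound, but there is a genuine gap at the step where you invoke Grigorchuk--Wilson. What Lemma~\ref{dicho} (the consequence of \cite{Grigorchuk-Wilson} used here) actually gives is that at some level $\ell$ every section $Q_v$ of $Q=\pi(H)$ is either finite or equal to $\mathfrak{G}$; it does \emph{not} produce a finite-index subgroup $\bar H_0\le\bar H$ that splits as a direct product $\prod_v\bar H_v$ inside $\mathrm{Rist}_{\mathfrak G}(\mathsf L_\ell)$. The obstruction is that sections at two vertices $u,v$ with $Q_u=Q_v=\mathfrak G$ can be \emph{linked}: there may exist an automorphism $\tau$ of $\mathfrak G$ with $g_v=\tau(g_u)$ for every $g\in{\rm St}_Q(\ell)$, so that the image of ${\rm St}_Q(\ell)$ in $Q_u\times Q_v$ is the graph of $\tau$ rather than a finite-index subgroup of the product. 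Such a diagonal contributes one copy of $\mathfrak G$ up to finite index, not two, and this linking persists at every deeper level. The paper handles this by introducing the equivalence relation $\sim$ on $\mathsf L'_\ell$ and taking $p$ to be the number of $\sim$-classes; your count of $p$ as the number of non-trivial vertices is in general wrong.

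This same linking is what makes the lifting step delicate, and branching alone (Fact~\ref{K-recursion}, Lemma~\ref{Gamma-branch}) does not resolve it. To pass from linked sections in $\mathfrak G$ to a controlled picture in $\Gamma(\mathfrak s^\ell\mathcal L)$ one must lift the linking automorphisms $\tau_{u,v}$ themselves; this is Lemma~\ref{lift-auto1} (relying on the description of ${\rm Aut}(\mathfrak G)$ from \cite{Grigorchuk-Sidki}), which then feeds into Lemma~\ref{finitepair} to show that within a $\sim$-class the cross-contamination subgroup $L_v$ is finite. Only after quotienting by these finite $L_v$ does one obtain a group commensurable with the product $\prod_{i=1}^p H_{u_i}$. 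These two lemmas are the technical core of the argument and are absent from your sketch; without them your final paragraph remains an assertion rather than a proof.
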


\begin{theorem}\label{separable}

Let $\mathcal{L}=(A_{n})_{n=1}^{\infty}$ be a sequence of finite
marked quotients of $\mathbf{F}$. Then $\Gamma\left(\mathcal{L}\right)$
is subgroup separable. 

\end{theorem}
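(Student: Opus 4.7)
The plan is to reduce subgroup separability of $\Gamma(\mathcal{L})$ to that of $\mathfrak{G}$ (Grigorchuk--Wilson), combined with the explicit structure of the FC-central kernel $N := \ker(\pi)$, where $\pi : \Gamma(\mathcal{L}) \to \mathfrak{G}$ is the canonical projection. Fix a finitely generated subgroup $H \le \Gamma(\mathcal{L})$ and an element $g \in \Gamma(\mathcal{L}) \setminus H$; the goal is a finite-index subgroup of $\Gamma(\mathcal{L})$ containing $H$ but missing $g$.

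Since $\pi(H)$ is finitely generated, if $\pi(g) \notin \pi(H)$ the Grigorchuk--Wilson theorem supplies a finite-index $M \le \mathfrak{G}$ with $\pi(H) \le M$ and $\pi(g) \notin M$, and $\pi^{-1}(M)$ does the job. The essential case is $\pi(g) \in \pi(H)$: after replacing $g$ by $h^{-1}g$ for a suitable $h \in H$, we may assume $g \in N$, and the problem reduces to separating $g \in N$ from $H \cap N$ by a finite-index subgroup of $\Gamma(\mathcal{L})$.

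For this reduced problem I would invoke Theorem \ref{comm-1} together with the wreath-recursion embedding of Subsection \ref{subsec:automorphisms}. Theorem \ref{comm-1} should provide, after passage to a finite-index subgroup $H_0 \le H$, an embedding of $H_0$ as a product of copies of $\Gamma(\mathfrak{s}^\ell \mathcal{L})$ sitting inside rigid stabilizers at prescribed vertices $v_1, \ldots, v_p \in \mathsf{L}_\ell$; correspondingly $H_0 \cap N$ decomposes as a direct sum indexed by the $v_i$. Because each $\Gamma_n$ is finite, the quotients $\Delta_{\le k}$ are finite and the kernels $\Delta_{>k}$ form a descending chain of finite-index normal subgroups of $\Gamma(\mathcal{L})$ with trivial intersection. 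For $k$ sufficiently large---larger than the ``support level'' of $g$ and of a chosen finite generating set of $H_0 \cap N$---the length-reduction property (Fact \ref{short-section}) and the branching structure over $\mathbf{K}$ (Fact \ref{K-recursion}) together ensure that $\pi_{\le k}(g) \notin \pi_{\le k}(H_0)$, yielding a finite-index subgroup containing $H_0$ and missing $g$. Finiteness of $[H : H_0]$ then upgrades separability from $H_0$ to $H$ by intersecting finitely many such subgroups, one per coset.

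The main obstacle is precisely this third step: distilling from Theorem \ref{comm-1} an embedding of $H_0$ that is compatible with the vertex-indexed decomposition of $N$, and verifying that taking $k$ past the relevant support truly separates $g$ from $H_0 \cap N$ in $\Delta_{\le k}$. The automorphism-lifting machinery (Lemma \ref{lift-auto1} and Corollary \ref{lift-auto2}) should permit one to symmetrize across level-$\ell$ vertices before separating factor by factor, while Facts \ref{short-section} and \ref{K-recursion} are the precise tools guaranteeing that the wreath recursion $\boldsymbol{\varphi}^k$ tracks elements of $\Gamma(\mathcal{L})$ faithfully enough at deep levels for the separation to succeed.
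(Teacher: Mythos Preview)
Your reduction to the case $g\in N=\ker(\pi:\Gamma(\mathcal{L})\to\mathfrak{G})$ is sound, and you correctly identify the relevant ingredients (the commensurability Theorem~\ref{comm-1}, the branching structure, the automorphism lifts). However, the third step contains a genuine gap, and the tools you invoke do not close it.

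The claim ``for $k$ sufficiently large, $\pi_{\le k}(g)\notin\pi_{\le k}(H_0)$'' amounts to asserting that $H_0$ is closed in the profinite topology generated by the normal subgroups $\ker(\pi_{\le k})$. This is precisely what needs to be proved, and it does not follow from $\bigcap_k\ker(\pi_{\le k})=\{id\}$ alone. The difficulty is that $\pi_{\le k}(H_0)$ is computed over all of $H_0$, not just $H_0\cap N$: even if $g\in N$ has finite ``support level'', there may exist elements $h_k\in H_0$ with $\pi(h_k)\neq id$ yet $\pi_{\le k}(h_k)=\pi_{\le k}(g)$, with $h_k$ varying with $k$. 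Facts~\ref{short-section} and~\ref{K-recursion} describe the formal wreath recursion on words and the branching over $\mathbf{K}$; they say nothing about how the subgroup $H_0$ sits inside the finite quotients $\Delta_{\le k}$, so they cannot by themselves rule out this phenomenon. Your reading of Theorem~\ref{comm-1} is also too strong: it gives abstract commensurability via a chain of maps with finite kernels and finite-index images, not a literal embedding of $H_0$ into a product of rigid stabilizers inside $\Gamma(\mathcal{L})$, so the claimed direct-sum decomposition of $H_0\cap N$ is not available.

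The paper takes a different route that sidesteps this obstacle. Rather than separating via the lamp-group quotients $\Delta_{\le k}$, it separates via the \emph{tree depth}: one constructs explicit finite-index subgroups
\[
U_m=\Bigl\langle\,{\rm St}_H(\ell),\ \prod_{v\in A}R_m^{\Gamma}(\mathsf{T}_v)\Bigr\rangle,
\]
where $A=\mathsf{L}_\ell\setminus\{u_1,\dots,u_p\}$ consists of the non-representative vertices and $R_m^{\Gamma}(\mathsf{T}_v)$ are the generalized rigid level stabilizers of Lemma~\ref{Gamma-branch}. Finite index of $U_m$ uses the branching (Lemma~\ref{Gamma-branch}) together with the lift $\tilde\Lambda_i$ of the finite-index subgroup $\Lambda_i\le Q_{u_i}$. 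The equality $\bigcap_m U_m={\rm St}_H(\ell)$ hinges on Lemma~\ref{finitepair}: the finiteness of the discrepancy subgroups $L_v$ forces the ${\rm St}_H(\ell)$-part of any element of $\bigcap_m U_m$ to stabilize, whence the rigid-stabilizer part must vanish. This is exactly where the automorphism-lifting machinery (Lemma~\ref{lift-auto1}) enters, and it does the real work that your sketch leaves undone.
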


\subsection{Generalized sections}

Recall the notion of sections of a subgroup $H<{\rm Aut}(\mathsf{T})$.
Given a level $k$ and a vertex $v\in\mathsf{L}_{k}$, consider the
level stabilizer ${\rm St}_{H}(k)=\{h\in H:x\cdot h=x\mbox{ for all }x\in\mathsf{L}_{k}\}$.
Under the canonical wreath recursion, $\varphi^{k}({\rm St}_{H}(k))<\prod_{u\in\mathsf{L}_{k}}{\rm Aut}(\mathsf{T}_{u})$.
Denote by $\phi_{v}$ the natural projection from the product $\prod_{u\in\mathsf{L}_{k}}{\rm Aut}(\mathsf{T}_{u})$
to ${\rm Aut}(\mathsf{T}_{v})$. The \emph{section of group }$H$
at $v$ is defined as 
\[
H_{v}:=\phi_{v}\left({\rm St}_{H}(k)\right),\mbox{ \ensuremath{k=|v|}.}
\]

For the diagonal product $\Gamma(\mathcal{L})$, we use the following
generalized notion of level stabilizers and sections. 

\begin{notation}[Generalized stabilizers and sections]

Denote by $\Delta_{>k}=\Delta_{>k}(\mathcal{L})$ the diagonal product
of $\left(\left(\Gamma_{n},S_{n}\right)\right)_{n=k+1}^{\infty}$
and $\pi_{>k}:\Gamma(\mathcal{L})\to\Delta_{>k}$ the marked projection.
Let $H$ be a subgroup of $\Gamma(\mathcal{L})$. Given a level $k$,
define the \emph{generalized level stabilizer} as
\[
{\rm St}_{H}(k):=\left\{ h\in H:x\cdot h=x\mbox{ for all }x\in\mathsf{L}_{k}\right\} ,
\]
where the action of $\Gamma(\mathcal{L})$ on $\mathsf{L}_{k}$ factors
through the projection $\Gamma(\mathcal{L})\to\mathfrak{G}$. Under
the recursion $\varphi_{k}$ given in (\ref{eq:phi_k2}), we have
\[
\varphi_{k}:{\rm St}_{H}(k)\to\prod_{u\in\mathsf{L}_{k}}\Gamma(\mathfrak{s}^{k}\mathcal{L}).
\]
Denote by $\phi_{v}$ the natural projection from $\prod_{u\in\mathsf{L}_{k}}\Gamma(\mathfrak{s}^{k}\mathcal{L})$
to the coordinate indexed by $v$. The \emph{generalized section}
of $H$ at $v$ is defined as 
\[
H_{v}:=\phi_{v}\circ\varphi_{k}\left({\rm St}_{H}(k)\right).
\]
For an element $\gamma\in{\rm St}_{H}(k)$, write $\gamma_{v}$ for
the section
\[
\gamma_{v}:=\phi_{v}\circ\varphi_{k}\left(\gamma\right).
\]

\end{notation}

We introduce one more piece of notation. 

\begin{notation}[Saturated subgroup]

We say a subgroup $H$ of a product $W_{1}\times\ldots\times W_{\ell}$
is \emph{saturated} if $\phi_{j}(H)=W_{j}$, for every $j\in\{1,\ldots,\ell\}$,
where $\phi_{j}$ is the natural projection $W_{1}\times\ldots\times W_{\ell}\to W_{j}$.
In this case we write $H<_{s}W_{1}\times\ldots\times W_{\ell}$. For
example, by the definitions, $\varphi_{k}\left({\rm St}_{H}(k)\right)$
is a saturated subgroup of $\prod_{v\in\mathsf{L}_{k}}H_{v}$. 

\end{notation}

Suppose $H<_{s}W_{1}\times W_{2}$. Write $L_{1}=H\cap\left(W_{1}\times\left\{ id_{W_{2}}\right\} \right)$
and $L_{2}=H\cap\left(\left\{ id_{W_{1}}\right\} \times W_{2}\right)$.
Then there is an isomorphic between $W_{1}/L_{1}$ and $W_{2}/L_{2}$.
In particular, if both $W_{1}$ and $W_{2}$ are just-infinite, then
either $L_{i}<_{f.i.}W_{i}$ for $i=1,2$ and $H$ is a finite index
subgroup of $W_{1}\times W_{2}$; or $L_{i}$ is trivial and $H$
is isomorphic to $W_{1}$. 

\subsection{Ingredients in the proofs \label{subsec:Ingredients}}

The key ingredient in the proof of Theorem \ref{comm-1} is the following
property of finitely generated subgroups of the Grigorchuk group $\mathfrak{G}$
shown in the work of Grigorchuk and Wilson \cite{Grigorchuk-Wilson}.

\begin{lemma}[Consequence of {\cite[Theorem 3]{Grigorchuk-Wilson}}]\label{dicho}

Let $H$ be a finitely generated subgroup of $\mathfrak{G}$. Then
there exists a finite level $\ell$ such that for each vertex $v\in\mathsf{T}_{\ell}$,
the section $H_{v}$ of $H$ is either finite or equal to $\mathfrak{G}$. 

\end{lemma}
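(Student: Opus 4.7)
The plan is to derive this from the Grigorchuk--Wilson subgroup structure theorem by descending the wreath recursion far enough that every surviving infinite section has saturated to all of $\mathfrak{G}$.

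First, I would apply \cite[Theorem 3]{Grigorchuk-Wilson} to the finitely generated subgroup $H<\mathfrak{G}$ to produce a first level $k_{0}$ such that each section $H_{v}$ with $|v|=k_{0}$ is either finite or contains (a conjugate of) the branching subgroup $K$. This is essentially the content of their main subgroup dichotomy, and the finitely generated hypothesis on $H$ is what guarantees that only finitely many distinct section-subgroups arise at any given level, so that the induction can be controlled uniformly across vertices.

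Second, I would descend further to promote ``contains $K$'' to ``equals $\mathfrak{G}$''. Using the regular branching inclusion $\psi(K)\supset K\times K$, once $H_{v}\supset K$ the two child sections $H_{v0},H_{v1}$ again each contain $K$. Because $K$ has finite index in $\mathfrak{G}$ and the lattice of finite-index subgroups of $\mathfrak{G}$ containing $K$ is finite, a bounded number of extra levels suffices: the descending/ascending sequence of sections $H_{vw}$ moves through this finite lattice, and by just-infiniteness of $\mathfrak{G}$ combined with the branching identity the only way for a section to stay strictly below $\mathfrak{G}$ at every further level is to collapse down, which contradicts the infiniteness preserved by the inclusion $K\subset H_{v}$. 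Hence after finitely many more levels each originally-infinite section is forced to equal $\mathfrak{G}$.

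Taking $\ell$ to be $k_{0}$ plus this uniform additional depth gives the required level, since a finite section at level $\le\ell$ restricts to finite sections at all descendants, and each infinite section saturates to $\mathfrak{G}$ by the extra descent. The main obstacle is the second step: turning ``$H_{v}\supset K$'' into ``$H_{v}=\mathfrak{G}$ at a uniformly bounded further level'' requires exploiting the precise congruence structure of $\mathfrak{G}$ over $K$, which is exactly the detailed analysis carried out in \cite{Grigorchuk-Wilson}. The remaining work is a uniform bookkeeping across the finitely many section-types at level $k_{0}$, for which the finitely generated hypothesis on $H$ is crucial.
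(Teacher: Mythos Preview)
Your approach diverges from the paper's because you have misread what \cite[Theorem~3]{Grigorchuk-Wilson} says. It is not a structural dichotomy asserting that sections are finite or contain $K$; it is an abstract induction principle: if a class $\mathcal{X}$ of subgroups of $\mathfrak{G}$ (i) contains $\{id\}$ and $\mathfrak{G}$, (ii) is closed under passage to finite-index overgroups, and (iii) contains every finitely generated $H\le\mathrm{St}_\mathfrak{G}(1)$ whose two first-level sections $H_0,H_1$ already lie in $\mathcal{X}$, then $\mathcal{X}$ contains every finitely generated subgroup of $\mathfrak{G}$. The paper's proof simply takes $\mathcal{X}$ to be the class of subgroups satisfying the lemma's conclusion and verifies (i)--(iii) directly: (i) is trivial, (ii) holds because $[L_v:H_v]<\infty$ whenever $[L:H]<\infty$, and (iii) is immediate from how sections compose. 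No intermediate step involving $K$, and no ``promotion'' argument, is needed.

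Your two-step route could perhaps be completed, but not with the reasoning you give. The first step would need a different citation --- some concrete structural statement, not the induction principle of Theorem~3. The second step is where the genuine gap lies: just-infiniteness and the branching inclusion $\psi(K)\supset K\times K$ only guarantee that a section containing $K$ stays infinite and continues to contain $K$; they do not force it to grow to $\mathfrak{G}$. Your sentence ``the only way for a section to stay strictly below $\mathfrak{G}$ at every further level is to collapse down'' is unjustified --- nothing you have said rules out a section sitting forever at a fixed proper finite-index subgroup above $K$. Closing this requires an additional input, for instance the congruence subgroup property of $\mathfrak{G}$ together with self-replication, which you gesture at (``the precise congruence structure'') but never actually invoke.
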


\begin{proof}

Denote by $\mathcal{X}$ the set of subgroups of $\mathfrak{G}$ satisfying
the statement. To show that $\mathcal{X}$ contains all finitely generated
subgroups of $\mathfrak{G}$, we check that the three conditions in
\cite[Theorem 3]{Grigorchuk-Wilson} are satisfied.

(i) Clearly $\{id\}\in\mathcal{X}$ and $\mathfrak{G}\in\mathcal{X}$.

(ii) Suppose $H\in\mathcal{X}$ is a subgroup such that for each vertex
$v\in\mathsf{T}_{\ell}$, the section $H_{v}$ of $H$ is either finite
or equal to $\mathfrak{G}$. Let $L$ be a subgroup of $\mathfrak{G}$
such that $H<L$ and $|L:H|<\infty$. Then ${\rm St}_{H}(\ell)<_{f.i.}{\rm St}_{L}(\ell)$.
It follows that at each vertex $v\in\mathsf{T}_{\ell}$, the section
$H_{v}$ is a finite index subgroup of the section $L_{v}$. Therefore
the sections $L_{v}$ are either finite or equal to $\mathfrak{G}$,
that is, $L\in\mathcal{X}$ as well. 

(iii) If $H$ is finitely generated subgroup of ${\rm St}_{\mathfrak{G}}(1)$,
and both sections $H_{0}$ and $H_{1}$ are in $\mathcal{X}$, then
by definition of $\mathcal{X}$ we have that $H\in\mathcal{X}$.

Then $\mathcal{X}$ contains all finitely generated subgroups of $\mathfrak{G}$
by \cite[Theorem 3]{Grigorchuk-Wilson}.

\end{proof}

Another ingredient is the following basic lifting property.

\begin{lemma}[Lifting]\label{finite kernel}

Suppose $\Gamma$ is a finitely generated FC-central extension of
$G$ that fits into $1\to N\to\Gamma\overset{\pi}{\to}G\to1$, where
$N$ is torsion. Let $H$ be a subgroup of $\Gamma$.
\begin{description}
\item [{(i)}] If $\pi(H)$ is finite and $H$ is finitely generated, then
$H$ is finite.
\item [{(ii)}] If $\pi(H)$ is finite index in $G$, then $H$ is finite
index in $\Gamma$. 
\end{description}
\end{lemma}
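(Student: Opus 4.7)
The plan for both parts is to apply Neumann's theorem on torsion FC-groups, which is stated earlier in the paper, to suitable finite subsets of $N$.

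For (i) the argument is direct. Since $\pi(H)$ is finite, $H\cap N$ has finite index in $H$, so it is finitely generated. Every element of $H\cap N\subseteq N\subseteq Z_{\mathrm{FC}}(\Gamma)$ has finite $\Gamma$-conjugacy class, hence a fortiori finite conjugacy class in $H\cap N$ itself, so $H\cap N$ is a finitely generated FC-group. By hypothesis $N$ is torsion, so $H\cap N$ is torsion as well, and the third bullet of Neumann's result forces a finitely generated torsion FC-group to be finite. Therefore $|H|=[H:H\cap N]\cdot|H\cap N|<\infty$.

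For (ii), the delicate point is that $N$ itself need not be finitely generated, so Neumann's theorem cannot be applied directly to $N$. The plan is to trim $N$ to a finite normal subgroup of $\Gamma$ that captures enough of $HN$ to finish. Since $\pi(H)$ has finite index in $G$, the preimage $HN=\pi^{-1}(\pi(H))$ has finite index in $\Gamma$, and hence is finitely generated as a subgroup of the finitely generated group $\Gamma$. Pick a finite generating set $g_1,\ldots,g_k$ of $HN$ and write $g_i=h_i n_i$ with $h_i\in H$ and $n_i\in N$. Because each $n_i$ lies in $Z_{\mathrm{FC}}(\Gamma)$, the set $T=\bigcup_{i=1}^k\{gn_ig^{-1}:g\in\Gamma\}$ is a finite subset of $N$ consisting of torsion elements. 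Let $M=\langle T\rangle$; then $M$ is $\Gamma$-invariant by construction, hence normal in $\Gamma$, and as a subgroup of the FC-group $N$ generated by a finite set of torsion elements, $M$ is finite by the second bullet of Neumann's result. Now $HM$ is a subgroup of $\Gamma$ containing every $g_i=h_in_i$, so $HM\supseteq HN$; the reverse inclusion is obvious, giving $HM=HN$. Consequently $[HN:H]=[M:M\cap H]\le |M|<\infty$, and combining with $[\Gamma:HN]<\infty$ yields $[\Gamma:H]<\infty$.

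The main obstacle, and the only nontrivial step, is the construction of $M$: one must upgrade the finitely many $n_i$ to a finite subset of $N$ that is invariant under $\Gamma$-conjugation, in order to obtain normality in the ambient group $\Gamma$ rather than merely in $N$. This is exactly where the FC-centrality hypothesis $N\subseteq Z_{\mathrm{FC}}(\Gamma)$ is used; the passage from a finite $\Gamma$-invariant set to a finite subgroup is then a routine application of Neumann's theorem to the torsion FC-group $N$.
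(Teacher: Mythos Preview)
Your proof is correct and follows essentially the same approach as the paper. Part (i) is identical. For part (ii), both arguments pass to the finite-index subgroup $HN=\pi^{-1}(\pi(H))$, pick a finite generating set, split each generator into an $H$-part and an $N$-part, and then use FC-centrality together with torsion to see that the $N$-parts lie in a finite normal subgroup; your version makes this last step explicit via the construction of $M$ and the second isomorphism theorem, whereas the paper simply asserts that $\tilde H$ is covered by finitely many cosets of $\langle s_1',\ldots,s_k'\rangle$.
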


\begin{proof}

(i) Suppose $H=\left\langle T\right\rangle $ where $T\subseteq\Gamma$
is finite, and $\pi(H)$ is finite. Then $H\cap N$ is a finite index
subgroup of $H$. It follows that $H\cap N$ is a finitely generated
torsion FC-group, thus finite. It follows that $H$ is finite.

(ii) Consider the subgroup $\tilde{H}<\Gamma$ given by $\tilde{H}=\{\gamma\in\Gamma:\pi(\gamma)\in\pi(H)\}$.
Then $\tilde{H}$ is a finite index subgroup of $\Gamma$. Let $S=\{s_{1},\ldots,s_{k}\}$
be a finite generating set of $\tilde{H}$. Since $\pi(\tilde{H})=\pi(H)$,
each generator $s_{i}=n_{i}s_{i}'$, where $s_{i}'\in H$, $n_{i}\in N$.
Since $N<Z_{{\rm FC}}(\Gamma)$ and $N$ is torsion, it follows that
$\tilde{H}$ is contained in the union of finitely many cosets of
$\left\langle s_{1}',\ldots,s_{k}'\right\rangle $. It follows that
$H$ is of finite index in $\Gamma$. 

\end{proof}

The lifting lemma \ref{finite kernel} implies that the branching
structure of $\mathfrak{G}$ is inherited by $\Gamma(\mathcal{L})$.
Consider the following subgroup of $\Gamma=\Gamma(\mathcal{L})$ which
can be viewed as a \emph{generalized vertex rigid stabilizer}. Given
a vertex $v\in\mathsf{L}_{\ell}$ and $m\in\mathbb{N}$, define
\[
R_{\Gamma}(v):=\left\{ \gamma\in{\rm St}_{\Gamma}(\ell):\ell=|v|,\ \pi_{\le\ell}(\gamma)=id,\ \gamma_{u}=id\mbox{ for all }u\in\mathsf{L}_{\ell}\setminus\{v\}\right\} ,
\]
where $\pi_{\le\ell}$ is the projection $\Gamma(\mathcal{L})\to\Delta_{\le\ell}$.
We may also regard $R_{\Gamma}(v)$ as a subgroup of $\Gamma\left(\mathfrak{s}^{\ell}\mathcal{L}\right)$.
Similar to level rigid stabilizers, we define 
\begin{equation}
R_{m}^{\Gamma}(\mathsf{T}_{v})=\prod_{u=vz,|z|=m}R_{\Gamma}(u).\label{eq:Rm}
\end{equation}

\begin{lemma}[Branching in $\Gamma(\mathcal{L})$]\label{Gamma-branch}

The generalized level rigid stabilizer $R_{m}^{\Gamma}(\mathsf{T}_{v})$
is of finite index in $\Gamma\left(\mathfrak{s}^{\ell}\mathcal{L}\right)$,
$\ell=|v|$. 

\end{lemma}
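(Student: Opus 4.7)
The plan is to deduce branching of $\Gamma(\mathcal{L})$ from the regular branching of $\mathfrak{G}$ over $K$ via the Lifting Lemma~\ref{finite kernel}(ii), applied to the projection $\pi: \Gamma(\mathcal{L}) \to \mathfrak{G}$. After replacing $\mathcal{L}$ with $\mathfrak{s}^{\ell}\mathcal{L}$, we may assume $v=\emptyset$ and $\ell=0$, so the claim reduces to: $\prod_{|z|=m} R_{\Gamma}(z)$ has finite index in $\Gamma(\mathcal{L})$.

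First, I would verify that the kernel $N=\ker\pi$ is torsion, so Lemma~\ref{finite kernel}(ii) applies. Any element of $N$ is represented by a word $w\in\mathbf{F}$ that is trivial in $\mathfrak{G}$; by Fact~\ref{short-section} and the resulting Cayley-topology convergence $(\Gamma_{n},S_{n})\to(\mathfrak{G},S)$, such a $w$ is also trivial in $\Gamma_{n}$ for every sufficiently large $n$. Since each $\Gamma_{n}$ is finite (because $A_{n}$ is finite), the image of $w$ in $\Gamma(\mathcal{L})\subset\prod_{n}\Gamma_{n}$ has only finitely many nontrivial coordinates, each in a finite group, hence has finite order.

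By the lifting lemma it then suffices to show that $\pi(\prod_{|z|=m} R_{\Gamma}(z))$ has finite index in $\mathfrak{G}$. Since $\prod_{|z|=m} K_{z}$ already has finite index in $\mathfrak{G}$ by the branching of $\mathfrak{G}$ over $K$, it is enough to find, for each $z\in\mathsf{L}_{m}$, a finite-index subgroup of $K_{z}$ contained in $\pi(R_{\Gamma}(z))$. For $z=1^{m}$ I would apply Fact~\ref{K-recursion}: for $w\in\mathbf{K}$ the element $\sigma^{m}(w)\in\mathbf{F}$ satisfies $\boldsymbol{\varphi}^{m}(\sigma^{m}(w))=(\delta_{1^{m}}^{w},id)$, so its image in $\Gamma(\mathcal{L})$ automatically lies in ${\rm St}_{\Gamma}(m)$ and has sections trivial at every $u\neq 1^{m}$ in $\Gamma(\mathfrak{s}^{m}\mathcal{L})$. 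For a general $z\in\mathsf{L}_{m}$, I would transport this construction by conjugating with any lift $\gamma\in\Gamma(\mathcal{L})$ of an element of $\mathfrak{G}$ sending $1^{m}$ to $z$ (such a $\gamma$ exists by transitivity of $\mathfrak{G}$ on $\mathsf{L}_{m}$); a direct wreath-product computation shows that conjugation preserves both defining conditions of the generalized rigid stabilizer, carrying $R_{\Gamma}(1^{m})$ onto $R_{\Gamma}(z)$.

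The main obstacle is the clause $\pi_{\leq m}(\gamma)=id$ in the definition of $R_{\Gamma}(z)$, which is not supplied by the formal recursion alone: a priori $\sigma^{m}(w)$ may have nontrivial image in the first $m$ factor groups $\Gamma_{1},\ldots,\Gamma_{m}$. To remedy this, I would restrict $w$ to the subgroup $\mathbf{K}_{(m)}\subset\mathbf{K}$ consisting of those elements whose formal sections $(\sigma^{m}(w))^{u}$ at every level $n\leq m$ and every $u\in\mathsf{L}_{n}$ project trivially in the finite lamp group $A_{n}$. Being cut out by finitely many homomorphisms to finite groups, $\mathbf{K}_{(m)}$ has finite index in $\mathbf{K}$ (and this is the step where the finiteness of each $A_{n}$ is essential), so its image in $K=\pi(\mathbf{K})$ has finite index in $K$. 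This furnishes the required finite-index subgroup of $K_{1^{m}}$ inside $\pi(R_{\Gamma}(1^{m}))$, and together with the conjugation argument completes the reduction to Lemma~\ref{finite kernel}(ii).
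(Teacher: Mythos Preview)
Your approach is correct but genuinely different from the paper's. The paper does not use Fact~\ref{K-recursion} or the substitution $\sigma$ at all: instead, for each vertex $u=vz$ it lifts an \emph{arbitrary} finite generating set of $R_{\mathfrak{G}}(u)$ to elements $\tilde g_j^u\in\Gamma(\mathcal{L})$, forms the subgroup $F^u=\langle\tilde g_j^u\rangle$, and then applies the lifting lemma to the \emph{sections} of $F^u$ at level $\ell+m$. By part~(ii) the section $F^u_u$ has finite index in $\Gamma(\mathfrak{s}^{\ell+m}\mathcal{L})$, and by part~(i) the sections $F^u_{u'}$ at $u'\neq u$ are finite; since $\Delta_{\le\ell+m}$ is also finite, the intersection $F^u\cap R_\Gamma(u)$ is cut out from $F^u$ by finitely many maps to finite groups, hence has finite index. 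So the paper handles the obstacle you identified (the clause $\pi_{\le m}=id$) not by restricting at the source $\mathbf{K}$, but by passing to a finite-index subgroup \emph{after} lifting, using only the finiteness of $\Delta_{\le\ell+m}$ and of the spurious sections.

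What your route buys is an explicit description of elements in $R_\Gamma(1^m)$ via $\sigma^m(w)$ for $w\in\mathbf{K}_{(m)}$, and a single global application of Lemma~\ref{finite kernel}(ii) at the end. The paper's route is slightly more economical in that it treats all vertices $u$ uniformly (no conjugation step) and does not need to analyse the image of $\sigma^m(w)$ in the first $m$ factor groups. One small point: your reduction to $v=\emptyset$ presupposes that $R_\Gamma(vz)$, viewed via the section at $v$, coincides with $R_{\Gamma(\mathfrak{s}^\ell\mathcal{L})}(z)$; this is true, but uses the compatibility of $\pi_{\le\ell+m}$ with the recursion, which deserves a sentence. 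The paper sidesteps this by working at level $\ell+m$ throughout.
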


\begin{proof}

Let $u$ be a vertex in the level $m$ of the subtree $\mathsf{T}_{v}$.
In the group $\mathfrak{G}$, the level rigid stabilizer $R_{\mathfrak{G}}\left(u\right)$
is finitely generated and of finite index in the section $\mathfrak{G}_{u}$.
Let $\{g_{1}^{u},\ldots,g_{k}^{u}\}$ be a generating set of $R_{\mathfrak{G}}\left(u\right)$.
For each $g_{j}^{u}$, take an element $\tilde{g}_{j}^{u}\in\Gamma(\mathcal{L})$
such that $\pi\left(\tilde{g}_{j}^{u}\right)=g_{j}$. Consider the
subgroup $F^{u}$ generated by $\left\{ \tilde{g}_{1}^{u},\ldots,\tilde{g}_{k}^{u}\right\} $.
Applying Lemma \ref{finite kernel}, we have that the generalized
section of $F^{u}$ at $u$ is of finite index in $\Gamma(\mathfrak{s}^{\ell+m}\mathcal{L})$
and at any vertex $v\in\mathsf{L}_{\ell+m}\setminus\{u\}$, the generalized
section is of finite index. Moreover, $\Delta_{\le\ell+m}$ is a finite
group. It follows that $F^{u}\cap R_{\Gamma}(u)$ is of finite index
in $F^{u}$, thus also of finite index in $\Gamma(\mathfrak{s}^{\ell+m}\mathcal{L})$.
Take a product over $u$ in the level $m$ of $\mathsf{T}_{v}$, we
obtain the statement. 

\end{proof}

The branching lemma \ref{Gamma-branch} will be useful in the proof
of subgroup separability. We will also need to lift automorphisms
of $\mathfrak{G}$ by Lemma \ref{lift-auto1}, see Lemma \ref{finitepair}
below.

\subsection{Proofs of Theorem \ref{comm-1} and Theorem \ref{separable}}

Throughout this subsection, let $H$ be a subgroup of $\Gamma$ generated
by a finite set $T$. Denote by $Q=\pi(H)$ its projection to $\mathfrak{G}$. 

Applying Lemma \ref{dicho} to the finitely generated subgroup $Q=\left\langle \pi(T)\right\rangle $
of $\mathfrak{G}$, there is a finite level $\ell$ such that the
sections $Q_{v}$ for $v\in\mathsf{T}_{\ell}$ are either finite or
equal to $\mathfrak{G}$. Fix such a level $\ell$ for $Q$. Denote
by $\mathsf{L}_{\ell}'$ the subset of level $\ell$ vertices with
$Q_{v}=\mathfrak{G}$. 

Denote by $\pi_{u,v}$ the natural projection $\prod_{x\in\mathsf{L}_{\ell}}Q_{x}\to Q_{u}\times Q_{v}$.
Define the following equivalence relation $\sim$ on $\mathsf{L}_{\ell}'$:
$u\sim v$ if and only if $\pi_{u,v}\left({\rm St}_{Q}(\ell)\right)\cap\left(Q_{u}\times\left\{ id_{Q_{v}}\right\} \right)=\{id\}$.
In other words, $u\sim v$ if and only if there exists an isomorphism
$\tau_{u,v}:\mathfrak{G}\to\mathfrak{G}$ such that for any $g\in{\rm St}_{Q}(\ell)$,
$g_{v}=\tau_{u,v}(g_{u})$. It is clear that $\sim$ is an equivalence
relation. Denote by $J_{1},\ldots,J_{p}$ the $\sim$ classes in $\mathsf{L}_{\ell}'$.
For each equivalence class $J_{i}$, fix a representative vertex $u_{i}\in J_{i}$. 

For $1\le i\le p$, consider the following subgroup of $Q_{u_{i}}$:
\begin{equation}
\Lambda_{i}=\left\{ g\in Q_{u_{i}}:\exists\tilde{g}\in{\rm St}_{Q}(\ell)\mbox{ such that }\left(\tilde{g}\right)_{v}=\begin{cases}
\tau_{u_{i},v}(g) & \mbox{ for }v\in J_{i},\\
id & \mbox{otherwise}.
\end{cases}\right\} .\label{eq:Ki}
\end{equation}
Note that $\Lambda_{i}$ is a nontrivial normal subgroup in $Q_{u_{i}}=\mathfrak{G}$.
Since $\mathfrak{G}$ is just-infinite, $\Lambda_{i}$ is of finite
index in $Q_{u_{i}}$. 

Now we lift from $Q$ to $H$. By the lifting property in Lemma \ref{finite kernel},
we have:

\begin{lemma}\label{ge-sections}

Let $\ell$ be a level such that the sections $Q_{v}$ for $v\in\mathsf{T}_{\ell}$
are either finite or equal to $\mathfrak{G}$. Then for $v\in\mathsf{L}_{\ell}$,
the generalized section $H_{v}$ is either finite or of finite index
in $\Gamma(\mathfrak{s}^{\ell}\mathcal{L})$. 

\end{lemma}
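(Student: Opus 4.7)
The plan is to deduce this directly from Lemma \ref{finite kernel}, applied to $H_v$ regarded as a finitely generated subgroup of $\Gamma(\mathfrak{s}^\ell\mathcal{L})$, with the quotient map $\pi\colon\Gamma(\mathfrak{s}^\ell\mathcal{L})\to\mathfrak{G}$ playing the role of the FC-central projection. Two ingredients need to be lined up first: that $H_v$ is finitely generated, and that $\pi(H_v)=Q_v$. For the first, I would note that ${\rm St}_H(\ell)$ has index at most $|\mathsf{L}_\ell|!$ in $H$, so it is finitely generated, and hence its image $H_v=\phi_v\circ\varphi_\ell({\rm St}_H(\ell))$ is finitely generated as well. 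For the second, the recursion $\varphi_\ell$ is by construction compatible with the canonical wreath recursion of $\mathfrak{G}$ through $\pi$, so applying $\pi$ to the generalized section recovers the classical section; that is, $\pi(H_v)=Q_v$.

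To invoke Lemma \ref{finite kernel}, I must also verify that $\ker\pi$ is a torsion group. This is the main step, and it follows from Cayley convergence $(\Gamma_n,S_n)\to(\mathfrak{G},S)$ built into the construction of $\Gamma(\mathfrak{s}^\ell\mathcal{L})$: any element of $\ker\pi$ is the image of some word $w\in\mathbf{F}$ with $w=id$ in $\mathfrak{G}$, and for every $n>\ell$ large enough that the ball of radius $|w|$ around the identity in $(\Gamma_n,S_n)$ agrees with that in $(\mathfrak{G},S)$, the image of $w$ in $\Gamma_n$ is trivial. Hence $w$ is nontrivial in only finitely many of the finite factors $\Gamma_n$, and so has finite order. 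With all hypotheses in place, Lemma \ref{finite kernel}(i) yields that $H_v$ is finite whenever $Q_v$ is finite, and Lemma \ref{finite kernel}(ii), applied with $\pi(H_v)=Q_v=\mathfrak{G}$ finite index (index one) in $\mathfrak{G}$, yields that $H_v$ has finite index in $\Gamma(\mathfrak{s}^\ell\mathcal{L})$ otherwise. Apart from the torsion verification, no step presents real difficulty.
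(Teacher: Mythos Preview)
Your proof is correct and follows essentially the same approach as the paper: both verify that $H_v$ is finitely generated (as a quotient of the finite-index subgroup ${\rm St}_H(\ell)$), identify $\pi(H_v)$ with the classical section $Q_v$, and then invoke Lemma~\ref{finite kernel}. Your explicit verification that $\ker\pi$ is torsion---via Cayley convergence and finiteness of the factors $\Gamma_n$---is a hypothesis of Lemma~\ref{finite kernel} that the paper's proof leaves implicit, so if anything your write-up is slightly more careful.
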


\begin{proof}

Recall that under the recursion $\varphi_{\ell}$, we have 

\[
\varphi_{\ell}\left({\rm St}_{H}(\ell)\right)<_{s}\prod_{v\in\mathsf{L}_{\ell}}H_{v},
\]
and the projection to $\mathfrak{G}$ satisfies 
\[
\pi\left({\rm St}_{H}(\ell)\right)<_{s}\prod_{v\in\mathsf{L}_{\ell}}\pi\left(H_{v}\right),\ \mbox{where each }\pi\left(H_{v}\right)\mbox{ is finite or }\mathfrak{G}.
\]

First note that $H_{v}$ is finitely generated: it is a quotient of
${\rm St}_{H}(\ell)$ and ${\rm St}_{H}(\ell)$ is of finite index
in $H$. Recall that $H_{v}$ is regarded as a subgroup of $\Gamma\left(\mathfrak{s}^{\ell}\mathcal{L}\right)$
and $\Gamma\left(\mathfrak{s}^{\ell}\mathcal{L}\right)$ is an FC-central
extension of $\mathfrak{G}$. Then by Lemma \ref{finite kernel},
if $\pi(H_{v})$ generates a finite subgroup of $\mathfrak{G}$, then
$H_{v}$ is finite; and if $\pi(H_{v})=\mathfrak{G}$, then $H_{v}$
is finite index in $\Gamma\left(\mathfrak{s}^{\ell}\mathcal{L}\right)$.

\end{proof}

Recall the definition of the equivalence classes $J_{i}$, $1\le i\le p$,
in $\mathsf{L}_{\ell}$, which depends on the quotient group $Q$.
The automorphism lifting lemma \ref{lift-auto1} implies the following.

\begin{lemma}\label{finitepair}

Let $v$ be a vertex in $J_{i}$ and $u_{i}$ be the fixed representative
of $J_{i}$. Then the subgroup $L_{v}$ of $H_{v}$ 
\begin{equation}
L_{v}:=\left\{ h\in H_{v}:\exists\gamma\in{\rm St}_{H}(\ell),\ \gamma_{v}=h\mbox{ and }\gamma_{u_{i}}=id\right\} \label{eq:Lv}
\end{equation}
is a finite group contained in $\ker\left(H_{v}\to\mathfrak{G}\right)$.

\end{lemma}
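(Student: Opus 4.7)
The plan is to prove part (1) directly from the equivalence relation, and to prove finiteness in part (2) via a Neumann-style rearrangement powered by the automorphism lift of Lemma \ref{lift-auto1}.

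Part (1) is almost immediate: if $h = \gamma_v$ with $\gamma \in \mathrm{St}_H(\ell)$ and $\gamma_{u_i} = \mathrm{id}$, then $\pi(\gamma) \in \mathrm{St}_Q(\ell)$ satisfies $\pi(\gamma)_{u_i} = \mathrm{id}$; since $v \sim u_i$ we get $\pi(h) = \pi(\gamma)_v = \tau_{u_i,v}(\pi(\gamma)_{u_i}) = \mathrm{id}$, so $h \in \ker(H_v \to \mathfrak{G})$.

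For the finiteness in part (2), invoke Lemma \ref{lift-auto1} (applied to $\mathfrak{s}^{\ell}\mathcal{L}$ in place of $\mathcal{L}$) to obtain $k$ and an automorphism $\tilde{\tau}$ of $\Delta_{>k}(\mathfrak{s}^{\ell}\mathcal{L})$ lifting $\tau_{u_i,v}$. Let $p_k, q_k$ be the marked projections of $\Gamma(\mathfrak{s}^{\ell}\mathcal{L})$ onto $\Delta_{\le k}(\mathfrak{s}^{\ell}\mathcal{L})$ (a finite group) and $\Delta_{>k}(\mathfrak{s}^{\ell}\mathcal{L})$, respectively. Choose a finite generating set $\{\tilde{t}_1, \ldots, \tilde{t}_s\}$ of $\mathrm{St}_H(\ell)$ (finite since this subgroup is of finite index in the finitely generated $H$) and define the discrepancy elements
\[
\delta_j := q_k\bigl((\tilde{t}_j)_v\bigr) \cdot \tilde{\tau}\bigl(q_k((\tilde{t}_j)_{u_i})\bigr)^{-1} \in \Delta_{>k}(\mathfrak{s}^{\ell}\mathcal{L}).
\]
By $v \sim u_i$ together with $\pi \circ \tilde{\tau} = \tau_{u_i,v} \circ \pi$, each $\delta_j$ projects trivially to $\mathfrak{G}$; hence $\delta_j \in N' := \ker(\Delta_{>k}(\mathfrak{s}^{\ell}\mathcal{L}) \to \mathfrak{G})$, which is contained in $Z_{\mathrm{FC}}(\Delta_{>k}(\mathfrak{s}^{\ell}\mathcal{L}))$ (FC-central extension) and is torsion (as each $A_n$ is finite). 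Therefore each $\delta_j$ has a finite $\Delta_{>k}(\mathfrak{s}^{\ell}\mathcal{L})$-conjugacy class, and the subgroup $F$ generated by all such conjugates of $\delta_1^{\pm 1}, \ldots, \delta_s^{\pm 1}$ is a finitely generated torsion FC-group, hence finite by Neumann's theorem.

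The key step is the rearrangement: for any $\gamma = \tilde{t}_{i_1}^{\epsilon_1} \cdots \tilde{t}_{i_n}^{\epsilon_n} \in \mathrm{St}_H(\ell)$, an induction on $n$ using the identity $\delta \cdot \tilde{\tau}(g) = \tilde{\tau}(g) \cdot \bigl(\tilde{\tau}(g)^{-1} \delta \tilde{\tau}(g)\bigr)$ to commute $\tilde{\tau}$-factors past $\delta$-factors yields
\[
q_k(\gamma_v) = e_\gamma \cdot \tilde{\tau}\bigl(q_k(\gamma_{u_i})\bigr), \qquad e_\gamma \in F.
\]
When $\gamma_{u_i} = \mathrm{id}$ this simplifies to $q_k(\gamma_v) = e_\gamma \in F$, so $q_k(L_v) \subseteq F$. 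Since $p_k(L_v)$ lies in the finite group $\Delta_{\le k}(\mathfrak{s}^{\ell}\mathcal{L})$ and the map $(p_k, q_k) : \Gamma(\mathfrak{s}^{\ell}\mathcal{L}) \hookrightarrow \Delta_{\le k}(\mathfrak{s}^{\ell}\mathcal{L}) \times \Delta_{>k}(\mathfrak{s}^{\ell}\mathcal{L})$ is injective, $L_v$ embeds into the finite set $\Delta_{\le k}(\mathfrak{s}^{\ell}\mathcal{L}) \times F$ and is therefore finite.

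The main obstacle is setting up the rearrangement cleanly: one must verify that pushing $\tilde{\tau}$-factors across $\delta$-factors produces only further $\Delta_{>k}(\mathfrak{s}^{\ell}\mathcal{L})$-conjugates of the $\delta_j^{\pm 1}$ (rather than new types of error terms), so that the reservoir $F$ remains finite. This is the analogue, in the presence of the lifted automorphism $\tilde{\tau}$, of the rearrangement Claim \ref{conj-prod} used in the proof of Lemma \ref{FC2}.
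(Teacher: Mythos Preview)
Your proof is correct and follows essentially the same route as the paper's: both arguments lift $\tau_{u_i,v}$ to $\Delta_{>k}(\mathfrak{s}^{\ell}\mathcal{L})$ via Lemma~\ref{lift-auto1}, split $\Gamma(\mathfrak{s}^{\ell}\mathcal{L})$ as a diagonal in $\Delta_{\le k}\times\Delta_{>k}$, define discrepancy elements $\delta_j$ (the paper's $n_{\gamma_j,v}$) on a finite generating set of ${\rm St}_H(\ell)$, and use that these lie in the torsion FC-kernel to obtain a finite normal closure containing $q_k(L_v)$. Your explicit framing of the rearrangement as an induction commuting $\tilde\tau$-factors past $\delta$-factors is exactly what the paper's product computation is doing, just spelled out more carefully.
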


\begin{proof}

Recall that an element $g\in{\rm St}_{Q}(\ell)$ satisfies for $v\in J_{i}$,
$g_{v}=\tau_{u_{i},v}\left(g_{u_{i}}\right)$, where $\tau_{u_{i},v}:\mathfrak{G}\to\mathfrak{G}$
is an automorphism. It follows that $L_{v}$ is contained in $\ker\left(H_{v}\to\mathfrak{G}\right)$. 

By Lemma \ref{lift-auto2}, there exists $k\in\mathbb{N}$ such that
the automorphism $\tau_{u_{i},v}$ can be lifted to an automorphism
$\tilde{\tau}_{u_{i},v}:\Delta_{>k}(\mathfrak{s}^{\ell}\mathcal{L})\to\Delta_{>k}(\mathfrak{s}^{\ell}\mathcal{L})$.
That is, the following diagram commute:

\[ \begin{tikzcd}
\Delta_{>k}(\mathfrak{s}^{\ell}\mathcal{L}) \arrow{r}{\tilde{\tau}_{u_{i},v}} \arrow[swap]{d}{\pi} & 
\Delta_{>k}(\mathfrak{s}^{\ell}\mathcal{L})\arrow{d}{\pi} \\%
\mathfrak{G} \arrow{r}{\tau_{u_{i},v}}&  \mathfrak{G}.
\end{tikzcd} \]Regard $\Gamma\left(\mathfrak{s}^{\ell}\mathcal{L}\right)$ as the
diagonal subgroup of $\Delta_{\le k}(\mathfrak{s}^{\ell}\mathcal{L})\times\Delta_{>k}(\mathfrak{s}^{\ell}\mathcal{L})$
and write its elements as $\gamma=(\gamma',\gamma'')$. Then for $\gamma\in{\rm St}_{H}(\ell)$,
its sections at $v$ and $u_{i}$ are related by
\begin{align}
\gamma_{v} & =\left(\gamma'_{v},\gamma_{v}''\right)=\left(\gamma_{v}',n_{\gamma,v}\tilde{\theta}_{u_{i},v}\left(\gamma''_{u_{i}}\right)\right),\label{eq:lifted}\\
\mbox{where } & n_{\gamma,v}=\gamma_{v}''\tilde{\theta}_{u_{i},v}\left(\gamma''_{u_{i}}\right)^{-1}\in\ker\left(\Delta_{>k}\left(\mathfrak{s}^{\ell}\mathcal{L}\right)\to\mathfrak{G}\right).
\end{align}

Recall that ${\rm St}_{H}(\ell)$ is finitely generated. Take a symmetric
finite generating set $\{\gamma_{1},\ldots,\gamma_{m}\}$ for ${\rm St}_{H}(\ell)$.
Let $h\in L_{v}$ and $\gamma\in{\rm St}_{H}(\ell)$ be such that
$h=\gamma_{v}$ and $\gamma_{u_{i}}=id$. Express $\gamma$ as a product
of generators, $\gamma=\gamma_{j_{1}}\ldots\gamma_{j_{m}}$. Then
by (\ref{eq:lifted}), we have that the second coordinate $\gamma_{v}''$
can be written as
\begin{align*}
\gamma_{v}'' & =n_{\gamma_{j_{1}},v}\tilde{\theta}_{u_{i},v}\left(\left(\gamma_{j_{1}}\right)''_{u_{i}}\right)\ldots n_{\gamma_{j_{m}},v}\tilde{\theta}_{u_{i},v}\left(\left(\gamma_{j_{m}}\right)''_{u_{i}}\right)\\
 & =n_{\gamma}\tilde{\theta}_{u_{i},v}\left(\left(\gamma_{j_{1}}\right)''_{u_{i}}\ldots\left(\gamma_{j_{m}}\right)''_{u_{i}}\right)\\
 & =n_{\gamma}\tilde{\theta}_{u,v}\left(\gamma_{u_{i}}''\right),
\end{align*}
where $n_{\gamma}$ is in the normal closure of $\left\{ n_{\gamma_{1},v},\ldots,n_{\gamma_{k},v}\right\} $
in $\Delta_{>k}\left(\mathfrak{s}^{\ell}\mathcal{L}\right)$. Since
$\gamma_{u_{i}}=id$, we have that $\tilde{\theta}_{u,v}\left(\gamma_{u_{i}}''\right)=id$
and $\gamma_{v}''=n_{\gamma}$. The kernel $\ker\left(\Delta_{>k}\left(\mathfrak{s}^{\ell}\mathcal{L}\right)\to\mathfrak{G}\right)$
is torsion and contained in the FC-center of $\Delta_{>k}\left(\mathfrak{s}^{\ell}\mathcal{L}\right)$,
therefore the normal closure of $\left\{ n_{\gamma_{1},v},\ldots,n_{\gamma_{k},v}\right\} $
in $\Delta_{>k}\left(\mathfrak{s}^{\ell}\mathcal{L}\right)$ is finite.
Thus $\gamma_{v}''=n_{\gamma}$ is contained in the finite group $\left\langle n_{\gamma_{1},v},\ldots,n_{\gamma_{k},v}\right\rangle ^{\Delta_{>k}\left(\mathfrak{s}^{\ell}\mathcal{L}\right)}$.
Since $\Delta_{\le k}\left(\mathfrak{s}^{\ell}\mathcal{L}\right)$
is finite, we conclude that $L_{v}$ is contained in a finite group. 

\end{proof}

\begin{proof}[Proof of Theorem \ref{comm-1}] 

Let $H=\left\langle T\right\rangle $ be a finitely generated subgroup
of $\Gamma(\mathcal{L})$, $Q$ be the projection of $H$ to $\mathfrak{G}$,
and the level $\ell$ be as in the beginning of this subsection. Since
the subgroup $\pi_{>\ell}\left({\rm St}_{H}(\ell)\right)$ is of finite
index in $\pi_{>\ell}(H)$ and the kernel of $H\to\pi_{>\ell}(H)$
is finite, $H$ and $\pi_{\ge\ell}\left({\rm St}_{H}(\ell)\right)$
are commensurable up to finite kernels. 

For the product $\prod_{v\in\mathsf{L}_{\ell}}H_{v}$, take its quotient
group $\prod_{v\in\mathsf{L}_{\ell}}\bar{H}_{v}$ where 
\begin{itemize}
\item for $v\in\{u_{1},\ldots,u_{p}\}$, $\bar{H}_{v}=H_{v}$, 
\item for $v\in J_{i}\setminus\{u_{i}\}$, $1\le i\le p$, $\bar{H}_{v}=H_{v}/L_{v}$,
where $L_{v}$ is defined in (\ref{eq:Lv}),
\item for $v\notin\mathsf{L}_{i}'$, $\bar{H}_{v}=\{id\}$. 
\end{itemize}
Denote by $\bar{H}_{\ell}$ the image of $\pi_{>\ell}\left({\rm St}_{H}(\ell)\right)$
in the quotient. Since for $v\in J_{i}\setminus\{u_{i}\}$, $L_{v}$
is finite by Lemma \ref{finitepair}, we have that $\bar{H}_{\ell}$
and $H$ are commensurable up to finite kernels. Note that the projection
${\rm St}_{H}(\ell)\to{\rm St}_{Q}(\ell)$ factors through $\bar{H}_{\ell}$.
With slight abuse of notation we still denote the projection $\bar{H}_{\ell}\to{\rm St}_{Q}(\ell)$
by $\pi$. 

Next we check that $\bar{H}_{\ell}$ and $\prod_{i=1}^{p}H_{u_{i}}$
are commensurable. Note that for $v\in J_{i}$, there is a homomorphism
$\tilde{\theta}_{u_{i},v}:H_{u_{i}}\to H_{v}/L_{v}$ such that for
any $\gamma\in\bar{H}_{\ell}$, we have $\gamma_{v}=\tilde{\theta}_{u_{i},v}(\gamma_{u_{i}})$.
It follows that $\bar{H}_{\ell}$ is isomorphic to its natural projection
to $\prod_{i=1}^{p}H_{u_{i}}$. Recall the subgroup $\Lambda_{i}$
of $Q_{u_{i}}$ defined in (\ref{eq:Ki}) and the property that $\Lambda_{i}$
is of finite index in $Q_{u_{i}}=\mathfrak{G}$, in particular, it
is finitely generated. Take a finite generating set $\{\gamma_{1},\ldots,\gamma_{k}\}$
of $\Lambda_{i}$ and denote by $g_{j}$ the element in ${\rm St}_{Q}(\ell)$
such that $(g_{j})_{v}=\tau_{u_{i},v}\left(\gamma_{j}\right)$ for
$v\in J_{i}$ and $\left(g_{j}\right)_{v}=id$ for $v\in\mathsf{L}_{\ell}\setminus J_{i}$.
Now choose an element $\tilde{g}_{j}\in\bar{H}_{\ell}$ such that
$\pi(\tilde{g}_{j})=g_{j}$. Consider the subgroup of $\bar{H}_{\ell}$
generated by $\left\{ \tilde{g}_{1},\ldots,\tilde{g}_{k}\right\} $.
Let $R_{i}$ be the subgroup of $\prod_{v\in\mathsf{L}_{\ell}}\bar{H}_{v}$
defined as 
\[
R_{i}=\left\{ h:\ h_{u_{i}}\in H_{u_{i}},\ h_{v}=\tilde{\theta}_{u_{i},v}(h_{u_{i}})\mbox{ for }v\in J_{i},\ h_{v}=id\mbox{ for }v\notin J_{i}\right\} .
\]
Note that $R_{i}$ is isomorphic to $H_{u_{i}}$. By Lemma \ref{finite kernel},
we have that the group 
\begin{equation}
\mathfrak{R}_{i}=\left\langle \tilde{g}_{1},\ldots,\tilde{g}_{k}\right\rangle \cap R_{i}\label{eq:Ri}
\end{equation}
is of finite index in both $\left\langle \tilde{g}_{1},\ldots,\tilde{g}_{k}\right\rangle $
and $R_{i}$. Since this is true for each $i\in\{1,\ldots,p\}$, we
conclude that $\bar{H}_{\ell}$ and $\prod_{i=1}^{p}H_{u_{i}}$ are
commensurable.

Recall that by Lemma \ref{ge-sections}, $H_{u_{i}}$ is of finite
index in $\Gamma\left(\mathfrak{s}^{\ell}\mathcal{L}\right)$. To
summarize, we have shown 
\[
H\leftarrow{\rm St}_{H}(\ell)\rightarrow\bar{H_{\ell}}\leftarrow\prod_{i=1}^{p}\mathfrak{R}_{i}\rightarrow\prod_{i=1}^{p}\Gamma\left(\mathfrak{s}^{\ell}\mathcal{L}\right),
\]
where each arrow indicates a homomorphism of groups with finite kernel
and with image of finite index. We conclude that $H$ and $\prod_{i=1}^{p}\Gamma\left(\mathfrak{s}^{\ell}\mathcal{L}\right)$
are commensurable up to finite kernels. Since they are both residually
finite, it follows that $H$ and $\prod_{i=1}^{p}\Gamma\left(\mathfrak{s}^{\ell}\mathcal{L}\right)$
are abstractly commensurable, see e.g., \cite[IV. 28 ]{delaHarpeBook}.

\end{proof}

We now move on to prove subgroup separability. Since ${\rm St}_{H}(\ell)$
is of finite index in $H$, to prove that $H$ is separable it suffices
to show that ${\rm St}_{H}(\ell)$ is separable, see e.g., \cite[Lemma 11]{Grigorchuk-Wilson}.
In the proof we make use of the generalized rigid stabilizers as in
the branching lemma \ref{Gamma-branch} to find an explicit sequence
of subgroups of finite index in $\Gamma(\mathcal{L})$ whose intersection
is ${\rm St}_{H}(\ell)$.

\begin{proof}[Proof of Theorem \ref{separable}]

We continue to use notations introduced in this section. Recall that
for $1\le i\le p$, $u_{i}$ is a chosen representative of the equivalence
class $J_{i}$. Write 
\[
A=\mathsf{L}_{\ell}\setminus\left\{ u_{1},\ldots,u_{p}\right\} .
\]
For a vertex $v\in A$, either $H_{v}$ is finite; or $H_{v}$ is
infinite and $v$ is in the equivalence class $J_{i}$ for some $1\le i\le p$. 

Let $R_{m}^{\Gamma}(\mathsf{T}_{v})$ be the generalized level rigid
stabilizer of $\Gamma(\mathcal{L})$ in the subtree $\mathsf{T}_{v}$
defined in (\ref{eq:Rm}). Take the subgroup $U_{m}$ of $\Gamma(\mathcal{L})$
defined as
\[
U_{m}=\left\langle {\rm St}_{H}(\ell),\prod_{v\in A}R_{m}^{\Gamma}(\mathsf{T}_{v})\right\rangle .
\]
We claim that the sequence $\left(U_{m}\right)$ is the desired approximation
for ${\rm St}_{H}(\ell)$:

\begin{claim}\label{H_l intersection}

For each ${\normalcolor m\in\mathbb{N}}$, the subgroup $U_{m}$ is
of finite index in $\Gamma(\mathcal{L})$, and ${\rm St}_{H}(\ell)=\cap_{m=1}^{\infty}U_{m}$. 

\end{claim}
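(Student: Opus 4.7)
The plan is to split Claim \ref{H_l intersection} into a finite-index statement and an intersection statement, both of which rest on the normality of $K_m := \prod_{v \in A} R_m^\Gamma(\mathsf{T}_v)$ in ${\rm St}_\Gamma(\ell)$. To see this normality, I would note that any $g \in {\rm St}_\Gamma(\ell)$ fixes $\mathsf{L}_\ell$ pointwise (via its projection to $\mathfrak{G}$) and can only permute descendants within each subtree $\mathsf{T}_v$; hence conjugation by $g$ permutes the factors $R_\Gamma(u)$ within each $R_m^\Gamma(\mathsf{T}_v)$ and preserves the partition of $\mathsf{L}_{\ell+m}$ by ancestors at level $\ell$. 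Thus $K_m \trianglelefteq {\rm St}_\Gamma(\ell)$ and $U_m = {\rm St}_H(\ell) \cdot K_m$.

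For finite index, I would view ${\rm St}_\Gamma(\ell)$ through $\varphi_\ell$ inside $\prod_{v\in\mathsf{L}_\ell} \Gamma(\mathfrak{s}^\ell \mathcal{L})$. By Lemma \ref{Gamma-branch} applied to each $v \in A$, the group $K_m$ has finite index in the $A$-coordinate block, while by Lemma \ref{ge-sections} the image of ${\rm St}_H(\ell)$ at each $u_i$-coordinate is $H_{u_i}$, which has finite index in $\Gamma(\mathfrak{s}^\ell \mathcal{L})$. Since $\Delta_{\le \ell}$ is finite, combining these yields $[\Gamma(\mathcal{L}):U_m] < \infty$.

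For the intersection, the inclusion $\supseteq$ is immediate. For the reverse, I take $\gamma \in \cap_m U_m$ and write $\gamma = h_m r_m$ with $h_m \in {\rm St}_H(\ell)$ and $r_m \in K_m$. The key observation is that $r_m$ has trivial $\Delta_{\le \ell}$-part and trivial section at every $u_i$, so $\pi_{\le\ell}(\gamma) = \pi_{\le\ell}(h_m)$ and $\gamma_{u_i} = (h_m)_{u_i}$ are independent of $m$. I would then invoke the following finite-kernel input: the kernel of the homomorphism $\Phi : {\rm St}_H(\ell) \to \Delta_{\le \ell} \times \prod_{i=1}^p H_{u_i}$ given by $h \mapsto (\pi_{\le \ell}(h), (h_{u_i})_i)$ is finite. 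Indeed, Lemma \ref{finitepair} forces the sections of $h \in \ker \Phi$ at vertices $v \in J_i \setminus \{u_i\}$ to lie in the finite set $L_v$, while Lemma \ref{ge-sections} shows $H_v$ is finite at all remaining $v \in A \setminus \mathsf{L}_\ell'$.

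Consequently $(h_m)$ lives in a fixed finite coset of $\ker \Phi$, and after passing to a subsequence we may take $h_m = h$ constant. Then $h^{-1}\gamma \in K_m$ for all $m$; since the $K_m$ are nested decreasingly and $\cap_m R_m^\Gamma(\mathsf{T}_v) = \{id\}$ by residual finiteness of $\Gamma(\mathfrak{s}^\ell \mathcal{L})$, we obtain $\cap_m K_m = \{id\}$ and hence $\gamma = h \in {\rm St}_H(\ell)$. The step I expect to require the most care is cleanly setting up the finite-kernel bound for $\Phi$: it depends on combining the automorphism-lifting information from Lemma \ref{finitepair} with the bookkeeping of sections at all $v \in A$, both inside and outside $\mathsf{L}_\ell'$.
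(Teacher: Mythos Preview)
Your intersection argument is correct and essentially matches the paper's: both exploit that $r_m$ has trivial $u_i$-sections and trivial $\Delta_{\le\ell}$-part, invoke Lemma~\ref{finitepair} (together with the finiteness of $H_v$ for $v\notin\mathsf{L}_\ell'$) to trap the variation of $h_m$ in a finite set, and finish using $\bigcap_m K_m=\{id\}$. Packaging this as ``$\ker\Phi$ is finite'' is a clean reformulation, and your observation that $K_m\trianglelefteq{\rm St}_\Gamma(\ell)$ (so that $U_m={\rm St}_H(\ell)\cdot K_m$) is implicitly used in the paper as well when it writes $\gamma=h_m r_m$.

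The finite-index half, however, has a genuine gap. You only use that $K_m$ has finite index in the $A$-block and that each individual projection $\phi_{u_i}({\rm St}_H(\ell))=H_{u_i}$ has finite index in $\Gamma(\mathfrak{s}^\ell\mathcal{L})$. This is not enough when $p\ge 2$: the image of ${\rm St}_H(\ell)$ in $\prod_{i=1}^p\Gamma(\mathfrak{s}^\ell\mathcal{L})_{u_i}$ could a priori be a diagonal-type subgroup of infinite index, and nothing in Lemmas~\ref{Gamma-branch} or~\ref{ge-sections} excludes this. What is missing is an \emph{independence} statement among the $u_i$-coordinates. The paper obtains it from the subgroups $\Lambda_i\lhd Q_{u_i}=\mathfrak{G}$ of~(\ref{eq:Ki}): by the very definition of the equivalence classes $J_i$, each $\Lambda_i$ is nontrivial, hence of finite index since $\mathfrak{G}$ is just-infinite. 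Lifting generators of $\Lambda_i$ to ${\rm St}_H(\ell)$ produces the subgroup $\tilde\Lambda_i$ whose elements have trivial section at every $u_j$ with $j\ne i$, sections in $R_m^\Gamma(\mathsf{T}_v)$ at $v\in J_i\setminus\{u_i\}\subset A$, and $u_i$-section ranging over a finite-index subgroup of $H_{u_i}$. From this one gets $U_m\cap R_\Gamma(u_i)$ of finite index in $R_\Gamma(u_i)$, and then finite index of $U_m$ in $\Gamma(\mathcal{L})$ follows via $\prod_{v\in\mathsf{L}_\ell}R_\Gamma(v)$. Your sketch never invokes the just-infinite property of $\mathfrak{G}$, and without it the ``combining'' step does not go through.
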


\begin{proof}

By Lemma \ref{Gamma-branch}, the generalized rigid stabilizer $R_{m}^{\Gamma}(\mathsf{T}_{v})$
is of finite index in $\iota_{v}\left(\Gamma\left(\mathfrak{s}^{\ell}\Gamma\right)\right)$.
To show that $U_{m}$ is of finite index in $\Gamma$, it suffices
to show that for each $v\in\mathsf{L}_{\ell}$, $U_{m}\cap R_{\Gamma}(v)$
is of finite index in $R_{\Gamma}(v)$. If $v\in A$, then by its
definition $R_{m}^{\Gamma}(\mathsf{T}_{v})<U_{m}\cap R_{\Gamma}(v)$.
If $v\notin A$ then $v=u_{i}$ for some $1\le i\le p$. Recall the
subgroup $\Lambda_{i}$ of $Q_{u_{i}}$ defined in (\ref{eq:Ki}),
$\Lambda_{i}$ is of finite index in $Q_{u_{i}}$. Consider the subgroup
of ${\rm St}_{H}(\ell)$ defined as 
\[
\tilde{\Lambda}_{i}=\left\{ \gamma\in{\rm St}_{H}(\ell):\gamma_{u_{i}}\in\Lambda_{i},\ \gamma_{u}=id\mbox{ for }u\notin J_{i},\ \gamma_{v}\in R_{m}^{\Gamma}(\mathsf{T}_{v})\mbox{ for }v\in J_{i}\setminus\{u_{i}\}\right\} .
\]
By its definition, we have that $\tilde{\Lambda}_{i}<U_{m}$. By the
same argument which lifts up a generating set of $\Lambda_{i}$ to
${\rm St}_{H}(\ell)$, we have that $\phi_{u_{i}}\left(\tilde{\Lambda}_{i}\right)$
is of finite index in $H_{u_{i}}$. It follows that $U_{m}\cap R_{\Gamma}(v)$
is of finite index in $R_{\Gamma}(u_{i})$. 

We now show that $\cap_{m=1}^{\infty}U_{m}\subseteq{\rm St}_{H}(\ell)$.
Suppose $\gamma\in\cap_{m=1}^{\infty}U_{m}$, then for any $m\in\mathbb{N}$,
one can write $\gamma$ as a product $\gamma=h_{m}r_{m}$, where $h_{m}\in{\rm St}_{H}(\ell)$
and $r_{m}\in\prod_{v\in A}R_{m}^{\Gamma}(\mathsf{T}_{v})$. In particular,
for any $m,n\in\mathbb{N}$, $\gamma=h_{m}r_{m}=h_{n}r_{n}$. It follows
that for $1\le i\le p$, $h_{m}h_{n}^{-1}$ has trivial section at
$u_{i}$: $(h_{m}h_{n}^{-1})_{u_{i}}=(r_{n}r_{m}^{-1})_{u_{i}}=id$.
By Lemma \ref{finitepair}, $h_{m}h_{n}^{-1}$ is contained in the
finite group $\prod_{v\in\mathsf{L}_{\ell}}L_{v}$. Since $\bigcap_{m=1}^{\infty}\prod_{v\in A}R_{m}^{\Gamma}(\mathsf{T}_{v})=\{id\}$,
we have that there is a sufficiently large index $m_{0}$ such that
$R_{m_{0}}^{\Gamma}(\mathsf{T}_{v})\cap L_{v}=\{id\}$. Then $r_{m}=r_{n}$
for all $n,m\ge m_{0}$. But $\cap R_{m}^{\Gamma}=\{id\}$, we conclude
that $r_{m_{0}}=id$ and $\gamma=h_{m_{0}}\in{\rm St}_{H}(\ell)$.

\end{proof}

Claim \ref{H_l intersection} implies that ${\rm St}_{H}(\ell)$ is
separable in $\Gamma(\mathcal{L})$. Since ${\rm St}_{H}(\ell)$ is
a finite index subgroup of $H$, it follows that $H$ is separable
as well. 

\end{proof}

\section{Construction of intermediate growth group with $\mathbb{Z}^{\infty}$
as center\label{sec:central}}

Our goal in this section is to exhibit examples of group of intermediate
growth whose center contains $\mathbb{Z}^{\infty}$. \textcolor{black}{Consider
a permutation wreath product $W=L\wr_{X}G$. In Tappe \cite{Tappe},
a full description of the Schur multiplier of $W$ is given. For a
pair $(x_{1},x_{2})\in X\times X$, we say its orbit $M=(x_{1},x_{2})\cdot G$
under the diagonal action of $G$ on $X\times X$ is an }\textcolor{black}{\emph{orbit
of trivial sign,}}\textcolor{black}{{} if $(x_{2},x_{1})\notin M$.
Denote by $\mathtt{t}$ the number of orbits of trivial sign in $X\times X$.
Then by \cite{Tappe}, the Schur multiplier $H^{2}(W,\mathbb{Z})$
contains the direct sum of $\mathtt{t}$ copies of $H_{1}(L)\otimes H_{1}(L)$. }

The same as in the previous sections, denote by $G_{n}$ the finite
quotient of the first Grigorchuk group $\mathfrak{G}$ under the projection
${\rm Aut}(\mathsf{T})\to{\rm Aut}(\mathsf{T}_{n})$. We still denote
by $\left\{ a,b,c,d\right\} $ the image of the generating set of
$\mathfrak{G}$ under the projection $\mathfrak{G}\to G_{n}$. The
finite group $G_{n}$ acts faithfully and level transitively on the
finite rooted tree $\mathsf{T}_{n}$ of $n$ levels. 

With the Schur multiplier in mind, consider the following kind of
finite sets where the diagonal action of $\mathfrak{G}$ on the pairs
has orbits of trivial sign. Take $X_{n}=\mathsf{L}_{n}\times G_{3}$,
where $\mathsf{L}_{n}$ is the level $n$ vertices of the rooted tree
$\mathsf{T}$. Consider the following action of $\mathfrak{G}$ on
$X_{n}$ which factors through the quotient $\mathfrak{G}\to G_{n+3}$.
Recall that the canonical wreath recursion in ${\rm Aut}(\mathsf{T})$
induces an embedding
\begin{align*}
\varphi_{n+3}:G_{n+3} & \hookrightarrow G_{3}\wr_{\mathsf{L}_{n}}G_{n},\\
g & \mapsto\left(\left(g_{v}\right){}_{v\in\mathsf{L}_{n}},\pi_{n}(g)\right).
\end{align*}
Consider the action of $G_{n+3}$ on $X_{n}$ given by
\[
(v,\gamma)\cdot g=(v\cdot g,\gamma g_{v}),\mbox{ where }v\in\mathsf{L}_{n},\ \gamma\in G_{3}.
\]
The first coordinate $v\cdot g$ is the image of $v$ under the action
of $g$ on $\mathsf{L}_{n}$, and the second coordinate $\gamma g_{v}$
is the multiplication of $\gamma$ and the section $g_{v}$ in the
finite group $G_{3}$. Note that the generators $a,b,c,d$ of $\mathfrak{G}$
have distinct images in $G_{3}$. The reason to take $X_{n}=\mathsf{L}_{n}\times G_{3}$
instead of $\mathsf{L}_{n+3}$ is to have the following orbit of trivial
sign:

\begin{lemma}\label{nonsym-orbit}

Let $G_{n+3}\curvearrowright X_{n}$ be defined as above. Then $G_{n+3}\curvearrowright X_{n}$
is transitive and the orbit of $\left((1^{n},id),(1^{n},ab)\right)$
is of trivial sign. 

\end{lemma}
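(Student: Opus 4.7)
The plan is to verify the two claims separately: transitivity follows from self-similarity of $\mathfrak{G}$, and the trivial-sign property reduces to a short direct computation in $G_3$.

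For transitivity, I would first observe that the canonical projection $\pi_n : G_{n+3} \twoheadrightarrow G_n$ is surjective and that $G_n$ acts transitively on $\mathsf{L}_n$ (the well-known level-transitivity of the Grigorchuk group). Given $(v,\gamma) \in X_n$, one therefore finds $g_1 \in G_{n+3}$ with $1^n \cdot g_1 = v$, giving $(1^n,id)\cdot g_1 = (v,\gamma_0)$ for some $\gamma_0 \in G_3$. It remains to show that from $(v,\gamma_0)$ one can reach $(v,\gamma)$ for any $\gamma$, equivalently that the section map $h \mapsto h_v$ restricted to the level stabilizer $\mathrm{St}_{G_{n+3}}(n)$ is surjective onto $G_3$. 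This follows from self-similarity of $\mathfrak{G}$: the section map $\mathrm{St}_{\mathfrak{G}}(n) \to \mathfrak{G}$, $\tilde h \mapsto \tilde h_v$, is surjective, and postcomposing with $\mathfrak{G} \twoheadrightarrow G_3$ factors through $G_{n+3}$ since $\mathrm{St}_{\mathfrak{G}}(n+3)$ maps into $\mathrm{St}_{\mathfrak{G}}(3)$ under the section map.

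For the trivial-sign claim, I would argue by contradiction: suppose some $g \in G_{n+3}$ realizes both $(1^n,id)\cdot g = (1^n,ab)$ and $(1^n,ab)\cdot g = (1^n,id)$. Both first-coordinate conditions force $\pi_n(g)$ to fix $1^n$, while the second-coordinate conditions read $g_{1^n} = ab$ and $ab\cdot g_{1^n} = id$ in $G_3$, which together require $(ab)^2 = id$ in $G_3$. So the task reduces to verifying $(ab)^2 \neq id$ in $G_3$. Using the standard wreath recursion $a = (id,id)\varepsilon$, $b = (a,c)$, $c = (a,d)$ one computes $ab = (c,a)\varepsilon$ and then $(ab)^2 = (ca,ac)$; since $ca = (a,d)\varepsilon$ has nontrivial top permutation, $(ab)^2$ already acts nontrivially on $\mathsf{T}_2$ via its section at $0$, so it is nontrivial in $G_3$, and the contradiction is reached.

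The main item requiring care is the bookkeeping in the transitivity argument: one must keep straight the distinction between action on $\mathsf{L}_n$ (through $\pi_n$) and action on the $G_3$-fiber (through the section map), and check that the surjective self-similarity section on $\mathfrak{G}$ descends cleanly to the quotient $G_{n+3}$. Neither step presents a genuine obstacle.
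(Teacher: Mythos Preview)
Your proposal is correct and follows essentially the same route as the paper: transitivity via level-transitivity of $\mathfrak{G}$ together with surjectivity of the section map onto $\mathfrak{G}$ (hence onto $G_3$), and the trivial-sign claim by the same contradiction reducing to $(ab)^2\neq id$ in $G_3$, with the identical computation $(ab)^2=(ca,ac)$. You supply a bit more detail than the paper does---spelling out why the section argument descends from $\mathfrak{G}$ to $G_{n+3}$, and why $(ca,ac)$ is nontrivial in $G_3$---but the underlying argument is the same.
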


\begin{proof}

Recall that $\mathfrak{G}$ acts level transitively on $\mathsf{T}$
and for any $v\in\mathsf{L}_{n}$, the section $\mathfrak{G}_{v}=\mathfrak{G}$.
It follows that the action $G_{n+3}\curvearrowright X_{n}$ is transitive. 

Assume $g\in G_{n+3}$ is such that $\left((1^{n},id),(1^{n},ab)\right)\cdot g=\left((1^{n},ab),(1^{n},id)\right)$.
Then $1^{n}\cdot g=1^{n}$, and the sections satisfy $id\cdot g_{v}=ab$
and $ab\cdot g_{v}=id$. This would imply that $abab=id$ in $G_{3}$,
which is a contradiction since $abab=(ca,ac)\neq id_{G_{3}}$. It
follows that such an element $g$ does not exist, the orbit of $\left((1^{n},id),(1^{n},ab)\right)$
under $G_{n+3}$ is of trivial sign. 

\end{proof}

\textcolor{black}{By \cite{Tappe}}, Lemma \ref{nonsym-orbit} implies
that the Schur multiplier $H^{2}\left(\mathbb{Z}\wr_{X_{n}}G_{n+3},\mathbb{Z}\right)$
has a direct summand $H_{1}(\mathbb{Z})\otimes H_{1}(\mathbb{Z})$
indexed by the orbit of $\left((1^{n},id),(1^{n},ab)\right)$. Let
$[\beta]$ be a generator of this summand $H_{1}(\mathbb{Z})\otimes H_{1}(\mathbb{Z})$
(as an abelian group), then we can take the central extension of $\mathbb{Z}\wr_{X_{n}}G_{n+2}$
corresponding to the 2-cocycle $[\beta]$. 

More explicitly, we take the following central extension of $\mathbb{Z}\wr_{X_{n}}G_{n+3}$.
Denote by $M_{n}$ the orbit of $\left((1^{n},id),(1^{n},ab)\right)$
under the diagonal action of $G_{n+3}$. Consider the step-$2$ free
nilpotent group $N_{n}$ on generators $\left\{ b_{x},x\in X_{n}\right\} $.
In the free nilpotent group $N_{n}$, impose the following additional
relations: 
\begin{align}
[b_{x},b_{y}] & =1\mbox{ if neither }(x,y)\mbox{ or }(y,x)\mbox{ is in }M_{n},\nonumber \\
\left[b_{x\cdot g},b_{y\cdot g}\right]=\left[b_{x},b_{y}\right] & \mbox{ for any }(x,y)\in M_{n},g\in G_{n+3}.\label{eq:relations}
\end{align}
Denote by $\bar{N}_{n}$ the resulting quotient group of $N_{n}$.
Since $M_{n}$ is an orbit of trivial sign, we have that $\bar{N}_{n}$
fits into the exact sequence 
\[
1\to\mathbb{Z}\to\bar{N}_{n}\to\mathbb{Z}^{|X_{n}|}\to1,
\]
where $\mathbb{Z}$ is the center of $\bar{N}_{n}$. With slight abuse
of notation we still denote by $b_{x}$ the image of the generator
$b_{x}$ in $\bar{N}_{n}$. The group $G_{n+3}$ acts on $\bar{N}_{n}$
by permuting the generators: $b_{x}\cdot g=b_{x\cdot g}$. 

Now take the semi-direct product $\Gamma_{n}=\bar{N}_{n}\rtimes G_{n+3}$.
An element of $\Gamma_{n}$ is recorded as $(h,g)$, where $h\in\bar{N}_{n}$
and $g\in G_{n+3}$. Because of the relations (\ref{eq:relations}),
$G_{n+3}$ acts trivially on the commutator subgroup of $\bar{N}_{n}$.
Therefore $\Gamma_{n}$ is a central extension of $\mathbb{Z}\wr_{X_{n}}G_{n+3}$:

\begin{lemma}\label{center0}

Let $\Gamma_{n}=\bar{N}_{n}\rtimes G_{n+3}$ be defined above. Then
the center of $\Gamma_{n}$ is
\[
Z\left(\Gamma_{n}\right)=\ker\left(\Gamma_{n}\to\mathbb{Z}\wr_{X_{n}}G_{n+3}\right)\simeq\mathbb{Z}.
\]

\end{lemma}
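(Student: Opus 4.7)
The plan is to identify $Z(\Gamma_n)$ with the kernel $K:=\ker(\Gamma_n\to \mathbb{Z}\wr_{X_n}G_{n+3})$, and then to verify that $K$ is precisely the commutator subgroup $[\bar{N}_n,\bar{N}_n]$ sitting inside the $\bar{N}_n$-factor of $\Gamma_n$, which in turn is isomorphic to $\mathbb{Z}$. Since the projection to $\mathbb{Z}\wr_{X_n}G_{n+3}$ factors through abelianizing the $\bar{N}_n$-factor and is the identity on $G_{n+3}$, we have $K=\{(h,id):h\in [\bar{N}_n,\bar{N}_n]\}$. Because $\bar{N}_n$ is $2$-step nilpotent, $[\bar{N}_n,\bar{N}_n]\subseteq Z(\bar{N}_n)$, and the defining relations (\ref{eq:relations}) make $G_{n+3}$ act trivially on this commutator subgroup; hence $K\subseteq Z(\Gamma_n)$. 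A direct inspection shows $[\bar{N}_n,\bar{N}_n]\cong\mathbb{Z}$, generated by $[b_{(1^n,id)},b_{(1^n,ab)}]$, whose nontriviality is forced by $M_n$ being an orbit of trivial sign (Lemma \ref{nonsym-orbit}).

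For the reverse inclusion, suppose $(h,g)\in Z(\Gamma_n)$. Commuting $(h,g)$ with $(b_x,id)$ for each $x\in X_n$ gives $h\,b_{x\cdot g}=b_x\,h$, i.e., $b_{x\cdot g}=h^{-1}b_xh$ in $\bar{N}_n$. Since $\bar{N}_n$ is nilpotent of class $2$, conjugation acts trivially on the abelianization $\bar{N}_n^{\mathrm{ab}}\cong\mathbb{Z}^{X_n}$, and the $b_x$ map to distinct basis vectors there; therefore $x\cdot g=x$ for every $x\in X_n$. The next step is to use this to conclude $g=id$, i.e., to verify that the action $G_{n+3}\curvearrowright X_n$ is faithful. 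Triviality of $g$ on the first coordinate $\mathsf{L}_n$ puts $g\in {\rm St}_{G_{n+3}}(n)$, and triviality on the second coordinate forces each section $g_v\in G_3$ to satisfy $\gamma g_v=\gamma$ for every $\gamma\in G_3$, hence $g_v=id$. Injectivity of the wreath-recursion embedding $\varphi_{n+3}:G_{n+3}\hookrightarrow G_3\wr_{\mathsf{L}_n}G_n$ then yields $g=id$.

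It remains to pin down those $(h,id)$ which are central. Commuting with $(h',id)$ for all $h'\in\bar{N}_n$ forces $h\in Z(\bar{N}_n)$, and commuting with $(id,g')$ for all $g'\in G_{n+3}$ forces $g'(h)=h$, so $h$ must be $G_{n+3}$-fixed. But $Z(\bar{N}_n)=[\bar{N}_n,\bar{N}_n]\cong\mathbb{Z}$ is already pointwise fixed under the $G_{n+3}$-action, so the $G_{n+3}$-invariance is automatic and $(h,id)\in K$. Thus $Z(\Gamma_n)=K\cong\mathbb{Z}$. The only slightly delicate point is the faithfulness of $G_{n+3}\curvearrowright X_n$, which however reduces immediately to faithfulness of the wreath recursion for the Grigorchuk quotients; the rest is mechanical semidirect-product computation using the $2$-step nilpotent structure of $\bar{N}_n$.
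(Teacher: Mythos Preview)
Your argument follows the same outline as the paper's: show $K=\{(h,id):h\in[\bar N_n,\bar N_n]\}$ is central, then rule out everything else. Your treatment of the inclusion $K\subseteq Z(\Gamma_n)$ and of the step forcing $g=id$ (via faithfulness of $G_{n+3}\curvearrowright X_n$) is correct and matches the paper's Case~(i).

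The gap is in the last step. You assert $Z(\bar N_n)=[\bar N_n,\bar N_n]$ and then dismiss the $G_{n+3}$-invariance condition as ``automatic''. That equality is false. Because the stabiliser of the pair $((1^n,id),(1^n,ab))$ equals the stabiliser of each coordinate, one has $|M_n|=|X_n|$, so membership in $M_n$ is encoded by a single fixed-point-free permutation $\sigma$ of $X_n$ via $(x,\sigma(x))\in M_n$. For $h$ with abelianisation $\sum_x\lambda_x b_x$ a class-$2$ commutator computation gives
\[
[h,b_y]=c^{\,\lambda_{\sigma^{-1}(y)}-\lambda_{\sigma(y)}},
\]
so $h\in Z(\bar N_n)$ iff $\lambda$ is constant on $\sigma^2$-orbits; in particular $\prod_{x\in X_n}b_x$ lies in $Z(\bar N_n)\setminus[\bar N_n,\bar N_n]$. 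Hence the $G_{n+3}$-invariance condition is doing genuine work in cutting $Z(\bar N_n)$ down to $[\bar N_n,\bar N_n]$, and your proof does not use it. The paper's Case~(ii) handles this by producing an explicit element $(b_y,id)$ that fails to commute with a given $\gamma$; its displayed commutator is a bit terse (it records only the contribution from the chosen $x$), but that is the place where the real content lies, and it is exactly the content missing from your reduction.
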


\begin{proof}

By the definitions, we have that 
\[
\ker\left(\Gamma_{n}\to\mathbb{Z}\wr_{X_{n}}G_{n+3}\right)=\ker\left(\bar{N}_{n}\to\oplus_{x\in\mathsf{L}_{n}}\left\langle b_{x}\right\rangle \right).
\]
Since $G_{n+3}$ acts trivially on $\left[\bar{N}_{n},\bar{N}_{n}\right]$,
$\ker\left(\Gamma_{n}\to\mathbb{Z}\wr_{X_{n}}G_{n+3}\right)\subseteq Z\left(\Gamma_{n}\right)$. 

Let $\gamma\in\Gamma_{n}$ be an element with nontrivial projection
to $\mathbb{Z}\wr_{X_{n}}G_{n+3}$, we need to show that it is not
in the center of $\Gamma_{n}$. Case (i): the projection of $\gamma$
to $G_{n+3}$ is nontrivial, then since $G_{n+3}$ acts faithfully
on $X_{n}$, there exists $x\in X_{n}$ such that $x\cdot\pi_{n+3}(\gamma)\neq x$.
Take an element $h\in\Gamma_{n}$ such that its projection to $\mathbb{Z}\wr_{X_{n}}G_{n+3}$
is $\left(b_{x},id_{G_{n+3}}\right)$. Then $[\gamma,h]$ projects
to $\left(b_{x\cdot\pi_{n+3}(\gamma)}-b_{x},id_{G_{n+3}}\right)\neq id$,
it follows that in this case $\gamma\notin Z(\Gamma_{n})$. Case (ii):
the projection of $\gamma$ to $\mathbb{Z}\wr_{X_{n}}G_{n+3}$ is
of the form $\left(\sum_{x\in X_{n}}\lambda_{x}b_{x},id_{G_{n+3}}\right)$,
where there exists at least one $x$ with $\lambda_{x}\neq0$. Since
$G_{n+3}$ acts transitively on $X_{n+3}$, there exists $g\in G_{n+3}$
such that $(1^{n},id)\cdot g=x$. Let $y=(1^{n},ab)\cdot g$. Take
an element $h\in\Gamma_{n}$ such that its projection to $\mathbb{Z}\wr_{X_{n}}G_{n+3}$
is $\left(b_{y},id_{G_{n+3}}\right)$. Then we have $\left[\gamma,h\right]=\left(\left[\lambda_{x}b_{x},b_{y}\right],id_{G_{n+3}}\right)=\left(\lambda_{x}\left[b_{(1^{n},id)},b_{(1^{n},ab)}\right],id_{G_{n+3}}\right)\neq id$.
Combining the two cases, we conclude that $Z\left(\Gamma_{n}\right)\subseteq\ker\left(\Gamma_{n}\to\mathbb{Z}\wr_{X_{n}}G_{n+3}\right)$. 

\end{proof}

Mark the group $\Gamma_{n}$ with the generating tuple $T_{n}=(a_{n},b_{n},c_{n},d_{n},t_{n})$,
where 
\[
x_{n}=\left(id_{\bar{N}_{n}},x\right)\mbox{ for }x\in\{a,b,c,d\}\mbox{ and }t_{n}=\left(b_{(1^{n},id)},id_{G_{n+3}}\right).
\]

We have defined a sequence of marked groups $\left(\left(\Gamma_{n},T_{n}\right)\right)_{n=1}^{\infty}$.
Next we show that the sequence $(\Gamma_{n},T_{n})$ converges in
the Cayley topology and identify the limit. Let $\mathcal{S}=1^{\infty}\cdot\mathfrak{G}$
be the orbit of the right most ray under the action of $\mathfrak{G}$,
$\mathcal{S}$ consists of infinite strings cofinal with $1^{\infty}$.
Consider the embedding 
\begin{equation}
\vartheta:\mathfrak{G}\hookrightarrow\left(\mathbb{Z}/2\mathbb{Z}\times\mathbb{Z}/2\mathbb{Z}\right)\wr_{\mathcal{S}}\mathfrak{G},\label{eq:theta-em}
\end{equation}
given by $\vartheta(a)=({\bf 0},a)$, $\vartheta(b)=(\delta_{1^{\infty}}^{b},b)$,
$\vartheta(c)=(\delta_{1^{\infty}}^{c},c)$ and $\vartheta(d)=(\delta_{1^{\infty}}^{d},d)$.
Under the embedding $\vartheta$, we write 
\[
\vartheta(g)=\left(\Phi_{g},g\right),
\]
where the function $\Phi_{g}:\mathcal{S}\to\mathbb{Z}/2\mathbb{Z}\times\mathbb{Z}/2\mathbb{Z}$
can be viewed as the \textquotedbl germ configuration\textquotedbl{}
of $g$. Consider the action of $\mathfrak{G}$ on the space $\mathcal{S}\times\left(\mathbb{Z}/2\mathbb{Z}\times\mathbb{Z}/2\mathbb{Z}\right)$
given by
\[
(x,\gamma)\cdot g=\left(x\cdot g,\gamma\Phi_{g}(x)\right),\ \mbox{where }x\in\mathcal{S},\gamma\in\mathbb{Z}/2\mathbb{Z}\times\mathbb{Z}/2\mathbb{Z}.
\]
Denote by $X$ the orbit of $\left(1^{\infty},id\right)$ under the
action of $\mathfrak{G}$. Denote by $W$ the permutation wreath product
$W=\mathbb{Z}\wr_{X}\mathfrak{G}$, marked with the generating tuple
$T=(\tilde{a},\tilde{b},\tilde{c},\tilde{d},t)$, where $\tilde{a}=({\bf 0},a)$,
$\tilde{b}=({\bf 0},b)$, $\tilde{c}=({\bf 0},c)$, $\tilde{d}=({\bf 0},d)$
and $t=\left(\delta_{\left(1^{\infty},id\right)}^{1},id_{\mathfrak{G}}\right)$. 

\begin{proposition}\label{center1}

The sequence $\left(\Gamma_{n},T_{n}\right)$ converges to $(W,T)$
in the Cayley topology when $n\to\infty$. 

\end{proposition}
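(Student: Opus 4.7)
The plan is to show that for every radius $R \geq 1$ there exists $N(R) \in \mathbb{N}$ such that for all $n \geq N(R)$, a word $w$ of length at most $R$ in the free group on $\{a, b, c, d, t, t^{-1}\}$ evaluates to the identity in $(\Gamma_{n}, T_{n})$ if and only if it evaluates to the identity in $(W, T)$. Write $w = u_{0} t^{\epsilon_{1}} u_{1} \cdots t^{\epsilon_{k}} u_{k}$ with each $u_{j}$ a word in $\{a,b,c,d\}$ and $\epsilon_{j} \in \{\pm 1\}$, and set $p_{j} = u_{0} \cdots u_{j-1}$. Unwinding the wreath multiplication gives $w = (f_W, g_W)$ in $W$ with $g_W = u_{0}u_{1}\cdots u_{k}$ and $f_W = \sum_{j=1}^{k} \epsilon_{j}\,\delta_{y_{j}^{\infty}}^{1}$, where $y_{j}^{\infty} = (1^{\infty}, id) \cdot p_{j}^{-1} \in X$; analogously $w = (h_{n}, g_{n})$ in $\Gamma_{n}$, with $g_{n} = \pi_{n+3}(g_W)$ and $h_{n} = \prod_{j=1}^{k} b_{y_{j}^{n}}^{\epsilon_{j}}$ (product taken in $\bar{N}_{n}$ in the order dictated by $w$), where $y_{j}^{n} = (1^{n}, id) \cdot \pi_{n+3}(p_{j})^{-1} \in X_{n}$. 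Since $(G_{n+3}, S) \to (\mathfrak{G}, S)$ in the Cayley topology, for $n$ large we have $g_W = id$ in $\mathfrak{G}$ iff $g_{n} = id$ in $G_{n+3}$; what remains is to compare the two lamp parts.

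I would next identify the lamp points using the length-reduction Fact \ref{short-section}. For $n$ with $R \leq 2^{n-1} - 1$, any $p \in \mathfrak{G}$ of length at most $R$ has section $p_{1^{n}}$ inside $\{id, b, c, d\}$; applying this to $\pi_{n+3}(p)^{-1}$ shows that its section at $1^{n}$ in the embedding $G_{n+3} \hookrightarrow G_3 \wr_{\mathsf{L}_n} G_n$ lies in the Klein $4$-subgroup $\{id, b, c, d\}$ of $G_{3}$. Moreover, up to a canonical $3$-periodic relabeling of $\{b, c, d\}$, this section equals the germ $\Phi_{p^{-1}}(1^{\infty}) \in \mathbb{Z}/2\mathbb{Z} \times \mathbb{Z}/2\mathbb{Z}$ produced by the embedding $\vartheta$. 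Since the rays $1^{\infty} \cdot p_{j}^{-1}$ agree with $1^{\infty}$ beyond level $R$, for $n \geq N(R)$ large enough, the prefix-at-level-$n$ determines the full ray, and the assignment $y_{j}^{\infty} \leftrightarrow y_{j}^{n}$ is a bijection between the two finite sets of lamp points, with every second coordinate of $y_{j}^{n}$ lying in $\{id, b, c, d\} \subset G_{3}$.

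The crux is to check that no pair $(y_{j}^{n}, y_{j'}^{n})$ with $y_{j}^{n} \neq y_{j'}^{n}$ lies in the trivial-sign orbit $M_{n}$, so that no central commutator relation of $\bar{N}_{n}$ is activated. By definition, a pair $(\bar y, \bar y') \in M_{n}$ must share a common first coordinate $v \in \mathsf{L}_{n}$ and satisfy $\bar y' = (v, (ab)\delta)$ when $\bar y = (v, \delta)$; so if such a pair occurred among the $y_{j}^{n}$, the Klein $4$-group elements $\delta_{j}, \delta_{j'}$ from the previous paragraph would satisfy $\delta_{j'} \delta_{j}^{-1} = ab$, which is impossible because $\{id, b, c, d\}$ is a subgroup of $G_{3}$ while the computation of Lemma \ref{nonsym-orbit} shows $ab \notin \{id, b, c, d\}$ in $G_{3}$. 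Therefore every commutator arising upon sorting $h_{n} = \prod_{j} b_{y_{j}^{n}}^{\epsilon_{j}}$ is killed by the defining relations (\ref{eq:relations}), so $h_{n} = id$ iff $\sum_{j:\, y_{j}^{n} = \bar y} \epsilon_{j} = 0$ for each $\bar y$, iff (by the bijection) $f_W = 0$. Combined with the projection matching, this yields $w = id$ in $\Gamma_{n}$ iff $w = id$ in $W$, as required. The main obstacle is precisely this last step: one has to confirm that the orbit $M_{n}$ chosen to build the central extension sits outside the range of germs produced by short words, so that the extra central element in $\Gamma_{n}$ is invisible at scale $R$; this is exactly where the choice of $ab$ (rather than an element of the Klein $4$-group) in the distinguished pair $\bigl((1^{n}, id), (1^{n}, ab)\bigr)$ pays off.
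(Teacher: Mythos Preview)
Your proof is correct and follows essentially the same route as the paper's: decompose a short word, use Fact~\ref{short-section} to force all second coordinates of the inverted-orbit points into the Klein four-subgroup $\{id,b,c,d\}\subset G_3$, observe that membership in $M_n$ forces $\gamma'\gamma^{-1}=ab\notin\{id,b,c,d\}$ so no pair of lamp points lies in $M_n$, conclude that the relevant $b_x$'s commute in $\bar N_n$, and finish by matching the abelian lamp sums through the bijection between orbit points.

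One small remark in your favor: your parenthetical about the ``canonical $3$-periodic relabeling of $\{b,c,d\}$'' is actually a genuine refinement. The paper asserts the bijection $\mathcal{O}_n(w)\to\mathcal{O}(w)$ is given by $(v,\gamma)\mapsto(v1^\infty,\gamma)$ with the \emph{same} $\gamma$, but in fact the section $(p^{-1})_{1^n}\in G_3$ equals $\sigma_n\bigl(\Phi_{p^{-1}}(1^\infty)\bigr)$ where $\sigma_n$ is the cyclic permutation $b\mapsto b_{1^n}$ of $\{b,c,d\}$ (depending on $n\bmod 3$). Since $\sigma_n$ is a group automorphism of the Klein four-group, this does not affect the argument --- one still gets a bijection and the key containment in $\{id,b,c,d\}$ --- but your formulation is the accurate one.
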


\begin{proof}

We show that the balls of radius $2^{n-1}-1$ around the identities
in $\left(\Gamma_{n},T_{n}\right)$ and $(W,T)$ are identical. In
what follows we denote by $\pi_{n}$ the marked projection to $G_{n}$,
where $t$ is sent to $id_{G_{n}}$. 

Take a word $w=x_{1}\ldots x_{\ell}$ in the letters $\{a,b,c,d,t\}$
of length $\ell\le2^{n-1}-1$. Consider the image of $w$ in $\Gamma_{n}$.
Consider the inverted orbit of $\left(1^{n},id\right)$:
\[
\mathcal{O}_{n}(w)=\left\{ \left(1^{n},id\right)\cdot\pi_{n+2}\left(\left(x_{1}\ldots x_{i}\right)^{-1}\right):0\le i\le\ell\right\} .
\]
Write $w_{i}=x_{1}\ldots x_{i}$. By the definition of the action,
for $(v,\gamma)=\left(1^{n},id\right)\cdot\pi_{n+2}\left(w_{i}\right)^{-1}$,
the second coordinate is
\[
\gamma=\pi_{2}\left(\left(w_{i}\right)_{1^{n}\cdot\pi_{n}(w_{i})^{-1}}\right).
\]
Note that $d\left(1^{n},1^{n}\cdot\pi_{n}\left(w_{i}\right)^{-1}\right)\le|w_{i}|\le\ell$.
By Lemma \ref{short-section}, we have that 
\[
\left(w_{i}\right)_{1^{n}\cdot\pi_{n}(w_{i})^{-1}}\in\{id,b,c,d\}.
\]
It follows that $\gamma\in\{id,b,c,d\}$. Recall that $M_{n}$ is
the orbit of $\left((1^{n},id),(1^{n},ab)\right)$ under the diagonal
action of $G_{n+3}$. Note that for a pair $\left(\left(v,\gamma\right),(u,\gamma')\right)$
to be in the orbit $M_{n}$, it is necessarily that $v=u$ and $\pi_{1}(\gamma^{-1}\gamma')=a$.
The fact that the second coordinate $\gamma$ is in $\{id,b,c,d\}$
for any $(v,\gamma)\in\mathcal{O}_{n}(w)$ implies that
\[
\left(\mathcal{O}_{n}(w)\times\mathcal{O}_{n}(w)\right)\cap M_{n}=\emptyset.
\]
Because of the relations imposed in $\bar{N}_{n}$, the subset $\left\{ b_{(v,\gamma)}\right\} _{(v,\gamma)\in\mathcal{O}_{n}(w)}$
of basis elements generates an abelian subgroup of $\bar{N}_{n}$.
Therefore the image of $w$ in $\Gamma_{n}$ can be written as 
\[
\tilde{\pi}_{n}(w)=\left(\sum_{(v,\gamma)\in\mathcal{O}_{n}(w)}z_{(v,\gamma)}b_{(v,\gamma)},\pi_{n+3}(w)\right),\ \mbox{where }z_{(v,\gamma)}\in\mathbb{Z}.
\]

Next we consider the image of the same word $w$ in $W$. By definition
of the action on $X$ and the rule of multiplication in $W$, we have
that the inverted orbit $\mathcal{O}(w)=\left\{ \left(1^{\infty},id\right)\cdot\pi\left(w_{1}\ldots w_{i}\right)^{-1}:0\le i\le\ell\right\} $
satisfies that for any $(v,\gamma)\in\mathcal{O}_{n}(w)$, the second
coordinate $\gamma\in\{id,b,c,d\}$. The image of $w$ in $W$ is
\[
\tilde{\pi}(w)=\left(\sum_{(u,\gamma)\in\mathcal{O}(w)}x_{(u,\gamma)}\delta_{(u,\gamma)},\pi(w)\right),\ \mbox{where }x_{(u,\gamma)}\in\mathbb{Z}.
\]

For $(v,\gamma)\in\mathcal{O}_{n}(w)$, since $d(v,1^{n})\le2^{n-1}-1$,
we have that the last digit of $v$ must be $1$. Similarly, for $(u,\gamma)\in\mathcal{O}(w)$,
$d(u,1^{\infty})\le2^{n-1}-1$ implies $u$ is of the form $u=u_{1}\ldots u_{n-1}1^{\infty}$.
Together with the property that the second coordinates must be in
$\{id,b,c,d\}$ as shown in the previous paragraphs, we have that
the following map is a bijection: 
\begin{align*}
\mathcal{O}_{n}(w) & \to\mathcal{O}(w)\\
(v,\gamma) & \mapsto(v1^{\infty},\gamma),
\end{align*}
Moreover, the multiplication rule in $\Gamma_{n}$ and $W$ imply
that $z_{(v,\gamma)}=x_{(v1^{\infty},\gamma)}$. Finally, by Lemma
\ref{short-section} we have that the injective radius of the marked
projection $\mathfrak{G}\to G_{n+3}$ is at least $2^{n-1}-1$. Thus
$\pi_{n+3}(w)\in G_{n+3}$ can be identifies with $\pi(w)\in\mathfrak{G}$.
We conclude that $\tilde{\pi}_{n}(w)$ can be identified with $\tilde{\pi}(w)$. 

\end{proof}

Next we examine the center of the diagonal product of the sequence
$\left(\left(\Gamma_{n},S_{n}\right)\right)_{n=1}^{\infty}$. 

\begin{proposition}\label{center2}

Let $\Gamma$ be the diagonal product of the sequence $\left(\left(\Gamma_{n},S_{n}\right)\right)_{n=1}^{\infty}$.
Then the center of $\Gamma$ is isomorphic to the direct sum 
\[
Z\left(\Gamma\right)\simeq\oplus_{n=1}^{\infty}Z\left(\Gamma_{n}\right).
\]

\end{proposition}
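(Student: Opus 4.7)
The plan is to show $Z(\Gamma) = \bigoplus_n Z(\Gamma_n)$ by embedding both sides into $\prod_n \Gamma_n$, bounding $Z(\Gamma)$ from above using Proposition \ref{center1}, and exhibiting explicit generators for the lower bound.

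Because $\Gamma$ embeds as the diagonal product into $\prod_n \Gamma_n$, one has the general identity $Z(\Gamma) = \Gamma \cap \prod_n Z(\Gamma_n)$, and Lemma \ref{center0} gives $Z(\Gamma_n) \simeq \mathbb{Z}$. To rule out infinitely supported central elements I would first verify that $Z(W) = \{id\}$ for $W = \mathbb{Z}\wr_X \mathfrak{G}$: the first Grigorchuk group is ICC, and it acts transitively on the infinite set $X$, so no $\mathfrak{G}$-invariant finitely supported function on $X$ is nonzero and the top group contributes nothing. Hence every $\gamma \in Z(\Gamma)$ projects trivially to $W$, i.e., $Z(\Gamma) \subseteq \ker(\Gamma \to W)$. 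Now if $\gamma$ is represented by a word $w$ of length $\ell$, Proposition \ref{center1} asserts that the balls of radius $\ell$ in $(\Gamma_n, T_n)$ and $(W, T)$ coincide once $2^{n-1} - 1 \ge \ell$; hence $\pi_W(w) = id$ forces $\pi_n(w) = id$ for all such $n$, and we obtain $Z(\Gamma) \subseteq \bigoplus_n Z(\Gamma_n)$.

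For the reverse inclusion I would construct, for each $n$, an element $\zeta_n = [t, g_n^{-1} t g_n] \in \Gamma$, where $g_n$ is a word in $\{a, b, c, d\}$ whose image in $\mathfrak{G}$ satisfies (i) $g_n$ fixes $1^n$ and its section at $1^n$, as an element of $\mathfrak{G}$, equals $ab$; and (ii) for every $m < n$, the $G_3$-image of the section $(g_n)_{1^m}$ is different from $ab$. Condition (i) automatically handles the case $m > n$, since the tree automorphism $ab$ moves every $1^k$ with $k \ge 1$, so $g_n$ moves every $1^m$ with $m > n$. A direct computation in $\bar{N}_n \rtimes G_{n+3}$ then yields $\pi_n(\zeta_n) = [b_{(1^n, id)}, b_{(1^n, ab)}]$, a generator of $Z(\Gamma_n)$, while for $m \ne n$ the pair $\bigl((1^m, id),\, (1^m, id) \cdot \pi_{m+3}(g_n)\bigr)$ fails to lie in the orbit $M_m$, so $\pi_m(\zeta_n) = id$ by the defining relations of $\bar{N}_m$. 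The subgroup of $\Gamma$ generated by $\{\zeta_n : n \in \mathbb{N}\}$ is then isomorphic to $\bigoplus_n \mathbb{Z}$ and lies in $Z(\Gamma)$, completing the proof.

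The main obstacle is the explicit construction of $g_n$: since $ab \notin K$, one cannot take $g_n$ in the rigid stabilizer $\mathrm{Rst}_{\mathfrak{G}}(1^n)$. Instead I would use the surjectivity of the section map $\mathrm{Stab}_{\mathfrak{G}}(1^n) \to \mathfrak{G}$, which follows from the self-similar structure of $\mathfrak{G}$, to produce some $g_n$ realizing (i); one then precomposes with elements of $\mathrm{Stab}_{\mathfrak{G}}(1^n)$ whose section at $1^n$ is trivial but which alter sections at the ancestors $1^m$ for $m < n$, in order to enforce (ii). This is a finite system of constraints, and the orbit $M_m$ is cut out by the very specific $G_3$-level condition ``second coordinate differs by $ab$'', which generic choices avoid.
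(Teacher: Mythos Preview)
Your overall strategy matches the paper's: produce, for each $n$, an explicit word whose image in $\Gamma_n$ generates $Z(\Gamma_n)$ and whose image in every other $\Gamma_m$ is trivial; then bound $Z(\Gamma)$ from above via the projection to the limit group. Your upper-bound argument via $Z(W)=\{id\}$ and Cayley convergence is fine and is essentially what the paper does.

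The one substantive comment concerns what you call ``the main obstacle.'' Your condition (ii) is not a separate constraint to be engineered; it follows automatically from (i). If $g_n$ fixes $1^n$, then for every $m<n$ it also fixes $1^{m+1}$, so the section $(g_n)_{1^m}$ lies in $\mathrm{St}_{\mathfrak{G}}(1)$, i.e.\ $\pi_1\bigl((g_n)_{1^m}\bigr)=id$. But membership of the pair $\bigl((1^m,id),(1^m,\gamma)\bigr)$ in $M_m$ (or its flip) forces $\pi_1(\gamma)=a$; since $\pi_1(ab)=\pi_1(ba)=a$, your section can equal neither $ab$ nor $ba$ in $G_3$, and the commutator vanishes in $\bar N_m$. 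So the ``precompose to alter ancestor sections'' step is unnecessary. (Note also that your (ii) should exclude $ba$ as well as $ab$, but as just shown both are automatic.)

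The paper streamlines slightly by taking $g$ in the full level stabilizer $\mathrm{St}_{\mathfrak{G}}(n)$ rather than just the vertex stabilizer of $1^n$; this makes the cases $m\le n-3$ trivial because then $\pi_{m+3}(g)=id$, leaving only $m\in\{n-2,n-1\}$ to the $\pi_1$ argument above. The existence of such $g$ with prescribed section $ab$ at $1^n$ follows from the self-replicating property of $\mathfrak{G}$, so this is no harder than your vertex-stabilizer choice.
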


\begin{proof}

In the factor $\Gamma_{n}$, the kernel of $\Gamma_{n}\to\mathbb{Z}\wr_{X_{n}}G_{n+3}$
is the center $Z\left(\Gamma_{n}\right)$. By the definition of $\bar{N}_{n}$,
its center is generated by the commutator $\left[b_{x_{n}},b_{y_{n}}\right]$,
where $x_{n}=(1^{n},id)$ and $y_{n}=(1^{n},ab)$. Recall in the marking
$T_{n}$ of $\Gamma_{n}$, the generator $t_{n}=\left(b_{x_{n}},id_{G_{n+3}}\right)$.
Take an element $g\in{\rm St}_{\mathfrak{G}}(n)$ with the section
$g_{1^{n}}=ab$. Take a word $w$ in $\{a,b,c,d\}$ which represents
$g$. Then the image of $w$ in $\Gamma_{n}$ is $\tilde{\pi}_{n}(w)=\left(id,\pi_{n+3}(g)\right)$.
Then in $\Gamma_{n}$, we have 
\[
\tilde{\pi}_{n}(w)t_{n}\tilde{\pi}_{n}(w)^{-1}=\left(b_{x_{n}\cdot\pi_{n+3}(g)},id_{G_{n+3}}\right)=\left(b_{y_{n}},id_{G_{n+3}}\right).
\]
It follows that the center of $\Gamma_{n}$ is generated by $\left[t_{n},\tilde{\pi}_{n}(w)\right]$.

We now show that for the word $w$ chosen in the previous paragraph,
the image of $\left[t,w\right]$ in $\Gamma_{j}$ is trivial, for
all $j\neq n$. 
\begin{itemize}
\item For $j>n$, since the section of $g$ at $1^{n}$ is $ab$, we have
that $1^{j}\cdot g=1^{n}(1^{j-n}\cdot ab)=1^{n}0u$, where $u$ is
a string of length $j-n-1$. Therefore $\tilde{\pi}_{j}(w)t_{j}\tilde{\pi}_{j}(w)^{-1}=\left(b_{\left(1^{n}0u,\gamma\right)},id_{G_{n+3}}\right).$
Note that neither the pair $\left(\left(1^{j},id\right),\left(1^{n}0u,\gamma\right)\right)$
nor the pair $\left(\left(1^{n}0u,\gamma\right),\left(1^{j},id\right)\right)$
can be in the orbit $M_{j}$ of $\left(\left(1^{j},id\right),\left(1^{j},ab\right)\right)$.
Thus the corresponding basis elements commute in $\bar{N}_{j}$. It
follows that $\tilde{\pi}_{j}\left(\left[w,t\right]\right)=id$.
\item For $j\le n-3$, $\pi_{j+3}(g)=id$ because $g\in{\rm St}_{\mathfrak{G}}(n)$.
In this case $x_{j}\cdot\pi_{j+3}(g)=x_{j}$, and it follows that
$\tilde{\pi}_{j}\left(\left[w,t\right]\right)=id$. 
\item For $j\in\{n-2,n-1\}$, we have that 
\[
x_{j}\cdot\pi_{j+3}(g)=\left(1^{j},\pi_{3}\left(g_{1^{j}}\right)\right).
\]
Since $g\in{\rm St}_{\mathfrak{G}}(1^{n})$, we have that $\pi_{1}(g_{1^{j}})=id$.
The pair $(x_{j},x_{j}\cdot\pi_{j+3}(g))$ or $(x_{j}\cdot\pi_{j+3}(g),x_{j})$
being in the orbit of $\left(\left(1^{j},id\right),\left(1^{j},ab\right)\right)$
would imply that $\pi_{1}\left(g_{1^{j}}\right)=a$, a contradiction.
We conclude that $\tilde{\pi}_{j}\left(\left[w,t\right]\right)=id$
for $j=n-2,n-1$. 
\end{itemize}
We have proved that $\tilde{\pi}_{n}\left(\left[t,w\right]\right)$
generates the center of $\Gamma_{n}$ and $\tilde{\pi}_{j}\left(\left[t,w\right]\right)=id$
for $j\neq n$. It follows that $Z(\Gamma_{n})=\ker\left(\Gamma_{n}\to\mathbb{Z}\wr_{X_{n}}G_{n+3}\right)$
is a direct summand in the kernel of the projection $\Gamma\to\Delta$,
where $\Delta$ is the diagonal product of the quotient sequence $\left(\left(\mathbb{Z}\wr_{X_{n}}G_{n+3},\bar{T}_{n}\right)\right)_{n=1}^{\infty}$.
Then we have 
\[
\ker\left(\Gamma\to\Delta\right)=\bigoplus_{n=1}^{\infty}\ker\left(\Gamma_{n}\to\mathbb{Z}\wr_{X_{n}}G_{n+3}\right).
\]

The inclusion $\ker\left(\Gamma\to\Delta\right)\subseteq Z(\Gamma)$
is clear by definitions. It remains to verify that $Z(\Gamma)\subseteq\ker\left(\Gamma\to\Delta\right)$.
Take an element $\gamma\notin\ker\left(\Gamma\to\Delta\right)$. Then
by Lemma there exists an index $j$ such that its projection $\gamma_{j}$
to $\Gamma_{j}$ is not in $Z(\Gamma_{j})$. It follows $\gamma\notin Z(\Gamma)$. 

\end{proof}

\begin{remark}

The statement of Proposition \ref{center2} clearly passes to subsequences:
for any increasing subsequence $\left(n_{i}\right)_{i=1}^{\infty}$,
the diagonal product of $\left(\left(\Gamma_{n_{i}},S_{n_{i}}\right)\right)_{i=1}^{\infty}$
has center isomorphic to the direct sum $\oplus_{i=1}^{\infty}Z\left(\Gamma_{n_{i}}\right)$. 

\end{remark}

We are now ready to prove Theorem \ref{large-center} which states
that there exists a torsion free group of intermediate growth whose
center is isomorphic to $\mathbb{Z}^{\infty}$. The proof proceeds
by showing the diagonal product $\Gamma$ in Proposition \ref{center2}
is of sub-exponential growth, then $\Gamma$ will satisfy the statement
of Theorem \ref{large-center} except that $\Gamma$ has torsion.
To have an example of a torsion free group as stated, one can further
take the diagonal product of $\Gamma$ and a suitable torsion-free
group of intermediate growth. Instead of $\Gamma$ we may take the
diagonal product of a subsequence $\left(\left(\Gamma_{n_{i}},S_{n_{i}}\right)\right)_{i=1}^{\infty}$,
thus there are uncountably many examples satisfying the statement
of Theorem \ref{large-center}. 

\begin{proof}[Proof of Theorem \ref{large-center}]

We first show that $\Gamma$ is of subexponential growth. By construction,
each factor group $\Gamma_{n}$ is virtually nilpotent, thus of polynomial
growth. By Proposition \ref{center1}, $\left(\Gamma_{n},T_{n}\right)$
converges to $(W,T)$ in the Cayley topology when $n\to\infty$. Then
by Lemma \ref{FC}, to show that $\Gamma$ is of exponential growth,
it suffices to show that $W$ is of subexponential growth. 

Recall the embedding $\vartheta$ of $\mathfrak{G}$ into $\left(\mathbb{Z}/2\mathbb{Z}\times\mathbb{Z}/2\mathbb{Z}\right)\wr_{\mathcal{S}}\mathfrak{G}$
in (\ref{eq:theta-em}), where we write $\vartheta(g)=\left(\Phi_{g},g\right)$.
The group $W$ can be viewed as a subgroup of $A\wr_{\mathcal{S}}\mathfrak{G}$,
where $A=\mathbb{Z}\wr\left(\mathbb{Z}/2\mathbb{Z}\times\mathbb{Z}/2\mathbb{Z}\right)$,
via the embedding 
\begin{align*}
W & \to A\wr_{\mathcal{S}}\mathfrak{G}\\
(f,g) & \mapsto\left(\psi_{f},g\right)\mbox{ where }\psi_{f}(v)=\left(\left(f((v,\gamma))\right)_{\gamma\in\mathbb{Z}/2\mathbb{Z}\times\mathbb{Z}/2\mathbb{Z}},\Phi_{g}(v)\right),\ v\in\mathcal{S}.
\end{align*}
Since $A$ contains $\mathbb{Z}^{4}$ as a finite index subgroup,
it is of polynomial growth. Then by \cite[Lemma 5.1]{BE1}, the growth
function of the permutation wreath product $A\wr_{\mathcal{S}}\mathfrak{G}$
is equivalent to $\exp\left(n^{\alpha_{0}}\log n\right)$. It follows
that the subgroup $W$ also has growth function bounded by $\exp\left(n^{\alpha_{0}}\log n\right)$. 

Combined with Proposition \ref{center2}, we have that for any subsequence
$(n_{i})$, the diagonal product $\Delta=\Delta_{(n_{i})}$ is a group
of subexponential growth whose center is isomorphic to $\mathbb{Z}^{\infty}$. 

The first example of a torsion free group of intermediate growth is
constructed by Grigorchuk in \cite{Grigorchuk85}. More precisely,
in \cite{Grigorchuk85} a torsion-free group $\tilde{G}$ is constructed,
which is an extension of $\mathfrak{G}$ and of intermediate growth.
The group $\tilde{G}$ can be described as follows. Take the subgroup
$\Lambda$ of $\mathbb{Z}^{3}\wr_{\mathcal{S}}\mathfrak{G}$ generated
by 
\[
\tilde{a}=({\bf 0},a),\ \tilde{b}=(\delta_{1^{\infty}}^{e_{1}},b),\ \tilde{c}=(\delta_{1^{\infty}}^{e_{2}},c),\ \tilde{d}=(\delta_{1^{\infty}}^{e_{3}},d),
\]
where $\left\{ e_{1},e_{2},e_{3}\right\} $ is the standard basis
of $\mathbb{Z}^{3}$. Note that $\vartheta(\mathfrak{G})\simeq\mathfrak{G}$
is a marked quotient of $\Lambda$ and 
\[
\ker\left(\Lambda\to\mathfrak{G}\right)=\oplus_{v\in\mathcal{S}}\left\langle e_{1}^{2},e_{2}^{2},e_{3}^{2}\right\rangle \simeq\oplus_{v\in\mathcal{S}}\mathbb{Z}^{3}.
\]
Take $\mathbb{Z}=\left\langle \hat{a}\right\rangle $ and mark it
with the generating tuple $\left(\hat{a},\hat{b},\hat{c},\hat{d}\right)$,
where $\hat{b}=\hat{c}=\hat{d}=0$. Then take $\tilde{G}$ to be the
diagonal product of $\left(\Lambda,\left(\tilde{a},\tilde{b},\tilde{c},\tilde{d}\right)\right)$
and $\left(\mathbb{Z},(\hat{a},\hat{b},\hat{c},\hat{d})\right)$.
Add a trivial generator $t=id$ and mark $\tilde{G}$ with $\tilde{T}=\{a,b,c,d,t\}$.
By \cite{Grigorchuk85}, $\tilde{G}$ is a torsion free group of intermediate
growth. 

Finally, take the diagonal product $\tilde{\Delta}$ of $\left(\Delta,T\right)$
and $\left(\tilde{G},\tilde{T}\right)$. Since both $\left(\Delta,T\right)$
and $\left(\tilde{G},\tilde{T}\right)$ are of sub-exponential growth,
$\tilde{\Delta}$ is of subexponential growth as well. We now show
$\tilde{\Delta}$ is torsion free. If the image of a word $w$ in
$\left\{ a,b,c,d,t\right\} $ in $\tilde{\Delta}$ is a non-trivial
finite order element, then its projection to $\tilde{G}$ must be
trivial since $\tilde{G}$ is torsion free. It follows that the projection
of $w$ to $\Delta$ is a nontrivial finite order element in $\ker\left(\Delta\to\mathfrak{G}\right)$.
However by its construction, $\ker\left(\Delta\to\mathfrak{G}\right)$
is torsion free, a contradiction. 

The center of $\tilde{G}$ is $\left\langle a^{2}\right\rangle $
and the center of $\Delta$ is contained in the normal closure of
$t$. Since $a^{2}$ evaluates to identity in $\Delta$ and $t$ evaluates
to identity in $\tilde{G}$, we conclude that 
\[
Z\left(\tilde{\Delta}\right)=Z\left(\Delta\right)\times Z(\tilde{G})\simeq\oplus_{i=1}^{\infty}\ker\left(\Gamma_{n_{i}}\to\mathbb{Z}\wr_{X_{n_{i}}}G_{n_{i}+3}\right)\oplus\left\langle a^{2}\right\rangle \simeq\mathbb{Z}^{\infty}.
\]
The group $\tilde{\Delta}$ satisfies all the requirements. 

\end{proof}

\section{Extensions of $W_{\omega}$\label{sec:construction}}

\subsection{The construction of diagonal products\label{subsec:construction}}

Let $U,V$ be two nontrivial finite groups such that one of them has
at least $3$ elements. In this section we consider the extensions
of $W_{\omega}=(U\times V)\wr_{\mathcal{S}}G_{\omega}$, where $\mathcal{S}$
is the orbit of $1^{\infty}$ under the action of the Grigorchuk $G_{\omega}$.
Recall that for the sequence $\omega=\omega_{0}\omega_{1}\ldots\in\{0,1,2\}^{\infty}$,
each symbol $\omega_{k}$ corresponds to a homomorphism $\{id,b,c,d\}\to\{id,a\}$.

Similar to the extensions of $\mathfrak{G}$ in Section \ref{sec:definition},
the input to the construction is a sequence of finite marked quotients
of $U\ast V$. Enumerate the non-identity elements of $U$ and $V$
as $\{u_{1},\ldots,u_{p}\}$, $p=|U|-1$ and $\{v_{1},\ldots,v_{q}\}$,
$q=|V|-1$. Let $\mathcal{L}=\left(F_{n}\right)_{n=1}^{\infty}$ be
a sequence of quotients of $U\ast V$, where each $L_{n}$ is marked
with the image of $\left(u_{1}\ldots,u_{p},v_{1},\ldots v_{p}\right)$
under the projection $U\ast V\to F_{n}$. 

Consider the permutation wreath product $\Delta_{n}=F_{n}\wr_{\mathsf{L}_{n}}G_{\omega}$,
marked with the following specific generating tuple
\begin{align}
T_{n} & =\left(\left(id,a\right),\left(id,b\right),\left(id,c\right),\left(id,d\right),u_{1,n},\ldots,u_{p,n},v_{1,n},\ldots,v_{q,n}\right),\label{eq:Tn}\\
\mbox{where } & u_{i,n}=\left(\delta_{1^{n}}^{u_{i}},id\right)\mbox{ and }v_{j,n}=\left(\delta_{1^{n-1}0}^{v_{j}},id\right),\ 1\le i\le p,\ 1\le j\le q.\nonumber 
\end{align}
With slight abuse of notation we still write $a$ for the generator
$(id,a)$, similarly for $b,c,d$. We now explain the choice of generators
$u_{i,n}$ and $v_{j,n}$, which mimics the wreath recursion in $G_{\omega}$.
Recall that at each level of the rooted binary tree $\mathsf{T}$,
the orbital Schreier graph of $1^{n}$ is a finite line segment of
$2^{n}$ vertices, connected with self-loops and multiple edges. The
vertex $1^{n}$ is at one end of the Schreier graph and $1^{n-1}0$
at the other end. Thus on the Schreier graph the generators $\left\{ u_{1},\ldots,u_{p}\right\} $
are placed at one end $1^{n}$; the generators $\left\{ v_{1},\ldots,v_{q}\right\} $
are placed at the other end $1^{n-1}0$. It will be clear later that
the choice of these two locations is essential: they are at the opposite
ends of the finite Schreier graph (thus distance $2^{n}-1$ apart)
and at the same time they are siblings on the tree $\mathsf{T}$.

Let ${\bf M}$ be the free product 
\[
{\bf M}=(\mathbb{Z}/2\mathbb{Z})\ast\left(\mathbb{Z}/2\mathbb{Z}\times\mathbb{Z}/2\mathbb{Z}\right)\ast U\ast V,
\]
marked with generating tuple ${\bf T}=\left\{ a,b,c,d,u_{1}\ldots,u_{p},v_{1}\ldots,v_{q}\right\} $.
Then by the definitions, each $\Delta_{n}$ is a marked quotient of
$\mathbf{M}$ with the generating set ${\bf T}$ projected onto $T_{n}$. 

\begin{definition}\label{defdelta}

Given a sequence of marked quotients $\mathcal{L}=(F_{n})_{n=1}^{\infty}$
of $U\ast V$, the group $\Delta=\Delta\left(\mathcal{L},\omega\right)$
is defined as the diagonal product of marked groups $\left(\Delta_{n},T_{n}\right)_{n=1}^{\infty}$,
where $\Delta_{n}=F_{n}\wr_{\mathsf{L}_{n}}G_{\omega}$ and the marking
$T_{n}$ is given in (\ref{eq:Tn}). Denote the marking of $\Delta$
as $T=\left\{ a,b,c,d,\tilde{u}_{1}\ldots,\tilde{u}_{p},\tilde{v}_{1}\ldots,\tilde{v}_{q}\right\} $. 

\end{definition}

\subsection{The FC-center of $\Delta(\mathcal{L},\omega)$}

Mark $W_{\omega}=\left(U\times V\right)\wr_{\mathcal{S}}G_{\omega}$
by the generating tuple 
\[
T_{0}=\left\{ a,b,c,d,\left(\delta_{1^{\infty}}^{u_{1}},id\right),\ldots\left(\delta_{1^{\infty}}^{u_{p}},id\right),\left(\delta_{1^{\infty}}^{v_{1}},id\right),\ldots\left(\delta_{1^{\infty}}^{v_{q}},id\right)\right\} .
\]
Given a vertex $v=v_{1}\ldots v_{n}\in\mathsf{L}_{n}$, denote by
$\check{v}$ its sibling, that is the other child of $v_{1}\ldots v_{n-1}$.
Note the following. 

\begin{fact}\label{Delta-W}

Let $\mathcal{L}=(F_{n})_{n=1}^{\infty}$ be a sequence of marked
quotients of $U\ast V$. Then the sequence $\left(\Delta_{n},T_{n}\right)_{n=1}^{\infty}$
converges to $\left(W_{\omega},T_{0}\right)$ in the Cayley topology.

\end{fact}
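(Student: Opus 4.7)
The plan is to show that for every radius $r$, there exists $N$ such that for all $n \geq N$ the labeled balls of radius $r$ around the identity in $(\Delta_n, T_n)$ and $(W_\omega, T_0)$ coincide as Cayley graphs. I would take $N$ large enough that $2r + 2 < 2^{N-1}$, which will allow me to invoke the length reduction property from Fact \ref{short-section} on prefixes of length at most $r$.

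Fix such $n$ and let $w = t_1 \cdots t_\ell$ be any word of length $\ell \leq r$ in the generators of $\mathbf{M}$. The $G_\omega$-projections agree trivially, since both $\Delta_n = F_n \wr_{\mathsf{L}_n} G_\omega$ and $W_\omega = (U \times V) \wr_{\mathcal{S}} G_\omega$ map naturally to $G_\omega$ with the $a, b, c, d$ generators mapping to themselves and all lamp generators killed. For the lamp configurations, I would track separately the contributions from $u_{i,n}$-type and $v_{j,n}$-type generators. In $\Delta_n$, the former accumulate at vertices $1^n \cdot \bar g^{-1}$ and the latter at $1^{n-1}0 \cdot \bar g^{-1}$, where $\bar g$ ranges over the prefix projections of $w$ to $G_\omega$. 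By the explicit description of the Schreier graph at level $n$ (a path of $2^n$ vertices with $1^n$ and $1^{n-1}0$ at opposite endpoints), these two sets of reached positions lie in disjoint Schreier neighborhoods of radius $r$ around each endpoint, so the $u$- and $v$-lamps never interact inside any single element of $F_n$.

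Next, I would construct the bijection matching these two separate lamp clusters in $\Delta_n$ with the single lamp cluster in $W_\omega$. In $W_\omega$, both $u_i$- and $v_j$-generators place their values at $1^\infty$ (in the $U$- and $V$-components of $U \times V$ respectively), and after the prefix action land at $1^\infty \cdot \bar g^{-1} \in \mathcal{S}$. By Fact \ref{short-section}, the section of a short $\bar g^{-1}$ at $1^n$ lies in $\{id, b, c, d\}$, all of which fix $1^\infty$; consequently $1^\infty \cdot \bar g^{-1} = (1^n \cdot \bar g^{-1}) \cdot 1^\infty$ as an infinite ray. Combined with the fact that $1^n$ and $1^{n-1}0$ are siblings in $\mathsf{T}$ and therefore map to a sibling pair under any tree automorphism, this lets me pair up the $u$-lamp at $1^n \cdot \bar g^{-1}$ in $\Delta_n$ with the $U$-component of the $W_\omega$ lamp at $1^\infty \cdot \bar g^{-1}$, and the $v$-lamp at $1^{n-1}0 \cdot \bar g^{-1}$ with the $V$-component of the same $\mathcal{S}$-lamp. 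This correspondence respects generators and yields a labeled isomorphism of the $r$-balls.

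The main obstacle I anticipate is controlling the sibling pair $(1^n \cdot \bar g^{-1}, 1^{n-1}0 \cdot \bar g^{-1})$ uniformly over all prefixes of $w$. One needs to verify both that for short $\bar g$ these two vertices stay in disjoint Schreier neighborhoods (so that the $u$- and $v$-lamps in $\Delta_n$ never collide inside a single $F_n$-valued coordinate), and that the sibling pair is consistently identified with the single point $1^\infty \cdot \bar g^{-1} \in \mathcal{S}$ for the purpose of matching values. Both requirements reduce to applying Fact \ref{short-section} to the various short prefixes of $w$, together with the structure of the Schreier graph at level $n$ recalled immediately before Definition \ref{defdelta}. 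Once these are in place, the coincidence of the $r$-balls, hence the claimed Cayley-topology convergence, follows.
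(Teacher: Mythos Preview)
Your proposal is correct and follows essentially the same route as the paper. Both arguments rest on the two observations you isolate: (i) for words of length at most $2^{n-1}-1$, the $U$-lamps (accumulating near $1^{n}$) and the $V$-lamps (accumulating near $1^{n-1}0$) live in disjoint halves of the level-$n$ Schreier segment and hence never interact inside a single $F_{n}$-coordinate; (ii) the sibling pair $\bigl(1^{n}\cdot\bar g^{-1},\,1^{n-1}0\cdot\bar g^{-1}\bigr)$ is identified with the single ray $1^{\infty}\cdot\bar g^{-1}\in\mathcal{S}$ via the map $v\mapsto v1^{\infty}$. The paper records this identification in one line as $\phi(v1^{\infty})=(f(v),f(\check v))$, whereas you spell out why it is well-defined by invoking the length-reduction property so that the relevant section fixes $1^{\infty}$; that is extra care rather than a different idea. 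One small remark: Fact~\ref{short-section} as stated in the paper is written for the specific recursion of $\mathfrak{G}=G_{(012)^{\infty}}$, while Fact~\ref{Delta-W} is for general $\omega$; the length-reduction statement you need holds verbatim for any $G_{\omega}$, so you should either note this or phrase the section bound directly from the recursion rules rather than by citation.
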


\begin{proof}

Because the $U$-generators and the $V$-generators are place $2^{n}-1$
apart on the level $n$ Schreier graph in $T_{n}$, the ball of radius
$2^{n-1}-1$ around identity in $\left(\Delta_{n},T_{n}\right)$ coincide
with the ball of same radius around identity in $\left(W_{\omega},T_{0}\right)$.
An explicit identification is given by for $(f,g)\in\Delta_{n}$,
where the function $f$ satisfies that for $d(1^{n},v)\le2^{n-1}-1$,
$f(v)\in U$ and for $d(1^{n},v)>2^{n-1}-1,$ $f(v)\in V$, it is
identified with $\left(\phi,g\right)\in W_{\omega}$, where 
\[
\phi(v1^{\infty})=\left(f(v),f(\check{v})\right)\in U\times V,\ \mbox{for }d(1^{n},v)\le2^{n-1}-1;\ \phi(x)=id\mbox{ otherwise}.
\]

\end{proof}

Given an element $\gamma=(\gamma_{n})\in\Delta$, denote by $\gamma_{\infty}$
its projection to $W_{\omega}$ as explained Fact \ref{Delta-W}.
Fact \ref{Delta-W} implies the following.

\begin{fact}\label{FC-delta2}

Suppose each group $F_{n}$ in $\mathcal{L}$ is an FC-group, then
the FC-center of $\Delta=\Delta(\mathcal{L},\omega)$ is
\[
Z_{{\rm FC}}(\Delta)=\ker\left(\Delta\to W_{\omega}\right).
\]

\end{fact}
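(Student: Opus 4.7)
The plan is to prove the two inclusions separately.

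For $\ker(\pi) \subseteq Z_{{\rm FC}}(\Delta)$, where $\pi : \Delta \to W_\omega$ is the natural projection, the key input is Fact \ref{Delta-W}: for every $n$ the balls of radius $2^{n-1}-1$ in $(\Delta_n,T_n)$ and $(W_\omega,T_0)$ are isomorphic as labelled Cayley balls. Hence if $\gamma \in \ker(\pi)$ is represented by a word $w$ of length $\ell$, then $\gamma_n = \mathrm{id}$ in $\Delta_n$ for every $n$ with $2^{n-1}-1 \ge \ell$, so there exists $N = N(\gamma)$ such that $\gamma_n = \mathrm{id}$ for all $n > N$. Moreover, since the $G_\omega$-projection of $\gamma$ coincides with that of $\pi(\gamma) = \mathrm{id}$, each coordinate has the form $\gamma_n = (f_n, \mathrm{id})$ lying in the lamp subgroup $\bigoplus_{\mathsf{L}_n} F_n$ for $n \le N$.

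Then for any $h \in \Delta$, writing $h_n = (\phi, g)$ and computing in $\Delta_n$,
\[
h_n (f_n, \mathrm{id}) h_n^{-1} = \bigl(\phi \cdot (g \cdot f_n) \cdot \phi^{-1},\ \mathrm{id}\bigr),
\]
whose value at $v \in \mathsf{L}_n$ is $\phi(v) f_n(v \cdot g) \phi(v)^{-1}$, a conjugate of $f_n(v \cdot g)$ in $F_n$. The FC-property of $F_n$ makes each pointwise conjugacy class finite, and finiteness of $\mathsf{L}_n$ then bounds the number of distinct conjugates of $\gamma_n$ in $\Delta_n$ by a finite quantity. Combined with triviality in factors $n > N$, the conjugacy class of $\gamma$ in $\Delta$ embeds into a product of finite sets indexed by $n \in \{1,\dots,N\}$, hence is finite, so $\gamma \in Z_{{\rm FC}}(\Delta)$.

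For the reverse inclusion $Z_{{\rm FC}}(\Delta) \subseteq \ker(\pi)$, since $\pi$ is a surjective homomorphism we have $\pi(Z_{{\rm FC}}(\Delta)) \subseteq Z_{{\rm FC}}(W_\omega)$, and it suffices to show that $W_\omega$ is ICC. Given a nontrivial $(\phi, g) \in W_\omega$, if $g \ne \mathrm{id}$ then conjugation by $(\mathbf{0}, h)$ produces $(h \cdot \phi, hgh^{-1})$, whose $G_\omega$-projection ranges over the infinite conjugacy class of $g$ in $G_\omega$ (using that $G_\omega$ is ICC when $\omega$ is not eventually constant, as $G_\omega$ is then just-infinite with trivial center). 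If $g = \mathrm{id}$ and $\phi \ne 0$, conjugation by $(\mathbf{0}, h)$ produces $(h \cdot \phi, \mathrm{id})$, whose support is $\mathrm{supp}(\phi) \cdot h^{-1}$; since the $G_\omega$-orbit $\mathcal{S}$ is infinite, these conjugates are pairwise distinct for infinitely many choices of $h$. In either case the conjugacy class is infinite, establishing that $W_\omega$ is ICC.

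The main obstacle is verifying that $W_\omega$ is ICC; the direct-computation half is routine once the Cayley radius estimate from Fact \ref{Delta-W} is combined with the FC property of each $F_n$.
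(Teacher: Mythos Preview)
Your proof is correct and follows the same two-inclusion strategy as the paper: use that $W_\omega$ is ICC for $Z_{\rm FC}(\Delta)\subseteq\ker\pi$, and use Fact~\ref{Delta-W} together with the FC property of each $F_n$ to show every element of $\ker\pi$ has finite conjugacy class. You in fact supply more detail than the paper does---the paper simply asserts that $W_\omega$ is ICC, whereas you sketch a proof; your parenthetical justification ``just-infinite with trivial center'' is not quite the implication you want (the clean argument is that $Z_{\rm FC}(G_\omega)$ is normal, hence trivial or of finite index by just-infiniteness, and finite index would force $G_\omega$ to be virtually abelian), but the conclusion is correct and well known.
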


\begin{proof}

Since the group $W_{\omega}$ is ICC, we have that $Z_{\mathbf{FC}}(\Delta)\subseteq\ker\left(\Delta\to W_{\omega}\right)$.
To show containment in the other direction, for any element $\gamma\in\ker\left(\Delta\to W_{\omega}\right)$,
since $(\Delta_{n},T_{n})\to(W_{\omega},T_{0})$ when $n\to\infty$,
there exists an index $n_{0}$ such that the projection $\gamma_{n}$
of $\gamma$ to $\Delta_{n}$ is trivial for all $n\ge n_{0}$. It
follows also that the projection of $\gamma$ to $G_{\omega}$ is
trivial. Then the conjugacy class of $\gamma$ in $\Delta$ is contained
in the finite product of the conjugacy classes of $\gamma_{n}$ in
$\oplus_{x\in\mathsf{L}_{n}}F_{n}$, $n<n_{0}$. We conclude that
$\gamma$ is in the FC-center of $\Delta$. 

\end{proof}

We now examine the kernel of $\Delta\left(\mathcal{L},\omega\right)\to W_{\omega}$.
Denote by $\left\langle [U,V]\right\rangle ^{F_{n}}$ the normal closure
of $\left[U,V\right]$ in $F_{n}$.

\begin{lemma}\label{KDelta}

Suppose $\mathcal{L}=\left(F_{n}\right)_{n=1}^{\infty}$ is a sequence
of marked quotients of $U\ast V$. Then 
\[
\ker\left(\Delta\to W_{\omega}\right)\supseteq\oplus_{n=1}^{\infty}\oplus_{x\in\mathsf{L}_{n}}\left\langle [U,V]\right\rangle ^{F_{n}}.
\]

\end{lemma}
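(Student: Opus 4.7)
The plan is to produce, for each $n \geq 1$, each $x \in \mathsf{L}_n$, and each $k \in \left\langle [U,V]\right\rangle^{F_n}$, an element $\tilde\delta_x^k \in \Delta$ whose projection to $\Delta_n$ equals $(\delta_x^k, id)$ while its projections to $W_\omega$ and to every $\Delta_m$ with $m \neq n$ are trivial. Once such ``isolating lifts'' are exhibited, the subgroup they generate in $\Delta$ automatically has pairwise disjoint support when viewed inside $\prod_m \Delta_m$, hence is canonically isomorphic to $\oplus_{n=1}^\infty \oplus_{x \in \mathsf{L}_n}\left\langle [U,V]\right\rangle^{F_n}$, and lies in $\ker(\Delta \to W_\omega)$ by construction.

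The seed lifts come from a single commutator. Fix $n$ and choose $g_n \in {\rm St}_{G_\omega}(n-1)$ whose section at $1^{n-1}$ equals $a$; such $g_n$ swaps the sibling pair $\{1^n, 1^{n-1}0\}$. Let $w_n$ be a word in $\{a,b,c,d\}$ with image $g_n$ in $G_\omega$, and for $u \in U$, $v \in V$ set
\[
\alpha_{n,u,v} := [w_n\,\tilde u\, w_n^{-1},\ \tilde v] \in \Delta.
\]
A direct computation in $F_n \wr_{\mathsf{L}_n} G_\omega$ shows that $w_n \tilde u w_n^{-1}$ transports the $u$-lamp from $1^n$ onto $1^{n-1}0$, where it meets the support of $\tilde v$, so the commutator projects to $(\delta_{1^{n-1}0}^{[u,v]}, id)$ in $\Delta_n$. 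The vanishing of $\alpha_{n,u,v}$ in every other factor rests on the sibling placement of the $U$- and $V$-generators dictated by~(\ref{eq:Tn}) together with the specific swap $g_n$: in $\Delta_m$ with $m \leq n-1$ the element $g_n$ stabilizes $\mathsf{L}_m$ pointwise, so the conjugation is inert and the two lamps sit at the distinct vertices $1^m$ and $1^{m-1}0$; in $\Delta_m$ with $m > n$ the $u$-lamp is displaced to $1^{n-1}0\,1^{m-n}$ while the $v$-lamp remains at $1^{m-1}0$, and these two strings differ; in $W_\omega$ the two lamps land at the distinct points $1^{n-1}0\,1^\infty$ and $1^\infty$ of $\mathcal{S}$. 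In each case the lamps commute in the respective wreath product, so $\alpha_{n,u,v}$ is trivial there.

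To realize arbitrary $x$ and arbitrary $k$, I propagate the seed by two further conjugation moves which automatically preserve triviality in the factors outside $\Delta_n$, since one is conjugating an element already trivial there. First, conjugation by a word $w_h$ representing an $h \in G_\omega$ sending $x$ to $1^{n-1}0$ (which exists by level transitivity of $G_\omega$ on $\mathsf{L}_n$) translates the isolated lamp from $1^{n-1}0$ to $x$ in $\Delta_n$. Second, conjugation by $w_{h'}\,\tilde u_i\, w_{h'}^{-1}$ (respectively $w_{h''}\,\tilde v_j\, w_{h''}^{-1}$), with $h'$ chosen to send $x$ to $1^n$ (respectively $h''$ sending $x$ to $1^{n-1}0$), acts on the $\Delta_n$-component as conjugation of the lamp at $x$ by $u_i$ (respectively $v_j$). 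Iterating these two moves and multiplying the resulting elements produces lifts $\tilde\delta_x^k$ realizing every $k$ in the normal closure $\left\langle [U,V]\right\rangle^{F_n}$ at every $x \in \mathsf{L}_n$.

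The main obstacle is the level-by-level position tracking in the second paragraph: one must verify that the ``siblings on the tree, opposite ends of the Schreier graph'' placement of the $U$- and $V$-generators, combined with the swap $g_n$, yields lamp coincidence exactly at level $n$ and at no other level, and likewise that the two lamps do not coincide on $\mathcal{S}$. This is precisely what makes the seemingly ad hoc marking~(\ref{eq:Tn}) necessary for the direct-sum statement above.
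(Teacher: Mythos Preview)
Your proof is correct and follows essentially the same strategy as the paper's: exhibit, for each level $n$, a commutator of a $U$-generator with a suitably conjugated $V$-generator (or vice versa) that is nontrivial in $\Delta_n$ but trivial in every other factor and in $W_\omega$, then propagate by conjugation to arbitrary $x\in\mathsf{L}_n$ and to the full normal closure. The only real difference is the choice of conjugating element: the paper takes $g$ to be a shortest word (of length $2^n-1$) sending $1^n$ to $1^{n-1}0$ and rules out coincidence at levels $m>n$ by the distance bound $d(1^m,1^{m-1}0)=2^m-1>|g|$, whereas you take $g_n\in{\rm St}_{G_\omega}(n-1)$ with section $a$ at $1^{n-1}$ and rule out coincidence by the explicit computation $1^m\cdot g_n^{-1}=1^{n-1}01^{m-n}\neq 1^{m-1}0$; both work, and your check in $W_\omega$ is in fact superfluous since $U$ and $V$ already commute in the lamp group $U\times V$.
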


\begin{proof}

The proof is similar to Lemma \ref{K-split} or Lemma \ref{center2}.
In the factor group $\Gamma_{n}$, take $g$ to be a shortest element
in $G_{\omega}$ such that $1^{n-1}0=1^{n}\cdot g$. The word length
of $g$ is $2^{n}-1$. Then 
\begin{align*}
gv_{j,n}g^{-1} & =\left(\delta_{1^{n-1}0\cdot g^{-1}}^{v_{j}},id\right)=\left(\delta_{1^{n}}^{v_{j}},id\right),\\
\left[u_{i,n},gv_{j,n}g^{-1}\right] & =\left(\delta_{1^{n}}^{\left[u_{i},v_{j}\right]},id\right).
\end{align*}
For the choice of $g$ above, for any $k<n$, $1^{k}\cdot g=1^{k}$.
For $k>n$, since $d(1^{k},1^{k-1}0)=2^{k}-1$, we have that $1^{k}\cdot g\neq1^{k-1}0$.
Therefore if $k\neq n$, then $1^{k-1}0\cdot g^{-1}\neq1^{k}$ and
\begin{align*}
gv_{j,k}g^{-1} & =\left(\delta_{1^{k-1}0\cdot g^{-1}}^{v_{j}},id\right),\\
\left[u_{i,k},gv_{j,k}g^{-1}\right] & =\left(\left[\delta_{1^{k}}^{u_{i}},\delta_{1^{k-1}0\cdot g^{-1}}^{v_{j}}\right],id\right)=id.
\end{align*}
The calculation above implies that for each $n$, $\left\langle [U,V]\right\rangle ^{F_{n}}$
is a direct summand in $\ker\left(\Gamma\to W_{\omega}\right)$. Taking
the normal closure of these summands, we obtain the statement.

\end{proof}

Write $\bar{F}_{n}=F_{n}/\left\langle \left[U,V\right]\right\rangle ^{F_{n}}$
and $\bar{\pi}_{n}:F_{n}\to\bar{F_{n}}$. With slight abuse of notation
we also write $\bar{\pi}_{n}$ for the projection $U\times V\to\bar{F}_{n}$.
Write $\pi_{U}:U\times V\to V$, $\pi_{V}:U\times V\to V$. Recall
that $\check{v}$ is the sibling of $v$. Suppose now $U$ and $V$
are abelian. Consistent with the markings $T_{0}$ and $T_{n}$, there
is a projection map 
\begin{align*}
\varrho_{n} & :\left(U\times V\right)\wr_{\mathcal{S}}G_{\omega}\to\bar{F}_{n}\wr_{\mathsf{L}_{n}}G_{\omega}\\
 & \left(f,g\right)\mapsto\left(\psi_{f},g\right),
\end{align*}
where 
\[
\psi_{f}(v)=\sum_{z\in\mathcal{S}}\bar{\pi}_{n}\left(\pi_{U}(f(vz)),\pi_{V}\left(f(\check{v}x)\right)\right).
\]
This map is a homomorphism because for any tree automorphism $g$,
if $v=u\cdot g$ then $\check{v}=\check{u}\cdot g$. By induction
on word length, one can verify the following diagram commutes:

\[ \begin{tikzcd}\label{consist}
\Delta \arrow{r}{\pi_n} \arrow[swap]{d}{\pi} & 
\Delta_n \arrow{d}{} \\%
W_{\omega} \arrow{r}{\varrho_n}&  \bar{F}_{n}\wr_{\mathsf{L}_{n}}G_{\omega}.
\end{tikzcd} \] In other word, for any element $\gamma=(\gamma_{n})_{n=1}^{\infty}\in\Delta$,
$\gamma_{n}$ and $\gamma_{\infty}$ have consistent projections to
$\bar{F}_{n}\wr_{\mathsf{L}_{n}}G_{\omega}$. 

\begin{corollary}\label{FC-abelianUV}

Suppose $\mathcal{L}=\left(F_{n}\right)_{n=1}^{\infty}$ is a sequence
of marked quotients of $U\ast V$, where $U,V$ are nontrivial finite
abelian groups. Suppose each group $F_{n}$ in $\mathcal{L}$ is an
FC-group. Then the FC-center of $\Delta=\Delta(\mathcal{L},\omega)$
is 
\[
Z_{{\rm FC}}(\Delta)=\oplus_{n=1}^{\infty}\oplus_{x\in\mathsf{L}_{n}}\left\langle [U,V]\right\rangle ^{F_{n}}.
\]

\end{corollary}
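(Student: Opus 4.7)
The plan is to combine the two preceding results (Fact \ref{FC-delta2} and Lemma \ref{KDelta}) with the commutative diagram involving $\varrho_n$. By Fact \ref{FC-delta2}, applied under the standing assumption that each $F_n$ is an FC-group, we have $Z_{\mathrm{FC}}(\Delta) = \ker(\Delta \to W_\omega)$. Lemma \ref{KDelta} gives the inclusion $\ker(\Delta \to W_\omega) \supseteq \bigoplus_{n=1}^\infty \bigoplus_{x \in \mathsf{L}_n} \langle [U,V]\rangle^{F_n}$. Thus everything reduces to verifying the reverse inclusion
\[
\ker(\Delta \to W_\omega) \subseteq \bigoplus_{n=1}^\infty \bigoplus_{x \in \mathsf{L}_n} \langle [U,V]\rangle^{F_n}.
\]

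Let $\gamma \in \ker(\Delta \to W_\omega)$ and denote by $\gamma_n = (f_n, g_n) \in \Delta_n = F_n \wr_{\mathsf{L}_n} G_\omega$ its projection to the $n$-th factor. Since the composition $\Delta \to W_\omega \to G_\omega$ coincides with the canonical projection $\Delta \to G_\omega$, each $g_n = \mathrm{id}$. By Fact \ref{Delta-W} the sequence $(\Delta_n, T_n)$ converges to $(W_\omega, T_0)$ in the Cayley topology, so if the word length of $\gamma$ in the generating set $T$ is at most $2^{n-1}-1$, then $\gamma_n$ is determined by its image $\gamma_\infty = \mathrm{id}$ in $W_\omega$, forcing $\gamma_n = \mathrm{id}$. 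Hence there exists $n_0$ such that $\gamma_n = \mathrm{id}$ for all $n \ge n_0$, and only finitely many components $\gamma_n$ with $n < n_0$ can be nontrivial.

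For these finitely many indices $n$, I would use the commutative diagram preceding Corollary \ref{FC-abelianUV}: since $\gamma_\infty = \mathrm{id}$ in $W_\omega$, we have $\varrho_n(\gamma_\infty) = \mathrm{id}$ in $\bar{F}_n \wr_{\mathsf{L}_n} G_\omega$, and by the commutativity the image of $\gamma_n$ in $\bar{F}_n \wr_{\mathsf{L}_n} G_\omega$ is also trivial. Combined with $g_n = \mathrm{id}$, this means $\bar\pi_n(f_n(x)) = \mathrm{id}$ for every $x \in \mathsf{L}_n$, i.e.\ $f_n(x) \in \ker(F_n \to \bar F_n) = \langle [U,V]\rangle^{F_n}$. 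Consequently $\gamma_n \in \bigoplus_{x \in \mathsf{L}_n} \langle [U,V]\rangle^{F_n}$ for every $n$, and the reverse inclusion is established.

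The only nontrivial point is the use of the commutative diagram: one should check that the convergence of $(\Delta_n, T_n)$ to $(W_\omega, T_0)$ gives a genuine comparison of the lamp components. This is essentially free here because $U, V$ are abelian, so the map $\varrho_n$ defined on $W_\omega$ is a genuine homomorphism and matches, on the chosen generating tuples, the projection $\Delta_n \to \bar F_n \wr_{\mathsf{L}_n} G_\omega$; once this identification is in place the argument is bookkeeping. No additional obstruction appears, so the proof should be short.
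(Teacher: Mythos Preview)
Your proposal is correct and follows essentially the same approach as the paper's proof: reduce via Fact~\ref{FC-delta2} and Lemma~\ref{KDelta} to the reverse inclusion, use Cayley convergence to see that only finitely many components $\gamma_n$ are nontrivial, and then invoke the commutative diagram with $\varrho_n$ to force each lamp value into $\langle [U,V]\rangle^{F_n}$. The paper phrases the last step as a contradiction (if some $(\gamma_n)_x\notin\langle[U,V]\rangle^{F_n}$ then $\gamma_\infty\neq\mathrm{id}$), whereas you argue directly, but the content is identical.
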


\begin{proof}

By Fact \ref{FC-delta2} and Lemma \ref{KDelta}, it remains to show
that $\ker\left(\Delta\to W_{\omega}\right)\subseteq\oplus_{n=1}^{\infty}\oplus_{x\in\mathsf{L}_{n}}\left\langle [U,V]\right\rangle ^{F_{n}}$.
If $\gamma\in\ker\left(\Delta(\mathcal{L},\omega)\to W_{\omega}\right)$,
then as in the proof of Fact \ref{FC-delta2}, there exists an index
$n_{0}$ such that the projection $\gamma_{n}$ of $\gamma$ to $\Delta_{n}$
is trivial for all $n\ge n_{0}$. For $n<n_{0}$, $\gamma_{n}\in\oplus_{x\in\mathsf{L}_{n}}F_{n}$.
Suppose there is an index $n$ and $x\in\mathsf{L}_{n}$ such that
$(\gamma_{n})_{x}\notin\left\langle \left[U,V\right]\right\rangle ^{F_{n}}$,
in other words the projection of $\left(\gamma_{n}\right)_{x}$ to
$F_{n}/\left\langle \left[U,V\right]\right\rangle ^{F_{n}}$ is nontrivial.
We have then the projection $\gamma_{\infty}$ of $\gamma$ to $W_{\omega}$
is nontrivial, contradicting with the choice of $\gamma$. It follows
that $\gamma\in\oplus_{n<n_{0}}\oplus_{x\in\mathsf{L}_{n}}\left\langle \left[U,V\right]\right\rangle ^{F_{n}}$. 

\end{proof}

\subsection{Recursions for $\Delta\left(\mathcal{L},\omega\right)$}

Let a sequence $\mathcal{L}=\left(F_{n}\right)_{n=1}^{\infty}$ of
marked quotients of $U\ast V$ and a string $\omega=\omega_{0}\omega_{1}\ldots\in\{0,1,2\}^{\infty}$
be given. We continue to use notations introduced in the previous
subsection. Denote by $\Delta_{>k}$ the diagonal product of factors
with index $n>k$, $\left(\left(\Delta_{n},T_{n}\right)\right)_{n=k+1}^{\infty}$.
We will also consider the group $\Delta\left(\mathfrak{s}^{k}\mathcal{L},\mathfrak{s}^{k}\omega\right)$
with shifted parameters, where $\mathfrak{s}^{k}\mathcal{L}=\left(F_{k+1},F_{k+2},\ldots\right)$
and $\mathfrak{s}^{k}\omega=\omega_{k}\omega_{k+1}\ldots$. 

Similar to Section \ref{sec:definition}, we first consider a formal
recursion on the level of the free product $\mathbf{M}$, then project
down to its quotients. Recall that $\mathbf{M}$ is the free product
$(\mathbb{Z}/2\mathbb{Z})\ast\left(\mathbb{Z}/2\mathbb{Z}\times\mathbb{Z}/2\mathbb{Z}\right)\ast U\ast V,$
marked with the generating tuple $\mathbf{T}$. Denote by $\mathfrak{S}_{2}$
the symmetric group of $\{0,1\}$, generated by the involution $\varepsilon=(0,1)$.
Consider the homomorphism 
\begin{align*}
\boldsymbol{\theta} & :\mathbf{M}\to\mathbf{M}\wr_{\{0,1\}}\mathfrak{S}_{2}\\
 & \mathbf{a}\mapsto(id,\varepsilon),\ b\mapsto\left(\delta_{1}^{\mathbf{b}}+\delta_{0}^{\mathbf{\omega_{0}(\mathbf{b})}},id\right),\ c\mapsto\left(\delta_{1}^{\mathbf{c}}+\delta_{0}^{\omega_{0}(\mathbf{c})},id\right),\ d\mapsto\left(\delta_{1}^{\mathbf{d}}+\delta_{0}^{\omega_{0}(\mathbf{d})},id\right)\\
 & u_{i}\mapsto\left(\delta_{1}^{u_{i}},id\right),\ v_{j}\mapsto\left(\delta_{1}^{v_{j}},id\right),\ 1\le i\le p\mbox{ and }1\le j\le q.
\end{align*}
Denote by $\pi_{1}$ the projection $\mathbf{M}\wr_{\{0,1\}}\mathfrak{S}_{2}\to\Delta(\mathfrak{s}\mathcal{L},\mathfrak{s}\omega)\wr_{\{0,1\}}\mathfrak{S}_{2}$
induced by the marked projection $\mathbf{M}\to\Delta(\mathfrak{s}\mathcal{L},\mathfrak{s}\omega)$. 

\begin{lemma}\label{M-recursion}

The homomorphism $\boldsymbol{\theta}$ induces an embedding $\theta:\Delta_{>1}\to\Delta\left(\mathfrak{s}\mathcal{L},\mathfrak{s}\omega\right)\wr_{\{0,1\}}\mathfrak{S}_{2}$
and the following diagram commute:

\[ \begin{tikzcd}
\mathbf{M} \arrow{r}{\boldsymbol{\theta}} \arrow[swap]{d}{\pi} & 
\boldsymbol{\theta}(\mathbf{M}) \arrow{d}{\pi_1} \\%
\Delta_{>1} \arrow{r}{\theta}&  \theta(\Delta_{>1}).
\end{tikzcd} \]

\end{lemma}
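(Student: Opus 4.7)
The plan is to establish the equality of kernels $\ker(\pi)=\ker(\pi_{1}\circ\boldsymbol{\theta})$ as homomorphisms out of $\mathbf{M}$; this simultaneously yields the factoring of $\pi_{1}\circ\boldsymbol{\theta}$ through $\pi$ (producing the map $\theta$) and the injectivity of $\theta$. The commutativity of the stated diagram is then tautological from the construction.

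To carry this out, for each $n\ge 2$ I would compare two homomorphisms from $\mathbf{M}$ into $\bigl(F_{n}\wr_{\mathsf{L}_{n-1}}G_{\mathfrak{s}\omega}\bigr)\wr_{\{0,1\}}\mathfrak{S}_{2}$. The first is $\pi_{n}:\mathbf{M}\to\Delta_{n}$ composed with the canonical wreath-recursion embedding $\Delta_{n}=F_{n}\wr_{\mathsf{L}_{n}}G_{\omega}\hookrightarrow\bigl(F_{n}\wr_{\mathsf{L}_{n-1}}G_{\mathfrak{s}\omega}\bigr)\wr_{\{0,1\}}\mathfrak{S}_{2}$ induced by $G_{\omega}\hookrightarrow G_{\mathfrak{s}\omega}\wr_{\{0,1\}}\mathfrak{S}_{2}$. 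The second is $\boldsymbol{\theta}$ followed by the marked projection of $\mathbf{M}\wr\mathfrak{S}_{2}$ onto $\bigl(F_{n}\wr_{\mathsf{L}_{n-1}}G_{\mathfrak{s}\omega}\bigr)\wr\mathfrak{S}_{2}$ arising from the fact that $F_{n}\wr_{\mathsf{L}_{n-1}}G_{\mathfrak{s}\omega}$ is the $(n-1)$-st factor of $\Delta(\mathfrak{s}\mathcal{L},\mathfrak{s}\omega)$. Agreement must be checked on each generator of $\mathbf{M}$: for $a,b,c,d$ this is exactly the defining wreath recursion of $G_{\omega}$; for $u_{i}$, the function $\delta_{1^{n}}^{u_{i}}$ on $\mathsf{L}_{n}$ restricts to $\delta_{1^{n-1}}^{u_{i}}$ on the $1$-subtree and to the trivial function on the $0$-subtree, matching $\boldsymbol{\theta}(u_{i})=(\delta_{1}^{u_{i}},id)$; and for $n\ge 2$, $\delta_{1^{n-1}0}^{v_{j}}$ is supported in the $1$-subtree and restricts there to $\delta_{1^{n-2}0}^{v_{j}}$, matching $\boldsymbol{\theta}(v_{j})=(\delta_{1}^{v_{j}},id)$. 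This is where the hypothesis $n\ge 2$ enters.

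Once the generator-wise agreement is in hand, the canonical wreath-recursion embedding above is injective, so $\ker(\pi_{n})$ equals the kernel of $\pi_{n}^{\mathfrak{s}}\circ\boldsymbol{\theta}$, where $\pi_{n}^{\mathfrak{s}}$ denotes the projection $\mathbf{M}\wr\mathfrak{S}_{2}\to\bigl(F_{n}\wr_{\mathsf{L}_{n-1}}G_{\mathfrak{s}\omega}\bigr)\wr\mathfrak{S}_{2}$ onto the $(n-1)$-st factor of $\Delta(\mathfrak{s}\mathcal{L},\mathfrak{s}\omega)\wr\mathfrak{S}_{2}$. Intersecting over $n\ge 2$ and using $\ker(\pi)=\bigcap_{n\ge 2}\ker(\pi_{n})$ together with $\ker(\pi_{1}\circ\boldsymbol{\theta})=\bigcap_{n\ge 2}\ker(\pi_{n}^{\mathfrak{s}}\circ\boldsymbol{\theta})$ (which follows because $-\wr\mathfrak{S}_{2}$ commutes with the formation of the diagonal subgroup inside a product of marked groups), the two kernels coincide, giving an injective $\theta$ with $\theta\circ\pi=\pi_{1}\circ\boldsymbol{\theta}$.

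The only real obstacle is the one-step index shift: the factor $\Delta_{n}$ of $\Delta_{>1}$ (for $n\ge 2$) corresponds via wreath recursion to the factor $F_{n}\wr_{\mathsf{L}_{n-1}}G_{\mathfrak{s}\omega}$ of $\Delta(\mathfrak{s}\mathcal{L},\mathfrak{s}\omega)$ indexed by $n-1$, so the two diagonal products have to be matched carefully. Beyond the marking conventions in \eqref{eq:Tn} and the recursive definition of $G_{\omega}$ from Subsection \ref{subsec:Grigorchuk-groups}, no further structural input is needed.
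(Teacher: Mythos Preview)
Your proof is correct and follows essentially the same approach as the paper: both arguments work componentwise, identifying for each $n\ge 2$ the factor $\Delta_{n}=F_{n}\wr_{\mathsf{L}_{n}}G_{\omega}$ with its image under the canonical wreath recursion inside $\bigl(F_{n}\wr_{\mathsf{L}_{n-1}}G_{\mathfrak{s}\omega}\bigr)\wr_{\{0,1\}}\mathfrak{S}_{2}$, checking agreement on generators (with the observation that $1^{n-1}0$ lies in the $1$-subtree precisely when $n\ge 2$), and then assembling the factors. The only stylistic difference is that the paper constructs the map $\theta=(\theta_{n})_{n\ge 2}$ explicitly and reads off injectivity from that of each $\theta_{n}$, whereas you phrase the same content as an equality of kernels $\ker(\pi)=\ker(\pi_{1}\circ\boldsymbol{\theta})$; the underlying verification is identical.
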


\begin{proof}

We first describe $\theta$. The group $\Delta_{>1}$ is defined as
the diagonal product of $\left(\left(\Delta_{n},T_{n}\right)\right)_{n=2}^{\infty}$.
Componentwise, for each $n\ge2$, we have the embedding 
\begin{align*}
\theta_{n} & :F_{n}\wr_{\mathsf{L}_{n}}G_{\omega}\to\left(F_{n}\wr_{\mathsf{L}_{n-1}}G_{\mathfrak{s}\omega}\right)\wr_{\{0,1\}}\mathfrak{S}_{2},\\
 & (f,g)\mapsto\left(\psi_{f,g},\pi_{1}(g)\right),
\end{align*}
where $\psi_{f,g}:\{0,1\}\to F_{n}\wr_{\mathsf{L}_{n-1}}G_{\mathfrak{s}\omega}$
is 
\[
\psi_{f,g}(i)=\left(\left(f(iu)\right)_{u\in\mathsf{L}_{n-1}},g_{i}\right),\ i\in\{0,1\},
\]
and $(g_{0},g_{1})\pi_{1}(g)$ is the image of $g$ under the canonical
wreath recursion in ${\rm Aut}(\mathsf{T})$. Explicitly, for the
generators in $T_{n}$ (\ref{eq:Tn}), we have 
\begin{align*}
\theta_{n}\left(x\right) & =(id,\varepsilon);\\
\theta_{n}\left(x\right) & =\left(\psi_{x},id\right),\ \mbox{where }\psi_{x}(0)=\left(id,\omega_{0}(x)\right),\ \psi_{x}(1)=\left(id,x\right),\ x\in\{b,c,d\};\\
\theta_{n}\left(u_{i,n}\right) & =\left(\delta_{1}^{u_{i}},id\right),\ \theta_{n}\left(v_{j,n}\right)=\left(\delta_{1}^{v_{j}},id\right),\ 1\le i\le p,\ 1\le j\le q.
\end{align*}

The map $\theta$ on $\Delta_{>1}$ is defined as for $\gamma=\left(\gamma_{n}\right)_{n=2}^{\infty}$,
where $\gamma_{n}\in\Delta_{n}$, $\theta\left(\left(\gamma_{n}\right)_{n=2}^{\infty}\right)=\left(\theta_{n}(\gamma_{n})\right)_{n=2}^{\infty}$.
By inspecting the generators, we have that $\theta$ maps $\Delta_{>1}$
into the permutation wreath product $\Delta\left(\mathfrak{s}\mathcal{L},\mathfrak{s}\omega\right)\wr_{\{0,1\}}\mathfrak{S}_{2}$,
where $a\mapsto(id,\varepsilon)$, $x\mapsto\left(\delta_{1}^{x}+\delta_{0}^{\omega_{0}(x)},id\right)$
for $x\in\{b,d,d\}$, $\tilde{u}_{j}\mapsto\left(\delta_{1}^{u_{j}},id\right)$
and $\tilde{v}_{j}\mapsto\left(\delta_{1}^{v_{j}},id\right)$. The
map $\theta$ is an embedding because componentwise, each $\theta_{n}$
is an embedding.

The commutative diagram can be checked by induction on the word length
of $w\in\mathbf{M}$. 

\end{proof}

Iterate the embedding $\theta$, we have that the group $\Delta_{>n}=\Delta_{>n}\left(\mathcal{L},\omega\right)$
embeds into the permutation wreath product $\Delta(\mathfrak{s}^{n}\mathcal{L},\mathfrak{s}^{n}\omega)\wr_{\mathsf{L}_{n}}\pi_{n}(G_{\omega})$.
Explicitly, the image of the generating tuple $T$ under $\theta^{n}$
is:

\begin{align}
\theta^{n}:\Delta_{>n} & \to\Delta(\mathfrak{s}^{n}\mathcal{L},\mathfrak{s}^{n}\omega)\wr_{\mathsf{L}_{n}}\pi_{n}(G_{\omega})\label{eq:theta_n}\\
a & \mapsto(id,a),\nonumber \\
x & \mapsto\left(\delta_{1^{n-1}0}^{\omega_{n-1}(x)}+\delta_{1^{n}}^{x},s\right),\mbox{ for generator }x\in\{b,c,d\},\nonumber \\
\gamma & \mapsto\left(\delta_{1^{n}}^{\gamma},id\right),\mbox{ for generator }\gamma\in\left\{ \tilde{u}_{1},\ldots,\tilde{u}_{p},\tilde{v}_{1},\ldots,\tilde{v}_{1}\right\} .\nonumber 
\end{align}

\section{The traverse fields and contraction properties\label{sec:tr-metric}}

In this section, we consider the traverse field associated with a
word $w$ in the letters $\{a,b,c,d\}$, under the action of the first
Grigorchuk group $\mathfrak{G}$ on $\mathsf{T}$. We show that the
traverse fields are compatible with the formal recursion on words.
In what follows $a,b,c,d$ acts on $\mathsf{T}$ as the generators
of $\mathfrak{G}$. 

Given a word $w=z_{1}...z_{m}$ in the alphabet $\left\{ a,b,c,d\right\} $
and a level $n$, consider the inverted orbits of the pair $\left(1^{n},1^{n-1}0\right)$
under the action of $\mathfrak{G}$:
\[
\mathcal{I}_{n}(w):=\left(\left(1^{n},1^{n-1}0\right),\left(1^{n},1^{n-1}0\right)\cdot z_{1}{}^{-1},...,\left(1^{n},1^{n-1}0\right)\cdot\left(z_{1}\ldots z_{m}\right){}^{-1}\right).
\]
Note that $\mathcal{I}_{n}(w)$ is an ordered sequence of points rather
than a set. 

For a vertex $v\in\mathsf{L}_{n}$, keep a record of the pattern of
visits from the inverted orbit $\mathcal{I}_{n}(w)$. Formally, for
$x\in\mathsf{L}_{n}$, let $\tilde{P}(x,w)$ be a string in $\{0,1\}$
which is defined recursively as: 
\[
\tilde{P}(x,id)=\begin{cases}
1 & \mbox{if }x=1^{n},\\
0 & \mbox{if }x=1^{n-1}0\\
\emptyset & \mbox{otherwise};
\end{cases},
\]
and for $s\in\{a,b,c,d\}$,
\[
\tilde{P}(x,ws)=\begin{cases}
\tilde{P}(x,w) & \mbox{ if }x\notin\left\{ 1^{n}\cdot(ws)^{-1},1^{n-1}0\cdot(ws)^{-1}\right\} ,\\
\tilde{P}(x,w)1 & \mbox{ if }x=1^{n}\cdot(ws)^{-1},\\
\tilde{P}(x,w)0 & \mbox{ if }x=1^{n-1}0\cdot(ws)^{-1}.
\end{cases}
\]
After collapse consecutive $1$'s into a single $1$, consecutive
$0$'s into a single $0$, we obtain from $\tilde{P}(x,w)$ a string
$P(x,w)$ alternating in $0$ and $1$. For example, if $\tilde{P}(x,w)=000111100$
then $P(x,w)=010$. 

\begin{definition}[Traverse field]

The collection $\left\{ P(x,w)\right\} _{x\in\mathsf{L}_{n}}$ defined
above is called the \emph{traverse field} of the word $w$ on level
$n$ under the action of $\mathfrak{G}$. 

\end{definition}

The following example illustrates the definitions.

\begin{example}

Let $n=2$ and $w=abaca$. Then 
\[
\mathcal{I}_{2}(abaca)=\left(\left(11,10\right),\left(01,00\right),\left(01,00\right),\left(10,11\right),\left(10,11\right),\left(00,01\right)\right).
\]
For the vertex $v=01$, we have $\tilde{P}(01,abaca)=110$ and $P(01,abaca)=10$. 

\end{example}

Consider the formal recursion $\boldsymbol{\varphi}$ on the level
of words where 
\[
a\mapsto\left(\emptyset,\emptyset\right)\varepsilon,\ b\mapsto\left(a,c\right),\ c\mapsto(a,d),\ d\mapsto(\emptyset,b).
\]
Write $\boldsymbol{\varphi}(w)=(w_{0},w_{1})\varepsilon^{s}$. Note
that no reduction is performed on the words $w_{0},w_{1}$. Under
the formal recursion we have the following. Given two strings $u=u_{1}\ldots u_{k}$
and $v=v_{1}\ldots v_{\ell}$ in $\{0,1\}$, we write $u\subset v$
if there is an increasing, injective map $\tau:\{1,\ldots,k\}\to\left\{ 1,\ldots,\ell\right\} $
such that $v_{\tau(i)}=u_{i}$ for all $1\le i\le k$. In other words,
$u\subset v$ if the string $u$ can be embedded into $v$ in an order
preserving way. 

\begin{lemma}\label{P-recursion}

Let $w$ be a word in $\{a,b,c,d\}$ and $\boldsymbol{\varphi}(w)=(w_{0},w_{1})\varepsilon^{s}$.
Then for $x\in\{0,1\}^{n}$, $n\ge1$, we have that 
\[
P(0x,w)\subset P\left(x,w_{0}\right)\mbox{ and }P(1x,w)\subset P\left(x,w_{1}\right).
\]

\end{lemma}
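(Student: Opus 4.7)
The plan is to translate the sequence of events making up $\tilde{P}(0x, w)$ (respectively $\tilde{P}(1x, w)$) into a sequence of events for $\tilde{P}(x, w_0)$ (respectively $\tilde{P}(x, w_1)$), and then deduce the order-preserving embedding on the collapsed strings from an elementary combinatorial lemma about alternating strings.

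The starting point is the standard wreath-product formula for the action of $w$ on the tree. For each prefix $w_i = z_1 \ldots z_i$ of $w$, I will write $\boldsymbol{\varphi}(w_i) = (w_0^{(i)}, w_1^{(i)}) \varepsilon^{s_i}$, where $s_i$ is the parity of the number of $a$-letters in $w_i$. Inverting the action formula gives, for any level-$(n+1)$ vertex $\epsilon y$,
\[
\epsilon y \cdot w_i^{-1} = (\epsilon + s_i)\bigl(y \cdot (w_{\epsilon + s_i}^{(i)})^{-1}\bigr).
\]
Applied to $1^{n+1} = 1 \cdot 1^n$ and $1^{n}0 = 1 \cdot 1^{n-1}0$, both of which have top coordinate $1$, this reads off that the inverted orbit of $(1^{n+1}, 1^n0)$ at index $i$ lies in the $0$-subtree exactly when $s_i = 1$, and in that case its image in the second coordinate is the inverted orbit of $(1^n, 1^{n-1}0)$ under $(w_0^{(i)})^{-1}$. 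Consequently, the events contributing to $\tilde{P}(0x, w)$ are exactly those indices $i$ with $s_i = 1$ for which the event in $\tilde{P}(x, w_0)$ at position $j_i := |w_0^{(i)}|$ is non-empty, and the symbol ($0$ or $1$) recorded in both strings agrees since it depends only on $w_0^{(i)}$. The analogous statement for visits to $1x$ uses $s_i = 0$ and the sections $w_1^{(i)}$.

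Thus $\tilde{P}(0x, w)$ is obtained from $\tilde{P}(x, w_0)$ by reading off the entries at a non-decreasing sequence of positions $(j_i)_{i \in I_1}$, where $I_1 = \{i : s_i = 1\}$; a given position may appear several times (when several $i \in I_1$ share the same $j_i$) or be omitted. I will then establish the following combinatorial fact: if a string $\tilde{Q}$ in the alphabet $\{0, 1, \emptyset\}$ is obtained from another such string $\tilde{R}$ by selecting entries at a non-decreasing sequence of positions, possibly with repetition, then after deleting $\emptyset$'s and collapsing consecutive equal symbols, the resulting alternating string $Q$ embeds in an order-preserving way into the similarly obtained $R$. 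The argument is immediate once one maps each letter of $Q$ to the unique block of $R$ containing the corresponding position of $\tilde{R}$: since distinct consecutive letters of $Q$ cannot belong to the same block of $R$, the map into the blocks of $R$ is strictly increasing and value-preserving.

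Combining these steps yields $P(0x, w) \subset P(x, w_0)$, and the symmetric argument with $s_i = 0$ and $w_1$ gives $P(1x, w) \subset P(x, w_1)$. The only nontrivial step is the combinatorial lemma, but the block argument above handles the $\emptyset$-entries and repeated positions uniformly; the real content of the proposition is in the wreath-recursion identification that gives the non-decreasing selection in the first place.
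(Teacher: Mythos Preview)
Your argument is correct and takes a genuinely different route from the paper's proof. The paper proceeds by induction on the length of $w$, splitting into cases according to whether the last letter is $a$ or lies in $\{b,c,d\}$, and tracking how the single new point appended to the inverted orbit interacts with the sections $w_0',w_1'$ of the shorter word. Your approach instead makes a single global identification: via the inverse wreath formula $\epsilon y\cdot w_i^{-1}=(\epsilon+s_i)\bigl(y\cdot (w_{\epsilon+s_i}^{(i)})^{-1}\bigr)$ you recognise the entire event stream defining $\tilde P(0x,w)$ as the pull-back, along the non-decreasing map $i\mapsto j_i=|w_0^{(i)}|$ restricted to $\{i:s_i=1\}$, of the event stream defining $\tilde P(x,w_0)$; then a clean combinatorial lemma (delete $\emptyset$'s, collapse, and observe that distinct letters of the collapsed source cannot land in the same block of the collapsed target) finishes. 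This separates the group-theoretic content from the purely combinatorial content and would transfer verbatim to any recursion of the same shape; the paper's induction is more elementary and self-contained but more tied to the specific letter-by-letter rules.

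One small point of presentation: your $\tilde Q,\tilde R$ are strings over $\{0,1,\emptyset\}$ indexed by prefixes, whereas the paper's $\tilde P$ already has the $\emptyset$ steps suppressed. It would be cleaner to introduce an explicit name for the prefix-indexed stream $(e_j)_{j=0}^{|w_0|}$ with $e_j\in\{0,1,\emptyset\}$, and state your combinatorial lemma in those terms; otherwise the phrase ``reading off the entries of $\tilde P(x,w_0)$ at position $j_i$'' is ambiguous, since $\tilde P(x,w_0)$ is not literally indexed by $j$. Also worth noting explicitly is that the base index $i=0$ is handled: for the $0x$ claim one has $s_0=0$ so no symbol is read, matching $\tilde P(0x,\mathrm{id})=\emptyset$; for the $1x$ claim one has $s_0=0$, $j_0=0$, and the initial symbol of $\tilde P(1x,\mathrm{id})$ agrees with $e_0(x)$.
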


\begin{proof}

We prove the claim by induction on the length of $w$. It is trivially
true for the empty word. Suppose the claim is true for all words of
length at most $m$. 

Let $w$ be a word of length $m+1$. Write $w=w'x$, $x\in\{a,b,c,d\}$
and $\boldsymbol{\varphi}(w')=(w_{0}',w_{1}')\varepsilon^{s}$.

Case 1: $w$ ends in $a$. For $s=0$, the new point in the inverted
orbit $\mathcal{I}_{n}(w)$ is 
\[
\left(1^{n},1^{n-1}0\right)\cdot w{}^{-1}=\left(0\left(1^{n-1}\cdot(w_{0}')^{-1}\right),0\left(1^{n-2}0\cdot(w_{0}')^{-1}\right)\right).
\]
In the left subtree, note that $(u,v)=\left(1^{n-1}\cdot(w_{0}')^{-1},1^{n-2}0\cdot(w_{0}')^{-1}\right)$
is the same as the last point in the inverted orbit of $(1^{n-1},1^{n-2}0)$
under $w_{0}'$. By the induction hypothesis, we have $P(0x,w')\subset P(x,w_{0}')$.
Since the new point $(u,v)$ is a repetition of the last point in
the inverted orbit under $w_{0}'$, we have that $P(0x,w)\subset P(x,w_{0}')$
as well. On the right subtree we have $P(1x,w)=P(1x,w')$, which is
contained by $P(x,w_{1}')$ by the induction hypothesis. The argument
for $s=1$ is the same with left and right subtrees swapped. 

Case 2: $w$ ends in $b,c,d$. In this case, the last two points in
the inverted orbit $\mathcal{I}_{n}(w)$ are the same: $\left(1^{n},1^{n-1}0\right)\cdot w{}^{-1}=\left(1^{n},1^{n-1}0\right)\cdot w'{}^{-1}$.
Therefore $P(ix,w)=P(ix,w')$ for $i\in\{0,1\}$, $x\in\left\{ 0,1\right\} ^{n}$.
Since there is no reduction on the words, $w_{i}'$ is a prefix of
$w_{i}$, thus $P(x,w_{i}')\subset P(x,w_{i})$. It follows by the
induction hypothesis that $P(ix,w)=P(ix,w')\subset P(x,w_{i}')\subset P(x,w_{i})$. 

\end{proof}

Next we consider admissible word reductions. Following \cite{BE1},
we say a word $w$ is \emph{pre-reduced}, if it does not contain consecutive
occurrences of $b,c,d$ (while consecutive occurrences of $a$ is
allowed). Given a word $w$, denote by $\mathring{w}$ its pre-reduction.
Since $\left(1^{n},1^{n-1}0\right)$ is fixed by $b,c,d$, by the
definitions we have:

\begin{fact}\label{preduction}

Let $n\ge1$. Then for $x\in\mathsf{L}_{n}$, $P(x,w)=P(x,\mathring{w})$,
that is, on level $n$, the traverse fields of $w$ and its pre-reduction
$\mathring{w}$ are the same. 

\end{fact}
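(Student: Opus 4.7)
The plan is to reduce Fact \ref{preduction} to the stabilization property highlighted just before its statement: each generator $b, c, d$ fixes both vertices $1^n$ and $1^{n-1}0$ in $\mathsf{L}_n$, for every $n \ge 1$. This is a routine induction on $n$ via the canonical wreath recursions $b = (a, c)$, $c = (a, d)$, $d = (\mathrm{id}, b)$: the case $n = 1$ is immediate, and the inductive step propagates through the cyclic pattern $b \to c \to d \to b$ (e.g.\ $b(1^n) = 1 \cdot c(1^{n-1}) = 1 \cdot 1^{n-1} = 1^n$). Since $\{\mathrm{id}, b, c, d\}$ is a Klein $4$-group, every product of letters from $\{b, c, d\}$ likewise fixes the pair $(1^n, 1^{n-1}0)$.

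I would then analyze a single elementary reduction step. Suppose $w = \alpha y_1 y_2 \beta$ with $y_1, y_2 \in \{b, c, d\}$ and let $y_3 = y_1 y_2$ be their product in the Klein $4$-group, so that $y_3 \in \{\mathrm{id}, b, c, d\}$; set $w' := \alpha y_3 \beta$, omitting the letter $y_3$ if it equals the identity. With $m = |\alpha|$, the inverted orbits $\mathcal{I}_n(w)$ and $\mathcal{I}_n(w')$ agree entry by entry through position $m$. Using $(pq)^{-1} = q^{-1} p^{-1}$ together with the stabilization property applied to $y_1$, $y_2$, and $y_3$, one checks that the next one or two entries of $\mathcal{I}_n(w)$ merely repeat the point $(1^n, 1^{n-1}0) \cdot \alpha^{-1}$, after which the two inverted orbits proceed in lockstep through the prefixes of $\beta$. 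Consequently, along these repeated entries the only possible change to $\tilde{P}(x, \cdot)$ is appending one or two additional consecutive $1$'s at the vertex $x = 1^n \cdot \alpha^{-1}$ and one or two additional consecutive $0$'s at $x = 1^{n-1}0 \cdot \alpha^{-1}$; for every other $x \in \mathsf{L}_n$ the strings $\tilde{P}(x, w)$ and $\tilde{P}(x, w')$ are identical.

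Since $P$ is obtained from $\tilde{P}$ by collapsing maximal runs of equal digits, the extra repetitions vanish, giving $P(x, w) = P(x, w')$ for every $x \in \mathsf{L}_n$. Iterating this elementary reduction finitely many times yields $\mathring{w}$, with the traverse field preserved at each step, which proves the Fact. The main (minor) subtlety is the case analysis required to verify that the two inverted orbits really do realign after at most two repeated entries (distinguishing $y_3 = \mathrm{id}$ from $y_3 \ne \mathrm{id}$, and handling the boundary cases $\alpha = \emptyset$ or $\beta = \emptyset$); once the stabilization property is isolated, everything else is bookkeeping.
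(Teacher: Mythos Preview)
Your proposal is correct and follows exactly the same idea as the paper: the paper's entire justification is the single sentence ``Since $(1^n,1^{n-1}0)$ is fixed by $b,c,d$, by the definitions we have\ldots'', and what you have written is a careful unpacking of precisely that observation (repeated inverted-orbit points contribute only repeated digits to $\tilde P$, which vanish under the run-collapsing that defines $P$). There is no difference in approach, only in level of detail.
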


Since $\left(1^{n},1^{n-1}0\right)$ is not fixed by $a$, inserting
or deleting $a^{2}$ may change the traverse field. This is the reason
that we consider pre-reduced words, instead of reduced words in the
free product $\mathbf{F}$. 

In \cite[Proposition 4.2]{Bartholdi98} the following length contraction
bound is established, see also \cite[Lemma 4.2]{BE1}: there is a
norm $\left\Vert \cdot\right\Vert $ such that for any pre-reduced
word $w$ in $\{a,b,c,d\}$, we have 
\begin{equation}
\left\Vert w_{0}\right\Vert +\left\Vert w_{1}\right\Vert \le\eta\left\Vert w\right\Vert +C,\label{eq:eta-contraction}
\end{equation}
where $\boldsymbol{\varphi}(w)=(w_{0},w_{1})\varepsilon^{s}$, $\eta$
is the real root of $X^{3}+X^{2}+X-1$, $C=\eta\left\Vert a\right\Vert $. 

By Lemma \ref{P-recursion} and Fact \ref{preduction}, we may apply
the contraction inequality (\ref{eq:eta-contraction}) to the traverse
fields. Denote by $A\left(n,w\right)$ the sum 
\begin{equation}
A\left(n,w\right):=\sum_{x\in\mathsf{L}_{n}}\left|P(x,w)\right|,\label{eq:A-1}
\end{equation}
where $\left|\cdot\right|$ is the length of the string. 

\begin{lemma}\label{A-contract-1}

There exists a constant $C>0$ such that for any pre-reduced word
$w$ in $\{a,b,c,d\}$ and $n\ge1$, we have
\[
A(n,w)\le C\left(\eta^{k}|w|+2^{k}\right)\mbox{ for any }1\le k\le n-2.
\]

\end{lemma}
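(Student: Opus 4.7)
The plan is to iterate the formal wreath recursion $\boldsymbol{\varphi}$ exactly $k$ times, transport the traverse-field estimate for $A(n,w)$ down to level $k$ via Lemma~\ref{P-recursion}, bound each resulting local contribution trivially in terms of $|w_{v}|$, and then apply the Bartholdi norm-contraction inequality~(\ref{eq:eta-contraction}) $k$ times to control $\sum_{v\in\mathsf{L}_{k}}|w_{v}|$.

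For the first step, fix $1\le k\le n-2$ and write $\boldsymbol{\varphi}^{k}(w)=\bigl((w_{v})_{v\in\mathsf{L}_{k}},\pi_{k}(w)\bigr)$. Iterating Lemma~\ref{P-recursion}, for each $v\in\mathsf{L}_{k}$ and $y\in\mathsf{L}_{n-k}$ one obtains an order-preserving embedding $P(vy,w)\subset P(y,w_{v})$, and in particular $|P(vy,w)|\le|P(y,w_{v})|$. Summing first over $y$ and then over $v$ yields
\[
A(n,w)\le\sum_{v\in\mathsf{L}_{k}}A(n-k,w_{v}).
\]
For each individual $v$ I would use the elementary bound $A(n-k,w_{v})\le 2(|w_{v}|+1)$, which follows directly from the definition of $\tilde{P}$: each of the $|w_{v}|+1$ prefixes of $w_{v}$ contributes at most one character at each of the two tracked vertices $1^{n-k}$ and $1^{n-k-1}0$, for a total of at most $2(|w_{v}|+1)$ characters in $\sum_{y}|\tilde{P}(y,w_{v})|$, and the collapse operation producing $P$ only shortens strings.

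The second step iterates the norm-contraction inequality~(\ref{eq:eta-contraction}) over the binary tree of depth $k$. At each node one pre-reduces the corresponding section, which by Fact~\ref{preduction} preserves the traverse field and cannot increase the norm, and then applies~(\ref{eq:eta-contraction}). A straightforward induction on $k$ produces
\[
\sum_{v\in\mathsf{L}_{k}}\|w_{v}\|\le\eta^{k}\|w\|+C'\sum_{j=0}^{k-1}\eta^{k-1-j}\,2^{j}\le\eta^{k}\|w\|+\frac{C'}{2-\eta}\cdot 2^{k},
\]
with the geometric tail converging in the right direction because $\eta<2$. Since the Bartholdi norm is equivalent to the word length up to a uniform multiplicative constant, this gives $\sum_{v\in\mathsf{L}_{k}}|w_{v}|\le C''(\eta^{k}|w|+2^{k})$. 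Substituting into the previous display yields $A(n,w)\le 2\sum_{v\in\mathsf{L}_{k}}|w_{v}|+2\cdot 2^{k}\le C(\eta^{k}|w|+2^{k})$, as claimed.

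There is no serious obstacle; the proof is essentially a matter of lining up three ingredients already prepared for us in the paper: the ``length-reduction to sections'' embedding of Lemma~\ref{P-recursion}, the invariance of the traverse field under pre-reduction (Fact~\ref{preduction}), and the Bartholdi contraction~(\ref{eq:eta-contraction}). The only care needed is in the bookkeeping of the geometric tail $\sum_{j=0}^{k-1}\eta^{k-1-j}\,2^{j}$, which crucially uses $\eta<2$ so that the branching of the tree dominates the linear contraction factor, and in remembering to pre-reduce the sections before applying~(\ref{eq:eta-contraction}) at every level of the iteration.
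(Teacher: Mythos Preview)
Your proof is correct and follows essentially the same approach as the paper: iterate Lemma~\ref{P-recursion} (together with Fact~\ref{preduction} to pre-reduce at each step) to obtain $A(n,w)\le\sum_{v\in\mathsf{L}_{k}}A(n-k,w_{v})$, bound each $A(n-k,w_{v})$ trivially by $2|w_{v}|+2$, and then control $\sum_{v}|w_{v}|$ via the iterated Bartholdi norm-contraction inequality~(\ref{eq:eta-contraction}), summing the resulting geometric tail using $\eta<2$.
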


\begin{proof}

Lemma \ref{P-recursion} implies that for a word $w$ such that $\boldsymbol{\varphi}(w)=(w_{0},w_{1})\varepsilon^{s}$,
$s\in\{0,1\}$, we have
\begin{equation}
A\left(n,w\right)\le A\left(n-1,w_{0}\right)+A\left(n-1,w_{1}\right).\label{eq:sum-1-1}
\end{equation}
By Fact \ref{preduction}, $A\left(n-1,w_{i}\right)=A\left(n-1,\mathring{w}_{i}\right)$.
Iterating $k$ times, we have that 
\[
A(n,w)\le\sum_{v\in\mathsf{L}_{k}}A(n-k,w_{v}),
\]
where each step of the recursion involves first applying $\varphi$
to the word $w$, then pre-reducing $w_{0}$ and $w_{1}$. 

Iterate the contraction inequality (\ref{eq:eta-contraction}), we
have that 
\[
\sum_{v\in\mathsf{L}_{k}}\left\Vert w\right\Vert _{v}\le\eta^{k}\left\Vert w\right\Vert +C2^{k}\frac{\eta}{1-\eta/2},
\]
where $\left\Vert \cdot\right\Vert $ is the norm in the contraction
inequality (\ref{eq:eta-contraction}). 

By the definition of the traverse field, we have the obvious bound
\[
A(k,w)=\sum_{x\in\mathsf{L}_{k}}\left|P(x,w)\right|\le2|w|+2\le2C'\left\Vert w\right\Vert +2.
\]
Putting these bounds together, we have 
\[
A(n,w)\le\sum_{v\in\mathsf{L}_{n-2}}\left(2C'\left\Vert w_{v}\right\Vert +2\right)\le2C'\eta^{k}\left\Vert w\right\Vert +C''2^{k}.
\]
The statement follows by changing $\left\Vert w\right\Vert $ back
to $|w|$. 

\end{proof}

\section{Volume growth estimates on $\Delta$ and proof of Theorem \ref{growth-pres}\label{sec:volume}}

In this section we estimate volume growth of the diagonal product
$\Delta\left(\mathcal{L}\right)=\Delta\left(\mathcal{L},(012)^{\infty}\right)$
defined in the Section \ref{sec:construction}. The main point is
that volume growth of $\Delta$ is controlled by the traverse field
(described in Section \ref{sec:tr-metric}) and growth in the groups
$\mathcal{L}=(F_{n})_{n=1}^{\infty}$. Our estimates on $\Delta$
are summarized in Theorem \ref{thm:main} which is stated and proved
in Subsection \ref{subsec:main-estimate}. 

Theorem \ref{thm:main} and the flexibility of choices of $\mathcal{L}$
in the construction allow us to establish Theorem \ref{growth-pres}.
Similar estimates can be shown for $\Delta\left(\mathcal{L},\omega\right)$,
where $\omega$ is a string such that all three letters $0,1,2$ appear
infinitely often. The change needed is to apply results in \cite{BE2}
and replace the contraction rate $\eta$ in $\mathfrak{G}$ with a
sequence of contraction rates $\eta_{1},\eta_{2},\ldots$ associated
with $\omega$. Since Theorem \ref{thm:main} is sufficient for our
purposes, we do not write the estimates for more general $\omega$
which would involve heavier notations.

\subsection{Growth estimates in each factor group}

In this subsection we focus on one factor group in the diagonal product
$\Delta(\mathcal{L})$. Let $F=\left\langle U,V\right\rangle $ be
a group generated by two finite subgroups $U,V$. Denote by $v_{F}$
the volume growth function of $F$ with respect to the generating
set $U\cup V$. 

Let $n\in\mathbb{N}$, consider the permutation wreath product $\Delta_{n}=F\wr_{\mathsf{L}_{n}}\mathfrak{G}$
marked with the generating set $T_{n}$ (\ref{eq:Tn}). Recall that
$T_{n}=\left(a,b,,c,d,u_{1,n},\ldots,u_{p,n},v_{1,n},\ldots,v_{q,n}\right)$.
Note that in $\Delta_{n}$, $u_{i,n}$ and $v_{j,n}$ commute, for
$1\le i\le p$, $q\le j\le q$. 

Consider a reduced word $w$ in the free product $\mathbf{M}$ as
in Subsection \ref{subsec:construction}. Denote by $\hat{\omega}$
the word obtained from $w$ by deleting letters in $\left\{ u_{1},\ldots,u_{p},v_{1},\ldots,v_{q}\right\} $.
That is, the word $w$ is of the form $w=w_{1}z_{1}w_{2}z_{2}\ldots w_{k}z_{k}$,
where each $w_{i}$ is a word in $\{a,b,c,d\}$ and $z_{i}\in U\ast V$.
And the corresponding $\hat{w}=w_{1}w_{2}\ldots w_{k}$. 

Now evaluate the word $w=w_{1}z_{1}w_{2}z_{2}\ldots w_{k}z_{k}$ in
$\left(\Delta_{n},T_{n}\right)$. Denote by $\pi_{U}$ the projection
$U\ast V\to U=\left\{ id,u_{1,n},\ldots,u_{p,n}\right\} $ and $\pi_{V}$
the projection $U\ast V\to V=\left\{ id,v_{1,n},\ldots,v_{q,n}\right\} $.
Denote the image of $w$ in $\Delta_{n}$ by $\left(\left(f_{w}(x)\right)_{x\in\mathsf{L}_{n}},\pi(w)\right)$,
where $\pi$ is the marked projection $\mathbf{M}\to\mathfrak{G}$.
By the multiplication rule in the permutation wreath product, at each
point $x\in\mathsf{L}_{n}$, the lamp configuration is the ordered
product
\begin{equation}
f_{w}(x)=\prod_{i=1}^{k}\left(\pi_{U}\left(z_{i}\right)\mathbf{1}_{\left\{ 1^{n}\cdot(w_{1}\ldots w_{i})^{-1}=x\right\} }+\pi_{V}(z_{i})\mathbf{1}_{\left\{ 1^{n-1}0\cdot\pi(w_{1}\ldots w_{i})^{-1}=x\right\} }\right),\label{eq:lampproduct}
\end{equation}
with the convention that if $x\notin\left\{ 1^{n}\cdot(w_{1}\ldots w_{i})^{-1},1^{n-1}0\cdot(w_{1}\ldots w_{i})^{-1}\right\} $,
then the \emph{i}-th factor is identity. From (\ref{eq:lampproduct}),
we have that the length of the word $f_{w}(x)$ in the letters $U\cup V$
is dominated by the number of alternating visits of the inverted orbit
of $1^{m}$ and $1^{m-1}0$ under $w_{1}\ldots w_{k}$, which is recorded
in the traverse field $P(x,w)$. In particular, we have
\begin{equation}
\left|f_{w}(x)\right|{}_{U\cup V}\le\left|P(x,\hat{w})\right|.\label{eq:length-T}
\end{equation}

We now proceed to show an upper bound of volume growth of $\left(\Delta_{n},T_{n}\right)$
in terms of the quantity $A(k,w)=\sum_{x\in\mathsf{L}_{k}}P(x,w)$
defined in (\ref{eq:A-1}). Denote $\mathcal{A}(n,r)$ the maximum
\[
\mathcal{A}(n,r):=\max\left\{ A(n,w):\ w\mbox{ is a word in }\left\{ a,b,c,d\right\} \mbox{ of length }\le r\right\} .
\]

\begin{lemma}[Growth upper bound]\label{V-A}

The volume function of $(\Delta_{n},T_{n})$ satisfies:
\begin{itemize}
\item for $r\le\left(2/\eta\right)^{n}$, $\log v_{\Delta_{n},T_{n}}(r)\le Cn^{\alpha_{0}}$,
where $C$ only depends on $|U|+|V|$;
\item suppose $\Phi_{F}$ is a function such that 
\[
v_{F,U\cup V}(n)\le\exp\left(\Phi_{F}(n)\right)\mbox{ for all }n\mbox{ and }\Phi_{F}(x)\mbox{ is non-decreasing and concave}.
\]
Then we have for $r>\left(2/\eta\right)^{n}$,
\[
\log\left|B_{\Delta_{n}}(id,r)\cap\ker(\Delta_{n}\to\mathfrak{G})\right|\le2^{n}\left(\Phi_{F}\left(\frac{\mathcal{A}(n,r)}{2^{n}}\right)+\log\left(1+\frac{\mathcal{A}(n,r)}{2^{n}}\right)+1\right).
\]
\end{itemize}
\end{lemma}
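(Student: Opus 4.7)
The plan is to track the structure of any word $w$ in the generators $T_{n}$ representing an element of $B_{\Delta_{n}}(id, r)$, using the traverse field machinery from Section \ref{sec:tr-metric}. Writing $w = w_{1}z_{1}\cdots w_{k}z_{k}$ as in Subsection \ref{subsec:construction} and letting $\hat{w}$ denote the subword obtained by deleting all $u_{i}, v_{j}$ letters, the lamp product formula (\ref{eq:lampproduct}) together with (\ref{eq:length-T}) and the definition of $\mathcal{A}(n, r)$ yields the key budget inequality
\[
\sum_{x \in \mathsf{L}_{n}} |f_{w}(x)|_{U \cup V} \;\le\; A(n, \hat{w}) \;\le\; \mathcal{A}(n, r)
\]
for every word $w$ of length $\le r$ in $T_n$. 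This is the only structural input from the group; the remainder of the proof is combinatorial counting.

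For the second bullet, an element of $B_{\Delta_{n}}(id, r) \cap \ker(\Delta_{n} \to \mathfrak{G})$ is determined by its lamp configuration $(f(x))_{x \in \mathsf{L}_{n}}$. Setting $A := \mathcal{A}(n, r)$, I would first count the budget allocations $(\ell_{x})_{x} \in \mathbb{Z}_{\ge 0}^{2^{n}}$ with $\sum_{x}\ell_{x} \le A$: there are at most $\binom{A + 2^{n}}{2^{n}}$ such tuples, with log bounded by $2^{n}\bigl(1 + \log(1 + A/2^{n})\bigr)$. Then for each allocation, the number of configurations with $|f(x)|_{U \cup V} = \ell_{x}$ is at most $\prod_{x} v_{F}(\ell_{x}) \le \exp\bigl(\sum_{x}\Phi_{F}(\ell_{x})\bigr)$, and Jensen's inequality applied to the concave non-decreasing function $\Phi_{F}$ gives $\sum_{x}\Phi_{F}(\ell_{x}) \le 2^{n}\,\Phi_{F}(A/2^{n})$. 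Adding the two logarithmic contributions reproduces the displayed bound in the statement.

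For the first bullet, I would take a length-minimising section $s \colon \mathfrak{G} \to \Delta_{n}$ (which exists since $\{a,b,c,d\} \subset T_{n}$ projects to a generating set of $\mathfrak{G}$), yielding $v_{\Delta_{n}}(r) \le v_{\mathfrak{G}}(r) \cdot |B_{\Delta_{n}}(2r) \cap \ker(\pi)|$, then combine Bartholdi's estimate $v_{\mathfrak{G}}(r) \le \exp(Cr^{\alpha_{0}})$ with the crude pointwise bound $v_{F}(\ell) \le (|U|+|V|)^{\ell}$ replacing $\Phi_{F}$ in the count of the previous paragraph. Applying Lemma \ref{A-contract-1} with $k = n - 2$ gives $\mathcal{A}(n, 2r) \le C'(\eta^{n-2}r + 2^{n-2}) \le C'' 2^{n}$ whenever $r \le (2/\eta)^{n}$, so the kernel factor contributes only $O(2^{n})$ to the logarithm with constant depending only on $|U|+|V|$, which together with the Bartholdi bound yields the claimed estimate.

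The main obstacle is justifying the lamp-budget inequality in the first display tightly enough: the saving over the naive bound $\sum_{x}|f_{w}(x)| \le |w|$ relies crucially on the fact that consecutive visits of the inverted orbit to the same endpoint produce consecutive $U$- (respectively $V$-) subwords whose product collapses inside $F$ to word length at most one, so that only the alternating pattern recorded in $P(x, \hat{w})$ — not the unreduced $\tilde{P}(x, \hat{w})$ — contributes to the lamp length. Combined with the contraction Lemma \ref{A-contract-1}, this is precisely what brings $\mathcal{A}(n, r)$ down to a manageable size and makes both bounds sharp; once it is in place, the remainder is Jensen's inequality plus standard lattice-point counting.
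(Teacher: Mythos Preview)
Your second-bullet argument is the paper's argument: the lamp-budget inequality $\sum_{x}|f_{w}(x)|_{U\cup V}\le A(n,\hat w)\le\mathcal A(n,r)$ from (\ref{eq:length-T}), the stars-and-bars count $\binom{\mathcal A(n,r)+2^{n}}{2^{n}}$, and concavity of $\Phi_{F}$ via Jensen. Your closing paragraph on why only the alternating pattern $P(x,\hat w)$ (and not $\tilde P$) controls the lamp length is exactly the content of (\ref{eq:lampproduct})--(\ref{eq:length-T}).

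For the first bullet, however, your route differs from the paper's and does not reach the stated bound (which should read $Cr^{\alpha_{0}}$; the printed $Cn^{\alpha_{0}}$ is a typo, as the paper's own computation and its use in Theorem~\ref{thm:main} confirm). Splitting $v_{\Delta_{n}}(r)\le v_{\mathfrak G}(r)\cdot|B_{\Delta_{n}}(2r)\cap\ker\pi|$ and feeding the kernel back into the second-item count with $k=n-2$ in Lemma~\ref{A-contract-1} produces a kernel contribution of order $2^{n}$ to the logarithm, because the ``$+1$'' inside that bound always costs $2^{n}$ regardless of how small $\mathcal A(n,2r)$ is. Your final estimate is therefore $Cr^{\alpha_{0}}+C'2^{n}$, which for $r$ small compared to $(2/\eta)^{n}$ is far larger than $Cr^{\alpha_{0}}$. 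Choosing the optimal $k=\lfloor\log_{2/\eta}r\rfloor$ in Lemma~\ref{A-contract-1} improves $\mathcal A(n,2r)$ to $O(r^{\alpha_{0}})$, but the allocation count over all $2^{n}$ lamps still leaves an unwanted $2^{n}$ (or at best $r^{\alpha_{0}}\log r$) term.

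The paper sidesteps the kernel entirely for this range: it sets $k=\lfloor\log_{2/\eta}r\rfloor$, adapted to $r$ rather than to $n$, rewrites $\Delta_{n}\simeq\bigl(F\wr_{\mathsf L_{n-k}}G_{\mathfrak s^{k}\omega}\bigr)\wr_{\mathsf L_{k}}\pi_{k}(\mathfrak G)$, and applies \cite[Lemma~5.1]{BE1} with the crude exponential bound on the inner lamp group. Because $k$ is tuned to $r$, both the contraction term $\eta^{k}r$ and $\log|\pi_{k}(\mathfrak G)|$ are of order $r^{\alpha_{0}}$, and no $2^{n}$ ever appears.
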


\begin{proof}

For $r<(2/\eta)^{n}$, write $k=\left\lfloor \log_{2/\eta}r\right\rfloor $.
Regard $\Delta_{n}$ as the permutation wreath product on level $k$,
\[
\Delta_{n}\simeq\left(F\wr_{\mathsf{L}_{n-k}}G_{\mathfrak{s}^{k}\omega}\right)\wr_{\mathsf{L}_{k}}\pi_{k}(G_{\omega}),
\]
where $\omega=(012)^{\infty}$. Then by \cite[Lemma 5.1]{BE1}, bounding
the growth of $F\wr_{\mathsf{L}_{n-k}}G_{\mathfrak{s}^{k}\omega}$
by the exponential function, we have $v_{\Delta_{n},T_{n}}(r)\le\exp(C\eta^{k}r)\cdot\left|\pi_{k}(G_{\omega})\right|\le\exp\left(Cn^{\alpha_{0}}\right)$,
where $\alpha_{0}=\log2/\log(2/\eta)$. 

By \ref{eq:length-T}, if $|w|\le r$, then 
\[
\sum_{x\in\mathsf{L}_{n}}\left|f_{w}(x)\right|_{U\cup V}\le\sum_{x\in\mathsf{L}_{n}}\left|P(x,\hat{w})\right|=A(n,\hat{w})\le\mathcal{A}(n,r).
\]
It follows that the set $B_{\Delta_{n}}(id,r)\cap\ker(\Delta_{n}\to\mathfrak{G})$
in the set $B'(r)$ of elements $\left(\left(f(x)\right)_{x\in\mathsf{L}_{n}},id\right)$,
where $g\in B_{\mathfrak{G}}(id,r)$ and $\sum_{x\in\mathsf{L}_{n}}\left|f(x)\right|_{U\cup V}\le\mathcal{A}(n,r)$.
The size of $B'(r)$ is bounded by
\[
\left|B'(r)\right|\le\left(\begin{array}{c}
\mathcal{A}(n,r)+2^{n}\\
2^{n}
\end{array}\right)\cdot\exp\left(2^{n}\Phi_{F}\left(\frac{\mathcal{A}(n,r)}{2^{n}}\right)\right).
\]
The second item in the statement follows then by plugging in the bound
$\left(\begin{array}{c}
n\\
k
\end{array}\right)\le\left(ne/k\right)^{k}$. 

\end{proof}

\begin{remark}\label{V-A-remark}

When $F$ is finite, we may replace $\Phi_{F}$ by $\max\left\{ \Phi_{F},\log|F|\right\} $.
In particular, for large $r$ such that $\mathcal{A}(n,r)\ge2^{n}{\rm Diam}(F,U\cup V)$,
we simply take the total volume of $\oplus_{x\in\mathsf{L}_{n}}F$
and have
\[
\log v_{\Delta_{n},T_{n}}(r)\le2^{n}|F|+\log v_{\mathfrak{G,}S}(r).
\]

\end{remark}

By the contraction inequality in Lemma \ref{A-contract-1}, we have
\[
\mathcal{A}(n,r)\le C\left(\eta^{n}r+2^{n}\right).
\]
Next we explain that the inequality is nearly optimal. Consider the
substitution 
\[
\zeta:ab\mapsto abadac,\ ac\mapsto abab,\ ad\mapsto acac.
\]
Take $w_{n}=\zeta^{n}(ad)$. It is shown in \cite[Proposition 4.7]{BE1}
that the sequence $w_{j}=\zeta^{j}(ad)$ has asymptotically maximal
inverted orbit growth. Recall that the inverted orbit of $x$ under
a word $w=z_{1}\ldots z_{k}$ is defined as the set $\mathcal{O}(x,w)=\left\{ x,x\cdot z_{1}^{-1},\ldots,x\cdot(z_{1}\ldots z_{k})^{-1}\right\} $. 

\begin{fact}\label{zeta-word}

The inverted orbit of $1^{n}$ under $w_{n}$ contains $\mathsf{L}_{n}$.

\end{fact}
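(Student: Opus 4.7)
}

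The plan is to proceed by induction on $n$, leveraging the structural identity
\[
\varphi(w_n) \;=\; \bigl(w_{n-1}^{-1},\, w_{n-1}\bigr) \quad \text{in } \mathfrak{G}\wr_{\{0,1\}}\mathfrak{S}_2,
\]
which emerges from the specific design of the substitution $\zeta$. Here $\varphi$ is the canonical wreath recursion for $\mathfrak{G}$, and the equality is an equality of group elements (using the relations $bc=d,\;cd=b,\;bd=c$ valid in $\mathfrak{G}$, since $\{id,b,c,d\}\cong\mathbb{Z}/2\mathbb{Z}\times\mathbb{Z}/2\mathbb{Z}$).

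First I would verify this identity inductively. Direct computation in the wreath product gives
\[
\varphi(\zeta(ac)) = (ca,\,ac), \qquad \varphi(\zeta(ad)) = (da,\,ad), \qquad \varphi(\zeta(ab)) = (cd,\,aba)\,\varepsilon.
\]
A short separate induction on $n$ (using the transition matrix of $\zeta$ on the vector of $ab$/$ac$/$ad$ pair-counts and starting from $w_0=ad$) shows that the number of $ab$-pairs in $w_{n-1}$ is always even, so that the $\varepsilon$-factors coming from $\varphi(\zeta(ab))$ cancel in $\varphi(w_n)=\varphi(\zeta(w_{n-1}))$. Combined with the $cd=b$ simplification (e.g.\ $cdaba=baba=(ab)^{-2}$ and $abacd=abab$), this yields the displayed identity. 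In particular $w_n$ acts on the right subtree as $w_{n-1}$ and on the left subtree as $w_{n-1}^{-1}$, and fixes the ray $1^n$.

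Given the identity, I then analyze $\mathcal{O}(1^n,w_n)=\{y_j\}_{j=0}^{|w_n|}$, $y_j=1^n\cdot(w_n|_j)^{-1}$, by partitioning indices $j$ according to whether $y_j$ lies in the right subtree ($y_j\in 1\cdot\mathsf{L}_{n-1}$) or the left subtree ($y_j\in 0\cdot\mathsf{L}_{n-1}$). The first bit of $y_j$ flips precisely at the letters $a$ of $w_n$, and the wreath recursion together with the identity above implies that the set of last $n-1$ coordinates of the right-subtree $y_j$'s coincides with the inverted orbit $\mathcal{O}(1^{n-1},w_{n-1})$ (acting on the right subtree), while the left-subtree projection coincides with $\mathcal{O}(1^{n-1},w_{n-1}^{-1})$. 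By the inductive hypothesis, both of these sets equal $\mathsf{L}_{n-1}$, so $\{y_j\}$ covers all of $\mathsf{L}_n=\{0,1\}\times\mathsf{L}_{n-1}$. The base case $n=1$ is verified by direct computation: $w_1=\zeta(ad)=acac$, and $\mathcal{O}(1,acac)=\{1,0,0,1,1\}\supseteq\mathsf{L}_1$.

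The main technical obstacle is the careful bookkeeping in the inductive step: one must verify that the subtree projections of the $y_j$'s really do exhaust $\mathcal{O}(1^{n-1},w_{n-1}^{\pm 1})$ as sets (even though the indexing orders differ). This combinatorial tracking through the wreath recursion is precisely what is carried out in \cite[Proposition 4.7]{BE1}, to which I would refer for the detailed verification.
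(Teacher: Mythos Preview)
Your approach is essentially the same as the paper's: both use the orbit decomposition $\mathcal{O}(1^n,w)=0\,\mathcal{O}(1^{n-1},w_0)\cup 1\,\mathcal{O}(1^{n-1},w_1)$ coming from the wreath recursion, the identity $\varphi(w_n)=(w_{n-1}^{-1},w_{n-1})$, and induction on $n$; the paper states these two ingredients without further justification, while you supply more detail and a base case.

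One caution regarding your sketched verification of the identity. The parity argument for $ab$-pairs correctly shows the permutation part of $\varphi(w_n)$ is trivial, and the $cd=b$ reduction does give the right section equal to $w_{n-1}$ in $\mathbf{F}$. However, the same reduction does \emph{not} force the left section to equal $w_{n-1}^{-1}$ in $\mathbf{F}$ once $n\ge 4$: for $n=4$ the left section of $\varphi(w_4)$ reduces in $\mathbf{F}$ to $(badacabadaca)^2$, which is in normal form and differs from $w_3^{-1}=(cadaba)^4$. The identity does hold in $\mathfrak{G}$ (as you state), but your pairwise computation plus the $cd=b$ simplification alone do not establish it there; one genuinely needs the more careful inductive tracking that you correctly defer to \cite[Proposition~4.7]{BE1}. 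The paper sidesteps this by simply asserting the identity, so your proposal is at the same level of rigor, just with a slightly misleading claim about how easily the identity follows.
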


\begin{proof}

Let $w$ be a word in $\{a,b,c,d\}$ and write $\boldsymbol{\varphi}(w)=\left(w_{0},w_{1}\right)\varepsilon^{s}$.
Then similar to the proof of Lemma \ref{P-recursion}, we have 
\[
\mathcal{O}(1^{n},w)=0\mathcal{O}(1^{n-1},w_{0})\cup1\mathcal{O}(1^{n-1},w_{1}).
\]
Under the formal recursion we have $\varphi(w_{n})=\left(w_{n-1}^{-1},w_{n-1}\right)$.
Then the statement follows by induction on $n$.

\end{proof}

Since $\mathfrak{G}$ acts by tree automorphisms, Fact \ref{zeta-word}
implies that the inverted orbit of $1^{n-1}0$ under $w_{n}$ covers
$\mathsf{L}_{n}$ as well. Note also that the image of $w_{n}$ in
$\mathfrak{G}$ is in the level-$n$ stabilizer. It follows that for
the word $\underbrace{w_{n}w_{n}^{-1}...w_{n}w_{n}^{-1}}_{r}$, that
is, the concatenation of $w_{n}$ and its reverse word $w_{n}^{-1}$
for $r$ times, its traverse field at each point $x\in\mathsf{L}_{n}$
is of length at least $2r$. The length of $w_{n}$ is comparable
to $\left(2/\eta\right)^{n}$, see the proof of \cite[Proposition 4.7]{BE1}.
The existence of such words implies that there is a constant $C>0$
which doesn't depend on $n,r$, such that 
\[
\mathcal{A}\left(n,C\left(2/\eta\right)^{n}r\right)\ge2^{n}r.
\]
Using the word $w_{n}$ we can find explicitly distinct words in $\Delta_{n}$
within a distance to the identity. 

\begin{lemma}[Growth lower bound]\label{w-lower}

There exists a constant $C>0$ such that 
\begin{itemize}
\item for $r\le\left(2/\eta\right)^{n}$, $\log v_{\Delta_{n},T_{n}}(r)\ge\frac{1}{C_{0}}n^{\alpha_{0}}$;
\item for $r>\left(2/\eta\right)^{n}$, 
\[
\log v_{\Delta_{n},T_{n}}(r)\ge2^{n}\log v_{F,U\cup V}\left(\frac{\eta^{n}r}{C2^{n}}\right).
\]
\end{itemize}
\end{lemma}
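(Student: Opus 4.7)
The plan is to produce many distinct elements in $B_{\Delta_{n}}(\mathrm{id},r)$ by explicit construction, using the substitution words $w_{n}=\zeta^{n}(ad)$ as a traversal skeleton. The first bullet follows immediately from the marked quotient $\Delta_{n}\twoheadrightarrow\mathfrak{G}$ sending every lamp generator to the identity: this map is length-nonincreasing, so $v_{\Delta_{n},T_{n}}(r)\ge v_{\mathfrak{G},S}(r)$, and the claim reduces to the classical volume lower bound for the first Grigorchuk group \cite{Bartholdi98,EZ2}.

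For the second bullet, by Fact~\ref{zeta-word} the inverted orbit of $1^{n}$ under $w_{n}$ covers $\mathsf{L}_{n}$; and since $\mathfrak{G}$ acts by tree automorphisms, the sibling $1^{n-1}0$ moves to a sibling at every step, so the inverted orbit of $1^{n-1}0$ under $w_{n}$ also covers $\mathsf{L}_{n}$. Set $L=|w_{n}|\asymp(2/\eta)^{n}$, fix a small absolute constant $c>0$, and put $s=\lfloor cr/L\rfloor$ and $k=\lfloor cs\rfloor$, so that $k\asymp\eta^{n}r/2^{n}$. Form the skeleton word $W=(w_{n}w_{n}^{-1})^{s}$: it has length $2sL\le r/2$, projects to the identity in $\mathfrak{G}$, has trivial lamp, and during each $w_{n}w_{n}^{-1}$ pass every vertex $v\in\mathsf{L}_{n}$ is visited at least once by each of the marked points $1^{n}$ and $1^{n-1}0$.

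Given any function $f\colon\mathsf{L}_{n}\to F$ with $|f(v)|_{U\cup V}\le k$, I would insert the letters of $f(v)$ into $W$ at matching-type visit slots at $v$, preserving left-to-right order. The total inserted length is at most $2^{n}k$; for $c$ chosen small the total word length is $\le r$, and by the product formula \eqref{eq:lampproduct} the resulting element of $\Delta_{n}$ has lamp configuration $f$. Distinct $f$'s give distinct elements of $B_{\Delta_{n}}(\mathrm{id},r)$, yielding
\[
v_{\Delta_{n},T_{n}}(r)\ge v_{F,U\cup V}(k)^{2^{n}},
\]
which is the desired bound after absorbing constants.

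The main obstacle is verifying feasibility of the insertion step: the letters of $f(v)$ must be deposited at $v$ in the prescribed order, but along a single copy of $w_{n}$ the $U$-slots and $V$-slots at $v$ appear in a fixed interleaved pattern dictated by $w_{n}$. The remedy is the $w_{n}\leftrightarrow w_{n}^{-1}$ alternation in $W$: inverting a pass reverses the local visit order at every vertex, so across $2s$ alternating passes the visit pattern at each $v$ is rich enough to realize any reduced alternating word in $U\cup V$ of length $\le k$ as a subword of the inserted sequence. Since every element of $F$ representable by a length-$\le k$ word in $U\cup V$ is also representable by a length-$\le k$ alternating word (merge consecutive same-type letters inside $U$ or $V$), the count of achievable $f(v)$'s agrees with $v_{F,U\cup V}(k)$ up to an absorbable multiplicative factor.
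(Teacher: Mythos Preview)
Your second bullet is essentially the paper's argument: repeat $w_{n}$ as a skeleton and insert one lamp letter per vertex per pass. You made the interleaving issue harder than it is. The clean observation (which is what the paper uses) is that in \emph{every} single pass of $w_{n}$, both the $1^{n}$-cursor and the $1^{n-1}0$-cursor visit every vertex of $\mathsf{L}_{n}$; moreover, since tree automorphisms preserve siblings, at the moment $1^{n}$ sits at $\check{x}$ the cursor $1^{n-1}0$ sits at $x$. So to deposit the next letter of $f(x)$ you simply insert it at the first $1^{n}$-visit to $x$ if it is a $U$-letter, or at the first $1^{n}$-visit to $\check{x}$ if it is a $V$-letter. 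One letter per vertex per pass, $\ell$ passes, total length $\le \ell(|w_{n}|+2^{n})$; no appeal to reversed visit orders or ``rich enough'' patterns is needed. Your $w_{n}w_{n}^{-1}$ alternation and the vague richness claim in the last paragraph are unnecessary and not obviously correct as stated; just drop them and use the sibling trick.

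Your first bullet has a small but genuine gap. Projecting $\Delta_{n}\twoheadrightarrow\mathfrak{G}$ gives $v_{\Delta_{n},T_{n}}(r)\ge v_{\mathfrak{G},S}(r)$, and the best available lower bound for $\mathfrak{G}$ (the one in \cite{EZ2}) is only $\exp\bigl(c_{\epsilon}r^{\alpha_{0}-\epsilon}\bigr)$, not $\exp(cr^{\alpha_{0}})$. So you do not recover the stated $\frac{1}{C_{0}}r^{\alpha_{0}}$. The paper avoids this by \emph{not} projecting to $\mathfrak{G}$: it reuses the lamp-insertion argument at a shallower scale. Take $j=\lfloor\log_{2/\eta}r\rfloor$; the inverted orbit of $1^{n}$ under $w_{j}$ projects onto the inverted orbit of $1^{j}$ under $w_{j}$, which is all of $\mathsf{L}_{j}$, so it visits at least $2^{j}$ distinct level-$n$ vertices. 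Inserting a single $U$-generator or not at each visited vertex gives at least $2^{2^{j}}$ elements of word length $\le |w_{j}|+2^{j}\asymp(2/\eta)^{j}\asymp r$, hence $\log v_{\Delta_{n},T_{n}}(r)\gtrsim 2^{j}\asymp r^{\alpha_{0}}$. That is the sharp exponent, obtained from the wreath structure of $\Delta_{n}$ itself rather than from the (currently slightly weaker) lower bound on $\mathfrak{G}$.
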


\begin{proof}

Let $C$ be a constant such that $\left|w_{j}\right|\le C\left(2/\eta\right)^{j}$
for all $n\in\mathbb{N}$. 

For $r>\left(2/\eta\right)^{n}$, in $\Delta_{n}$ consider configurations
$\left(f_{x}\right)_{x\in\mathsf{L}_{n}}\in\oplus_{x\in\mathsf{L}_{n}}F$
such that $\left|f_{x}\right|_{U\cup V}\le\ell$. For each $f_{x}$,
fix a representing word $r_{x}$ in $U$ and $V$ of shortest length
for $f_{x}$. We explicitly find a word in $\mathbf{M}$ that writes
an element $\left(\left(f_{x}\right),id\right)$. 

Write $w_{n}=z_{1}z_{2}\ldots z_{k_{n}}$. Recall that the inverted
orbit of $w_{n}$ visits all points on $\mathsf{L}_{n}$. For $x\in\mathsf{L}_{n}$,
denote by $j_{x}$ the smallest index $j$ such that $1^{n}\cdot\left(z_{1}\ldots z_{j}\right)^{-1}=x$.
For $\ell=1$, for each $x$ such that $f_{x}\in\left\{ u_{1},\ldots u_{p}\right\} $,
insert the word $f_{x}$ after $x_{j_{x}}$; after the first round
of insertion, for each $x$ such that $f_{x}\in\left\{ v_{1},\ldots,v_{q}\right\} $,
insert the word $f_{x}$ after $z_{j_{\check{x}}}$, where $\check{x}$
is the sibling of $x$. Then by the multiplication formula (\ref{eq:lampproduct}),
we have that after the insertions the resulting word evaluate to $\left(\left(f_{x}\right),\pi(w_{n})\right)$
in $\Delta_{n}$. For general $\ell$, one can repeat this procedure
$\ell$ times, each time reduce $\max_{x\in\mathsf{L}_{n}}\left|f_{x}\right|_{U\cup V}$
by $1$. The total length of the word is bounded by $\ell\left(\left|w_{n}\right|+2^{n}\right)$.
Therefore for $\ell\in\mathbb{N}$, we have
\[
v_{\Delta_{n},T_{n}}\left(\ell\left(\left|w_{n}\right|+2^{n}\right)\right)\ge\left(v_{F,U\cup V}(\ell)\right)^{2^{n}}.
\]

For $r<(2/\eta)^{n}$, write $j=\left\lfloor \log_{2/\eta}r\right\rfloor $.
Then by argument in the previous paragraph for $\ell=1$ with $w_{n}$
replaced by $w_{j}$ shows that 
\[
v_{\Delta_{n},T_{n}}\left(\left|w_{j}\right|+2^{j}\right)\ge2^{j}.
\]
The statement follows. 

\end{proof}

The estimates on one factor group $\Delta_{n}$ are summarized in
the following. We use the notation that for two positive numbers $x,y$,
$x\asymp_{C}y$ if $y/C\le x\le Cy$. 

\begin{proposition}[Piecewise description of the growth function of $(\Delta_n,T_n)$]\label{growth-factor}

Suppose that the growth function $v_{F,U\cup V}$ satisfies that there
exists a constant $C_{0}>1$ and a non-decreasing concave function
$\Phi_{F}$ such that for $1\le\ell\le{\rm Diam}(F,U\cup V)$, 
\[
\log v_{F,U\cup V}(\ell)\asymp_{C_{0}}\Phi_{F}(\ell)\mbox{ and }\Phi_{F}(\ell)\ge\frac{1}{C_{0}}\log(1+\ell),\ \Phi_{F}(1)=1.
\]
There exists a constant $C>1$, which only depends on $C_{0}$, such
that the volume growth function of $\left(\Delta_{n},T_{n}\right)$
satisfies: 
\begin{description}
\item [{(i)}] For $r\le\left(2/\eta\right)^{n}$, 
\[
\log\left|v_{\Delta_{n},T_{n}}(n)\right|\asymp_{C}n^{\alpha_{0}}.
\]
\item [{(ii)}] For $\left(2/\eta\right)^{n}\le r<\left(2/\eta\right)^{n}{\rm Diam}(F,U\cup V)$,
\[
\log\left|v_{\Delta_{n},T_{n}}(n)\right|\asymp_{C}2^{n}v_{F,U\cup V}\left(\eta^{n}r\right)+\log v_{\mathfrak{G,}S}(r).
\]
\item [{(iii)}] In the case that $F$ is a non-trivial finite group, for
$r\ge\left(2/\eta\right)^{n}{\rm Diam}(L,U\cup V)$,
\[
\log\left|v_{\Delta_{n},T_{n}}(n)\right|\asymp_{C}2^{n}\log|F|+\log v_{\mathfrak{G,}S}(r).
\]
\end{description}
\end{proposition}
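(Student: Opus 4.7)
The proposition is essentially an assembly of three ingredients already established in this section: the upper bound of Lemma~\ref{V-A}, the lower bound of Lemma~\ref{w-lower}, and the contraction estimate $\mathcal{A}(n,r)\le C(\eta^{n}r+2^{n})$ coming from Lemma~\ref{A-contract-1}. My plan is therefore to treat the three bullets as the three regimes into which the input $r$ naturally partitions, and in each regime to combine a matching upper and lower bound.

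For the upper bound I will proceed as follows. Since $\mathfrak{G}$ is a marked quotient of $\Delta_{n}$, one has the coarse inequality
\[
v_{\Delta_{n},T_{n}}(r)\le v_{\mathfrak{G},S}(r)\cdot\bigl|B_{\Delta_{n}}(id,r)\cap\ker(\Delta_{n}\to\mathfrak{G})\bigr|,
\]
so it suffices to estimate the kernel factor. In regime (i), i.e.\ $r\le(2/\eta)^{n}$, I simply invoke the first bullet of Lemma~\ref{V-A} (which already factors through the level-$k$ iterated wreath decomposition of $\Delta_{n}$ with $k=\lfloor\log_{2/\eta}r\rfloor$) to obtain a bound of the order of $\log v_{\mathfrak{G},S}(r)$. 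In regimes (ii) and (iii), the second bullet of Lemma~\ref{V-A}, combined with the contraction bound $\mathcal{A}(n,r)\le C(\eta^{n}r+2^{n})$, gives a kernel bound of order
\[
2^{n}\Bigl(\Phi_{F}\!\bigl(C\eta^{n}r/2^{n}\bigr)+\log(1+C\eta^{n}r/2^{n})+1\Bigr).
\]
The hypothesis $\Phi_{F}(\ell)\ge\tfrac{1}{C_{0}}\log(1+\ell)$ lets me absorb the logarithmic correction into the $\Phi_{F}$ term, and the hypothesis $\log v_{F,U\cup V}\asymp_{C_{0}}\Phi_{F}$ translates the $\Phi_{F}$ term back into $\log v_{F,U\cup V}$. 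In regime (iii) the argument of $\Phi_{F}$ exceeds $\mathrm{Diam}(F,U\cup V)$, so by Remark~\ref{V-A-remark} the kernel contribution saturates at $2^{n}\log|F|$.

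For the lower bound I will use two independent contributions. Since $\mathfrak{G}$ is a quotient of $\Delta_{n}$, one has the free inequality $v_{\Delta_{n},T_{n}}(r)\ge v_{\mathfrak{G},S}(r)$, which alone is enough in regime (i) to match the upper bound. In regimes (ii) and (iii) I apply Lemma~\ref{w-lower}: inserting lamp letters along the maximal-inverted-orbit word $w_{n}$ produces at least $v_{F,U\cup V}(\ell)^{2^{n}}$ distinct elements inside the ball of radius $\ell(|w_{n}|+2^{n})\asymp \ell(2/\eta)^{n}$, so with $\ell\asymp\eta^{n}r/2^{n}$ one gets $\log v_{\Delta_{n},T_{n}}(r)\ge 2^{n}\log v_{F,U\cup V}(c\eta^{n}r/2^{n})$; when $\ell\ge\mathrm{Diam}(F,U\cup V)$ this lower bound saturates, matching regime (iii). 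Multiplying (or rather adding logs of) the $\mathfrak{G}$-contribution and the lamp contribution in parallel, which is legitimate because the two types of moves live in the kernel and in a transversal respectively, delivers the lower bound stated in each bullet.

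The main technical point to verify, which I expect to be the only nontrivial step, is the self-consistency of the constants in the composition $\Phi_{F}\mapsto\log v_{F,U\cup V}\mapsto\Phi_{F}$ after the multiplicative constant $C$ inside the argument of $\Phi_{F}$ is introduced by the contraction bound: one must check that the inequality $\Phi_{F}(Ct)\le C'\Phi_{F}(t)$ holds with $C'$ depending only on $C_{0}$ and $C$. This follows from concavity and monotonicity of $\Phi_{F}$ together with the normalization $\Phi_{F}(1)=1$, since a concave non-decreasing function on $[1,\infty)$ satisfying $\Phi_{F}(\ell)\ge\tfrac{1}{C_{0}}\log(1+\ell)$ is automatically doubling up to a multiplicative constant depending only on $C_{0}$. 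Once this doubling is in hand, the upper and lower estimates differ only by absolute constants, and the three bullets of the proposition follow by separately writing out the resulting inequalities in each of the three ranges of $r$.
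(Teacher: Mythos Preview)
Your proposal is correct and follows essentially the same route as the paper: the paper's proof is a three-line citation of Lemma~\ref{V-A} and Lemma~\ref{w-lower} for regime~(i), Lemma~\ref{A-contract-1} together with the second items of Lemmas~\ref{V-A} and~\ref{w-lower} for regime~(ii), and Remark~\ref{V-A-remark} with Lemma~\ref{w-lower} for regime~(iii). Your write-up simply unpacks these citations; the one extra technical point you raise (the doubling $\Phi_{F}(Ct)\le C'\Phi_{F}(t)$) follows directly from concavity and $\Phi_{F}(0)\ge 0$ via $\Phi_{F}(t)\ge \tfrac{1}{C}\Phi_{F}(Ct)$, so the $\log(1+\ell)$ hypothesis is not actually needed for this step---it is used only to absorb the $\log(1+\mathcal{A}(n,r)/2^{n})$ correction term from Lemma~\ref{V-A}.
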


\begin{proof}

The statement (i) follows from the first items in Lemma \ref{V-A}
and Lemma \ref{w-lower}. Statement (ii) follows from Lemma \ref{A-contract-1},
the second items in Lemma \ref{V-A} and Lemma \ref{w-lower}, and
the assumption on $v_{F,U\cup V}$. Statement (iii) follows from Remark
\ref{V-A-remark} and the second item of Lemma \ref{w-lower}.

\end{proof}

\begin{remark}

Recall that the balls of radius $2^{n-1}-1$ around identities in
$\left(\Delta_{n},T_{n}\right)$ and $(W,T)$ are identical. Proposition
\ref{growth-factor} (i) shows that the growth functions of $\left(\Delta_{n},T_{n}\right)$
and $(W,T)$ remain equivalent up to the radius $\left(2/\eta\right)^{n}$,
although balls are no longer identical. The estimates show that the
growth in the lamp groups becomes more visible only after the radius
$\left(2/\eta\right)^{n}$, in the way specified in (ii). 

\end{remark}

\subsection{Growth estimates for $\Delta$ \label{subsec:main-estimate}}

We have proved in the previous subsection that if the growth functions
of the lamp groups in $\mathcal{L}$ satisfies the assumption of Proposition
\ref{growth-factor}, then the growth function of $\left(\Delta_{n},T_{n}\right)$
can be estimated with good precision. We now proceed to estimate the
growth function of $\left(\Delta,T\right)$. To this end we impose
the following uniform assumption on the sequence $\mathcal{L}$.

\begin{assumption}\label{L-growth-assum}

Suppose $\mathcal{L}=\left(F_{n}\right)_{n=1}^{\infty}$ is a sequence
of marked quotients of $U\ast V$ which satisfies the following conditions.
There exists a non-decreasing concave function $\Phi:\mathbb{R}_{\ge0}\to\mathbb{R}_{\ge0}$,
$\Phi(1)=1$, and a constant $C_{0}>1$, such that for any $n\in\mathbb{N}$
and $0\le\ell\le{\rm Diam}(F_{n},U\cup V)$, we have $\Phi(\ell)\ge\frac{1}{C_{0}}\log\left(1+\ell\right)$,
and
\[
\log v_{F_{n},U\cup V}(\ell)\asymp_{C_{0}}\Phi(\ell).
\]

\end{assumption}

Note that in Assumption \ref{L-growth-assum}, the groups $F_{n}$
can be finite or infinite. Also the sequence $\left(F_{n}\right)_{n=1}^{\infty}$
does not need to have monotone properties, for example, it could be
that on an infinite set of indices, $F_{n}=\{id\}$. 

\begin{example}[Linear $\Phi$]\label{linear-ex}

Suppose $\mathcal{L}=\left(F_{n}\right)_{n=1}^{\infty}$ is a sequence
of expanders, that is, there exists a constant $\lambda_{0}>0$, such
that for any $n\in\mathbb{N}$, and $A\subseteq F_{n}$ with $|A|\le\frac{|F_{n}|}{2}$,
we have 
\begin{equation}
\frac{\left|\partial_{U\cup V}A\right|}{\left|A\right|}\ge\lambda_{0},\label{eq:expansion}
\end{equation}
where $\partial_{U\cup V}A=\{x\in A:\exists s\in U\cup V,\ xs\notin A\}$.
Then expansion in (\ref{eq:expansion}) implies that 
\[
v_{F_{n},U\cup V}(r)\ge\left(1+\lambda_{0}\right)^{r}\ \mbox{for all }r\mbox{ s.t.\ }\left|B_{F_{n}}(id,r-1)\right|\le\frac{|F_{n}|}{2}.
\]
It follows that if $\mathcal{L}=\left(F_{n}\right)_{n=1}^{\infty}$
is a sequence of expanders, where each $F_{n}$ is a finite marked
quotient of $U\ast V$, then $\mathcal{L}$ satisfies Assumption \ref{L-growth-assum}
with $\Phi(x)=x$ and a constant $C_{0}$ that only depends on $\lambda_{0}$
and $|U|+|V|$. 

\end{example}

We now put combine the bounds in the individual factors to give estimates
on the growth function of the diagonal product $\Delta=\Delta(\mathcal{L},(012)^{\infty})$.
Given $r\in\mathbb{N}$, denote by $J_{r}$ the index set
\[
J_{r}(\mathcal{L}):=\left\{ j\in\mathbb{N}:\ r\ge\left(\frac{2}{\eta}\right)^{j}\mbox{ and }F_{j}\neq\{id\}\right\} .
\]
Write $k_{r}=\left\lceil \log_{2/\eta}r\right\rceil $. The key observation
in the upper bound direction is to regard $\Delta$ as the diagonal
product of $\left(\Delta_{n},T_{n}\right)_{n=1}^{k_{r}}$ and $\left(\Delta_{>k_{n}},T\right)$,
and then apply Proposition \ref{growth-factor} to each of these factors.
The lower bound simply comes from taking the maximum of the growth
functions of the factors. 

\begin{theorem}[Volume growth of $\Delta$]\label{thm:main}

Suppose $\mathcal{L}=\left(F_{n}\right)_{n=1}^{\infty}$ is a sequence
of marked quotients of $U\ast V$ that satisfies Assumption \ref{L-growth-assum}
with function $\Phi$ and constant $C_{0}>0$. Let $\Delta=\Delta\left(\mathcal{L},(012)^{\infty}\right)$
be the diagonal product defined in the Section \ref{sec:construction}.
Then there is a constant $C>0$ which only depends on $C_{0}$ such
that 

\[
\log v_{\Delta,T}(r)\le C\sum_{j\in J_{r}(\mathcal{L})}2^{j}\Phi\left(\min\left\{ \left(\frac{\eta}{2}\right)^{j}r,{\rm Diam}\left(F_{j},U\cup V\right)\right\} \right)+Cr^{\alpha_{0}}.
\]
and

\[
\log v_{\Delta,T}(r)\ge\frac{1}{C}\max_{j\in J_{r}(\mathcal{L})}2^{j}\Phi\left(\min\left\{ \left(\frac{\eta}{2}\right)^{j}r,{\rm Diam}\left(F_{j},U\cup V\right)\right\} \right)+\frac{1}{C}r^{\alpha_{0}}.
\]

\end{theorem}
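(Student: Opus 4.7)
The plan is to establish the two bounds separately, in each case reducing to the single-factor estimates in Proposition \ref{growth-factor} (equivalently Lemmas \ref{V-A}, \ref{w-lower}, \ref{A-contract-1}) via the natural quotients of $\Delta$.

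For the \emph{lower bound}, I will use two sources of elements. First, by Fact \ref{Delta-W} and the general principle (recalled in Section 2.3) that the Cayley limit of a diagonal product is a marked quotient of it, $W_\omega$ is a quotient of $\Delta$, so $v_{\Delta,T}(r)\ge v_{W_\omega,T_0}(r)\ge\exp(\tfrac{1}{C}r^{\alpha_{0}})$ by \cite{BE1}. Second, each $\Delta_j$ is likewise a marked quotient, hence $v_{\Delta,T}(r)\ge v_{\Delta_j,T_j}(r)$ for every $j$. Applying the second bullet of Lemma \ref{w-lower} for $j\in J_r(\mathcal{L})$ and using Assumption \ref{L-growth-assum} together with concavity of $\Phi$ to identify the resulting lamp growth with $\Phi\bigl(\min\{(\eta/2)^j r,\operatorname{Diam}(F_j,U\cup V)\}\bigr)$ yields the single-factor lower bound. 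Taking the maximum over $j\in J_r$ and adding the $W_\omega$-contribution produces the required bound.

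For the \emph{upper bound}, I use the extension $1\to K\to\Delta\to\mathfrak{G}\to 1$ with $K=\ker(\Delta\to\mathfrak{G})$, which gives $v_{\Delta,T}(r)\le v_{\mathfrak{G},S}(r)\cdot |B_{\Delta,T}(id,2r)\cap K|$; the first factor is $\exp(Cr^{\alpha_0})$ by \cite{Bartholdi98}. To bound the kernel ball, I choose $N=\lceil\log_2(2r+1)\rceil+1$ so that by Fact \ref{Delta-W} the balls of radius $2r$ in $(\Delta_n,T_n)$ and $(W_\omega,T_0)$ coincide for all $n\ge N$. Consequently, $B_{\Delta}(id,2r)\cap K$ injects into $\bigl(B_{W_\omega}(id,2r)\cap\ker(W_\omega\to\mathfrak{G})\bigr)\times\prod_{n<N,\,F_n\neq\{id\}}\bigl(B_{\Delta_n}(id,2r)\cap\ker(\Delta_n\to\mathfrak{G})\bigr)$, since for $n\ge N$ the $n$-th coordinate is determined by the $W_\omega$-image. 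The $W_\omega$-factor contributes another $\exp(Cr^{\alpha_0})$ by \cite{BE1}. Each remaining $\Delta_n$ factor is handled in two sub-regimes. If $n\in J_r(\mathcal{L})$ (so $n\le k_r:=\lceil\log_{2/\eta}r\rceil$), the second bullet of Lemma \ref{V-A} together with Lemma \ref{A-contract-1} applied with $k=n-2$ yields $\mathcal{A}(n,2r)/2^n\le C((\eta/2)^n r+1)$, and then concavity of $\Phi$ and the bound $\log(1+x)\le C_0\Phi(x)$ from Assumption \ref{L-growth-assum} absorb the additive logarithmic term, producing $C\cdot 2^n\Phi(\min\{(\eta/2)^n r,\operatorname{Diam}(F_n,U\cup V)\})$ (the $\min$ with $\operatorname{Diam}$ is forced by Remark \ref{V-A-remark} when the argument exceeds the diameter). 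If $n$ lies in the transitional range $k_r<n<N$, the first bullet of Lemma \ref{V-A} gives $\log v_{\Delta_n,T_n}(2r)\le Cn^{\alpha_0}$; summing over this range of length $O(\log r)$ produces only $O((\log r)^{1+\alpha_0})=o(r^{\alpha_0})$, which is absorbed into the target $Cr^{\alpha_0}$ term.

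The \emph{main obstacle} is securing the clean additive $Cr^{\alpha_0}$ term in the upper bound: a naive sub-multiplicative estimate $v_{\Delta,T}(r)\le\prod_n v_{\Delta_n,T_n}(r)$ would inflate the $\mathfrak{G}$-contribution by a factor of $\log r$, since every non-trivial factor $\Delta_n$ would contribute its own $\mathfrak{G}$-projection separately. Resolving this requires exploiting both that all $\Delta_n$ share a common $\mathfrak{G}$-quotient (used in the short exact sequence above) and that $(\Delta_n,T_n)\to(W_\omega,T_0)$ stabilizes on balls once $n\ge N$, so that infinitely many factors collapse into a single $W_\omega$-contribution. A secondary technical point is the correct choice of cutoff $k$ in Lemma \ref{A-contract-1}, so that $\mathcal{A}(n,2r)/2^n$ matches $(\eta/2)^n r$ for precisely the $n\in J_r$ indexing the main term.
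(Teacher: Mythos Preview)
Your lower bound argument is correct and identical to the paper's.

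Your upper bound has a gap in the transitional range $k_r<n<N$. The bound $\log v_{\Delta_n,T_n}(2r)\le Cn^{\alpha_0}$ you take from the first bullet of Lemma \ref{V-A} is a typo in the paper: the proof there actually gives $\log v_{\Delta_n,T_n}(r)\le Cr^{\alpha_0}$ (with $k=\lfloor\log_{2/\eta}r\rfloor$ one has $\eta^k r\asymp 2^k\asymp r^{\alpha_0}$), and no bound of the form $Cn^{\alpha_0}$ can hold since $\Delta_n$ surjects onto $\mathfrak{G}$, forcing $\log v_{\Delta_n,T_n}(r)\ge c_\epsilon r^{\alpha_0-\epsilon}$ for $r$ near $(2/\eta)^n$. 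With the correct bound $Cr^{\alpha_0}$, your sum over the $\asymp(1-\alpha_0)\log_2 r$ indices in the transitional range yields $O(r^{\alpha_0}\log r)$, not $o(r^{\alpha_0})$, and the clean $Cr^{\alpha_0}$ term is lost. Switching to the second bullet of Lemma \ref{V-A} on the kernel ball does not rescue this: the additive ``$+1$'' there contributes $2^n$, and $\sum_{k_r<n<N}2^n\asymp 2^N\asymp r$.

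The paper circumvents this by \emph{not} splitting the large indices. It views $\Delta$ as the diagonal product of $\Delta_1,\dots,\Delta_{k_r}$ together with the single tail $\Delta_{>k_r}$, and uses that $\Delta_{>k_r}$ embeds via the iterated recursion (\ref{eq:theta_n}) into a permutation wreath product over $\mathsf{L}_{k_r}$. One application of the \cite{BE1}-type argument (as in the proof of the first bullet of Lemma \ref{V-A}) to this single wreath product then gives $\log v_{\Delta_{>k_r},T}(r)\le Cr^{\alpha_0}$ in one stroke, absorbing all of your transitional range at once. Your two devices---the common $\mathfrak{G}$-quotient and the stabilization for $n\ge N$---correctly peel off the extremes, but the gap $k_r<n<N$ genuinely requires this bundling via the recursive wreath structure.
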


\begin{proof}

We first prove the volume upper bound. Denote by $N=\ker\left(\Delta\to W\right)$.
Since $v_{\Delta,T}(r)\le\left|B_{\Delta}(id,r)\cap N\right|\cdot v_{W,T_{0}}(r)$,
and $\log v_{W,T_{0}}(r)\asymp n^{\alpha_{0}}$ by \cite{BE1}, it
suffices to show the upper bound for $\left|B_{\Delta}(id,r)\cap N\right|$.
Regard $\Delta$ as the diagonal product of $\left(\Delta_{n},T_{n}\right)_{n=1}^{k_{r}}$
and $\left(\Delta_{>k_{n}},T\right)$. Note that if $\gamma\in N$
and $F_{j}=\{id\}$, then the projection of $\gamma$ to $\Delta_{j}=\mathfrak{G}$
is trivial. Thus we only need to count in factors indexed by $J_{r}(\mathcal{L})$
and $\Delta_{>k_{r}}$. For $j\in J_{r}(\mathcal{L})$, we invoke
Assumption \ref{L-growth-assum} and apply the second item in Lemma
\ref{V-A} and Lemma \ref{A-contract-1}, then
\[
\log\left|B_{\Delta_{j}}(id,r)\cap\ker(\Delta_{j}\to\mathfrak{G})\right|\le C'2^{j}\Phi\left(\min\left\{ \left(\frac{\eta}{2}\right)^{j}r,{\rm Diam}\left(F_{j},U\cup V\right)\right\} \right),
\]
where $C'$ only depends on $C_{0}$. For the factor $\Delta_{>k_{r}}$,
recall that it is isomorphic to a subgroup of the permutation wreath
product $\Delta(\mathfrak{s}^{k_{r}}\mathcal{L},\mathfrak{s}^{k_{r}}(012)^{\infty})\wr_{\mathsf{L}_{k_{r}}}\pi_{k_{r}}(\mathfrak{G})$.
Therefore by Proposition \ref{growth-factor} (i), we have that 
\[
v_{\Delta_{>k_{r}},T}(r)\le C''r^{\alpha_{0}},
\]
where $C''$ is an absolute constant. Therefore 
\begin{align*}
\log\left|B_{\Delta}(id,r)\cap N\right| & \le\sum_{j\in J_{r}}\log\left|B_{\Delta_{j}}(id,r)\cap\ker(\Delta_{j}\to\mathfrak{G})\right|+v_{\Delta_{>k_{r}},T}(r)\\
 & \le C'\sum_{j\in J_{r}}2^{j}\Phi\left(\min\left\{ \left(\frac{\eta}{2}\right)^{j}r,{\rm Diam}\left(F_{j},U\cup V\right)\right\} \right)+C''r^{\alpha_{0}}.
\end{align*}
We have proved the upper bound.

To show the lower bound, first note that since $W$ is a marked quotient
of $\Delta$, we have $v_{\Delta,T}(n)\ge v_{W,T_{0}}(n)$. By the
second item in Lemma \ref{w-lower}, we have that for each $j\in J_{r}$,
\begin{align*}
\log v_{\Delta_{j},T_{j}}(r) & \ge2^{j}\log v_{F_{j},U\cup V}\left(\left(\frac{\eta}{2}\right)^{j}\frac{r}{C}\right)\\
 & \ge\frac{1}{C_{0}}2^{j}\Phi\left(\min\left\{ \left(\frac{\eta}{2}\right)^{j}\frac{r}{C},{\rm Diam}\left(F_{j},U\cup V\right)\right\} \right)\\
 & \ge\frac{1}{C_{0}C}2^{j}\Phi\left(\min\left\{ \left(\frac{\eta}{2}\right)^{j}r,{\rm Diam}\left(F_{j},U\cup V\right)\right\} \right).
\end{align*}
Since $v_{\Delta,T}\ge\max\left\{ v_{\Delta_{j,}T_{j}},j\in J_{r},v_{W,T_{0}}\right\} $,
the statement follows. 

\end{proof}

\subsection{Approximations of prescribed functions\label{subsec:approximations}}

In this subsection, let $f:\mathbb{R}_{+}\to\mathbb{R}_{+}$ be a
continuous function that is non-decreasing, subadditive and $f(x)/x\to0$
when $x\to\infty$. Suppose in addition that there is a constant $\lambda>2$
such that $f(x)\ge x^{\alpha}$ for all $x>0$, where $\alpha=\frac{1}{\log_{2}\lambda}$.
We describe a procedure to approximate $f$ by quantities that appear
in the bounds of Theorem \ref{thm:main}. 

Given such a function $f$, define recursively the following two sequences.
Let $\theta_{0}=0$, $m_{0}=0$. For $j\in\mathbb{N}$, define 
\begin{align*}
m_{j}' & =\min\left\{ m>m_{j-1}:\frac{f\left(\lambda^{m}\right)}{\lambda^{m}}\le\left(2/\lambda\right)^{\theta_{j-1}+1}\right\} \\
m_{j}'' & =\min\left\{ m>m_{j-1}:f\left(\lambda^{m}\right)\ge2f\left(\lambda^{m_{j-1}}\right)\right\} ,\\
m_{j} & =\max\left\{ m_{j}',m_{j}''\right\} ;
\end{align*}
and 
\[
\theta_{j}:=\log_{2/\lambda}\left(\frac{f\left(\lambda^{m_{j}}\right)}{\lambda^{m_{j}}}\right).
\]
Although suppressed in the notation, the sequences $(m_{j})$ and
$\left(\theta_{j}\right)$ are determined by $f$ and $\lambda$.
Define the function $\phi:\{\theta_{1},\theta_{2}\ldots\}\to\mathbb{R}_{+}$
to be 
\[
\phi(\theta_{j})=\lambda^{m_{j}-\theta_{j}}.
\]
The choice of these sequences is to guarantee the following approximation. 

\begin{lemma}\label{approximation}

There is a constant $c_{\lambda}>0$ which only depends on $\lambda$,
such that for $j\in\mathbb{N}$, and any $x\in\left[\lambda^{m_{j-1}},\lambda^{m_{j}}\right)$,
\[
c_{\lambda}\sum_{i:\ \lambda^{\theta_{i}}\le x}2^{j}\min\left\{ \lambda^{-\theta_{i}}x,\phi(\theta_{i})\right\} \le f(x)\le\lambda\max_{i:\ \lambda^{\theta_{i}}\le x}2^{\theta_{i}}\min\left\{ \lambda^{-\theta_{i}}x,\phi(\theta_{i})\right\} .
\]

\end{lemma}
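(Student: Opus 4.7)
The plan is to first establish structural properties of the two sequences, then verify the two-sided approximation by a case analysis based on which of $m_j'$ or $m_j''$ realizes the maximum.

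First I would check that the sequences are well-defined and record their basic features. The hypothesis $f(x)/x \to 0$ makes $m_j'$ finite, while $f(x) \geq x^\alpha \to \infty$ makes $m_j''$ finite; in particular $(m_j)$ is strictly increasing and $m_j \to \infty$. The defining condition $f(\lambda^{m_j})/\lambda^{m_j} \leq (2/\lambda)^{\theta_{j-1}+1}$ together with the fact that $\log_{2/\lambda}$ reverses inequalities (since $2/\lambda < 1$) gives $\theta_j \geq \theta_{j-1}+1$. Combining $f(x) \geq x^\alpha$ with the subadditivity bound $f(\lambda^m) \leq (\lambda^m + 1) f(1)$ then yields $0 \leq \theta_j \leq m_j$, so $\phi(\theta_j) = \lambda^{m_j - \theta_j} \geq 1$ and the key identity
\[
f(\lambda^{m_j}) \;=\; \lambda^{m_j}\,(2/\lambda)^{\theta_j} \;=\; 2^{\theta_j}\phi(\theta_j)
\]
holds. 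For $x \in [\lambda^{m_{j-1}}, \lambda^{m_j})$ and every $i \leq j-1$, the inequalities $\theta_i \leq m_i \leq m_{j-1} \leq \log_\lambda x$ show $\lambda^{\theta_i} \leq x$, and $x \geq \lambda^{m_i} = \lambda^{\theta_i}\phi(\theta_i)$ saturates the minimum so that the $i$-th term equals $2^{\theta_i}\phi(\theta_i) = f(\lambda^{m_i})$. For $i = j$ (whenever $\lambda^{\theta_j} \leq x < \lambda^{m_j} = \lambda^{\theta_j}\phi(\theta_j)$) the minimum is not saturated and the term is $(2/\lambda)^{\theta_j} x$.

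The lower bound is then immediate: monotonicity of $f$ gives $f(x) \geq f(\lambda^{m_{j-1}}) = 2^{\theta_{j-1}}\phi(\theta_{j-1})$, which is the $i = j-1$ term; if $\lambda^{\theta_j} \leq x$ one also has $f(x) \geq f(\lambda^{\theta_j}) \gtrsim (2/\lambda)^{\theta_j} \lambda^{\theta_j} = 2^{\theta_j}$, and by comparing with the linear piece $(2/\lambda)^{\theta_j} x$ one recovers the $i = j$ contribution as well. Taking the maximum over the two indices already dominates the full max on the right-hand side (up to a constant depending only on $\lambda$), since for $i < j-1$ the terms $f(\lambda^{m_i})$ are smaller, and for $i > j$ they are excluded because $\lambda^{\theta_i} > x$.

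For the upper bound I would distinguish the two cases. If $m_j = m_j''$, then by minimality $f(\lambda^{m_j-1}) < 2 f(\lambda^{m_{j-1}})$, and subadditivity $f(\lambda y) \leq \lceil \lambda \rceil f(y)$ yields
\[
f(x) \;\leq\; f(\lambda^{m_j}) \;\leq\; (\lambda+1) f(\lambda^{m_j-1}) \;<\; 2(\lambda+1) f(\lambda^{m_{j-1}}),
\]
which is bounded by $c_\lambda$ times the $(j-1)$-term. If instead $m_j = m_j' > m_j''$, then $f(\lambda^{m_j})/\lambda^{m_j} \leq (2/\lambda)^{\theta_{j-1}+1}$. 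For $x \in [\lambda^{m_j-1}, \lambda^{m_j})$,
\[
f(x) \leq f(\lambda^{m_j}) = \lambda^{m_j} (2/\lambda)^{\theta_j} \leq \lambda \cdot (2/\lambda)^{\theta_j} x,
\]
matching $\lambda$ times the $i = j$ term (and the range $\lambda^{\theta_j} \leq x$ is automatic here since $\theta_j \leq m_j$ combined with the fact that $m_j - m_{j-1}$ is large enough in Case B for $\theta_j \leq m_j - 1$). For the residual subregime $x \in [\lambda^{m_{j-1}}, \lambda^{m_j-1})$, I would apply the same case analysis inductively to the interval structure between $m_{j-1}$ and $m_j - 1$ created by the intermediate powers of $\lambda$, using the uniform control on $f(\lambda^m)/\lambda^m$ coming from the definition of $m_j'$.

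The main obstacle is the residual subregime in Case B: one needs to rule out an intermediate doubling of $f$ on $[\lambda^{m_{j-1}}, \lambda^{m_j-1}]$ that would not be witnessed either by the saturated $(j-1)$-term or by the unsaturated linear $j$-term. This is exactly where the combined definition $m_j = \max\{m_j', m_j''\}$ pays off: the clause $m_j''$ prevents $f$ from doubling unchecked within a window, while the clause $m_j'$ guarantees that once the ratio $f(\lambda^m)/\lambda^m$ has dropped, the linear piece $(2/\lambda)^{\theta_j} x$ is available to carry the upper bound. Making this interplay explicit, and absorbing all numerical factors into a single constant $c_\lambda$ depending only on $\lambda$, completes the proof.
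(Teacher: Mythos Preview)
Your overall architecture matches the paper's: split the index set into $i\le j-1$ (where the minimum is saturated and the $i$-th term equals $f(\lambda^{m_i})$) and $i\ge j$ (where the term is linear, $(2/\lambda)^{\theta_i}x$), then run a case analysis on whether $m_j=m_j'$ or $m_j=m_j''$. However, several steps are either missing or incorrect.

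\textbf{The sum is never controlled.} For the left inequality you must show $\sum_{i}A_i\lesssim f(x)$, not just $f(x)\gtrsim$ one or two individual terms. Your remark that ``for $i<j-1$ the terms $f(\lambda^{m_i})$ are smaller'' only gives monotonicity, not summability. The paper closes this gap by proving the auxiliary Claim that $\phi(\theta_i)$ is non-decreasing; combined with $\theta_i\ge\theta_{i-1}+1$, this makes $\sum_{i\le j-1}2^{\theta_i}\phi(\theta_i)$ a geometric series dominated by $2A_{j-1}$, and similarly $\sum_{i\ge j}(2/\lambda)^{\theta_i}x\le \frac{1}{1-2/\lambda}A_j$. (An equivalent route is the doubling $f(\lambda^{m_i})\ge 2f(\lambda^{m_{i-1}})$ built into $m_i''$.) Without one of these observations the left inequality does not follow.

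\textbf{Your lower bound on $f$ versus the $j$-term is wrong.} When $\lambda^{\theta_j}\le x$ the $j$-term is $(2/\lambda)^{\theta_j}x$, which for $x$ near $\lambda^{m_j}$ is of size $2^{\theta_j}\phi(\theta_j)=f(\lambda^{m_j})$. Your bound $f(x)\ge f(\lambda^{\theta_j})\ge 2^{\theta_j}$ is too weak by exactly the factor $\phi(\theta_j)$, which can be arbitrarily large. The paper instead uses the minimality of $m_j'$: for integer $m\in(m_{j-1},m_j)$ one has $f(\lambda^m)/\lambda^m>(2/\lambda)^{\theta_{j-1}+1}$, which (in Case $m_j=m_j'$, where $\theta_j=\theta_{j-1}+1$) is precisely $f(x)\gtrsim(2/\lambda)^{\theta_j}x$.

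\textbf{The upper bound in your Case B is incomplete, and the proposed fix uses the wrong tool.} You only treat $x\in[\lambda^{m_j-1},\lambda^{m_j})$ and defer the rest to an ``induction'' using ``uniform control on $f(\lambda^m)/\lambda^m$ coming from the definition of $m_j'$''. But minimality of $m_j'$ gives a \emph{lower} bound on $f(\lambda^m)/\lambda^m$, not an upper bound. What is needed across the whole interval is subadditivity, which gives $f(x)\le Cx\cdot f(\lambda^{m_{j-1}})/\lambda^{m_{j-1}}=Cx(2/\lambda)^{\theta_{j-1}}$; since $\theta_j=\theta_{j-1}+1$ in this case, this is $\le C'\,(2/\lambda)^{\theta_j}x$ whenever the $j$-term is present, and otherwise $x<\lambda^{\theta_j}\le\lambda^{m_{j-1}+1}$ so $f(x)\lesssim f(\lambda^{m_{j-1}})=A_{j-1}$.
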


\begin{proof}

For $i,m$ such that $\theta_{i}\le m$, denote by $A_{i,m}$ the
quantity
\[
A_{i,m}=2^{\theta_{i}}\min\left\{ \lambda^{m-\theta_{i}},\phi(\theta_{i})\right\} ,
\]
for $i$ such that $\theta_{i}>m$, set $A_{i.m}=0$. Let $m\in\left[m_{j-1},m_{j}\right)$.
Note that by definitions, we have that $\theta_{j-1}<m$; moreover,
for any $i\le j-1$, $\phi(\theta_{i})=\lambda^{m_{i}-\theta_{i}}\le\lambda^{m-\theta_{i}}$;
for $i\ge j$, $\phi(\theta_{i})=\lambda^{m_{i}-\theta_{i}}>\lambda^{m-\theta_{i}}$.
Therefore for $m\in\left[m_{j-1},m_{j}\right)$,
\[
A_{i,m}=\begin{cases}
2^{\theta_{i}}\phi\left(\theta_{i}\right)=2^{\theta_{i}}\lambda^{m_{i}-\theta_{i}} & \mbox{if }i\le j-1\\
\left(2/\lambda\right)^{\theta_{i}}\lambda^{m} & \mbox{if }i\ge j,\theta_{i}\le m.
\end{cases}
\]

\begin{claim}

The sequence $\left(\phi\left(\theta_{i}\right)\right)_{i=1}^{\infty}$
is non-decreasing. 

\end{claim}

\begin{proof}[Proof of the Claim]

Note that by the definitions, we have 
\[
\phi\left(\theta_{i}\right)=\lambda^{m_{i}-\theta_{i}}=f\left(\lambda^{m_{i}}\right)/2^{\theta_{i}}.
\]
Consider the following two cases.

Case 1: $m_{i}'>m_{i}''$. In this case $m_{i}=m_{i}'$ and $f\left(\lambda^{m_{i}}\right)/\lambda^{m_{i}}=(2/\lambda)^{\theta_{i-1}+1}$,
$\theta_{i}=\theta_{i-1}+1$. Therefore 
\[
\phi\left(\theta_{i}\right)=f\left(\lambda^{m_{i}}\right)/2^{\theta_{i}}=f\left(\lambda^{m_{i}}\right)/2^{\theta_{i-1}+1}\ge f\left(\lambda^{m_{i-1}}\right)/2^{\theta_{i-1}}=\phi(\theta_{i-1}).
\]

Case 2: $m_{i}'\le m_{i}''$. In this case $m_{i}=m_{i}''$ and $f\left(\lambda^{m_{i}}\right)=2f\left(\lambda^{m_{i}-1}\right)$,
$\theta_{i}\ge\theta_{i-1}+1$. Therefore 
\[
\phi\left(\theta_{i}\right)=f\left(\lambda^{m_{i}}\right)/2^{\theta_{i}}=2f\left(\lambda^{m_{i-1}}\right)/2^{\theta_{i}}\ge2f\left(\lambda^{m_{i-1}}\right)/2^{\theta_{i-1}+1}=\phi(\theta_{i-1}).
\]

\end{proof}

We now return to the proof of the lemma. Let $m=\log_{\lambda}x$.
The summation on the left-hand side, over $i$ such that $\theta_{i}\le m$,
splits into two parts, $i\ge j-1$ and $i\ge j$ (the second part
might be empty). Since by the definitions $\theta_{i}\ge\theta_{i-1}+1$
and by the Claim $\phi\left(\theta_{i}\right)$ is non-decreasing,
we have that $\max_{i\le j-1}A_{i,m}=A_{j-1,m},$ 
\[
\sum_{i\le j-1}A_{i,m}=\sum_{i\le j-1}2^{\theta_{i}}\phi\left(\theta_{i}\right)\le2^{\theta_{j-1}+1}\phi\left(\theta_{j-1}\right)=2(2/\lambda)^{\theta_{j-1}}\lambda^{m_{j-1}}=2A_{j-1,m}.
\]
If the set $\{i:\theta_{i}\le m,i\ge j\}$ is non-empty (equivalently,
$\theta_{j}\le m$), then we have a decreasing geometric sum 
\[
\sum_{i\ge j,\ \theta_{i}\le m}A_{i,m}=\sum_{i\ge j,\ \theta_{i}\le m}\left(2/\lambda\right)^{\theta_{i}}\lambda^{m}\le\frac{1}{1-2/\lambda}\left(2/\lambda\right)^{\theta_{j}}\lambda^{m}=\frac{1}{1-2/\lambda}A_{j,m}.
\]

Similar to the discussion in the proof of the Claim, we consider two
cases. Case 1 $m_{i}'>m_{i}''$. In this case, $\theta_{j}=\theta_{j-1}+1$
and 
\[
(2/\lambda)^{\theta_{j}-1}\ge f\left(\lambda^{m}\right)/\lambda^{m}>(2/\lambda)^{\theta_{j}}.
\]
Then we have the upper bound
\[
f(\lambda^{m})\le\lambda^{m}\left(2/\lambda\right)^{\theta_{j-1}}\le\lambda\max_{i:\theta_{i}\le m}A_{i,m};
\]
and the lower bound
\[
f(\lambda^{m})\ge\lambda^{m}\left(2/\lambda\right)^{\theta_{j}}\ge\min\left\{ \left(\frac{1}{2}-\frac{1}{\lambda},\frac{1}{2\lambda}\right)\right\} \sum_{i:\theta_{i}\le m}A_{i,m}.
\]

Case 2: $m_{i}'>m_{i}''$. In this case 
\[
f\left(\lambda^{m_{j-1}}\right)\le f\left(\lambda^{m}\right)\le2f\left(\lambda^{m_{j-1}}\right),
\]
and 
\[
\frac{A_{j,m}}{A_{j-1.m}}\le\frac{\lambda^{m_{j}-\theta_{j}}2^{\theta_{j}}}{\lambda^{m_{j-1}-\theta_{j-1}}2^{\theta_{j-1}}}=\frac{\lambda^{-\theta_{j}}f(\lambda^{m_{j}})}{\lambda^{-\theta_{j-1}}f(\lambda^{m_{j-1}})}=2\lambda^{-\theta_{j}+\theta_{j-1}}\le\frac{2}{\lambda}.
\]
Then we have the upper bound 
\[
f(\lambda^{m})\le2f\left(\lambda^{m_{j-1}}\right)=2(2/\lambda)^{\theta_{j-1}}\lambda^{m_{j-1}}\le2\max_{i:\theta_{i}\le m}A_{i,m};
\]
and the lower bound 
\[
f(\lambda^{m})\ge f\left(\lambda^{m_{j-1}}\right)=A_{j-1,m}\ge\frac{1}{2}\min\left\{ \frac{1}{2},\frac{\lambda}{2}-1\right\} \sum_{i:\theta_{i}\le m}A_{i,m}.
\]
The statement of the lemma follows by combing these two cases, where
$c_{\lambda}=\min\left\{ \frac{1}{2}-\frac{1}{\lambda},\frac{1}{2\lambda},\frac{\lambda}{2}-1\right\} $. 

\end{proof}

\subsection{Proof of Theorem \ref{growth-pres}}

The goal of this subsection is to apply Theorem \ref{thm:main} to
prove the prescribed growth function theorem \ref{growth-pres} stated
in the Introduction. 

First we fix a choice of an expander sequence $\left(\Gamma_{i}\right)_{i=1}^{\infty}$
such that $\Gamma_{n}$ is a marked quotient of $U\ast V$, both $U$
and $V$ are abelian and there is a constant $K_{0}$ such that 
\[
{\rm Diam}(\Gamma_{n},U\cup V)\asymp_{K_{0}}n.
\]
Denote by $\delta_{0}>0$ the lower bound for Cheeger constants of
$\left(\Gamma_{i}\right)_{i=1}^{\infty}$. Such expander sequences
exist: for example, by the Margulis construction, one can start with
a group with Kazhdan's property (T) which is generated by two finite
abelian subgroups and take a sequence of its finite quotients. For
an explicit sequence of such expanders, see e.g., \cite[Example 2.3]{BZ1}.
Alternatively, one can take $\Gamma_{i}={\rm PSL}_{2}(\mathbb{Z}/5^{i}\mathbb{Z})$.
There exists an explicit marking of $\Gamma_{i}$ with $U=\mathbb{Z}/2\mathbb{Z}$
and $V=\mathbb{Z}/2\mathbb{Z}\times\mathbb{Z}/2\mathbb{Z}$; and $\left(\left(\Gamma_{i},U\cup V\right)\right)_{i=1}^{\infty}$
forms a sequence of expanders, see \cite[Lemma 6.1]{KassabovPak}. 

Given a function $f$ satisfying the assumptions of Theorem \ref{growth-pres},
the strategy is to find a sequence $\mathcal{L}=\left(F_{n}\right)$
such that the log-volume bounds for $\Delta(\mathcal{L})$ in Theorem
\ref{thm:main} approximate the function $f$ up to a fixed constant.
The choice of parameters is provided by the approximation lemma \ref{approximation}.
Namely, we choose $F_{n}$ to be nontrivial groups only along the
sequence $(\theta_{j})$, and at the level $\left\lfloor \theta_{j}\right\rfloor $,
we take the corresponding lamp group to have diameter comparable to
$\phi\left(\theta_{j}\right)$.

\begin{proof}[Proof of Theorem \ref{growth-pres}]

Take $\Gamma_{i}={\rm PSL}_{2}(\mathbb{Z}/5^{i}\mathbb{Z})$ and choose
the marking on $\Gamma_{i}$ by $U=\mathbb{Z}/2\mathbb{Z}$ and $V=\mathbb{Z}/2\mathbb{Z}\times\mathbb{Z}/2\mathbb{Z}$
as in \cite[Lemma 6.1]{KassabovPak}. Then there is a constant $K_{0}$
such that ${\rm Diam}(\Gamma_{i},U\cup V)\asymp_{K_{0}}i$ for all
$i$ and $\left(\left(\Gamma_{i},U\cup V\right)\right)_{i=1}^{\infty}$
forms a sequence of expanders. Denote by $\delta_{0}$ the infimum
of the Cheeger constants of $\left(\Gamma_{i},U\cup V\right)$. 

Given a function $f:\mathbb{N\to N}$ satisfying the assumptions of
the statement, we may continue it to a sub-additive continuous function
$f:\mathbb{R}_{\ge0}\to\mathbb{R}_{\ge0}$ such that $f(x)\ge x^{\alpha_{0}}$
for all $x\ge0$. Recall the contraction ratio $\eta$ as in Theorem
\ref{thm:main} and the fact that $\alpha_{0}=1/\log_{2}(2/\eta)$.
Let $\lambda=2/\eta$. Recall the sequences $(m_{j})$, $(\theta_{j})$
and $\left(\phi(\theta_{j})\right)$ associated with $f$ and $\lambda$
defined in Subsection \ref{subsec:approximations}. Note that the
assumption $f(x)\ge x^{\alpha_{0}}$ implies that $\phi(\theta_{j})\ge1$
for all $j\in\mathbb{N}$.

Now we choose the sequence $\mathcal{L}=(F_{n})_{n=1}^{\infty}$ by
\begin{align*}
F_{\left\lfloor \theta_{j}\right\rfloor } & =\Gamma_{\left\lfloor \phi\left(\theta_{j}\right)\right\rfloor }\mbox{ for }j\in\mathbb{N};\\
F_{n} & =\{id\}\mbox{ for }n\notin\left\{ \left\lfloor \theta_{1}\right\rfloor ,\left\lfloor \theta_{2}\right\rfloor ,\ldots\right\} .
\end{align*}
Take the diagonal product $\Delta=\Delta\left(\mathcal{L},(012)^{\infty}\right)$
as defined in Section \ref{subsec:construction}. 

As explained in Example \ref{linear-ex}, the sequence $\mathcal{L}$
defined above satisfies Assumption \ref{L-growth-assum} with $\Phi(x)=x$
and constant $C_{0}$ which only depends on the constant $K_{0}$
and $\delta_{0}$ associated with the expander sequence $\left(\Gamma_{i}\right)$.
Therefore by Theorem \ref{thm:main}, there is a constant $C$, which
only depends on $K_{0}$ and $\delta_{0}$, such that 

\[
\log v_{\Delta,T}(r)\le C\sum_{j:\left\lfloor \theta_{j}\right\rfloor \le\log_{\lambda}r}2^{j}\min\left\{ \left(\frac{\eta}{2}\right)^{j}r,\phi(\theta_{j})\right\} +Cr^{\alpha_{0}}.
\]
and

\[
\log v_{\Delta,T}(r)\ge\frac{1}{C}\max_{j:\left\lfloor \theta_{j}\right\rfloor \le\log_{\lambda}r}2^{j}\min\left\{ \left(\frac{\eta}{2}\right)^{j}r,\phi(\theta_{j})\right\} +\frac{1}{C}r^{\alpha_{0}}.
\]
Then by the approximation lemma \ref{approximation}, we have that
for all $r\ge1$,
\begin{align*}
\log v_{\Delta,T}(r) & \le\frac{C}{c_{\lambda}}f(r)+Cr^{\alpha_{0}}\le\left(\frac{C}{c_{\lambda}}+C\right)f(r);\\
\log v_{\Delta,T}(r) & \ge\frac{1}{C\lambda}f(r)+\frac{1}{C}r^{\alpha_{0}}\ge\frac{1}{C\lambda}f(r).
\end{align*}
By Fact \ref{FC-delta2} and Corollary \ref{FC-abelianUV}, we have
that $\Delta$ is an FC-central extension of $W=(U\times V)\wr_{\mathcal{S}}\mathfrak{G}=(\mathbb{Z}/2\mathbb{Z})^{3}\wr_{\mathcal{S}}\mathfrak{G}$.
We have finished the proof of the statement. 

\end{proof}

\bibliographystyle{alpha}
\bibliography{FCgr}

\end{document}